\newtheorem{Thm}{Theorem}[section]
\newtheorem{State}{Theorem}
\newtheorem{Prop}[Thm]{Proposition}
\newtheorem{Lem}[Thm]{Lemma}
\newtheorem{Cor}[Thm]{Corollary}
\theoremstyle{remark}
\newtheorem{Def}[Thm]{Definition}
\newtheorem{Rem}[Thm]{Remark}
\newtheorem{Not}[Thm]{Notation}
\newtheorem{Conv}[Thm]{Convention}
\newtheorem{Exp}[Thm]{Example}
\theoremstyle{definition}
\newtheorem*{Ack}{Acknowledgements}
\numberwithin{equation}{section}
\newcommand{\tc}{\textbf{t}} 
\newcommand{\qc}{\textbf{q}}	
\newcommand{\hc}{\textbf{h}}
\newcommand{\sym}[2]{
#1 \vert  #2
}
\newcommand{\con}[2]{
#1 #2
}
\DeclareMathOperator{\GL}{GL}
\DeclareMathOperator{\ord}{ord}
\DeclareMathOperator{\Sym}{Sym}
\DeclareMathOperator{\pr}{pr}
\DeclareMathOperator{\sgn}{\delta}
\DeclareMathOperator{\ev}{ev}
\newcommand{\oord}{\overline{\ord}}
\newcommand{\Rat}{\mathbb Q}
\newcommand{\Z}{\mathbb Z}
\newcommand{\R}{\mathbb R}
\def\<{\langle}
\def\>{\rangle}
\def\dprime{{\prime\prime}}
\def\sp{$^+$stable }
\def\eps{\varepsilon}
\def\u{\mathfrak{u}}
\def\v{\mathfrak{v}}
\def\m{\mathfrak{m}}
\def\T{\mathscr{T}}
\def\Teb{\boldsymbol{\T}}
\def\Tebu{\mathbf{t}}
\def\X{\mathscr{X}}
\def\Xb{\mathbf{X}}
\def\Xeb{\boldsymbol{\X}}
\def\Xebu{\mathbf{x}}
\def\W{\mathscr{W}}
\def\Y{\mathscr{Y}}
\def\Yeb{\boldsymbol{\Y}}
\def\Yebu{\mathbf{y}}
\def\P{\mathscr{P}}
\def\Pas{\mathscr{P}_{\rm as}}
\def\Mas{\mathscr{M}_{\rm as}}
\def\Lambdas{\Lambda_{\rm as}}
\def\H{\mathscr{H}}
\def\St{\widetilde{S}}
\def\Sw{\mathscr{S}}
\def\Sd{\mathscr{SD}}
\def\Phit{\widetilde{\Phi}}
\def\wc{\mathcal{C}}
\def\awc{\widetilde{\mathcal{C}}}
\def\K{\mathbb{K}}
\def\Kp{\mathbb{K}^{\prime}}
\def\tdot{< \hspace{-0.6em}{\cdot~}}
\def\tplus{\leq \hspace{-.6em}{{\tinyplus}~}}
\def\tplustrict{< \hspace{-.6em}{{\tinypluss}~}}
\def\ss{{}_\smallsmile\hspace{-.05em}}
\def\w{\ring w}
\newcommand\tinypluss{\raisebox{0.12em}{\hbox{\scalebox{0.5}{$+$}}}}
\newcommand\tinyplus{\raisebox{0.22em}{\hbox{\scalebox{0.5}{$+$}}}}
\definecolor{webgreen}{rgb}{0,.4,0}
\definecolor{webbrown}{rgb}{.4,0,0}
\renewcommand{\dateseparator}{, }
\newcommand{\todaymy}{\shortmonthname ~ {\the\day}\dateseparator \the\year}
\title{The stable limit DAHA: the structure of the standard representation} 
\author{Bogdan Ion} 
\address{Department of Mathematics, University of Pittsburgh, Pittsburgh, PA 15260}
\email{bion@pitt.edu}
\author{Dongyu Wu}
\address{Beijing Institute of Mathematical Sciences And Applications, Beijing 101408}
\email{dow16@pitt.edu, wudongyu@bimsa.cn}
\date{Jun. 27, 2025}
\subjclass[2010]{20C08, 05E05}
\begin{document}

\begin{abstract}
We prove a number of results about the structure of the standard representation of the stable limit DAHA. More precisely, we address the triangularity, spectrum, and eigenfunctions of the limit Cherednik operators, and construct several PBW-type bases for the stable limit DAHA. We establish a remarkable triangularity property concerning the contribution of certain special elements of the PBW basis of a finite rank DAHA  of high enough rank to  the PBW expansion of  a PBW basis element of the stable limit DAHA.  The triangularity property implies the faithfulness of the standard representation. This shows that  the algebraic structure defined by the limit operators associated to elements of the finite rank DAHAs is precisely the stable limit DAHA.
\end{abstract}

\dedicatory{Dedicated to the memory of Ian G. Macdonald, in tribute to his vision and insight}

\maketitle

\section{Introduction}
 
The stable limit DAHA is an algebra that emerged in \cite{IW} from the investigation of the algebraic structure that can be associated to  double affine Hecke algebra (DAHA) of type $\GL_k$ as the rank goes to infinity. In the earlier construction of the stable limit \emph{spherical} DAHA \cite{SV2}, the limit object is the inverse limit of the finite rank spherical DAHAs. As the full DAHAs do not form an inverse system,  our approach was to study of the limiting behavior of individual elements (e.g. the standard generators) in the inverse system of \emph{polynomial} standard representations of the finite rank DAHAs and describe the algebraic structure defined by the limit operators. 

The standard \emph{Laurent polynomial} representation $\P_k$ of $\H_k$ (the  DAHA of type $\GL_k$) gives rise to two such polynomial inverse systems, $\P_k^+$ and $\P_k^-$, that are the so-called standard representations of corresponding subalgebras of $\H_k$, denoted by $\H_k^+$ and $\H_k^-$. The critical analysis in both situations is that of the limiting behavior of the Cherednik operators $Y_i^{(k)}$ (for $\H_k^+$) and their inverses (for $\H_k^-$). While the limiting behavior of the action of (the inverse of) a Cherednik operators $Y_i^{(k)}$ on $\P_k^-$ is compatible with the inverse limit structure and leads to an inverse limit operator \cite{Kn}, the action on $\P_k^+$ is no longer compatible and the description of its limiting behavior requires a weaker concept of limit (that combines the concept of inverse limit with the $\tc$-adic topology on $\P_k^+$) . We refer to \cite{IW}*{\S6} or \S\ref{sec: seq-limit} for the precise definition. The resulting limit operators act on certain spaces of almost symmetric functions $\Pas^\pm$ in infinitely many variables, called the \emph{almost symmetric modules}. It was observed in \cite{IW} that the limit operators corresponding to the DAHA standard generators define a representation of an algebra $\H^+$, defined by generators and relations (the stable limit DAHA); the two representations were called standard representations. It is important to remark that the algebra $\H^+$ is defined over the field of rational functions in one parameters $\tc$, while the definition of the standard representations requires a second parameter $\qc$.

One result that is needed to fully describe the two algebraic structures defined by the limit standard generators is the description of the kernel of the standard representations. It was predicted (see \cite{IW}*{pg. 413}) that the standard representations are faithful, and therefore the limit algebraic structures are precisely described by the stable limit DAHA. Both representations play roles in the study of a number of important phenomena: the representation on $\Pas^-$ is related to Macdonald theory in the stable limit (see, e.g. \cite{Kn}*{Conjecture 11.2}) and the equivariant K-theory of certain smooth strata in the parabolic flag Hilbert schemes of points in $\mathbb{A}^2$ \cite{CGM}, and  the representation on $\Pas^+$ is related to the double Dyck path algebra \cite{IW}*{\S7} and the (rational) Shuffle Theorem \cites{CM, Mel}. 

In this paper we address a number of finer structural questions regarding the $\Pas^+$ standard representation of $\H^+$, henceforth referred to as \emph{the standard representation}. The analysis of the $\Pas^-$ standard representation is more simple and we refer to \cites{IW, Kn} for a more extensive discussion. Our first result is the existence of the limit Cherednik operators $Y_i^{(k)}\in \H_k^+$. Previously \cite{IW}*{Proposition 6.25}, the limit operator $\Y_i$ was defined as the limit of the sequence of the \emph{deformed Cherednik operators} $\widetilde{Y}_i^{(k)}$ which are certain truncations of $Y_i^{(k)}$. This led to certain surprising properties of the limit operators; for example, the limit operators were proved to commute despite the fact that the deformed Cherednik operators were no longer commuting. We prove the following (Theorem \ref{thm: Y limit}).
\begin{State} \label{stateA}
For any $i\geq 1$ we have $ \Y_i=\lim_k Y_i^{(k)}$.
\end{State}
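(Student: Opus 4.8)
The plan is to deduce the identity $\Y_i=\lim_k Y_i^{(k)}$ from the already established presentation of $\Y_i$ as a limit of \emph{deformed} Cherednik operators, by showing that the discrepancy between the genuine operators $Y_i^{(k)}$ and their truncations $\widetilde Y_i^{(k)}$ washes out in the limit. Recall from \cite{IW}*{Proposition 6.25} that $\Y_i=\lim_k\widetilde Y_i^{(k)}$, where the limit (see \S\ref{sec: seq-limit}) interlaces the inverse limit over the number of variables with the $\tc$-adic topology on the $\P_k^+$: an operator $A$ on $\Pas^+$ equals $\lim_k A_k$ if, for every $f\in\Pas^+$, every $m$, and every $N\ge 0$, one has $\pr_m\bigl(A_k f_k\bigr)\equiv\pr_m(Af)\pmod{\tc^N}$ once $k$ is large enough, $f_k\in\P_k^+$ denoting the canonical truncation of $f$. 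Since this notion of limit is additive, the desired identity reduces to
\begin{equation}\label{eq:remainder-vanishes}
\lim_k\bigl(Y_i^{(k)}-\widetilde Y_i^{(k)}\bigr)=0 ,
\end{equation}
and once \eqref{eq:remainder-vanishes} is in hand the very existence of $\lim_k Y_i^{(k)}$ becomes automatic, notwithstanding the incompatibility of $(Y_i^{(k)})_k$ with the tower of the $\P_k^+$.

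To analyze \eqref{eq:remainder-vanishes} I would write $Y_i^{(k)}\in\H_k$, in its action on $\P_k^+$, as a word in the Demazure--Lusztig operators $T_j^{(k)}$ and the rotation element $\pi_k$; by construction $\widetilde Y_i^{(k)}$ is obtained from this operator by discarding the terms produced either by the $\qc$-shift acting on a high-index variable or by a Demazure--Lusztig operator $T_j^{(k)}$ whose index lies in a range whose lower end grows with $k$. Hence $R_i^{(k)}:=Y_i^{(k)}-\widetilde Y_i^{(k)}$ is an explicit finite sum of operators on $\P_k^+$, each carrying at least one such ``high'' factor. It is convenient to treat $i=1$ first: for general $i$ one has $Y_i^{(k)}=\bigl(T_{i-1}^{(k)}\bigr)^{-1}\cdots\bigl(T_1^{(k)}\bigr)^{-1}\,Y_1^{(k)}\,T_1^{(k)}\cdots T_{i-1}^{(k)}$, and since the $T_j^{(k)}$ with $j<i$ involve no high-index variable and converge trivially to the corresponding generators of $\H^+$, the multiplicativity of the limit under such products carries the case $i=1$ over to all $i$.

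Now fix $f\in\Pas^+$, involving only $x_1,\dots,x_r$ and symmetric in the remaining variables, and fix $m$ and $N$. The required bound $\pr_m\bigl(R_1^{(k)}f_k\bigr)\in\tc^N\P_m^+$ for $k\gg 0$ would follow from two features of the ``high'' factors, both already exploited in the proof of \cite{IW}*{Proposition 6.25}. First, a Demazure--Lusztig operator $T_j^{(k)}$ applied to a Laurent polynomial independent of $x_j$ and $x_{j+1}$ returns a scalar multiple of that polynomial; tracking the successive modifications of $f_k$ as the word acts, only boundedly many of the $T_j^{(k)}$ can act non-trivially before the $\qc$-shift intervenes, the remaining ones contributing only an overall scalar. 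Second, the genuine $\qc$-shift differs from its truncation only in a component supported on monomials whose $\tc$-adic valuation tends to infinity with $k$, or involving variables $x_j$ with $j$ large, which $\pr_m$ annihilates. Combining the two, every term of $\pr_m\bigl(R_1^{(k)}f_k\bigr)$ lies in $\tc^N\P_m^+$ for $k$ large, which is \eqref{eq:remainder-vanishes} for $i=1$, hence in general.

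I expect the main obstacle to be the second feature above: one must organize the expansion of the $\qc$-difference part of $Y_1^{(k)}$ precisely enough to certify that every term deleted in forming $\widetilde Y_1^{(k)}$ carries a coefficient whose $\tc$-adic valuation, after evaluation on a fixed $f_k$ and projection to $m$ variables, grows without bound in $k$. This is the same kind of bookkeeping of reduced words and of Demazure--Lusztig coefficients that underlies the very definition of $\widetilde Y_i^{(k)}$, so no fundamentally new machinery is needed, but it has to be pushed a step further than in \cite{IW}. A secondary point is to check that the threshold $k_0$ may be chosen to depend only on $f$ and $N$, not on $m$: this uses the almost-symmetry of $f$, which confines all non-symmetric behaviour to $x_1,\dots,x_r$ and thereby bounds, uniformly in $m$, how many Demazure--Lusztig operators can act non-trivially before the $\qc$-shift.
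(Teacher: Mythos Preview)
Your overall strategy—show $\lim_k(Y_i^{(k)}-\widetilde Y_i^{(k)})=0$ and add this to the known $\Y_i=\lim_k\widetilde Y_i^{(k)}$—is exactly the paper's. But two steps in your execution are genuine gaps. First, your working description of the limit (``$\pr_m(A_kf_k)\equiv\pr_m(Af)\pmod{\tc^N}$ for $k\gg0$'') is \emph{not} Definition~\ref{Def of limit}: that definition demands a decomposition $f_k=h_k+\sum a_{i,k}g_{i,k}$ with the $g_{i,k}$ inverse-compatible and their limits lying in $\Pas^+$. A pointwise $\tc$-adic estimate after projection does not produce such a decomposition, and in particular does not force the limit into $\Pas^+$ rather than $\P_\infty^+$. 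So even if your estimate held, it would not by itself establish convergence in the required sense.

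Second, the paper replaces your ``bookkeeping of reduced words and Demazure--Lusztig coefficients'' by a closed formula that makes the whole thing immediate. A short lemma (Lemma~\ref{lemma: Y-Yt}) shows that in the monomial expansion of $T_j^{-1}x^\lambda$ the only $x^\mu$ with $\mu_{j+1}=0$ is $x^{s_j(\lambda)}$, occurring (with coefficient $1$) precisely when $\lambda_j=0$. Iterating this along $T_{k-1}^{-1}\cdots T_i^{-1}$ yields Proposition~\ref{prop: Y-Yt}: on $x^\lambda$ one has $(Y_i^{(k)}-\widetilde Y_i^{(k)})x^\lambda=0$ if $\lambda_i>0$, and $=\tc^{k-i}\,Y_i^{(i)}x^\lambda$ if $\lambda_i=0$. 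The point is that the difference is $\tc^{k-i}$ times a \emph{$k$-independent} operator; together with Remark~\ref{rem: Y coeff} (the coefficients of $\tc^{-1}Y_i^{(i)}x^\lambda$ are polynomial in $\tc$), this gives exactly the decomposition Definition~\ref{Def of limit} asks for on each basis element $m_{\sym{\lambda}{\mu}}$, with $a_k=\tc^{k-i}\to0$. No reduction to $i=1$ is needed (and incidentally your conjugation formula has the inverses on the wrong side: the relation is $Y_{i+1}=\tc^{-1}T_iY_iT_i$), nor any estimate on how many $T_j$ act nontrivially.
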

This is proved by showing that the discrepancy between the operators $Y_i^{(k)}$ and $\widetilde{Y}_i^{(k)}$ converges to $0$.  The result has some immediate consequences, aside from the more conceptual explanation of the commutativity of the limit Cherednik operators. It shows that any fixed linear combination of words in the standard generators of the DAHA defines a  limit operator, and therefore we can talk about the limit structure defined by limit operators associated to \emph{all} elements of the finite rank DAHAs, and this coincides with the algebra generated by the limit operators associated to the standard generators of the finite rank DAHAs.

On the set $\Lambdas$ consisting of pairs $\sym{\lambda}{\mu}$ with  $\lambda$ a strict composition (i.e. its last part is strictly positive) and  $\mu$ a partition, we introduce a partial order $\preceq$ which is related to the unique partial order on the inductive limit of $\Z_{\geq 0}^k$ that is compatible with the (affine parabolic) Bruhat order on each $\Z_{\geq 0}^k$ (see \S\ref{sec: compo}). Each $\sym{\lambda}{\mu}$ has an associated almost symmetric monomial $m_{\sym{\lambda}{\mu}}\in \Pas^+$. Our second main result is the following triangularity of the limit Cherednik operators (Theorem \ref{thm: cY-triangularity}).

\begin{State}\label{stateB}
Let $\sym{\lambda}{\mu}\in \Lambdas$ and $i\geq 1$. Then, 
$$
\Y_i m_{\sym{\lambda}{\mu}}\in \sgn_i(\lambda)\qc^{\lambda_i}\tc^{u_{\con{\lambda}{\mu}}(i)}  m_{\sym{\lambda}{\mu}} + \sum_{\sym{\lambda^\prime}{\mu^\prime}\prec \sym{\lambda}{\mu}} \K m_{\sym{\lambda^\prime}{\mu^\prime}}.
$$
\end{State}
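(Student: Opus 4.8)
The plan is to reduce the statement about the limit operator $\Y_i$ to a statement about the finite rank Cherednik operators $Y_i^{(k)}$ for $k$ large, using Theorem A, and then exploit the known triangularity of $Y_i^{(k)}$ with respect to the monomial basis of the Laurent polynomial representation $\P_k^+$. More precisely, fix $\sym{\lambda}{\mu}\in\Lambdas$. For $k$ large enough, the almost symmetric monomial $m_{\sym{\lambda}{\mu}}$ is, up to the stabilization procedure, the image of a genuine Laurent polynomial $f_k\in\P_k^+$ which is, say, a (partially symmetrized) monomial associated to the composition obtained by appending enough zeros to $\lambda$ and reading $\mu$ into the symmetric tail. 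The first step is to recall (or set up the reference to) the precise triangularity of $Y_i^{(k)}$ on nonsymmetric Macdonald-type monomial bases of $\P_k^+$: acting on such a partially symmetric monomial, $Y_i^{(k)}$ returns the same monomial with eigenvalue a monomial in $\qc,\tc$ of the shape $\sgn_i(\lambda)\qc^{\lambda_i}\tc^{(\cdots)}$, plus lower terms in the appropriate Bruhat-type order on compositions. The exponent of $\tc$ here must be identified with $u_{\con{\lambda}{\mu}}(i)$; this is a combinatorial bookkeeping computation tracking how many inversions involving the $i$-th slot survive after the partial symmetrization, and I would isolate it as a lemma about the quantity $u_{\con{\lambda}{\mu}}(i)$ being independent of $k$ once $k$ is large.

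The second step is to pass to the limit. By Theorem A, $\Y_i = \lim_k Y_i^{(k)}$ in the sense of \S\ref{sec: seq-limit} (the combined inverse-limit / $\tc$-adic limit). Since the family $f_k$ stabilizes to $m_{\sym{\lambda}{\mu}}$ and the diagonal coefficient $\sgn_i(\lambda)\qc^{\lambda_i}\tc^{u_{\con{\lambda}{\mu}}(i)}$ is eventually constant in $k$, the leading term survives the limit unchanged. The terms below it, a priori an expanding collection as $k$ grows, must be shown to organize into a convergent sum over $\{\sym{\lambda'}{\mu'}\prec\sym{\lambda}{\mu}\}$ with coefficients in $\K$. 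This is where the partial order $\preceq$ on $\Lambdas$ must be matched with the affine parabolic Bruhat order on each $\Z_{\geq 0}^k$: the key point, established in \S\ref{sec: compo}, is that $\preceq$ is precisely the order induced on the inductive limit, so that the finite-rank lower terms indexed by compositions Bruhat-below $(\lambda,0,\dots,0\,|\,\mu)$ are, under the stabilization map, indexed exactly by pairs $\sym{\lambda'}{\mu'}\prec\sym{\lambda}{\mu}$ (together with terms that die in the $\tc$-adic limit). One then checks that the coefficients stabilize, or at least converge $\tc$-adically, to elements of $\K$.

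The main obstacle I anticipate is the control of the lower-order terms in the limit: in finite rank, $Y_i^{(k)}$ acting on a partially symmetric monomial produces terms that are not themselves partially symmetric in the same pattern, and one must resymmetrize (apply the relevant Hecke symmetrizers) before comparing with $m_{\sym{\lambda'}{\mu'}}$; tracking that this resymmetrization is compatible across $k$ and does not introduce terms outside the principal order ideal below $\sym{\lambda}{\mu}$ is delicate. A secondary subtlety is ruling out ``escape to infinity'': one must ensure that no term with support in arbitrarily high-index variables contributes in the limit, which should follow from the explicit formula for $Y_i^{(k)}$ (a product of Demazure–Lusztig operators and the $\pi$-rotation) together with the $\tc$-adic vanishing built into the definition of $\Pas^+$ and of $\lim_k$. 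I would handle this by first proving the statement for $m_{\sym{\lambda}{\mu}}$ with $\mu$ empty (the purely nonsymmetric case, where the computation is the classical triangularity of nonsymmetric Macdonald polynomials in disguise) and then propagating through the symmetrization in $\mu$ using the intertwiner/creation-operator relations, checking at each stage that the order $\preceq$ is respected.
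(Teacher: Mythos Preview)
Your high-level plan---reduce to finite rank via Theorem~A, invoke the monomial triangularity of $Y_i^{(k)}$, pass to the limit---matches the paper's, but you are missing the key shortcut that makes the argument clean. The paper does not resymmetrize or induct on $\mu$. Instead it uses a stability result (Proposition~\ref{prop: p(k)-stable}): $\P(k)^+$ is stable under the action of $\H(k)^+$. Since $\Y_i\in\H(k)^+$ for $i\le k=\ell(\lambda)$ and $m_{\sym{\lambda}{\mu}}\in\P(k)^+$, one knows \emph{a priori} that $\Y_i m_{\sym{\lambda}{\mu}}\in\P(k)^+$, i.e.\ it is already a $\K$-combination of $m_{\sym{\lambda'}{\mu'}}$ with $\ell(\lambda')\le k$. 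This dissolves both obstacles you flag: there is no ``escape to infinity'' and no need to resymmetrize anything. (The case $i>k$ is handled separately by the vanishing $\Y_i|_{\P(k-1)^+}=0$, Corollary~\ref{prop: y-vanishing}.)

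Once stability is in place, the paper's argument is: for each $m_{\sym{\lambda'}{\mu'}}$ appearing in the answer, pick the single witness monomial $x^{\con{\lambda'0^{k-\ell(\lambda')}}{\mu'}}$; this monomial must then appear in $\widetilde Y_i^{(n)}\Pi_n m_{\sym{\lambda}{\mu}}$ for $n$ large, hence by the ordinary monomial triangularity (Proposition~\ref{prop: Ytilde-triangularity}) it is Bruhat-below some monomial of $\Pi_n m_{\sym{\lambda}{\mu}}$. The passage from this Bruhat inequality on concatenated compositions to $\sym{\lambda'}{\mu'}\preceq\sym{\lambda}{\mu}$ is a separate combinatorial lemma (Lemma~\ref{lemma: ineq}) that you do not isolate: if $\con{\lambda}{\mu}\le\con{\eta}{\nu}$ with $\ell(\lambda)=\ell(\eta)$, then $\con{\lambda}{\mu_+}\le\con{\eta}{\nu_+}$. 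Your plan gestures at ``matching $\preceq$ with the affine parabolic Bruhat order'' but does not name this step, and it is the only nontrivial order-theoretic input. The diagonal coefficient is then read off the largest monomial $x^{\con{\lambda}{\nu}}$ in $\Pi_n m_{\sym{\lambda}{\mu}}$ (where $\nu$ is the increasing rearrangement of $\mu0^{\cdots}$), and one checks $u_{\con{\lambda}{\nu}}(i)=u_{\con{\lambda}{\mu}}(i)$ for $i\le k$. Your proposed route through ``$\mu$ empty first, then propagate via intertwiners'' would work but is substantially more labor than needed.
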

The constants that appear in the statement as coefficients of the main term form the common spectrum of the limit Cherednik operators; we refer to \S\ref{sec: eigen-def} for their precise definition. This immediately raises the problem of describing the common eigenfunctions of the operators $\Y_i$, $i\geq 1$. We show that some eigenfunctions arise from the limit of the eigenfunctions of finite rank Cherednik operators: the non-symmetric Macdonald polynomials $E_\lambda(\qc,\tc)$ (Theorem \ref{thm: E limit} and Corollary \ref{cor: Y eigenfunctions}).

\begin{State}\label{stateC}
Let $\lambda\in \Lambda_k$ and $i\geq 1$. The sequence $(E_{\lambda0^n})_{n\geq 0}$ is convergent and
$$
\mathscr{E}_\lambda:=\lim_n E_{\lambda 0^n}\in \P(k)^+ \quad \text{and}\quad \Y_i \mathscr{E}_\lambda= \sgn_i(\lambda)\qc^{\lambda_i} \tc^{u_{\lambda}(i)}\mathscr{E}_\lambda.
$$
\end{State}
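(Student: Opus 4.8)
The statement combines Theorem~\ref{thm: E limit} --- convergence of $(E_{\lambda0^n})_n$ with limit in $\P(k)^+$ --- and Corollary~\ref{cor: Y eigenfunctions} --- the eigenvalue equation. The plan is to prove convergence first, and then to read off the eigenvalue equation from Theorem~\ref{stateA}.

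Granting convergence, the eigenvalue equation is the easy part. For each $n$ the polynomial $E_{\lambda0^n}\in\P_{k+n}$ is, by definition, the unique element with leading monomial $x^{\lambda0^n}$ of coefficient $1$ that is a simultaneous eigenfunction of $Y_1^{(k+n)},\dots,Y_{k+n}^{(k+n)}$, the eigenvalue of $Y_i^{(k+n)}$ being the spectral value attached to $\lambda0^n$ at position $i$. A straightforward inspection of the spectrum formula (\S\ref{sec: eigen-def}) shows this eigenvalue is eventually constant in $n$ and equal to $\sgn_i(\lambda)\qc^{\lambda_i}\tc^{u_\lambda(i)}$: appending zeros to the tail of a composition changes neither the sign, nor the $\qc$-exponent, nor the $\tc$-exponent $u_{\lambda0^n}(i)$ at a fixed position $i$ once $n$ is large (a trailing zero sits at a position $>i$ and never meets the inequalities defining $u$, while for $i>k$ the value is the prescribed one once $n\ge i-k$). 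Applying $\lim_n$ to $Y_i^{(k+n)}E_{\lambda0^n}=\sgn_i(\lambda)\qc^{\lambda_i}\tc^{u_\lambda(i)}E_{\lambda0^n}$ and using Theorem~\ref{stateA} --- together with the fact, built into the framework of \S\ref{sec: seq-limit}, that $A^{(n)}\to A$ and $v^{(n)}\to v$ imply $A^{(n)}v^{(n)}\to Av$ --- then yields $\Y_i\mathscr{E}_\lambda=\sgn_i(\lambda)\qc^{\lambda_i}\tc^{u_\lambda(i)}\mathscr{E}_\lambda$.

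For the convergence I would use the intertwiner construction of the non-symmetric Macdonald polynomials. Let $\lambda^+$ be the partition rearrangement of $\lambda$, with $\ell=\ell(\lambda^+)\le k$ parts. Then $E_{\lambda0^n}$ is obtained from the dominant polynomial $E_{\lambda^+0^n}$ by a word in the simple intertwiners $\tau_1,\dots,\tau_{k-1}$ of length bounded independently of $n$, and supported on the first $k$ positions, since each such move fixes the trailing zeros and never transports an entry between positions $k$ and $k+1$. By Theorem~\ref{stateA} --- and its consequence that any fixed word in the DAHA generators defines a limit operator --- these intertwiners converge as $n\to\infty$ to limit operators on $\Pas^+$ preserving $\P(k)^+$. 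It therefore suffices to show that the dominant sequence $(E_{\lambda^+0^n})_n$ converges, to an element of $\P(k)^+$; granting this, $\mathscr{E}_\lambda$ exists, lies in $\P(k)^+$, and has leading almost symmetric monomial $m_{\sym{\hat\lambda}{\emptyset}}$, where $\hat\lambda$ is $\lambda$ with its trailing zeros removed.

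The convergence of the dominant sequence is the main obstacle, and this is exactly where the weak ($\tc$-adic, inverse-limit) notion of limit of \S\ref{sec: seq-limit} is indispensable: the polynomials $E_{\lambda^+0^n}$ are \emph{not} symmetric in the tail variables $x_{k+1},x_{k+2},\dots$ (for instance $E_\mu$ fails to be $s_i$-symmetric even when $\mu_i=\mu_{i+1}$, since the $Y$-eigenvalues at positions $i$ and $i+1$ then differ by a factor of $\tc$), so the real content is that this asymmetry is washed out $\tc$-adically in the limit. I would control the discrepancy between $E_{\lambda^+0^{n+1}}$ and $E_{\lambda^+0^n}$, under the identification used to define the limit, by one of two routes. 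The first uses the explicit combinatorial (Ram--Yip / Haglund--Haiman--Loehr-type) expansion of $E_{\lambda^+0^{n+1}}$, showing that the terms genuinely involving the new $(k+n+1)$-st index, and the mismatches between orbits of tail monomials, carry weights divisible by powers of $\tc$ growing with $n$, so that the sequence is $\tc$-adically Cauchy in the required sense. The second tracks the cyclic-creation recursion for $E_{\lambda^+0^n}$ as a function of $n$ and estimates the $\tc$-adic size of the corrections it introduces, reusing the estimates developed in the proof of Theorem~\ref{stateA} for the gap between the Cherednik operators and their deformations. A subsidiary point to verify with care is the composability of the limits used above, namely $\lim_n(A^{(n)}v^{(n)})=(\lim_n A^{(n)})(\lim_n v^{(n)})$ for the stable intertwiner word $A^{(n)}$ and $v^{(n)}=E_{\lambda^+0^n}$, which is again of the type recorded in \S\ref{sec: seq-limit}. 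With convergence in hand and the leading term identified, the eigenvalue equation follows as in the second paragraph; alternatively, one may build the limit eigenfunction directly from Theorem~\ref{thm: cY-triangularity} and the simplicity of the spectrum, and then read off convergence by matching the finite-rank and limiting triangular systems.
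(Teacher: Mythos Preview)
Your overall architecture --- reduce to a ``nice'' base case via finite intertwiners, then take the limit, and deduce the eigenvalue equation from Theorem~\ref{stateA} via Proposition~\ref{prop: continuity} --- is the same as the paper's. The eigenvalue argument is essentially Corollary~\ref{cor: Y eigenfunctions}, and your observation about how $u_{\lambda0^n}(i)$ behaves as $n$ grows is correct.

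The genuine gap is in your treatment of the base case. Your claim that ``$E_\mu$ fails to be $s_i$-symmetric even when $\mu_i=\mu_{i+1}$'' is false: it is a standard fact (recorded in the paper just before \eqref{int1}) that $T_iE_\mu=E_\mu$ whenever $\mu_i=\mu_{i+1}$, and $T_if=f$ is equivalent to $s_if=f$ for polynomials. Distinct $Y$-eigenvalues at positions $i,i+1$ are perfectly compatible with $s_i$-symmetry; indeed $Y_{i+1}=\tc^{-1}T_iY_iT_i$ forces the eigenvalues to differ by $\tc^{-1}$ on any $T_i$-fixed vector. So $E_{\lambda^+0^n}$ \emph{is} symmetric in the tail variables, and the base case is far easier than you anticipate. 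The paper (Proposition~\ref{prop: E limit}) shows that for any composition with its nonzero parts packed at the front --- in particular for $\lambda^+$ --- the sequence $(E_{\lambda0^n})_n$ is an honest \emph{inverse} sequence: $\pi_{k+n}E_{\lambda0^n}=E_{\lambda0^{n-1}}$. This is proved by applying $\pi$ to the recursions \eqref{int1} and \eqref{int3} and checking that the potentially problematic boundary term vanishes thanks to a divisibility lemma (Lemma~\ref{lema: div}: $E_\lambda$ is divisible by $x_1\cdots x_{p(\lambda)}$ when the nonzero parts are packed). No combinatorial formula, no $\tc$-adic Cauchy estimate, and no cyclic-creation analysis is needed here.

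The weak ($\tc$-adic) limit enters only in the \emph{reduction} step, exactly opposite to where you placed it. Passing from the packed base case to a general $\lambda$ requires moving internal zeros outward; the relevant intertwiner \eqref{int1} then has an $n$-dependent coefficient proportional to $\tc^{u_{\mu0^n}(i+1)}$ with $\mu_{i+1}=0$, which tends to $0$, so in the limit the intertwiner becomes simply $T_i$ (Corollary~\ref{cor: intert}). Your reduction to $\lambda^+$ would work too, but you should note that the intertwiner word is not a fixed element of the DAHA --- its scalar shifts depend on $n$ --- so the appeal to ``any fixed word defines a limit operator'' is not quite the right justification; one must instead check directly that these scalars converge, which is immediate once their $\tc$-adic order is identified.
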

We call the elements $\mathscr{E}_\lambda$, for $\lambda\in \Lambda$, \emph{limit (non-symmetric) Macdonald functions.} This result was also obtained independently in \cite{BW} through an analysis of the combinatorial formula \cite{HHL} for finite rank non-symmetric Macdonald polynomials. Our arguments are based on Theorem \ref{stateB} and the analysis of the action of the DAHA intertwining operators. The limit Macdonald functions do not span $\Pas^+$ (the common spectrum of the operators $\Y_i$ is not simple). A full eigenbasis was described in \cite{BW} (and is recalled in \S\ref{sec: eigenbasis}).

In \S\ref{sec: pbw} we introduce a distinguished set of words $\Sw$ in the generators of $\H^+$ and prove (Theorem \ref{thm: pbw}) that it is a basis of $\H^+$.  We call $\Sw$ the \emph{PBW basis} of $\H^+$. The result follows from a careful analysis of the relations satisfied by the generators of $\H^+$ and the structure of the stable limit DAHA at $\tc=1$.

Theorem \ref{stateA} has the following consequence relavant to the faithfulness of the standard representation. The standard generators of $\H^+$ are denoted by $\Xeb_i$, $\Yeb_i$, $\Teb_j$, $i\geq  1$. The subalgebra of $\H^+$ generated by the generators $\Xeb_i$, $\Yeb_i$, $i\leq k$ and $\Teb_j$, $j\leq k-1$ is denoted by $\H(k)^+$. There is a canonical morphism $\varphi_k: \H(k)^+\to \H_k^+$. We have the following (Theorem \ref{thm: kernel}).

\begin{State}\label{stateD}
Let $\mathbf{H}\in \H(r)^+$ and  let $H: \Pas^+\to \Pas^+$ be the operator given by the action of $\mathbf{H}$. Then, $H=0$ if and only if
$\varphi_k(\mathbf{H})=0$, for all $k\geq r$ large enough.
\end{State}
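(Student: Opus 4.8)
The plan is to deduce Statement~D from the triangularity of the standard representation established in Statement~B together with the convergence result of Statement~A. The ``if'' direction is the straightforward one: if $\varphi_k(\mathbf H)=0$ for all large $k$, then writing $\mathbf H$ as a finite linear combination of PBW basis words of $\H(r)^+\subseteq \H(k)^+$ and pushing each word through $\varphi_k$, the resulting element of $\H_k^+$ acts as $0$ on $\P_k^+$; by Statement~A the action of $\mathbf H$ on $\Pas^+$ is the limit (in the combined inverse/$\tc$-adic sense of \S\ref{sec: seq-limit}) of these zero operators, hence is $0$. The one point to be careful about here is that $\varphi_k(\mathbf H)=0$ must hold for a cofinal set of $k$, so that one genuinely has a sequence of zero operators whose limit is being computed; this is exactly the hypothesis ``for all $k\geq r$ large enough.''

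For the ``only if'' direction, suppose $H=0$ on $\Pas^+$. Expand $\mathbf H$ in the PBW basis $\Sw$ of $\H^+$ (Theorem~\ref{thm: pbw}); since $\mathbf H\in\H(r)^+$ only finitely many PBW words occur, all of them lying in $\H(r)^+$. The strategy is to evaluate $H$ on the almost symmetric monomials $m_{\sym{\lambda}{\mu}}$ and to use the triangularity of the $\Yeb_i$-action (Statement~B), together with the known triangularity of the $\Xeb_i$ and $\Teb_j$ actions on the $m_{\sym{\lambda}{\mu}}$ basis, to read off the coefficients of $\mathbf H$ one at a time. Concretely, one orders the PBW words so that the ``leading'' contribution of a word $w$, applied to a suitably chosen $m_{\sym{\lambda}{\mu}}$, produces a monomial $m_{\sym{\lambda'}{\mu'}}$ that cannot be cancelled by any other word appearing in $\mathbf H$; vanishing of $H$ then forces the coefficient of $w$ to be $0$, and one proceeds by (reverse) induction on the order. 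This is the step where I expect the real work to lie: one must make the right choice of test functions $m_{\sym{\lambda}{\mu}}$ for each PBW word and verify that the combinatorics of the partial order $\preceq$ on $\Lambdas$ (\S\ref{sec: compo}) is compatible with the ordering of $\Sw$ in the precise way needed for the triangular system to be invertible. Presumably the bookkeeping is controlled by the degrees in $\qc$ and $\tc$ recorded in the spectrum of Statement~B, which separate words with different numbers of $\Yeb$-factors, while the $\Xeb$- and $\Teb$-parts are separated by the $\Lambdas$-support.

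Granting the faithfulness of the standard representation of $\H^+$ (which is the conclusion that $H=0\iff\mathbf H=0$ falls out of the argument above), the remaining assertion is that $\mathbf H=0$ in $\H(r)^+$ is equivalent to $\varphi_k(\mathbf H)=0$ for all large $k$. One inclusion is trivial since each $\varphi_k$ is an algebra morphism, so $\mathbf H=0$ immediately gives $\varphi_k(\mathbf H)=0$. For the converse one again uses Statement~A: the action of $\mathbf H$ on $\Pas^+$ is the limit of the actions of $\varphi_k(\mathbf H)$ on $\P_k^+$, so if all those vanish then $H=0$, whence $\mathbf H=0$ by faithfulness. Assembling these pieces gives the stated equivalence. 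The main obstacle, to reiterate, is the triangular-elimination argument establishing faithfulness: everything else is a formal consequence of Statements~A and~B once that is in hand, but making the induction on $\Sw$ work requires a careful matching of the PBW ordering with the $\preceq$-order and the spectral data, and in particular controlling how the non-leading terms in Statement~B interact across the different PBW words occurring in $\mathbf H$.
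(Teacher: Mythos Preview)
Your ``if'' direction is fine and matches the paper: $H=\lim_k \varphi_k(\mathbf H)$ by Theorem~A, so if the terms of the sequence vanish then so does the limit.

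Your ``only if'' direction, however, has the logic inverted and contains a genuine gap. You are trying to deduce Statement~D from faithfulness of the standard representation of $\H^+$, by first running a triangularity argument on the PBW basis to show $H=0\Rightarrow\mathbf H=0$. But in the paper the dependence goes the other way: Statement~D is proved first by an elementary argument, and only afterwards is it used as the \emph{starting point} for the (much harder) proof of faithfulness in \S\ref{sec: faith}. Moreover, the kind of triangularity argument you sketch is exactly of the type the paper warns against in the introduction: the spectrum in Statement~B is not simple, and ``the expected dominant term \dots\ can and will appear with coefficient zero'' when acting with linear combinations of PBW words. So your induction on $\Sw$ cannot be made to work along the lines you describe; this is why the paper needs the elaborate PBW analysis of \S\ref{sec: faith} (Theorem~\ref{conj: main}) rather than a direct triangularity argument on $\Pas^+$.

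The missing idea is short and avoids faithfulness of $\H^+$ entirely. By \S\ref{sec: YX} and \eqref{eq5}, on the subspace $x_1\cdots x_k\Pas^+$ the operators $\Y_i$ for $i\le k$ are the \emph{honest inverse limits} of $Y_i^{(k)}$ (not merely limits in the weaker $\tc$-adic sense). Hence for any $\lambda\in\Lambda_k$,
\[
0=H\cdot (x_1\cdots x_k)x^\lambda=\lim_{\stackrel{\longleftarrow}{n}} \varphi_n(\mathbf H)\cdot (x_1\cdots x_k)x^\lambda,
\]
and in particular $\varphi_k(\mathbf H)$ acts by zero on $(x_1\cdots x_k)\P_k^+$. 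Now use faithfulness of the \emph{finite rank} representation of $\H_k^+$ on $(x_1\cdots x_k)\P_k^+$ (Proposition~\ref{laurent rep}) to conclude $\varphi_k(\mathbf H)=0$. This is the paper's proof of Theorem~\ref{thm: kernel}; it needs only the known faithfulness for $\H_k^+$, not for $\H^+$.
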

We establish the faithfulness of the standard representation using a strategy based on Theorem \ref{stateD}. We note that some standard possible arguments for addressing this problem fail. For example, the corresponding representation of stable limit DAHA at $\tc=1$ is not faithful. Also, an argument based on the analysis of the stable limit DAHA action on limit non-symmetric Macdonald polynomials (using the PBW basis and Pieri formulas) also fails because under the action of certain linear combinations of PBW basis elements on a fixed  limit non-symmetric Macdonald polynomial the \emph{expected} dominant term in the resulting expression can and will appear with coefficient zero. Our approach, outlined in \S\ref{sec: faith}, is based on a remarkable triangularity property (Theorem \ref{conj: main}) concerning the occurrence of certain special elements of the PBW basis of $\H_k^+$ in the PBW expansion of $\varphi_k(\mathbf{H})$  for $\mathbf{H}$ a PBW basis element of $\H^+$. Theorem \ref{conj: main} is first proved for a certain class of elements $\mathbf{H}$ (Proposition \ref{prop: main1}, Corollary \ref{cor: main}) that  serve as the basis of the main argument.  The triangularity property (Theorem \ref{conj: main}) implies the faithfulness of the standard representation (Theorem \ref{thm: faith}).
\begin{State}\label{stateE}
The standard representation of $\H^+$ is faithful. 
\end{State}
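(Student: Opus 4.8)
The plan is to deduce Theorem \ref{stateE} from the triangularity property in Theorem \ref{conj: main} (stated in the excerpt as the key technical input) together with the reduction afforded by Theorem \ref{stateD}. First I would reduce the faithfulness statement to a statement about a fixed finite-rank truncation: by Theorem \ref{stateD}, an element $\mathbf{H}\in\H(r)^+$ acts as the zero operator on $\Pas^+$ if and only if $\varphi_k(\mathbf{H})=0$ for all sufficiently large $k$. Hence it suffices to show that if $\mathbf{H}\in\H^+$ is nonzero, then $\varphi_k(\mathbf{H})\neq 0$ for all $k$ large enough. Writing $\mathbf{H}$ in the PBW basis $\Sw$ of $\H^+$ (which exists by Theorem \ref{thm: pbw}), we have $\mathbf{H}=\sum_{w\in S}c_w\,w$ with finitely many nonzero coefficients $c_w\in\K$, and we must prove that the image $\sum_w c_w\,\varphi_k(w)$ does not vanish.

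The heart of the argument is then the triangularity of Theorem \ref{conj: main}: it asserts that, for each PBW basis element $w$ of $\H^+$, the PBW expansion of $\varphi_k(w)$ in $\H_k^+$ (for $k$ large) contains a distinguished special PBW basis element of $\H_k^+$ — call it the \emph{leading term} $\ell_k(w)$ — with an explicitly controlled (in particular nonzero) coefficient, and moreover that the assignment $w\mapsto \ell_k(w)$ is injective on the relevant finite set of basis elements once $k$ is large enough, with the partial order on these leading terms being compatible with (a suitable extension of) the order $\preceq$ on $\Lambdas$. Granting this, the proof is a standard leading-term argument: among the finitely many $w$ with $c_w\neq 0$, pick one, say $w_0$, whose leading term $\ell_k(w_0)$ is maximal with respect to the relevant order (for all large $k$ simultaneously — this requires that the comparison among the finitely many $\ell_k(w)$ stabilizes, which follows from the compatibility of the orders with the rank-independent order $\preceq$). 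Then in the PBW expansion of $\varphi_k(\mathbf{H})=\sum_w c_w\varphi_k(w)$, the special basis element $\ell_k(w_0)$ appears only from the $w_0$ term — it cannot be cancelled by the lower-order contributions of the other $\varphi_k(w)$ — and it appears with coefficient $c_{w_0}$ times the nonzero scalar provided by Theorem \ref{conj: main}. Hence $\varphi_k(\mathbf{H})\neq 0$.

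I would organize this as follows: (i) invoke Theorem \ref{thm: pbw} to write the arbitrary nonzero element in the PBW basis; (ii) invoke Theorem \ref{conj: main} (whose proof, per the excerpt, proceeds via Proposition \ref{prop: main1} and Corollary \ref{cor: main} for a generating class of elements $\mathbf{H}$, then bootstraps to all PBW basis elements) to extract the leading special PBW term of each $\varphi_k(w)$ together with the nonvanishing of its coefficient and the injectivity/order-compatibility of $w\mapsto\ell_k(w)$; (iii) run the maximal-term cancellation argument to conclude $\varphi_k(\mathbf{H})\neq 0$ for all large $k$; (iv) apply Theorem \ref{stateD} to conclude that the corresponding operator on $\Pas^+$ is nonzero, i.e. the standard representation is faithful.

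The main obstacle is entirely contained in Theorem \ref{conj: main} and its preparatory results: one must identify the correct notion of ``special'' PBW basis elements of $\H_k^+$, prove that $\varphi_k$ of a PBW basis element of $\H^+$ expands with a genuinely nonzero coefficient on a well-chosen such special element, and — crucially — control the interaction between these leading terms as $w$ ranges over the finite support of $\mathbf{H}$, so that no unexpected cancellation occurs. As the excerpt itself emphasizes, the naive substitutes for this step fail: the $\tc=1$ specialization is not faithful, and the Pieri/eigenfunction approach breaks down precisely because the expected dominant term can appear with coefficient zero. So the delicate point is to show that in the \emph{PBW-basis} bookkeeping (rather than in the action on Macdonald functions) the leading coefficient is provably nonzero; once Theorem \ref{conj: main} is in hand, the passage to Theorem \ref{stateE} via Theorem \ref{stateD} is, by comparison, routine.
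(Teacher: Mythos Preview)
Your strategy is the paper's: reduce via Theorem~\ref{stateD}, expand $\mathbf{H}$ in the PBW basis $\Sw$, and use the triangularity of Theorem~\ref{conj: main} to exhibit a PBW basis element of $\H_k^+$ that survives in $\varphi_k(\mathbf{H})$. The one real imprecision is your description of the order and of how the extremal term is selected.

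The order governing Theorem~\ref{conj: main} has nothing to do with $\preceq$ on $\Lambdas$; it is the $\hc$-adic order of the coefficient of a special element $M_{\mathbf{a}}(X)M_{\mathbf{a}}(Y,T)T_{w_{\mathbf{a}}}$ in $\varphi_k(\mathfrak{w})$, refined by the invariant $z(\mathbf{a})$ and the dominance order on gap sequences. The non-cancellation argument is accordingly an $\hc$-order argument, not a straight leading-term argument. Concretely, the paper clears denominators so that the coefficients $c_{\mathfrak{w}}\in\Rat[\qc,\hc]$ are relatively prime, reduces (by looking modulo $\hc$) to the case where all $\mathfrak{w}$ in the support share the same $m(\mathfrak{w})$, and then selects $\mathfrak{w}_{\max}$ in two stages: among those $\mathfrak{w}$ with $\ord_{\hc} c_{\mathfrak{w}}=0$, first minimise $|z(\mathfrak{w})|$, and only within that subset maximise the gap data $g(\mathfrak{w})$ in the product dominance order. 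Theorem~\ref{conj: main}(i)--(iv) then forces the special element $M_{\mathbf{g}_{\max}}(X)M_{\mathbf{g}_{\max}}(Y,T)T_{w_{\max}}$ to appear in $\varphi_k(\mathbf{H})$ at $\hc$-order exactly $\vartheta=|m(\mathbf{H})-z(\mathfrak{w}_{\max})|$, with the order-$\vartheta$ contribution coming only from $\mathfrak{w}_{\max}$ (the lowest-order coefficient being $\qc^{\vartheta}\hc^{\vartheta}$). A single ``pick the maximal leading term'' step, as in your sketch, does not suffice: the map $\mathfrak{w}\mapsto M_{g(\mathfrak{w})}(X)M_{g(\mathfrak{w})}(Y,T)T_w$ is injective, but the same special element of $\Sd(k)$ can appear in $\varphi_k(\mathfrak{w}')$ for $\mathfrak{w}'\neq\mathfrak{w}_{\max}$ at a higher $\hc$-order, so the two-stage selection and the $\hc$-filtration are what actually block cancellation.
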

This allows us to complete the project initiated in \cite{IW}  of describing the algebraic structure defined by the limit operators associated to elements of the finite rank DAHAs (Theorem \ref{thm: limit-algebra}).
\begin{State}\label{stateF}
The algebra generated by the action of the limit operators $\X_i$, $\Y_i$, $\T_i$, $i\geq 1$, on $\Pas^+$ is isomorphic to $\H^+$.
\end{State}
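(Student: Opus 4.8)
The plan is to obtain this statement as a formal consequence of the faithfulness of the standard representation (Theorem~\ref{thm: faith}) together with Theorem~\ref{thm: Y limit}. Let $\rho\colon \H^+\to \operatorname{End}_{\K}(\Pas^+)$ denote the standard representation. By the way it is built, $\rho(\Xeb_i)=\X_i$, $\rho(\Yeb_i)=\Y_i$ and $\rho(\Teb_i)=\T_i$ for $i\geq 1$; since $\H^+$ is generated as an algebra by the $\Xeb_i,\Yeb_i,\Teb_i$, the image $\rho(\H^+)$ is exactly the subalgebra $\mathcal{A}\subseteq \operatorname{End}_{\K}(\Pas^+)$ generated by the limit operators $\X_i,\Y_i,\T_i$, $i\geq 1$. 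Theorem~\ref{thm: faith} asserts that $\rho$ is injective, so $\rho$ restricts to an isomorphism of algebras $\H^+\to\mathcal{A}$, which is the desired assertion.

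What remains is to record that $\mathcal{A}$ coincides with the algebra of limit operators attached to \emph{all} elements of the finite rank DAHAs, so that the isomorphism carries its intended meaning. For this I would first observe that $\H^+=\bigcup_{r\geq 1}\H(r)^+$, since the generators of $\H(r)^+$ exhaust those of $\H^+$ as $r\to\infty$ and $\H(r)^+\subseteq\H(r+1)^+$. Given $\mathbf{H}\in\H(r)^+$ and $k\geq r$ we have $\varphi_k(\mathbf{H})\in\H_k^+$, which acts on $\P_k^+$. Using the identities $\X_i=\lim_k X_i^{(k)}$ and $\T_i=\lim_k T_i^{(k)}$ (immediate from the definitions), Theorem~\ref{thm: Y limit} in the form $\Y_i=\lim_k Y_i^{(k)}$, and the compatibility of the sequential limit of \S\ref{sec: seq-limit} with sums and with composition of operators, the sequence of these actions converges and $\lim_k\varphi_k(\mathbf{H})=\rho(\mathbf{H})$. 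Hence $\{\lim_k\varphi_k(\mathbf{H}) : \mathbf{H}\in\bigcup_{r}\H(r)^+\}=\rho(\H^+)=\mathcal{A}$, and the isomorphism above identifies $\H^+$ with the algebraic structure generated by the limit operators associated to arbitrary elements of the finite rank DAHAs.

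All of the real content sits in the inputs rather than in this deduction. Theorem~\ref{thm: Y limit} is precisely what makes it legitimate to replace, inside words, the deformed Cherednik operators $\widetilde{Y}_i^{(k)}$ used in the original definition of $\Y_i$ by the genuine operators $Y_i^{(k)}$, so that words in the generators of $\H^+$ are honestly sequential limits of the corresponding words in the finite rank DAHAs; and Theorem~\ref{thm: faith} rests in turn on the kernel criterion (Theorem~\ref{thm: kernel}) and the triangularity property (Theorem~\ref{conj: main}). The only point that genuinely needs checking at this last step is the routine fact that the sequential limit respects sums and composition; once that is in place I do not expect any further obstacle.
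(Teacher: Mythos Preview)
Your proposal is correct and follows the same route as the paper: the paper's proof of this theorem is a one-paragraph deduction observing that the algebra generated by the limit operators is the image of the standard representation, hence isomorphic to $\H^+$ by the faithfulness established in Theorem~\ref{thm: faith}. Your second and third paragraphs add context (the interpretation via $\lim_k\varphi_k(\mathbf{H})$ and the role of Theorem~\ref{thm: Y limit}) that the paper places in the introduction rather than in the formal proof, but this elaboration is accurate and does not deviate from the paper's logic.
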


 It is important to note that Theorem \ref{stateD} is trivially true for the standard representation on $\Pas^-$ (because $\mathbf{H}$ is the inverse limit of the sequence $\varphi_k(\mathbf{H})$). Therefore, Theorems \ref{stateE} and \ref{stateF} apply to both standard representations, $\Pas^+$ and $\Pas^-$,  of the stable limit DAHA. 


\begin{Ack}{The work of BI was partially supported by  the  Simons Foundation grant 420882.}  The results on the triangularity and spectrum of the limit Cherednik operators and the PBW basis of the stable limit DAHA  were obtained in 2021-22 and were reported by the first  author in his talks at the \emph{AMS Spring Eastern Sectional Meeting,  Special Session on Macdonald Theory and Beyond: Combinatorics, Geometry, and Integrable Systems} in March 2022, and at the  \emph{Workshop of symmetric spaces, their  generalizations, and special functions} at the University of Ottawa  in August 2022.
\end{Ack}


\section{Notation}

\subsection{}\label{sec: not1}
 We denote by $\Xb$ an infinite alphabet $x_1,x_2,\dots$ and by $\Sym[\Xb]$ the ring of symmetric functions in $\Xb$. The field or ring of coefficients $\K\supseteq \Rat$ will depend on the context. For any $k\geq 1$, 
we denote by $\overline{\Xb}_k$ the finite alphabet $x_1,x_2,\dots, x_k$ and by  ${\Xb}_k$ the infinite alphabet $x_{k+1},x_{k+2},\dots$. $\Sym[{\Xb}_k]$ will denote the ring of symmetric functions in 
${\Xb}_k$. 
Furthermore, for any $1\leq k\leq m$, we denote by $\overline{\Xb}_{[k,m]}$ the finite alphabet $x_k,\dots, x_m$. As usual, we denote by $h_n[\Xb]$ (or $h_n[\overline{\Xb}_k]$, or 
$h_n[\Xb_k]$, 
or $h_n[\overline{\Xb}_{[k,m]}]$) the $n$-{th} complete symmetric functions (or polynomials) in the indicated alphabet, by $p_n[\Xb]$ (or $p_n[\overline{\Xb}_k]$, or $p_n[\Xb_k]$, or 
$p_n[\overline{\Xb}_{[k,m]}]$) 
the $n$-th power sum symmetric functions (or polynomials). The symmetric function $p_1[\Xb]=h_1[\Xb]$ is also denoted by $\Xb=x_1+x_2+\cdots$. For a partition $\lambda$,  $m_\lambda[\Xb]$ (or $m_\lambda[\overline{\Xb}_k]$, or 
$m_\lambda[\Xb_k]$, denotes the monomial symmetric function (or polynomial) in the indicated alphabet. For $\lambda$  a finite sequence of non-negative numbers (a composition), we denote by $x^\lambda$ the monomial $\displaystyle\prod_{i\geq 1} x_i^{\lambda_i}$.

\subsection{}\label{sec: not3}
The symmetric monomials in the alphabet $\Xb_k$, $k\geq 1$,  can be explicitly expressed in terms of the symmetric monomials in the alphabet $\Xb_{k-1}$ using the formula 
\begin{equation}\label{eq: symm-monomial}
m_\lambda[\Xb_k]=\sum_S (-1)^{|S|} \binom{|S|}{m_1(\lambda_S); \dots ; m_{\lambda_1}(\lambda_S)} x_k^{|\lambda_S|} m_{\widehat{\lambda}_S}[\Xb_{k-1}].
\end{equation}
The notation in this formula is as follows. If $\lambda$ is the partition $\lambda_1\geq \lambda_2\geq \dots \geq \lambda_n>0$,  the sum runs over subsets $S$ of $[n]:=\{1,2,\dots,n\}$, $\lambda_S$ is the partition obtained from $\lambda$ by keeping only the parts indexed by the elements of $S$, $|\lambda_S|=\sum_{i\in S} \lambda_i$, and  $\widehat{\lambda}_S$ is the partition obtained from $\lambda$ by removing the parts indexed by the elements of $S$. Further, $m_i(\lambda_S)$ denotes the multiplicity of $i$ in $\lambda_S$ and the coefficient that appears in the formula is the usual multinomial coefficient.

\subsection{}\label{sec: not2}
Any action of the  monoid $(\Z_{>0},\cdot)$ on the ring $\K$ extends to a canonical action by $\Rat$-algebra morphisms on $\Sym[\Xb]$. The morphism corresponding to the action of $n\in \Z_{>0}$ is denoted by 
$\mathfrak{p}_n$ 
and is defined by
$$
\mathfrak{p}_n\cdot p_k[\Xb]=p_{nk}[\Xb], \quad k\geq 1.
$$
In our context $\K=\Rat(\tc,\qc)$ will be the field of fractions generated by  two parameters $\tc, \qc$, the action of $(\Z_{>0},\cdot)$ on $\K$ is $\Rat$-linear, and $\mathfrak{p}_n$ acts on 
parameters by raising them to the $n$-th power: $\mathfrak{p}_n\cdot \tc=\tc^n, ~\mathfrak{p}_n\cdot \qc=\qc^n$.

\subsection{}

Let $R$ be a ring with an action of $(\Z_{>0},\cdot)$ by ring morphisms. Any ring morphism $\varphi: \Sym[\Xb]\to R$ that is compatible with the action of $(\Z_{>0},\cdot)$ is uniquely determined by the image of $p_1[\Xb]=\Xb$. 
The image of $F[\Xb]\in \Sym[\Xb]$ through $\varphi$ is usually denoted by  $F[\varphi(\Xb)]$ and called the plethystic evaluation (or substitution) of $F$ at $\varphi(\Xb)$.

The plethystic exponential  {Exp} is defined as
$$\textrm{Exp}[\Xb]=\sum_{n=0}^{\infty}h_n[\Xb]=\exp\left(\sum_{n=1}^\infty \frac{p_n[\Xb]}{n}\right).$$

\subsection{} \label{sec: esf}
We will use some symmetric polynomials that are related to the complete homogeneous symmetric functions via plethystic substitution. More precisely, let    $h_n[\overline{\mathbf{X}}_k]$  be the symmetric polynomial obtained from  the symmetric function $h_n[(1-\tc)\Xb]$ by specializing to $0$ the elements of the alphabet ${\mathbf{X}}_k$. The corresponding notation applies to $h_n[(\tc-1)\Xb]$ and other plethystic substitutions.

\subsection{} \label{sec: not4}

For any $k\geq 1$,  let $\P_k=\K[x_1^{\pm 1},\dots,x_k^{\pm 1}]$ be the ring of Laurent polynomials in the variables $x_1,\dots, x_k$.  The symmetric group $S_k$ acts on $\P_k$ by permuting the variables. We denote by  $s_i$ the simple transposition that interchanges $x_i$ and $x_{i+1}$ and is fixing all the other variables. The polynomial subring 
$$\P_k^+=\K[x_1,\dots,x_k]$$
is stable under the action of $S_k$.

\subsection{}

Let $ \pi_k:\P^+_k\rightarrow \P^+_{k-1}$ be the evaluation morphism that maps $x_{k}$ to $0$.  The rings $\P_k^+$, $k\geq 1$ form a graded inverse system. We will use the notation   $\P^+_{\infty}$ for the graded inverse limit ring $\displaystyle 
\lim_{\longleftarrow}\P_k^+$. The graded inverse limit ring is sometimes referred to in the literature as the ring of formal polynomials in the variables $x_i$, $i\geq 1$. We denote by $\Pi_k: \displaystyle \lim_{\longleftarrow}\P_k^+\to \P_k^+$ the canonical morphism.

If $h_k\in \P_k^+$, $k\geq 1$, is a sequence compatible with the inverse system, we use $\displaystyle \lim_{\stackrel{\longleftarrow}{k}} h_k\in \P_\infty^+$ to denote the inverse limit of $(h_k)_{k\geq 1}$.  For any $n\geq 1$, a sequence of operators $A_k:\P_k^+\to \P_k^+$, $k\geq n$, compatible with the inverse system induces a (limit) operator $A=\displaystyle \lim_{\stackrel{\longleftarrow}{k}}: \P_\infty^+\to\P_\infty^+$. For example, the sequence $A_k=s_n$, $k\geq n$, given by the action of the simple transposition $s_n$, induces a limit operator $s_n$ acting on $\P_\infty^+$. In turn, this leads to an action of the infinite symmetric group $S_\infty$ (the inductive limit of $S_k$, $k\geq 1$) on $\P_\infty^+$.

\subsection{}\label{sec: p(k)}

For any $k\geq 0$, denote
$$ {\P(k)^+}=\{F\in \P_{\infty}^{+}\ |\ s_i F=F,~ \text{for all } i>k \}.$$
From the definition it is clear that ${\P}(k)^{+}\subset{\P}(k+1)^{+}$. Also, ${\P}(0)^{+}$ is the ring of symmetric functions  $\Sym[\Xb]$, and, more generally, for any $k\leq 1$,
 the multiplication map  $$ \P_k^{+}\otimes \Sym[\mathbf{X}_k]  \cong\P(k)^+$$ is an algebra isomorphism.

\subsection{}\label{sec: pas}

The graded subring $\Pas^+\subset \P_\infty^+$ is  defined as the inductive limit of the spaces $\P(k)^+$:
$$\Pas^{+}=\bigcup_{k\geq 0}{\P}(k)^{+}.$$
More concretely, an element of $\Pas^+$ must be fixed by all simple transpositions with the possible exception of finitely many.  We refer to  $\Pas^+$ as the \emph{almost symmetric module}.


\subsection{}\label{sec: seq-limit}
We recall the concept of limit defined in \cite{IW}*{Definition 6.18}; we emphasize that this concept of limit depends intrinsically on the structure of the subspace $\Pas^+\subset \P^+_\infty$.

Let $R(\tc,\qc)=A(\tc,\qc)/B(\tc,\qc)\in \K$, with $A(\tc,\qc),~B(\tc,\qc)\in \mathbb{Q}[\tc,\qc]$. The order of vanishing at $\tc=0$ for  $R(\tc,\qc)$,
 denoted by
$$
\ord R(\tc,\qc),
$$
 is the difference between the order of vanishing at $\tc=0$ for $A(\tc,\qc)$ and $B(\tc,\qc)$.

We say that the sequence $(a_n)_{n\geq 1}\subset \K$ converges to $0$ if the sequence $(\ord a_n)_{n\geq 1}\subset \Z$ converges to $+\infty$. We say that the sequence $(a_n)_{n\geq 1}\subset \K$ converges 
to $a$ if $(a_n-a)_{n\geq 1}$ converges to $0$. We write,
$$
\lim_{n\to \infty} a_n=a.
$$

\begin{Def}\label{Def of limit}
Let $(f_k)_{k\geq 1}$ be a sequence with $f_k\in \P^+_k$. We say that the sequence is convergent if there exists $N\geq 1$ and sequences $(h_k)_{k\geq 1}$, $(g_{i,k})_{k\geq 1}$, $i\leq N$, $h_k, ~g_{i, k}\in \P^+_k$, and $(a_{i, 
k})_{k\leq 1}$, $i\leq N$, $a_{i, k}\in \K$ such that
\begin{enumerate}[label=({\alph*)}]
\item For any $k\geq 1$, we have $f_k=h_k+\sum_{i=1}^N a_{i, k} g_{i, k}$;
\item For any $i\leq N$, $k\geq 2$,  $\pi_k(g_{i, k})=g_{i, k-1}$ and $\pi_k(h_{k})=h_{k-1}$. We denote by $$\displaystyle g_i= \lim_{\stackrel{\longleftarrow} { k}}  g_{i,k}\quad  \text{and}\quad  \displaystyle h=\lim_{\stackrel{\longleftarrow} { k}}  h_{k}$$ the sequence 
$(g_{i,k})_{k\geq 
1}$ and, respectively, $(h_{k})_{k\geq 1}$ as elements of $\P_\infty^+$.  We require that $g_i\in \Pas^+$.
\item For any  $i\leq N$ the sequence $(a_{i, k})_{k\geq 1}$ is convergent. We denote  $\displaystyle a_i=\lim_{k\to \infty}(a_{i, k})$.
\end{enumerate}
If the sequence $(f_k)_{k\geq 1}$ is convergent we define its limit as $$\lim_k(f_k):=h+ \sum_{i=1}^N a_i g_i\in\P_\infty^+.$$
\end{Def}

\begin{Exp} The sequence 
$$f_k=(1+\tc+...+\tc^k)e_i[\overline{\mathbf{X}}_{k}],$$
 has the limit
$$\lim_k f_k=\frac{1}{1-\tc}e_i[\Xb].$$
The sequence  
$$g_k=\tc^k e_i[\overline{\mathbf{X}}_{k}]$$
has  limit $0$.
\end{Exp}
By \cite{IW}*{Proposition 6.20}, the limit of a sequence does not depend on the choice of the auxiliary sequences in Definition \ref{Def of limit}.


\subsection{}\label{sec: operator-limit}
The concept of limit defined in \S\ref{sec: seq-limit} allows us to define the corresponding concept of limit of operators. Assume that $A_k:\P_k^+\to \P_k^+$, $k\geq 1$, is a sequence of operators with the following property
\begin{description}
\item[(C)] For any $f\in \Pas^+$, the sequence
$(A_k \Pi_k f)_{k\geq 1}$ converges to an element of $\Pas^+$. 
\end{description}
Let $A$ be the operator 
$$
A: \Pas^+\to \Pas^+,\quad f\mapsto \lim_k A_k \Pi_k f.
$$
We refer to $A$ as the limit operator of the sequence $(A_k)_{k\geq 1}$ and use the notation $A=\displaystyle\lim_k A_k$. It is clear from the definition that the inverse limit of a sequence of operators is a particular case of such a limit. In such a case, we may use  the notation $\displaystyle \lim_{\stackrel{\longleftarrow}{k}}A_k$ to emphasize this fact.

 For the following result we refer to \cite{IW}*{Proposition 6.21, Corollary 6.22}.
\begin{Prop}\label{prop: continuity}
Let $A_k, B_k: \P_k^+\to \P_k^+$ be two sequences of operators satisfying the property {\bf(C)}, and let $A, B$ denote the corresponding limit operators. Then, 
\begin{enumerate}[label=({\roman*)}]
\item For  
$(f_k)_{k\geq 1}$, $f_k\in \P_k^+$ any convergent sequence such that $f=\lim_{k} f_k\in \Pas^+$, we have 
$$A f=\lim_{k}A_{k} f_k$$
\item The operator $AB$ is the limit of the sequence of operators $(A_kB_k)_{k\geq 1}$.
\end{enumerate}
\end{Prop}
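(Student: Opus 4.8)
The plan is to place all the substance in part~(i) and then obtain part~(ii) formally.

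For part~(i), let $(f_k)_{k\geq 1}$ be convergent with $f=\lim_k f_k\in\Pas^+$, and apply Definition~\ref{Def of limit}: there are $N\geq 1$, sequences $(h_k)_k$ and $(g_{i,k})_k$ ($i\leq N$) compatible with the inverse system, and $(a_{i,k})_k\subset\K$, with $f_k=h_k+\sum_{i=1}^N a_{i,k}g_{i,k}$, and with $g_i:=\lim_{\stackrel{\longleftarrow}{k}}g_{i,k}\in\Pas^+$, $h:=\lim_{\stackrel{\longleftarrow}{k}}h_k\in\P_\infty^+$, $a_i:=\lim_k a_{i,k}$. Since $\Pas^+$ is a $\K$-submodule of $\P_\infty^+$ and $f=h+\sum_i a_i g_i\in\Pas^+$ with every $g_i\in\Pas^+$, the element $h=f-\sum_i a_i g_i$ lies in $\Pas^+$ as well. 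As $h$ and the $g_i$ are the inverse limits of the respective sequences, $\Pi_k h=h_k$ and $\Pi_k g_i=g_{i,k}$, so the $\K$-linearity of each $A_k$ gives
$$
A_k f_k=A_k\Pi_k h+\sum_{i=1}^N a_{i,k}\,A_k\Pi_k g_i .
$$
Property~{\bf(C)} applies to $h\in\Pas^+$ and to each $g_i\in\Pas^+$, so $(A_k\Pi_k h)_k\to Ah$ and $(A_k\Pi_k g_i)_k\to Ag_i$, with all limits in $\Pas^+$.

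I would then record two elementary stability properties of the limit, both read off Definition~\ref{Def of limit} (or taken from \cite{IW}): (a)~a finite $\K$-linear combination of convergent sequences is convergent, with limit the same combination of the limits; (b)~if $(b_k)_k\subset\K$ converges to $b$ and $(c_k)_k$ is a convergent sequence with $c_k\in\P_k^+$ and $\lim_k c_k\in\Pas^+$, then $(b_kc_k)_k$ converges to $b\lim_k c_k$. For~(b), write $b_kc_k=b\,c_k+(b_k-b)c_k$; the first summand is handled by~(a), and for the second one feeds the decomposition of $(c_k)_k$ from Definition~\ref{Def of limit} and notes that each scalar coefficient occurring there, multiplied by the sequence $(b_k-b)$ converging to $0$, yields a sequence converging to $0$ because $\ord$ is additive on products; hence $(b_k-b)c_k\to 0$. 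Applying~(a) and~(b) to the displayed identity, and using the $\K$-linearity of $A$ (a consequence of the $\K$-linearity of the $A_k$ together with~(a)),
$$
\lim_k A_k f_k=Ah+\sum_{i=1}^N a_i\,Ag_i=A\!\left(h+\sum_{i=1}^N a_i g_i\right)=Af,
$$
which proves part~(i).

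For part~(ii), fix $f\in\Pas^+$. By property~{\bf(C)} for $(B_k)_k$, the sequence $f_k:=B_k\Pi_k f$ is convergent with $\lim_k f_k=Bf\in\Pas^+$; hence part~(i), applied to $(f_k)_k$, shows that $(A_kB_k\Pi_k f)_k=(A_k f_k)_k$ converges to $A(Bf)=(AB)f\in\Pas^+$. Therefore $(A_kB_k)_k$ satisfies~{\bf(C)} and its limit operator sends $f$ to $(AB)f$, i.e.\ $\lim_k A_kB_k=AB$.

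The step I expect to be the main obstacle is the bookkeeping inside part~(i): one has to check that the pieces $h_k,g_{i,k}$ of the decomposition of $f_k$ are exactly the projections $\Pi_k h,\Pi_k g_i$ so that~{\bf(C)} applies verbatim, that $h$ itself (and not only the $g_i$) lands in $\Pas^+$, and that the mixed terms $a_{i,k}A_k\Pi_k g_i$ pass to the limit correctly although $(a_{i,k})_k$ is merely convergent rather than constant — which is precisely the role of auxiliary property~(b). Once these points are verified, the rest is formal.
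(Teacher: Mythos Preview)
Your argument is correct. The paper does not actually prove this proposition; it simply refers to \cite{IW}*{Proposition 6.21, Corollary 6.22} for the proof, so there is no in-paper argument to compare against. Your approach---extracting the decomposition from Definition~\ref{Def of limit}, observing that $h\in\Pas^+$ because $f$ and the $g_i$ are, applying property~{\bf(C)} to each piece, and then reassembling via the two stability lemmas (a) and (b)---is precisely the natural route and is what one would expect to find in \cite{IW}. The deduction of (ii) from (i) is standard.

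One minor remark: in your verification of (b), the step ``$b_k a'_{j,k}\to b a'_j$'' for convergent scalar sequences uses that $\ord$ behaves like a non-archimedean valuation (so that products of convergent sequences converge to the product of limits); this is routine but worth stating once, since Definition~\ref{Def of limit} only spells out convergence to zero explicitly.
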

The second part of the statement can be interpreted as a property of continuity for the operator $A$.


\section{The Bruhat order}

\subsection{} 
Let $\<\cdot,\cdot\>$ denote the standard Euclidean scalar product on $\R^k$, and let $\{\eps_i\}_{1\leq i\leq k}$ denote the standard basis. The symmetric group $S_k$ is the Weyl group of the root system $\Phi=\{\eps_i-\eps_j~|~1\leq i\neq j\leq k\}$ of type $A_{k-1}$,  with the simple transpositions $s_i$, $1\leq i\leq k-1$, corresponding to reflections associated to the simple roots $\alpha_i=\eps_i-\eps_{i+1}$, $1\leq i\leq k-1$. The affine symmetric group  $\St_k$ is  the Weyl group of  the affine root system  $\Phit=\{n\delta+\eps_i-\eps_j~|~1\leq i\neq j\leq k,~n\in \Z\}\cup \Z\delta$ of type $A^{(1)}_{k-1}$. We fix the basis $\alpha_i$, $0\leq i\leq k-1$, with $\alpha_0=\delta -\eps_1+\eps_k$, and we regard $S_k$ as a parabolic subgroup of $\St_k$. The corresponding sets of positive roots are denoted by $\Phi^+$ and $\Phit^+$. We consider $\delta$ as the constant function $1$ on $\R^k$, and use the notation $\<\delta,x\>=1$, for any $x\in \R^k$. The generator $s_0$ that corresponds to the simple root $\alpha_0$  acts on $\R^k$ as the \emph{affine} reflection
$$
s_0(x)=x-\<x,\alpha_0\>(-\eps_1+\eps_k).
$$
A reduced decomposition of $w\in \St_k$ is an expression of minimal length as a product of simple reflections.

\subsection{}
The lattice $\Z^k\subset \R^k$ is stable under the action of $\St_k$; its elements will be called weights. The dominant Weyl chamber is $\wc=\{x\in \R^k~| \<x,\alpha_i\>\geq 0, ~0\leq i\leq k-1\}$; the elements of $\Z^k\cap \wc$, and   $\Z^k\cap (-\wc)$ are called dominant, and respectively, anti-dominant weights. The $\St_k$-orbit of $0$, denoted by $Q_k$ is called the root lattice, as it is precisely the sub-lattice of $\Z^k$ generated by $\Phi$.

The fundamental alcove is defined as $\awc=\{x\in \R^k~| \<x,\alpha_i\>\geq 0, ~0\leq i\leq k-1\}$; the elements of $\Z^k\cap \awc$ are called minuscule weights. For $\lambda\in \Z^k$, the unique dominant, and anti-dominant elements in its $S_k$-orbit are denoted by $\lambda_+$, and respectively $\lambda_-$. The unique minuscule element in the $\St_k$-orbit of $\lambda$ is denoted by $\tilde\lambda$. We denote by 
$\w_\lambda\in S_k$  the unique minimal length element such that  $\w_\lambda(\lambda_-)=\lambda$, and by  $w_\lambda\in \St_k$,   the unique minimal length element such that $w_\lambda(\tilde\lambda)=\lambda$.

\subsection{} The Bruhat order is a partial order on any Coxeter group, in particular on $\St_k$. For its basic properties see \cite{Hum}*{Chapter 5}. We can use the Bruhat order on $\St_k$ to define a partial order on $\Z^k$, which we will also call Bruhat order: if $\lambda,\mu\in \Z^k$ then, by definition, $\lambda\leq \mu$ if and only if $\lambda$ and $\mu$ are in the same $\St_k$-orbit, and $w_\lambda\leq w_\mu$.  If $\lambda< \mu$ and there are no other weights between $\lambda$ and $\mu$, we write $\lambda \tdot \mu$. If $\lambda \tdot \mu$, then the definition of the Bruhat order implies that $\mu=s_\alpha(\lambda)$, for some $\alpha\in \Phit^+$  (see, e.g. \cite{Hum}*{Proposition 5.11}); if, moreover, $\lambda$ and $\mu$ are in the same $S_k$-orbit, then $\alpha\in \Phi^+$. 

\begin{Def}
Let $\lambda, \mu, \nu \in \Z^k$ such that $\lambda-\mu$ and $\lambda-\nu\in Q_k$. We say that  $\nu$ is a convex combination of $\lambda$ and $\mu$ if $\nu = (1-\tau)\lambda +\tau\mu $ with $0\leq \tau\leq 1$. The notion of convex combination of a finite set of weights is defined in the corresponding fashion.
\end{Def}
For the first two properties  below we refer to  \cite{Kn}*{(3.7), (3.9)}; the third property is a direct consequence of the second; the fourth was proved  in \cite{Sa}*{Lemma 5.5} for a particular affine Weyl group, but the proof provided there works in general.

\begin{Lem}\label{lemma-1} Let $\lambda,\mu\in \Z^k$ and $\alpha\in \Phit^+$. Then,
\begin{enumerate}[label=(\roman*)]
\item $\lambda< s_\alpha(\lambda)$  if and only if $\<\alpha,\lambda\>> 0$;
\item  Let  $~0\leq i\leq k-1$ such that  $\<\alpha_i,\mu\>\leq 0$.  Then,  $$\lambda\leq \mu\quad \text{ if and only if } \quad \min\{\lambda, s_i(\lambda)\}\leq s_i(\mu) \quad \text{ if and only if } \quad s_i(\lambda)\leq \mu;$$
\item Let $0\leq i\leq k-1$ and $\lambda\leq \mu$.  Then, $$\text{either}\quad s_i(\lambda)\leq \mu, \quad \text{or}\quad s_i(\lambda)\leq s_i(\mu)\quad \text{(or both)};$$
\item  For any $~0\leq i\leq k-1$ such that  $\<\alpha_i,\lambda\>< 0$, and $\nu\in \Z^k$ such that $\nu$ is a proper convex combination of $\lambda$ and $s_i(\lambda)$, we have $\nu<s_i(\lambda)<\lambda$.
\end{enumerate}
\end{Lem}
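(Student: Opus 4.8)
The plan is to treat parts (i) and (ii) as the analytic core and to derive (iii) and (iv) from them by elementary manipulations with affine reflections. For (i) and (ii) I would follow \cite{Kn}*{(3.7),(3.9)}: via the bijection $\lambda\mapsto w_\lambda$ the $\St_k$-orbit of a minuscule weight is identified with the set of minimal-length coset representatives of the parabolic quotient $\St_k/S_k$, and the weight Bruhat order becomes the restriction of the Bruhat order on $\St_k$ to that set (equivalently, the induced order on the quotient, by Deodhar). Part (i) is then the reflection criterion for Bruhat order on a parabolic quotient (a representative moves up under $s_\alpha$ precisely when the corresponding root pairs positively with the weight), and part (ii) is the lifting/$Z$-property for parabolic quotients, rephrased for weights, controlling the interaction of $\leq$ with a single simple reflection $s_i$ when the larger weight lies on the negative side of the wall $\langle\alpha_i,\cdot\rangle=0$. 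In the write-up I would cite Knop for both, and Björner--Brenti or Deodhar for the underlying parabolic-quotient facts.

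For (iii), assume $\lambda\leq\mu$ and split on the sign of $\langle\alpha_i,\mu\rangle$. If $\langle\alpha_i,\mu\rangle\leq 0$, part (ii) applies verbatim and gives $s_i(\lambda)\leq\mu$, the first alternative. If $\langle\alpha_i,\mu\rangle>0$ and $\langle\alpha_i,\lambda\rangle\geq 0$, then by (i) $s_i(\lambda)\leq\lambda\leq\mu$, again the first alternative. Finally, if $\langle\alpha_i,\mu\rangle>0$ and $\langle\alpha_i,\lambda\rangle<0$, then $\langle\alpha_i,s_i(\mu)\rangle=-\langle\alpha_i,\mu\rangle<0$, so (ii) applies to the pair $(s_i(\lambda),s_i(\mu))$ and gives $s_i(\lambda)\leq s_i(\mu)$ iff $\min\{s_i(\lambda),\lambda\}\leq\mu$; but $\langle\alpha_i,\lambda\rangle<0$ forces $\lambda<s_i(\lambda)$ by (i), so the minimum is $\lambda$, and $\lambda\leq\mu$ is the hypothesis, whence $s_i(\lambda)\leq s_i(\mu)$, the second alternative.

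For (iv), since $\langle\alpha_i,\lambda\rangle<0$ put $m=-\langle\alpha_i,\lambda\rangle>0$ and $\mu_j=\lambda+j\alpha_i$ for $0\leq j\leq m$, so $\mu_0=\lambda$ and $\mu_m=s_i(\lambda)$. Because $\alpha_i=\eps_i-\eps_{i+1}$ is primitive in $\Z^k$, any proper convex combination $\nu$ of $\lambda$ and $s_i(\lambda)$ lying in $\Z^k$ is of the form $\nu=\mu_j$ for a unique integer $0<j<m$ (and the hypotheses $\lambda-\mu,\lambda-\nu\in Q_k$ hold automatically). A direct computation using $\langle\delta,\cdot\rangle=1$ shows that $\beta:=j\delta-\alpha_i\in\Phit^+$ satisfies $s_\beta(\mu_j)=\mu_m=s_i(\lambda)$ with $\langle\beta,\mu_j\rangle=m-j>0$, so (i) gives $\nu=\mu_j<s_i(\lambda)$; and $s_{\alpha_i}(s_i(\lambda))=\lambda$ with $\langle\alpha_i,s_i(\lambda)\rangle=m>0$, so (i) gives $s_i(\lambda)<\lambda$. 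Chaining these yields $\nu<s_i(\lambda)<\lambda$. (This reproduces the argument of \cite{Sa}*{Lemma 5.5} in the present setting.)

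The main obstacle is the parabolic affine Bruhat-order input (i)--(ii); everything downstream is bookkeeping with affine roots. Within that bookkeeping the only mild points to verify carefully in (iv) are that the affine vectors $j\delta-\alpha_i$ are genuine positive roots of $\Phit$ for $1\leq j<m$, and that $\mu_0,\mu_j,\mu_m$ lie in a common $\St_k$-orbit (immediate, as they are related by reflections) so that the weight Bruhat order is defined on them.
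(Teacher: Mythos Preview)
Your overall plan matches the paper's: cite \cite{Kn} for (i)--(ii), derive (iii) from these, and reproduce the \cite{Sa} argument for (iv). The derivation of (iv) is correct (with the harmless caveat that your remark ``$\alpha_i=\eps_i-\eps_{i+1}$ is primitive'' literally covers only $i\geq 1$; for $i=0$ the relevant direction is $-\eps_1+\eps_k$, but your formula $\beta=j\delta-\alpha_i$ still yields a positive root and the computation goes through unchanged).

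There is, however, a genuine slip in your proof of (iii). In the sub-case $\langle\alpha_i,\mu\rangle>0$ and $\langle\alpha_i,\lambda\rangle\geq 0$ you write ``by (i) $s_i(\lambda)\leq\lambda$''. Part (i) says the opposite: $\langle\alpha_i,\lambda\rangle>0$ gives $\lambda<s_i(\lambda)$, so you cannot conclude $s_i(\lambda)\leq\mu$ this way. The fix is to merge this sub-case with your Case~3. Whenever $\langle\alpha_i,\mu\rangle>0$, note that $\langle\alpha_i,s_i(\mu)\rangle<0$ and, by (i), $\mu<s_i(\mu)$; hence $\lambda\leq\mu\leq s_i(\mu)$, and (ii) applied with $s_i(\mu)$ in the role of $\mu$ gives $s_i(\lambda)\leq s_i(\mu)$, the second alternative. (This is exactly your Case~3 argument, which does not actually use the hypothesis $\langle\alpha_i,\lambda\rangle<0$.) With this correction the proof of (iii) is complete and agrees with the paper's one-line claim that (iii) follows from (ii).
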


\begin{Cor}\label{cor: last-position} Let $\lambda,\mu\in \Z^k$ such that $\lambda\leq \mu$ and  $\lambda, \mu$ are in the same $S_k$-orbit. Then, $\lambda_k\leq \mu_k$.
\end{Cor}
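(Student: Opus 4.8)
Let $\lambda,\mu\in \Z^k$ lie in the same $S_k$-orbit with $\lambda\leq\mu$ in the Bruhat order; then $\lambda_k\leq\mu_k$.

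The plan is an induction along a saturated chain in the Bruhat order from $\lambda$ to $\mu$, the essential point being to show that such a chain cannot leave the $S_k$-orbit of $\lambda$. Fix a saturated chain $\lambda=\nu_0\tdot\nu_1\tdot\cdots\tdot\nu_m=\mu$ in $\Z^k$. By the remark following the definition of the Bruhat order, for each $j$ we have $\nu_{j+1}=s_{\alpha^{(j)}}(\nu_j)$ for some $\alpha^{(j)}\in\Phit^+$, and by Lemma \ref{lemma-1}(i), $\langle\alpha^{(j)},\nu_j\rangle>0$. Since $\nu_{j+1}\neq\nu_j$ the root $\alpha^{(j)}$ is real, so we may write $\alpha^{(j)}=n_j\delta+\eps_{a_j}-\eps_{b_j}$; from the description of $\Phit^+$ we have $n_j\geq 0$, with $n_j=0$ exactly when $\alpha^{(j)}\in\Phi^+$ (so $a_j<b_j$). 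Using the affine reflection formula (as in the formula for $s_0$ in the definition of the Bruhat order), $s_{\alpha^{(j)}}(x)=x-\langle x,\alpha^{(j)}\rangle(\eps_{a_j}-\eps_{b_j})$, one computes directly that
$$
\sum_{i=1}^k (\nu_{j+1})_i^2-\sum_{i=1}^k (\nu_j)_i^2 = 2\,n_j\,\langle\alpha^{(j)},\nu_j\rangle\ \geq\ 0 .
$$
Thus $x\mapsto\sum_i x_i^2$ is weakly increasing along the chain; since $\lambda$ and $\mu$ lie in a common $S_k$-orbit the two ends agree, so this quantity is constant and every $n_j=0$. Hence every step is a reflection in a finite positive root $\eps_{a_j}-\eps_{b_j}\in\Phi^+$ with $a_j<b_j\leq k$, and in particular all $\nu_j$ lie in the $S_k$-orbit of $\lambda$.

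It then remains to track the last coordinate across a single step $\nu_j\tdot\nu_{j+1}=s_{\eps_{a_j}-\eps_{b_j}}(\nu_j)$. This reflection interchanges the coordinates in positions $a_j$ and $b_j$ and fixes all others, and the inequality $\nu_j<\nu_{j+1}$ forces, via Lemma \ref{lemma-1}(i), $(\nu_j)_{a_j}>(\nu_j)_{b_j}$. Since $a_j<b_j\leq k$ we have $a_j\neq k$; if $b_j\neq k$ then $(\nu_{j+1})_k=(\nu_j)_k$, while if $b_j=k$ then $(\nu_{j+1})_k=(\nu_j)_{a_j}>(\nu_j)_{b_j}=(\nu_j)_k$. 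In either case $(\nu_j)_k\leq(\nu_{j+1})_k$, and telescoping over $j=0,\dots,m-1$ gives $\lambda_k=(\nu_0)_k\leq(\nu_m)_k=\mu_k$.

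The one step that is not purely formal is the monotonicity in the first paragraph, and it is genuinely needed: an affine covering step whose root involves the index $k$ can strictly decrease the last coordinate (for instance applying $s_{n\delta+\eps_k-\eps_b}$ with $n\geq 1$), so without excluding affine steps the bookkeeping of the second paragraph would break down. The form $\sum_i x_i^2$ is the natural certificate here, being constant on $S_k$-orbits and strictly increasing under every affine covering step. If one prefers to avoid chains altogether, the argument repackages as an induction on $\ell(w_\mu)-\ell(w_\lambda)$: choose a lower cover $\mu'\tdot\mu$ with $\lambda\leq\mu'$; since $\sum_i(\mu'_i)^2$ is caught between $\sum_i\lambda_i^2$ and $\sum_i\mu_i^2$, which are equal, this cover is a finite reflection, so $\mu'\in S_k\cdot\lambda$; one then combines the single-step comparison with the inductive hypothesis applied to $\lambda\leq\mu'$.
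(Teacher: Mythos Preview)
Your proof is correct and follows essentially the same strategy as the paper: reduce to a single cover $\lambda\tdot\mu$, observe that such a cover must be a \emph{finite} reflection $s_{\eps_i-\eps_j}$ with $i<j$, and then track the $k$-th coordinate.

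The paper's proof is two lines: it simply asserts ``It is enough to assume that $\lambda\tdot\mu$'' and then invokes the fact stated just before the corollary (that a cover inside a single $S_k$-orbit comes from $\alpha\in\Phi^+$). Your argument is more self-contained: you take a saturated chain and use the quadratic form $\sum_i x_i^2$ to show it is weakly increasing along the chain and constant on $S_k$-orbits, hence every $n_j=0$ and each step is a finite reflection. This simultaneously \emph{proves} the fact the paper merely states and justifies the reduction to covers (i.e., that all intermediate weights in the chain stay in the same $S_k$-orbit, which the paper leaves implicit). So your route is the same as the paper's, but with the underlying justification made explicit via the norm certificate.
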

\begin{proof}
It is enough to assume that $\lambda\tdot\mu$. In this case, we must have $\mu=s_\alpha(\lambda)$ with $\<\alpha,\lambda\>> 0$, for some $\alpha=\eps_i-\eps_j$, $i<j$. If $j<k$, then $\lambda_k=\mu_k$. If $j=k$, then $\mu_k=\lambda_i>\lambda_k$.
\end{proof}

\subsection{}\label{sec: not5}
 Let $\Lambda_k=\Z_{\geq 0}^k\subset \Z^k$, $k\geq 1$, and let $\Lambda_0=\emptyset$. The $(\Lambda_k)_{k\geq 0}$ form a direct system, with structure maps $\Lambda_k\to \Lambda_{k+1}$ given by extension by $0$ (i.e. adding $0$ as the last coordinate). Let $\Lambda$ be the inductive limit of the direct system $(\Lambda_k)_{k\geq 0}$.
We make use of the following notation, consistent with the notation in \S\ref{sec: not3}: if $\lambda\in \Lambda_k$ and $S\subseteq [k]$, then $\widehat{\lambda}_S\in \Lambda_{k-|S|}$ is obtained from $\lambda$ by removing $\lambda_i$ for all $i\in S$.  

We have the following results, for which we refer to \cite{Kn}*{Lemma 7.3, Lemma 9.4, Corollary 9.5}. 
\begin{Prop} \label{prop: B-properties}
Let $\lambda,\mu\in \Z^k$. \begin{enumerate}[label=(\roman*)]
\item  If $\lambda\leq \mu$ and $\mu\in \Lambda_k$, then $\lambda\in \Lambda_k$;
\item  If $\lambda_i=\mu_i$, for all $i\in S\subseteq [k]$, then  $\lambda\leq \mu$ if and only if $\widehat{\lambda}_S\leq \widehat{\mu}_S$ (with respect to the $\St_{k-|S|}$-Bruhat order).
\end{enumerate}
In particular, there is a unique order relation $\leq$ on $\Lambda$ whose restriction to $\Lambda_k$ is the $\St_{k}$-Bruhat order.
\end{Prop}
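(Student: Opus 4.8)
The plan is to prove (i) by reducing to covering relations, to prove (ii) by induction on $|S|$ (so that it suffices to treat $|S|=1$) with the biconditional split into its two implications, and finally to read off the ``In particular'' clause from (ii). Everything except one implication of (ii) is short; the hard part will be showing that a Bruhat chain witnessing $\lambda\leq\mu$ can be chosen so as not to disturb a coordinate that is held fixed.

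For (i): by the chain property of the Bruhat order it is enough to treat a covering $\lambda\tdot\mu$. Then $\mu=s_\alpha(\lambda)$ for some $\alpha\in\Phit^+$, and $\langle\alpha,\lambda\rangle>0$ by Lemma \ref{lemma-1}(i). Writing $\alpha=n\delta+\eps_i-\eps_j$ with $n\geq 0$, the reflection changes only the $i$-th and $j$-th coordinates, with $\mu_i=\lambda_j-n$, $\mu_j=\lambda_i+n$, and $n+\lambda_i-\lambda_j>0$; the remaining coordinates of $\lambda$ agree with those of $\mu\in\Lambda_k$ and so are nonnegative. If $n=0$ then $\lambda_i=\mu_j\geq 0$ and $\lambda_j=\mu_i\geq 0$; if $n\geq 1$ then $\lambda_j=\mu_i+n\geq n>0$ and $\lambda_i>\lambda_j-n=\mu_i\geq 0$. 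In all cases $\lambda\in\Lambda_k$, which gives (i).

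For (ii): after inducting on $|S|$ I reduce to $S=\{l\}$ with $\lambda_l=\mu_l=c$. Deleting the $l$-th coordinate identifies the slice $\{\nu\in\Z^k:\nu_l=c\}$ with $\Z^{k-1}$, and under this identification a root $\alpha=n\delta+\eps_i-\eps_j\in\Phit^+$ of $\St_k$ with $l\notin\{i,j\}$ descends to a root $\beta\in\Phit^+$ of $\St_{k-1}$ with $\langle\beta,\widehat\nu\rangle=\langle\alpha,\nu\rangle$ for all $\nu$ in the slice, and conversely every positive root of $\St_{k-1}$ lifts this way. Granting this, the implication $\widehat\lambda\leq\widehat\mu\Rightarrow\lambda\leq\mu$ follows by lifting a saturated chain in $\St_{k-1}$ one step at a time, using Lemma \ref{lemma-1}(i) to check that each lifted step is again a strict Bruhat relation. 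For the converse, $\lambda\leq\mu\Rightarrow\widehat\lambda\leq\widehat\mu$, I would induct on $\ell(w_\mu)-\ell(w_\lambda)$: pick $\rho$ with $\lambda\leq\rho\tdot\mu$; if the covering $\rho\tdot\mu$ does not involve the index $l$ then $\rho_l=c$, the inductive hypothesis applied to $\lambda\leq\rho$ gives $\widehat\lambda\leq\widehat\rho$, and descending $\rho\tdot\mu$ gives $\widehat\rho<\widehat\mu$; if it does involve the index $l$, one must first modify the chain so that coordinate $l$ returns to the value $c$ at an earlier stage, performing the local surgery with Lemma \ref{lemma-1}(ii)--(iii). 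This rerouting step (which is essentially \cite{Kn}*{Lemma 9.4}) is the one I expect to be the main obstacle: it requires a careful case analysis according to how the offending reflection acts on coordinate $l$ and on the coordinate it is paired with.

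Finally, granting (ii), the ``In particular'' clause is immediate. The structure map $\Lambda_k\hookrightarrow\Lambda_{k+1}$ sends $\lambda,\mu\in\Lambda_k$ to $\lambda 0,\mu 0$, whose last coordinate is $0$, so (ii) with $S=\{k+1\}$ shows $\lambda 0\leq\mu 0$ in the $\St_{k+1}$-Bruhat order if and only if $\lambda\leq\mu$ in the $\St_k$-Bruhat order. Hence each structure map is an order embedding, the direct limit $\Lambda=\bigcup_{k\geq 0}\Lambda_k$ carries a partial order restricting to the $\St_k$-Bruhat order on each $\Lambda_k$, and this order is unique since $\Lambda=\bigcup_{k\geq 0}\Lambda_k$. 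Part (i) records, in addition, that each $\Lambda_k$ is downward closed inside $\Z^k$ for the $\St_k$-Bruhat order, so the resulting order on $\Lambda$ is unambiguous with respect to the ambient lattices.
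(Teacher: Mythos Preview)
The paper does not give its own proof of this proposition: it simply cites \cite{Kn}*{Lemma 7.3, Lemma 9.4, Corollary 9.5}. So there is no argument in the paper to compare against; your write-up is strictly more than what the paper does.

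Your proof of (i) is correct and self-contained. Your proof of the implication $\widehat\lambda\leq\widehat\mu\Rightarrow\lambda\leq\mu$ in (ii) and of the ``In particular'' clause are also correct. For the converse implication in (ii) you outline the right strategy (reroute a chain so that the fixed coordinate is not disturbed) and then, at the crucial surgery step, you yourself point to \cite{Kn}*{Lemma 9.4} rather than carry it out. That is exactly the reference the paper invokes, so you land in the same place as the paper for the hard direction; but as written your argument for that direction is a sketch, not a proof. If you want a fully self-contained proof you will need to execute the case analysis you allude to (Lemma~\ref{lemma-1}(ii)--(iii) is indeed the right tool). Otherwise, citing \cite{Kn}*{Lemma 9.4} at that point, as the paper does, is appropriate.
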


\subsection{}\label{sec: compo} A composition $\lambda$ is a finite sequence (including the empty sequence) of non-negative numbers; its length $\ell(\lambda)$ is defined as the number of terms in the sequence; its weight $|\lambda|$ is defined as the total sum of its terms.  We say that $\lambda$ is a strict composition, if its last term $\lambda_{\ell(\lambda)}$ is non-zero. We consider the empty sequence to be a strict composition. A partition is a finite decreasing sequence of positive numbers; in particular, a partition is a strict composition. We regard all partitions and compositions as elements of $\Lambda$. If $\lambda$ is a composition, we consider it as an element of  $\Z^{\ell(\lambda)}$; in particular, $\lambda_+$ is the unique dominant element in the $S_{\ell(\lambda)}$-orbit of $\lambda$.

\begin{Not} Let $\lambda$ be a strict composition, and let $\mu$ be a partition. The ordered pair $(\lambda, \mu)$ will be denoted by $\sym{\lambda}{\mu}$.  We denote by $\Lambdas$ the set of elements of the form $\sym{\lambda}{\mu}$. For $\lambda, \mu\in \Lambda$,  their concatenation is denoted by $\con{\lambda}{\mu}$. We adopt the corresponding notation for the concatenation of any finite set of elements of $\Lambda$.  In Definition \ref{def: order},  $0^{n}$ denotes a sequence of zeroes of length $n$.
\end{Not}

\begin{Def}\label{def: order} The partial order relation $\preceq$ on $\Lambdas$ is defined as  follows
$$\sym{\lambda}{\mu}\preceq \sym{\eta}{\nu}, \text{ if } \ell(\lambda)\leq \ell(\eta) \text{ and }  \con{\lambda 0^{\ell(\eta)-\ell(\lambda)}}{\mu}\leq \con{\eta}{\nu}.$$
\end{Def}
The relation defined above is indeed an order relation. Indeed, if $\sym{\lambda}{\mu}\preceq \sym{\eta}{\nu}$ and $\sym{\eta}{\nu}\preceq \sym{\lambda}{\mu}$, then $\ell(\lambda)= \ell(\eta)$, and  $\con{\lambda}{\mu}\leq \con{\eta}{\nu}$ and  $\con{\eta}{\nu}\leq \con{\lambda}{\mu}$. Since $\leq$ is an order relation, we have $\con{\lambda}{\mu} = \con{\eta}{\nu}$, and therefore $\sym{\lambda}{\mu} = \sym{\eta}{\nu}$.

\subsection{}

We will need the following technical result.
\begin{Lem}\label{lemma: ineq}
Let $\lambda, \eta$, $\mu, \nu$ be compositions. 
\begin{enumerate}[label=(\roman*)]
\item  If  $\ell(\lambda)=\ell(\eta)$ and $\con{\lambda}{\mu}\leq \con{\eta}{\nu}$, then $\con{\lambda}{\mu_+}\leq \con{\eta}{\nu_+}$.
\item  If  $\lambda, \eta$ are strict compositions, $\ell(\lambda)\leq \ell(\eta)$,  and $\con{\lambda0^{\ell(\eta)-\ell(\lambda)}}{\mu}\leq \con{\eta}{\nu}$, then $\sym{\lambda}{\mu_+}\preceq \sym{\eta}{\nu_+}$.
\end{enumerate}

\end{Lem}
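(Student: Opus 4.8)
The plan is to prove part (i) first, since part (ii) is largely a bookkeeping reduction to it. For part (i), write $\con{\lambda}{\mu} \leq \con{\eta}{\nu}$ in $\Lambda_m$ with $m = \ell(\lambda)+\ell(\mu)$ (after padding $\mu$ and $\nu$ with trailing zeros to a common length, using Proposition \ref{prop: B-properties}(ii) to see that padding preserves the inequality). The key observation is that passing from $\nu$ to $\nu_+$ and from $\mu$ to $\mu_+$ is achieved by a sequence of simple transpositions $s_i$ acting on the last $\ell(\mu)$ coordinates, and each such transposition, applied to the larger element, should be handled by Lemma \ref{lemma-1}(ii)--(iii). Concretely, I would argue by induction on the number of inversions needed to sort the tail of $\con{\eta}{\nu}$ into $\con{\eta}{\nu_+}$: if $\con{\eta}{\nu}$ already has a decreasing tail we are sorting, pick a simple transposition $s_i$ (with $i$ in the tail block) such that $s_i(\con{\eta}{\nu})$ has one more inversion removed, i.e. $\<\alpha_i, \con{\eta}{\nu}\> < 0$; then by Lemma \ref{lemma-1}(iii), either $\con{\lambda}{\mu} \leq s_i(\con{\eta}{\nu})$ or $\con{\lambda}{\mu} \leq s_i(\con{\lambda}{\mu}) \leq s_i(\con{\eta}{\nu})$, and in the second case $s_i(\con{\lambda}{\mu})$ only rearranges the tail of $\con{\lambda}{\mu}$, so we may replace $\con{\lambda}{\mu}$ by it. Iterating, we reach $\con{\lambda}{\mu'} \leq \con{\eta}{\nu_+}$ where $\mu'$ is some rearrangement of $\mu$ — but then $\mu'$ can be sorted the rest of the way to $\mu_+$ by the same mechanism applied in the other direction, or more cleanly: once the right-hand side is fixed at $\con{\eta}{\nu_+}$, apply Lemma \ref{lemma-1}(ii) repeatedly (since $\nu_+$ is dominant, $\<\alpha_i,\con{\eta}{\nu_+}\> \geq 0$ fails in the tail, so we use the contrapositive direction) to push the left side up from $\con{\lambda}{\mu'}$ to $\con{\lambda}{\mu_+}$.

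Let me restructure that last step more carefully, since it is where the argument is most delicate. Having reached $\con{\lambda}{\mu'} \leq \con{\eta}{\nu_+}$ with $\mu'$ a permutation of $\mu$, I want $\con{\lambda}{\mu_+} \leq \con{\eta}{\nu_+}$. Choose $i$ in the tail block with $\<\alpha_i, \con{\lambda}{\mu'}\> < 0$ (exists unless $\mu'$ is already dominant); then $\<\alpha_i, \con{\eta}{\nu_+}\> \geq 0$ since $\nu_+$ is dominant, so Lemma \ref{lemma-1}(ii) applies with $\mu \rightsquigarrow \con{\eta}{\nu_+}$ and gives $s_i(\con{\lambda}{\mu'}) \leq \con{\eta}{\nu_+}$; and $s_i(\con{\lambda}{\mu'})$ has a tail strictly closer to $\mu_+$ in dominance order. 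Iterating terminates at $\con{\lambda}{\mu_+} \leq \con{\eta}{\nu_+}$. Note that throughout, $\ell(\lambda) = \ell(\eta)$ guarantees that the "tail block" (coordinates $\ell(\lambda)+1, \dots, m$) is the same on both sides, so the simple transpositions $s_i$ we use genuinely act only on the $\mu$/$\nu$ part and the head coordinates are untouched.

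For part (ii), set $d = \ell(\eta) - \ell(\lambda) \geq 0$. By hypothesis $\con{(\lambda 0^d)}{\mu} \leq \con{\eta}{\nu}$ in $\Lambda_{\ell(\eta)+\ell(\mu)}$ (again padding $\mu,\nu$ to equal length). Since $\ell(\lambda 0^d) = \ell(\eta)$, part (i) applies directly to the compositions $\lambda 0^d$, $\eta$, $\mu$, $\nu$ and yields $\con{(\lambda 0^d)}{\mu_+} \leq \con{\eta}{\nu_+}$. This is exactly the defining inequality for $\sym{\lambda}{\mu_+} \preceq \sym{\eta}{\nu_+}$ in Definition \ref{def: order}, once we note $\ell(\lambda) \leq \ell(\eta)$ by hypothesis and that $\lambda$, $\eta$ being strict compositions makes $\sym{\lambda}{\mu_+}, \sym{\eta}{\nu_+} \in \Lambdas$. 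The main obstacle is entirely in part (i), specifically in verifying that the two sorting processes (sorting the right side to $\nu_+$, then sorting the left side to $\mu_+$) can be carried out without ever violating the comparability/orbit hypotheses of Lemma \ref{lemma-1} — i.e. that at each stage the two weights remain in the same $\St$-orbit (automatic, since we only apply Weyl group elements) and that the sign conditions $\<\alpha_i, -\> \lessgtr 0$ needed to invoke parts (ii), (iii) are actually available; this last point is what forces the two-phase structure (first fix the right side at a dominant tail, then the sign condition $\<\alpha_i,\con{\eta}{\nu_+}\>\geq 0$ becomes free).
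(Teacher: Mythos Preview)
Your approach is correct and follows the same underlying idea as the paper's proof---sort the tail of the larger weight toward $\nu_+$ one simple reflection at a time, using the ``minimum'' clause of Lemma~\ref{lemma-1}(ii)---but it is organized less efficiently and contains one misapplied citation.

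The paper collapses your two phases into one by observing at the outset that $\con{\lambda}{\mu_+}\leq\con{\lambda}{\mu}$ (iterated application of Lemma~\ref{lemma-1}(i)), so by transitivity one may assume $\mu=\mu_+$ from the start. Then the induction on the Bruhat interval $[\nu_+,\nu]$ runs exactly as your Phase~1, except that at each step the left side is already $\con{\lambda}{\mu_+}$, hence automatically equals $\min\{\con{\lambda}{\mu_+},\con{\lambda}{s_i(\mu_+)}\}$ since $\mu_+$ is dominant, and Lemma~\ref{lemma-1}(ii) gives $\con{\lambda}{\mu_+}\leq\con{\eta}{s_i(\nu)}$ directly. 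Your Phase~2 thereby becomes unnecessary.

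Your Phase~2 as written has a small slip: you invoke Lemma~\ref{lemma-1}(ii) with the upper element $\con{\eta}{\nu_+}$, but that lemma requires $\langle\alpha_i,\con{\eta}{\nu_+}\rangle\leq 0$, whereas dominance of $\nu_+$ gives $\geq 0$. The step is nonetheless valid for a simpler reason: if $\langle\alpha_i,\con{\lambda}{\mu'}\rangle<0$ then $s_i(\con{\lambda}{\mu'})<\con{\lambda}{\mu'}$ by Lemma~\ref{lemma-1}(i), and transitivity finishes. This is exactly the observation the paper front-loads to avoid Phase~2 altogether.
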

\begin{proof} Because $\ell(\lambda0^{\ell(\eta)-\ell(\lambda)})=\ell(\eta)$, the second claim is a consequence of the first. To prove the first claim, note that  Lemma \ref{lemma-1} i) implies that $\con{\lambda}{\mu_+}\leq \con{\lambda}{\mu}$, so we may assume that $\mu=\mu_+$. We prove this claim by induction on the length of the interval $[\nu_+,\nu]$ in the Bruhat order. If this length is $0$, then $\nu_+=\nu$, in which case the conclusion is precisely the hypothesis. Otherwise, let $\alpha_i$ be a finite simple root such that $\<\alpha_i,\nu\><0$. Then, Lemma \ref{lemma-1} implies that $s_i(\nu)<\nu$ and  $\con{\lambda}{\mu_+}=\min\{\con{\lambda}{\mu_+},\con{\lambda}{s_i(\mu_+)}\}\leq\con{\eta}{s_i(\nu)}$.  Applying the induction hypothesis to $s_i(\nu)$ finishes the proof.
\end{proof}


\subsection{}  For $\sym{\lambda}{\mu}\in \Lambdas$, we denote $m_{\sym{\lambda}{\mu}}= x^{\lambda}m_{\mu}[\Xb_{\ell(\lambda)}]$. Let $\Mas= \left\{ m_{\sym{\lambda}{\mu}}~\big\vert~\sym{\lambda}{\mu}\in \Lambdas \right\}$; its elements will be called almost symmetric monomials. Also, for $k\geq 0$, denote $\Mas(k)= \left\{ m_{\sym{\lambda}{\mu}}\in \Mas~\big\vert~\ell(\lambda)\leq k \right\}$.
\begin{Prop}\label{prop: basis}
The set $\Mas$ is a basis for $\Pas^+$. In consequence, $\Mas(k)$ is a basis of $\P(k)^+$.
\end{Prop}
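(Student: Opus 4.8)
The plan is to show that $\Mas=\{m_{\sym{\lambda}{\mu}}\}$ is a basis of $\Pas^+=\bigcup_{k\geq 0}\P(k)^+$ by checking the statement one level $\P(k)^+$ at a time, so that the second assertion (that $\Mas(k)$ is a basis of $\P(k)^+$) is in fact the engine driving the first. Since $\Pas^+$ is the increasing union of the $\P(k)^+$ and every $m_{\sym{\lambda}{\mu}}$ with $\ell(\lambda)\leq k$ lies in $\P(k)^+$ — indeed $x^{\lambda}m_{\mu}[\Xb_{\ell(\lambda)}]$ is symmetric in all variables past the $\ell(\lambda)$-th, hence past the $k$-th — it suffices to prove that for each $k\geq 0$ the finite-type set $\Mas(k)$ is a basis of $\P(k)^+$; taking the union over $k$ then gives the first claim, because a linear dependence among elements of $\Mas$ or a failure to span some $f\in\Pas^+$ would already be visible inside a single $\P(k)^+$.

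First I would invoke the tensor decomposition recalled in \S\ref{sec: p(k)}: the multiplication map $\P_k^+\otimes\Sym[\Xb_k]\xrightarrow{\ \sim\ }\P(k)^+$ is an algebra isomorphism. On the left, $\P_k^+=\K[x_1,\dots,x_k]$ has the monomial basis $\{x^{\alpha}\}_{\alpha\in\Z_{\geq0}^k}$, and $\Sym[\Xb_k]$ has the basis $\{m_{\mu}[\Xb_k]\}_{\mu\ \text{a partition}}$ (monomial symmetric functions in the infinite alphabet $x_{k+1},x_{k+2},\dots$). Hence $\{x^{\alpha}\,m_{\mu}[\Xb_k] : \alpha\in\Z_{\geq0}^k,\ \mu\ \text{a partition}\}$ is a $\K$-basis of $\P(k)^+$. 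This is almost $\Mas(k)$, except that the index $\alpha$ ranges over all of $\Z_{\geq0}^k$, whereas in $\Mas(k)$ one uses strict compositions $\lambda$ together with the alphabet $\Xb_{\ell(\lambda)}$ rather than the fixed $\Xb_k$. So the remaining task is purely a reindexing/change-of-basis argument within $\P(k)^+$.

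The reconciliation goes as follows. Given $\alpha\in\Z_{\geq0}^k$, write $\alpha=\lambda\,0^{k-\ell(\lambda)}$ where $\lambda$ is the strict composition obtained by deleting trailing zeros (with $\lambda$ the empty composition if $\alpha=0$); then $x^{\alpha}=x^{\lambda}$ and $\ell(\lambda)\leq k$. Thus each basis element above equals $x^{\lambda}\,m_{\mu}[\Xb_k]$ with $\ell(\lambda)=:r\leq k$, and we must compare $x^{\lambda}m_{\mu}[\Xb_k]$ with the $\Mas(k)$-element $m_{\sym{\lambda}{\mu}}=x^{\lambda}m_{\mu}[\Xb_r]$. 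For fixed $r$ this is governed entirely by the relation between $\{m_{\mu}[\Xb_k]\}_{\mu}$ and $\{m_{\mu}[\Xb_r]\}_{\mu}$ inside $\Sym[\Xb_r]\supseteq\Sym[\Xb_k]$; more precisely, expanding $x^{\lambda}m_\mu[\Xb_k]$ in $\P(k)^+$ and collecting the variables $x_{r+1},\dots,x_k$ into symmetric combinations, the iterated formula \eqref{eq: symm-monomial} of \S\ref{sec: not3} (applied $k-r$ times, peeling off $x_k,\dots,x_{r+1}$) rewrites $m_\mu[\Xb_k]$ as a $\Z$-linear combination of terms $x^{\beta}m_{\widehat{\mu}_S}[\Xb_r]$ with $\beta$ supported on $\{r+1,\dots,k\}$ and $|\widehat{\mu}_S|\leq|\mu|$, the leading term (the $S=\emptyset$ term, $\beta=0$) being $m_\mu[\Xb_r]$ itself with coefficient $1$. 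Reading this off against the partial order on compositions — deeper support position and smaller weight making elements strictly smaller — shows the transition matrix between the two indexing sets, restricted to any bounded degree, is unitriangular with $\pm1,$ multinomial entries, hence invertible over $\Z$. Therefore $\Mas(k)$ is obtained from the known basis $\{x^\alpha m_\mu[\Xb_k]\}$ by an invertible (block-triangular, degree-by-degree) change of coordinates, so it too is a basis of $\P(k)^+$.

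The main obstacle — really the only non-bookkeeping point — is verifying the triangularity of that transition matrix carefully enough to conclude invertibility: one must check that \eqref{eq: symm-monomial}, iterated, never reintroduces an index pair $\sym{\lambda'}{\mu'}$ that is $\succeq\sym{\lambda}{\mu}$ other than the diagonal one, and that within each fixed total degree only finitely many indices occur so that "unitriangular" genuinely forces invertibility. Both follow from the explicit shape of \eqref{eq: symm-monomial} (every non-leading summand strictly lowers $|\mu|$ while adding powers of later variables, and total degree is preserved), together with the evident fact that $\Pas^+$ is graded with finite-dimensional homogeneous components in each $\P(k)^+$; the finiteness lets us pass safely between "unitriangular" and "basis." Once this is in hand, the union-over-$k$ step is immediate and completes the proof of Proposition \ref{prop: basis}.
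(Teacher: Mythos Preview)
Your overall strategy --- reducing to a single $\P(k)^+$ and arguing by a unitriangular change of basis from the tensor-product basis $\{x^\alpha m_\mu[\Xb_k]\}$ --- is sound and genuinely different from the paper's approach, which proves spanning by an induction on $n-\ell(\lambda)$ and proves linear independence by isolating a distinguishing monomial. Your route has the advantage of handling spanning and independence simultaneously.

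However, the execution has a gap. When you iterate \eqref{eq: symm-monomial} to rewrite $x^\alpha m_\mu[\Xb_k]$ (with $\alpha=\lambda\,0^{k-r}$, $r=\ell(\lambda)$), the non-leading terms you obtain are of the form $x^{\lambda}x^{\beta}m_{\mu'}[\Xb_r]$ with $\beta\neq 0$ supported on $\{r+1,\dots,k\}$. If $\gamma$ denotes the strict composition obtained from $\lambda\cup\beta$, then $\ell(\gamma)>r$, so $x^{\gamma} m_{\mu'}[\Xb_r]$ is \emph{not} the element $m_{\sym{\gamma}{\mu'}}=x^{\gamma} m_{\mu'}[\Xb_{\ell(\gamma)}]$ of $\Mas(k)$: the alphabet index on the symmetric part does not match the length of the monomial part. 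Thus your computation does not exhibit a transition matrix between the two candidate bases at all, and the asserted unitriangularity is not yet established.

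The cleanest fix is to run the change of basis in the opposite direction. Expand $m_{\sym{\lambda}{\mu}}=x^\lambda m_\mu[\Xb_r]$ in the tensor basis via the elementary split-off identity
\[
m_\mu[\Xb_r]=\sum_{\nu} m_\nu[\overline{\Xb}_{[r+1,k]}]\,m_{\mu\setminus\nu}[\Xb_k],
\]
summing over sub-multisets $\nu$ of the parts of $\mu$. The leading term ($\nu=\emptyset$) is $x^{\lambda 0^{k-r}}m_\mu[\Xb_k]$, and every other term is $x^{\alpha'}m_{\mu'}[\Xb_k]$ with the associated strict composition of length strictly greater than $r$. This genuinely yields a unitriangular transition (block-triangular by $\ell(\lambda)$, with identity diagonal blocks), and your finiteness-in-each-degree argument then finishes the proof. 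Note that this direction does not use \eqref{eq: symm-monomial}; that formula is its inverse.
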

\begin{proof} To show that $\Mas$ spans $\Pas$ it is enough to argue that the span of  $\Mas$ contains any element of the form $x^{\lambda}m_{\mu}[\Xb_{n}]$ with $\lambda$ a strict composition, $\mu$ a partition, and $n\geq\ell(\lambda)$. We prove this by induction on $n-\ell(\lambda)\geq 0$. If $n=\ell(\lambda)$, then $x^{\lambda}m_{\mu}[\Xb_{n}]=m_{\sym{\lambda}{\mu}}\in \Mas$. If $n-\ell(\lambda)> 0$, the formula \eqref{eq: symm-monomial} can be used to express
$x^{\lambda}m_{\mu}[\Xb_{n}]$ as a sum of elements that satisfy the induction hypothesis. Therefore, $x^{\lambda}m_{\mu}[\Xb_{n}]$ is in the span of $\Mas$.

To show that $\Mas$ is linearly independent, assume that there is a non-empty finite subset $S\subset \Lambdas$, and non-zero elements $c_{\sym{\lambda}{\mu}}\in \K$, for $\sym{\lambda}{\mu}\in S$, such that 
$$\sum_{\sym{\lambda}{\mu}\in S}c_{\sym{\lambda}{\mu}}m_{\sym{\lambda}{\mu}}=0.$$
Let $\sym{\lambda}{\mu}\in S$ such that $\ell(\lambda)$ is minimal, and $N$ such that $\ell(\lambda)+N> \ell(\eta)$, for any $\sym{\eta}{\nu}\in S$. The monomial $$x^\lambda\cdot \prod_{i\geq 1}x_{i+\ell(\lambda)+N}^{\mu_i}$$
appears in the monomial expansion of $m_{\sym{\lambda}{\mu}}$ and cannot appear in the monomial expansion of any other $m_{\sym{\eta}{\nu}}$, for $\sym{\eta}{\nu}\in S$. This contradicts the fact that $c_{\sym{\lambda}{\mu}}\neq 0$.
\end{proof}
We refer to $\Mas$ as the monomial basis of $\Pas^+$.
\section{The stable limit DAHA}

\subsection{} Let $\H^+$ be the  \sp limit DAHA, defined as follows.

\begin{Def}\label{def: sDAHA+}
Let  $\H^+$ be  the $\K$-algebra    generated by the elements $\Teb_i$,$\Xeb_i$, and $\Yeb_i$, $i\geq 1$, satisfying  the following relations
  \begin{subequations}\label{sdaha}
        \begin{equation}\label{T relations}
          \begin{gathered}
          \Teb_{i}\Teb_{j}=\Teb_{j}\Teb_{i}, \quad |i-j|>1,\\
          \Teb_{i}\Teb_{i+1}\Teb_{i}=\Teb_{i+1}\Teb_{i}\Teb_{i+1}, \quad i\geq 1,
          \end{gathered}
        \end{equation}
        \begin{equation}\label{Quadratic}
                  (\Teb_{i}-1)(\Teb_{i}+\tc)=0, \quad i\geq 1,
        \end{equation}
        \begin{equation}\label{X relations}
            \begin{gathered}
                \tc \Teb_i^{-1} \Xeb_i \Teb_i^{-1}=\Xeb_{i+1}, \quad  i\geq 1\\
                \Teb_{i}\Xeb_{j}=\Xeb_{j}\Teb_{i}, \quad  j\neq i,i+1,\\
                \Xeb_i \Xeb_j=\Xeb_j \Xeb_i,\quad i,j\geq 1,
            \end{gathered}
        \end{equation}
         \begin{equation}\label{Y relations}
            \begin{gathered}
                \tc^{-1} \Teb_i \Yeb_i \Teb_i=\Yeb_{i+1}, \quad i\geq 1\\
               	\Teb_{i}\Yeb_{j}=\Yeb_{j}\Teb_{i}, \quad  j\neq i,i+1,\\
                \Yeb_i \Yeb_j=\Yeb_j \Yeb_i, \quad i,j\geq 1,
            \end{gathered}
        \end{equation}
                \begin{equation}\label{XY cross relations}
            \Yeb_1 \Teb_1 \Xeb_1=\Xeb_2 \Yeb_1\Teb_1.
        \end{equation}
    \end{subequations}
\end{Def}

\begin{Rem}
We emphasize that the defining relations of $\H^+$ do not depend on the parameter $\qc$. Therefore, $\H^+$ is defined over $\Rat(\tc)$. The parameter $\qc$ is included  in the field of definition because its role in the definition of the standard representation (see \S\ref{sec: standardrep}).
\end{Rem}

\begin{Def}
For any $k\geq 2$, denote by $\H(k)^+$ the subalgebra of $\H^+$ generated by $\Teb_i$, $1\leq i\leq k-1$, and $\Xeb_i$, $\Yeb_i$, $1\leq i\leq k$.
\end{Def}

\begin{Not}
The subalgebra of $\H^+$ generated by $\Teb_i$, $1\leq i\leq k-1$ is the finite Hecke algebra associated to the permutation group $S_k$. It has a standard basis $\{\Teb_w\}_{w\in S_k}$, where, as usual, we denote $\Teb_w=\Teb_{i_\ell}\cdots \Teb_{i_1}$ if $w=s_{i_\ell}\cdots s_{i_1}$ is a reduced expression of $w\in S_k$ in terms of simple transpositions. A reduced expression of $w$ is not unique, but the number of factors that appear in a reduced expression, denoted $\ell(w)$, is unique and is called the length of the permutation $w$. The length function and the concept of reduced expression are compatible with the direct system of symmetric groups and therefore, the subalgebra of $\H^+$ generated by $\Teb_i$, $1\leq i$, has a standard basis $\{\Teb_w\}_{w\in S_\infty}$.
\end{Not}
\begin{Not}
As in \S\ref{sec: not1}, for $\lambda$ a composition we denote by $\Xeb_\lambda=\displaystyle \prod_{i\geq 1} \Xeb_i^{\lambda_i}$ and $\Yeb_\lambda=\displaystyle \prod_{i\geq 1} \Yeb_i^{\lambda_i}$.
\end{Not}
\subsection{}
The double affine Hecke algebra $\H_k$, $k\geq 1$, of type $\GL_k$ can be presented as follows.

\begin{Def}\label{DAHA}
The algebra $\H_k$, $k\geq 1$, is the $\K$-algebra generated by the elements $T_i$, $1\leq i\leq k-1$, and $X_i^{\pm1}$, $Y_i^{\pm1}$, $1\leq i\leq k$, satisfying  all the relations in Definition \ref{def: sDAHA+} and 
        \begin{equation}\label{det relation}
        Y_1 X_1\dots X_k= \qc X_1\dots X_k Y_1.
        \end{equation}
We denote by $\H_k^+$ the subalgebra of $\H_k$ generated by $T_i$, $1\leq i\leq k-1$, and  $X_i$, $Y_i$, $1\leq i\leq k$.
\end{Def}
\begin{Not}
For $\lambda\in \Lambda_k$  we denote $X_\lambda=\displaystyle \prod_{i\geq 1} X_i^{\lambda_i}$ and $Y_\lambda=\displaystyle \prod_{i\geq 1} Y_i^{\lambda_i}$.
\end{Not}
\begin{Rem}\label{rem: varphi-1}
There exists a canonical morphism $\varphi_k: \H(k)^+\to \H_k^+$ that sends each generator $\Teb_i$, $\Xeb_i$, $\Yeb_i$ to the corresponding generator $T^{(k)}_i$, $X^{(k)}_i$, $Y^{(k)}_i$ of $\H_k^+$. An important relation that holds in $\H_k^+$ is
\begin{equation}\label{1=1-daha}
Y^{(k)}_1 X^{(k)}_1=\qc \tc^{-k-1} X^{(k)}_1 Y^{(k)}_1 T_1\cdots T_{k-1}^2\cdots T_1.
\end{equation}
Therefore, the element $\Yeb_1 \Xeb_1-\qc \tc^{-k-1} \Xeb_1 \Yeb_1 \Teb_1\cdots \Teb_{k-1}^2\cdots \Teb_1$ lies in the kernel of $\varphi_k$.

\end{Rem}

\begin{Rem} Let $\omega_k=\tc^{k}T_{k-1}^{-1}\cdots T_1^{-1} Y_1^{-1}\in \H_k$. For $1\leq i\leq k$, we have
$$Y_i = \tc^{k+1-i}T_{i-1} \dots T_{1}\omega_k^{-1} T_{k-1}^{-1}\dots T_{i}^{-1}.$$
In rank $k=1$, the operator $Y_1$ is simply the multiplication operator by $\tc$.
The element $\omega_k$ can be used to give an equivalent presentation of $\H_k$. More precisely,  $\H_k$, $k\geq 2$, is the $\mathbb{Q}(\tc,\qc)$-algebra generated by the elements $T_i$, $1\leq i\leq k-1$, $X_i^{\pm1}$, $1\leq i\leq k$, and $\omega_k^{\pm 1}$, satisfying the relations \eqref{T relations}, \eqref{Quadratic}, \eqref{X relations}, and 
 \begin{equation}\label{omega2 relation}
	\begin{gathered}
            \omega_k T_i \omega_k^{-1}=T_{i-1}, \quad 2\leq i\leq k-1, \quad  \omega_k^2 T_1 \omega_k^{-2}=T_{k-1}, 
	\end{gathered}
        \end{equation}
        \begin{equation}\label{X-omega2 cross relation}
          \begin{gathered}
	 \omega_k X_{i+1}\omega_k^{-1}=X_i,  \quad 1\leq i\leq k-1,\quad \omega_k X_1 \omega_k^{-1}=\qc^{-1}X_k.
          \end{gathered}
        \end{equation}
The algebra $\H_k^+$ the subalgebra of $\H_k$ generated by $T_i$, $1\leq i\leq k-1$, $X_i$, $1\leq i\leq k$, and $\omega_k^{-1}$.
\end{Rem}

\begin{Rem}\label{rem: Y coeff}
The definition of the action of $T_i$ shows that, for any $\lambda\in \Z^k$, the coefficients of the monomials expansion of $T_i x^\lambda$ and $\tc T^{-1}_i x^\lambda$ are polynomials in $\tc$. Therefore, the coefficients of the monomial expansion of $\tc^{-1}Y_i x^\lambda$ are polynomial in $\tc$.
\end{Rem}

\subsection{} \label{sec: dahastandardrep}
The representation below is called the standard representation of $\H_k$.

\begin{Prop}\label{laurent rep}
  The following formulas define a faithful representation of $\H_k$ on $\P_k$:
  \begin{align}\label{DAHA representation}
      \begin{split}
	    T_i f(x_1,\dots,x_k) &= s_i f(x_1,\dots,x_k) +(1-\tc)x_i\frac{1-s_i}{x_i-x_{i+1}}f(x_1,\dots,x_k), ~1\leq i\leq k-1,\\
	    X_i f(x_1,\dots,x_k) &= x_i f(x_1,\dots,x_k),~1\leq i\leq k,\\    
	    \omega_k f(x_1,\dots,x_k) &= f(\qc^{-1} x_k,x_1,\dots,x_{k-1}).
      \end{split}
  \end{align}
  The subspace  $\P_k^+$ is stable under the action of  $\H_k^+$. The corresponding representation of $\H_k^+$ on $\P_k^+$ is faithful.
\end{Prop}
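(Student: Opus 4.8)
The plan is to split the statement into three parts: (i) the displayed operators preserve $\P_k^+$ and satisfy the defining relations of $\H_k$, so that we get an $\H_k$-module structure on $\P_k$ and, by restriction, an $\H_k^+$-module structure on $\P_k^+$; (ii) the $\H_k$-action on $\P_k$ is faithful; (iii) faithfulness on $\P_k^+$ follows from (ii) by a localization argument. For (i), the divided-difference part of $T_i$ is $(1-\tc)x_i(x_i-x_{i+1})^{-1}(1-s_i)$ and $(1-s_i)f$ is divisible by $x_i-x_{i+1}$, so $T_i$ preserves $\P_k^+$; multiplication by $x_i$ preserves $\P_k^+$; and $\omega_k^{-1}f(x_1,\dots,x_k)=f(x_2,\dots,x_k,\qc x_1)$ preserves $\P_k^+$. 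So it suffices to produce a representation of $\H_k$ on $\P_k$, for which I would use the alternative presentation recorded above with generators $T_i$, $X_i$, $\omega_k^{\pm1}$. The relations \eqref{T relations}, \eqref{Quadratic} for the Demazure--Lusztig operators $T_i$ are the classical braid and quadratic identities (reduced as usual to a rank-two check); the $X_i$ are multiplications, hence commute, and $T_iX_j=X_jT_i$ for $j\neq i,i+1$ because $T_i$ only involves $x_i,x_{i+1}$; the identity $\tc T_i^{-1}X_iT_i^{-1}=X_{i+1}$ in \eqref{X relations} is a direct one-variable computation; and conjugation by $\omega_k$ implements the cyclic relabelling $x_j\mapsto x_{j-1}$ of variables (indices mod $k$) with the single $\qc$-twist $x_1\mapsto\qc^{-1}x_k$, which affects only the pair $(x_k,x_1)$, so one reads off \eqref{omega2 relation} and \eqref{X-omega2 cross relation} directly.

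For (ii) I would use the PBW decomposition $\H_k=\bigoplus_{w\in S_k}\K[X^{\pm1}]\,T_w\,\K[Y^{\pm1}]$ together with the triangularity and joint diagonalizability of the Cherednik operators. Suppose $H=\sum_{\lambda,w,\mu}c_{\lambda,w,\mu}X_\lambda T_wY_\mu$ (with $\lambda,\mu\in\Z^k$, $w\in S_k$) acts as $0$ on $\P_k$. By Remark~\ref{rem: Y coeff} the operators $\tc^{-1}Y_i$ are triangular with polynomial-in-$\tc$ entries in the monomial basis, so the commuting family $Y_1,\dots,Y_k$ admits a joint eigenbasis $\{E_\nu\}_{\nu\in\Z^k}$ of $\P_k$ (the non-symmetric Macdonald polynomials), with $Y_iE_\nu=\xi_\nu(i)E_\nu$ and each $\xi_\nu(i)$ a monomial in $\qc,\tc$. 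Since $\qc,\tc$ are algebraically independent over $\Rat$, the joint spectrum $\nu\mapsto(\xi_\nu(1),\dots,\xi_\nu(k))$ is injective, and any finite set of the characters $\mu\mapsto\prod_i\xi_\nu(i)^{\mu_i}$ of $\Z^k$ coming from distinct $\nu$ is linearly independent. Applying $H$ to $E_\nu$ gives
$$0=\sum_{\lambda,w}\Big(\sum_\mu c_{\lambda,w,\mu}\prod_i\xi_\nu(i)^{\mu_i}\Big)X_\lambda\,T_wE_\nu .$$
A standard triangularity analysis shows that, for $\nu$ chosen sufficiently deep in a fixed Weyl chamber, the finitely many elements $X_\lambda T_wE_\nu$ occurring have pairwise distinct leading monomials (the leading monomial of $X_\lambda T_wE_\nu$ is $x^{\lambda+w(\nu)}$, and for deep $\nu$ a bounded shift $\lambda$ cannot cause collisions between different $(\lambda,w)$); comparing leading terms forces $\sum_\mu c_{\lambda,w,\mu}\prod_i\xi_\nu(i)^{\mu_i}=0$ for every $(\lambda,w)$ and all such $\nu$, and linear independence of the characters yields $c_{\lambda,w,\mu}=0$. (Alternatively, one may simply invoke the classical faithfulness of the polynomial representation of the $\GL_k$ DAHA.)

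For (iii), conjugation by the multiplication operator $D=(x_1\cdots x_k)$ fixes every $X_i$ and every $T_i$ (it commutes with multiplication by the symmetric function $x_1\cdots x_k$) and multiplies $\omega_k^{-1}$ by a power of $\qc$; hence conjugation by $D^m$, $m\in\Z$, is an inner automorphism of $\H_k$ preserving $\H_k^+$. Since $\P_k=\bigcup_{m\ge0}D^{-m}\P_k^+$, any $H\in\H_k^+$ annihilating $\P_k^+$ annihilates $D^{-m}\P_k^+$ for every $m$, hence annihilates all of $\P_k$, so $H=0$ by (ii). The main obstacle is part (ii): it rests on the structure theory of the Cherednik operators (triangularity, the explicit monomial form of the joint eigenvalues, and the genericity coming from $\qc,\tc$ being algebraically independent) --- exactly the finite-rank input whose stable-limit analogues are developed later in the paper; all the remaining verifications are routine computations with the explicit operators.
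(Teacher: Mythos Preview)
The paper does not prove this proposition; it is quoted as the standard (well-known) polynomial representation of the $\GL_k$ DAHA, with no argument supplied. So there is nothing to compare against, and your write-up is essentially a self-contained sketch of a classical fact. Parts (i) and (ii) are fine as outlines: the relation checks are routine, and the eigenbasis/leading-term argument for faithfulness on $\P_k$ is a standard strategy (and your parenthetical ``alternatively, invoke the classical result'' is exactly what the paper does).

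There is, however, a genuine gap in your part (iii). You assert that if $H\in\H_k^+$ annihilates $\P_k^+$ then $H$ annihilates $D^{-m}\P_k^+$; equivalently, that $D^mHD^{-m}$ annihilates $\P_k^+$. You have correctly observed that conjugation by $D$ is an automorphism of $\H_k$ preserving $\H_k^+$ (it fixes $X_i,T_i$ and rescales $Y_i$ by a power of $\qc$), but this only tells you that $D^mHD^{-m}$ \emph{lies in} $\H_k^+$, not that it annihilates $\P_k^+$. Knowing that $H$ kills $\P_k^+$ gives no direct control over the action of the \emph{different} element $D^mHD^{-m}$, so the localization step as written does not go through.

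The simplest repair is to bypass (iii) entirely: your argument in (ii) already works inside $\P_k^+$. For $H\in\H_k^+$ the PBW expansion has $\lambda,\mu\in\Z_{\geq 0}^k$, and the non-symmetric Macdonald polynomials $E_\nu$ with $\nu\in\Z_{\geq 0}^k$ lie in $\P_k^+$ (this is Proposition~\ref{prop: B-properties}(i) in the paper). Choosing $\nu$ deep in the dominant cone of $\Z_{\geq 0}^k$ makes the leading monomials $x^{\lambda+w(\nu)}$ pairwise distinct for the finitely many $(\lambda,w)$ involved, and the same character-independence argument forces all $c_{\lambda,w,\mu}=0$. This gives faithfulness of $\H_k^+$ on $\P_k^+$ directly, without passing through $\P_k$.
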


\begin{Conv}\label{convention} Since the standard representations of $\H_k$ and $\H_k^+$ are faithful, our notation will not distinguish between an element of the algebra and the corresponding operator acting in the standard representation.
There will be elements denoted by the same symbol that belong to several (often infinitely many) algebras. Our  notation will not keep track of this information as long as it is implicit  from the context. When necessary, we will add  the 
superscript $(k)$ (e.g. $T_i^{(k)}, X_i^{(k)}, Y^{(k)}_i \in \H_k$) to make such information explicit.
\end{Conv}

\subsection{} \label{sec: standardrep}

The standard representation of  $\H^+$ is a representation on $\Pas^+$, constructed in \cite{IW}. The action of each generator arises as described in \S\ref{sec: operator-limit} from an associated sequence of operators. More precisely, 
the action of $\Teb_i$ arises as the (inverse) limit of the sequence of the Demazure-Lusztig operators $T_i^{(k)}:\P_k^+\to \P_k^+$, $k\geq i$,
\begin{equation}\label{eq: Taction}
  T_i^{(k)} f(x_1,\dots,x_k) = s_i f(x_1,\dots,x_k) +(1-\tc)x_i\frac{1-s_i}{x_i-x_{i+1}}f(x_1,\dots,x_k).
\end{equation}
The action of $\Xeb_i$ arises as the (inverse) limit of the sequence of the multiplication operators $X_i^{(k)}:\P_k^+\to \P_k^+$, $k\geq i$,
\begin{equation}\label{eq: Xaction}
  X_i^{(k)} f(x_1,\dots,x_k) = x_i f(x_1,\dots,x_k).
\end{equation}
The description of the action of $\Yeb_i$, as defined in \cite{IW}, is more complicated. To specify the associated sequence of operators, we need the auxiliary maps $\varpi_k: \P_k^+\to \P_k^+$,
\begin{equation}
 \varpi_k f(x_1,\dots,x_k) =\pr_1f(x_2,\dots,x_k, \qc x_1).
\end{equation}
Above,  $\pr_1:\P_k^+ \to \P_k^+$, is the $\K$-linear map which acts as identity on monomials divisible by $x_1$ and as the zero map on monomials not divisible by $x_1$. In other words, $\pr_1$  is the 
projection onto the subspace $x_1\P_k^+$. Therefore,  $ \varpi_k =\pr_1\omega_k^{-1}$. The action of $\Yeb_i$ arises as the limit of the sequence of operators $\widetilde{Y}_i^{(k)}: \P_k^+\to \P_k^+$, $k\geq i$,
\begin{equation}\label{eq: Yaction}
\widetilde{Y}_i^{(k)}  = \tc^{k+1-i}T_{i-1} \dots T_{1}\varpi_k T_{k-1}^{-1}\dots T_{i}^{-1}=\left( T_{i-1} \dots T_{1} \pr_1 T_{1}^{-1} \dots T_{i-1}^{-1} \right) Y_i^{(k)}.
\end{equation}

To distinguish between the elements $\Teb_i, \Xeb_i, \Yeb_i\in \H^+$ and their action on $\Pas^+$, we denote 
$$
\T_i=\lim_k T_i^{(k)}=\lim_{\stackrel{\longleftarrow}{k}}T_i^{(k)}, \quad \X_i=\lim_k X_i^{(k)}=\lim_{\stackrel{\longleftarrow}{k}}X_i^{(k)}, \quad \Y_i=\lim_k \widetilde{Y}_i^{(k)}.
$$
However, since the operators $\T_i$ and $\X_i$ act on elements of $\Pas^+$ as specified in \eqref{eq: Taction} and \eqref{eq: Xaction}, we will routinely use $T_i$ and $X_i$ to refer to them.

\subsection{} As it turns out, the difference between the operators $\widetilde{Y}_i^{(k)}$ and $Y_i^{(k)}$ is rather minimal and in fact $ \Y_i=\lim_k Y_i^{(k)}$. For example,  the difference between the action of $Y_1^{(k)}$ and $\widetilde{Y}_1^{(k)}$ on the monomial $x^\lambda$ is either $0$ (if $\lambda_1>0$) or $\tc^kx^\lambda$ (if $\lambda_1=0$) and $\lim_k \tc^kx^\lambda=0$. Before explaining the details and stating the precise relationship in the general case we need the following technical result.
\begin{Lem}\label{lemma: Y-Yt}
Let $\lambda,\mu\in \Lambda_k$ and $1\leq i< k$, such that $x^\mu$ appears in the monomial expansion of $T_i^{-1}x^\lambda$. Then, $\mu_{i+1}=0$ if and only if $\mu=s_{i}(\lambda)$ and $\lambda_i=0$.
\end{Lem}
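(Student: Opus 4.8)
The plan is to reduce the statement to a two-variable computation and then read it off from an explicit formula for $T_i^{-1}$ applied to a monomial.

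\emph{Step 1: normal form and reduction.} From the quadratic relation \eqref{Quadratic} one gets $T_i\bigl(T_i-(1-\tc)\bigr)=\tc$, hence $\tc\,T_i^{-1}=T_i-(1-\tc)$, and therefore
$$
T_i^{-1}x^\lambda=\tc^{-1}s_i\,x^\lambda+(\tc^{-1}-1)\Bigl(x_i\tfrac{1-s_i}{x_i-x_{i+1}}-1\Bigr)x^\lambda .
$$
Since $s_i$, the operator $\tfrac{1-s_i}{x_i-x_{i+1}}$, and multiplication by $x_i$ all change only the exponents in positions $i$ and $i+1$ and preserve their sum, every monomial $x^\mu$ occurring in $T_i^{-1}x^\lambda$ satisfies $\mu_j=\lambda_j$ for $j\neq i,i+1$ and $\mu_i+\mu_{i+1}=\lambda_i+\lambda_{i+1}$. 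Hence the condition $\mu_{i+1}=0$ is equivalent to $\mu_i=\lambda_i+\lambda_{i+1}$, while the condition $\mu=s_i(\lambda)$ is equivalent to $\mu_i=\lambda_{i+1}$; consequently the two conditions on the right-hand side of the lemma hold together exactly when $\lambda_i=0$ and $\mu=s_i\lambda$. It therefore suffices to analyze $T_i^{-1}(x_i^{a}x_{i+1}^{b})$ with $a=\lambda_i$ and $b=\lambda_{i+1}$, and to decide when a monomial of $x_{i+1}$-degree $0$ occurs in it.

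\emph{Step 2: case analysis.} Expanding $\tfrac{1-s_i}{x_i-x_{i+1}}(x_i^{a}x_{i+1}^{b})$ as the usual truncated geometric sum and substituting into the formula above, one finds that the $x_{i+1}$-degrees occurring in $T_i^{-1}(x_i^{a}x_{i+1}^{b})$ form the set $\{a\}$ if $a=b$, the set $\{b+1,\dots,a\}$ if $a>b$, and the set $\{a,a+1,\dots,b\}$ if $a<b$; moreover every coefficient is a nonzero element of $\K$, so no cancellation can occur. Hence the least $x_{i+1}$-degree occurring is $b+1\ge 1$ when $a>b$, and is $\min(a,b)$ when $a\le b$. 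Therefore a monomial of $x_{i+1}$-degree $0$ occurs only when $a=\lambda_i=0$, and in that case -- whether $b=0$, where $T_i^{-1}x^\lambda=x^\lambda$, or $b>0$, which is the case $a<b$ -- the only such monomial is $x_i^{b}x_{i+1}^{0}=x^{s_i\lambda}$. This proves the forward implication. The converse is immediate: if $\mu=s_i\lambda$ and $\lambda_i=0$, then $\mu_{i+1}=(s_i\lambda)_{i+1}=\lambda_i=0$.

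\emph{Main obstacle.} There is essentially no difficulty beyond careful bookkeeping in the case analysis; the only points worth flagging are the boundary cases $b=0$ (where $x_i^{a}$ with $a>0$ has, perhaps surprisingly, no pure power of $x_i$ in its $T_i^{-1}$-image at all) and $a=b=0$. One could instead argue Step~2 more structurally: the monomials occurring in $T_i^{-1}x^\lambda$ are precisely the $x^\nu$ with $\nu$ a lattice point which is a convex combination of $s_i\lambda$ and $\lambda$, and $x^\lambda$ itself is absent exactly when $\lambda_i>\lambda_{i+1}$; together with $(s_i\lambda)_{i+1}=\lambda_i$ and $\nu_i+\nu_{i+1}=\lambda_i+\lambda_{i+1}$ this yields the same conclusion and links the statement to the convex-combination language of Lemma \ref{lemma-1}. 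Either route is short once the two-variable formula is in hand.
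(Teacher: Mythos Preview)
Your proof is correct. The paper's own argument is precisely the structural alternative you sketch in your final paragraph: it writes down the two triangularity formulas for $T_i^{-1}x^\lambda$ according to the sign of $\langle\lambda,\alpha_i\rangle$, observes that the lower terms are proper convex combinations of $\lambda$ and $s_i(\lambda)$ (hence have strictly positive $i$-th and $(i+1)$-st components), and concludes that $\mu_{i+1}=0$ forces $\mu\in\{\lambda,s_i(\lambda)\}$, with $\mu=\lambda\neq s_i(\lambda)$ ruled out because it would require $\langle\lambda,\alpha_i\rangle<0$ while $\lambda_{i+1}=0$. Your explicit two-variable expansion reaches the same endpoint by direct computation; the trade-off is that your route is self-contained and pins down every coefficient, whereas the paper's route is shorter and ties the lemma back to the Bruhat-order machinery already in place (Lemma~\ref{lemma-1}).
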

\begin{proof} We will make use of the following facts about the action of  $T_i^{-1}$, which follow from the explicit formulas for the action of $T_i^{-1}$ and Lemma \ref{lemma-1} iv)
\begin{subequations}
\begin{equation*}
T_i^{-1} x^{\lambda} \in x^{s_i(\lambda)} +(1-\tc^{-1})x^\lambda+\sum_{\nu<s_i(\lambda)}\K x^\nu, \quad \text{if } \<\lambda,\alpha_i\>< 0,
\end{equation*}
\begin{equation*}
T_i^{-1} x^{\lambda} \in \tc^{-1}x^{s_i(\lambda)} +\sum_{\nu<\lambda}\K x^\nu, \quad \text{if } \<\lambda,\alpha_i\>> 0.
\end{equation*}
\end{subequations}
The weights $\nu$ that appear in the above sums are proper convex combinations of $\lambda$ and $s_i(\lambda)$ and therefore their $i$ and $i+1$ components are positive. If  $x^\mu$ appears in one of the expressions above and $\mu_{i+1}=0$, then $\mu$ is either $\lambda$ or $s_i(\lambda)$. If $\mu=\lambda\neq s_i(\lambda)$, then $\<\lambda,\alpha_i\>< 0$, which contradicts $\mu_{i+1}=0$. Therefore, $\mu=s_i(\lambda)$. It is also useful to remark that if $\mu_{i+1}=0$ and $x^\mu$ appears in  the monomial expansion of $T_i^{-1}x^\lambda$ with coefficient $1$.
\end{proof}
\begin{Prop} \label{prop: Y-Yt}
Let $\lambda\in \Lambda_k$. Then, 
$$
(Y_i^{(k)}-\widetilde{Y}_i^{(k)})x^\lambda=\begin{cases}
0, & \text{if} \quad  \lambda_i>0,\\   \tc^{k+1-i}T_{i-1}\cdots T_1\omega_i^{-1} x^{\lambda}, & \text{if} \quad \lambda_i=0.
\end{cases}
$$
In the case $i=1$,  $\omega_1$ is the identity operator.   For $i\geq 2$, $ \tc^{k+1-i}T_{i-1}\cdots T_1\omega_i^{-1} =\tc^{k-i}Y_{i}^{(i)}$.
\end{Prop}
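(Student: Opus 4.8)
The plan is to compute the discrepancy $(Y_i^{(k)}-\widetilde{Y}_i^{(k)})x^\lambda$ directly from the factorization in \eqref{eq: Yaction}, namely $\widetilde{Y}_i^{(k)}=\left(T_{i-1}\cdots T_1\,\pr_1\,T_1^{-1}\cdots T_{i-1}^{-1}\right)Y_i^{(k)}$, together with the identity $Y_i^{(k)}=\tc^{k+1-i}T_{i-1}\cdots T_1\,\omega_k^{-1}\,T_{k-1}^{-1}\cdots T_i^{-1}$ recalled earlier. Writing $P_i := T_{i-1}\cdots T_1\,\pr_1\,T_1^{-1}\cdots T_{i-1}^{-1}$ for the conjugated projector, we have $Y_i^{(k)}-\widetilde{Y}_i^{(k)} = (\mathrm{id}-P_i)\,Y_i^{(k)}$, so the first step is to understand $(\mathrm{id}-P_i)$ on the relevant monomials. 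Since $\pr_1$ kills exactly the monomials not divisible by $x_1$, the operator $\mathrm{id}-P_i = T_{i-1}\cdots T_1\,(\mathrm{id}-\pr_1)\,T_1^{-1}\cdots T_{i-1}^{-1}$ is $T_{i-1}\cdots T_1$ applied to the projection onto the span of monomials with $x_1$-exponent zero, precomposed with $T_1^{-1}\cdots T_{i-1}^{-1}$.

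The key computational step is then to analyze $(\mathrm{id}-\pr_1)\,T_1^{-1}\cdots T_{i-1}^{-1}\,Y_i^{(k)}x^\lambda$. I would first reduce modulo $T$'s: using $Y_i^{(k)} = \tc^{k+1-i}T_{i-1}\cdots T_1\,\omega_k^{-1}\,T_{k-1}^{-1}\cdots T_i^{-1}$ and the braid/commutation relations, the string $T_1^{-1}\cdots T_{i-1}^{-1}\cdot\tc^{k+1-i}T_{i-1}\cdots T_1$ collapses (telescopes) so that $T_1^{-1}\cdots T_{i-1}^{-1}Y_i^{(k)} = \tc^{k+1-i}\,\omega_k^{-1}\,T_{k-1}^{-1}\cdots T_i^{-1}$ — here I should double-check the reduced-word bookkeeping, but this cancellation is exactly what makes the statement clean. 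Now $\omega_k^{-1}$ acts by $f(x_1,\dots,x_k)\mapsto f(x_2,\dots,x_k,\qc x_1)$ (up to the $\pr_1$ already absorbed), and $T_{k-1}^{-1}\cdots T_i^{-1}$ only involves variables $x_i,\dots,x_k$. The point of Lemma~\ref{lemma: Y-Yt} is precisely to control the $x_1$-exponent (equivalently, after the cyclic shift, the $x_{i+1}$-exponent) of the monomials produced: a monomial $x^\mu$ with $\mu_{i+1}=0$ survives the kill by $\mathrm{id}-\pr_1$ (after shifting) only when $\mu = s_i(\lambda)$ and $\lambda_i=0$, and in that case appears with coefficient $1$. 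Thus when $\lambda_i>0$ every monomial in $T_1^{-1}\cdots T_{i-1}^{-1}Y_i^{(k)}x^\lambda$ has the relevant exponent positive, so $\mathrm{id}-\pr_1$ annihilates everything and the discrepancy is $0$; when $\lambda_i=0$, exactly the single term survives and one is left with $\tc^{k+1-i}T_{i-1}\cdots T_1$ applied to it, which after retracing the identifications is $\tc^{k+1-i}T_{i-1}\cdots T_1\,\omega_i^{-1}x^\lambda$.

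Finally, for the last two sentences of the statement: when $i=1$ the operator $T_{i-1}\cdots T_1$ and $\omega_1^{-1}$ are empty/identity, giving $\tc^k x^\lambda$ — consistent with the $i=1$ remark preceding the Proposition. For $i\geq 2$, the identity $\tc^{k+1-i}T_{i-1}\cdots T_1\,\omega_i^{-1} = \tc^{k-i}Y_i^{(i)}$ follows by comparing with the rank-$i$ formula $Y_i^{(i)} = \tc\,T_{i-1}\cdots T_1\,\omega_i^{-1}$ (the $i=k$ case of the general $Y$-formula, with the trailing $T_{k-1}^{-1}\cdots T_i^{-1}$ string empty when $k=i$), so $\tc^{k+1-i}T_{i-1}\cdots T_1\,\omega_i^{-1} = \tc^{k-i}\cdot\tc\,T_{i-1}\cdots T_1\,\omega_i^{-1} = \tc^{k-i}Y_i^{(i)}$. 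I expect the main obstacle to be the reduced-word bookkeeping in the telescoping of the $T$-strings and keeping track of which variables the various $\omega$'s cyclically permute; the combinatorial input (Lemma~\ref{lemma: Y-Yt}) is already isolated, so once the operator identity $Y_i^{(k)}-\widetilde{Y}_i^{(k)} = (\mathrm{id}-\pr_1\text{-conjugate})Y_i^{(k)}$ is massaged into a form where only the $x_{i+1}$-exponent (post-shift) matters, the case split is immediate.
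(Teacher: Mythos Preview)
Your overall strategy is the same as the paper's: reduce
$(Y_i^{(k)}-\widetilde{Y}_i^{(k)})x^\lambda$ to
$\tc^{k+1-i}T_{i-1}\cdots T_1\,(\mathrm{id}-\pr_1)\,\omega_k^{-1}\,T_{k-1}^{-1}\cdots T_i^{-1}x^\lambda$
via the telescoping you describe, and then determine which monomials in
$T_{k-1}^{-1}\cdots T_i^{-1}x^\lambda$ survive $(\mathrm{id}-\pr_1)\omega_k^{-1}$.
However, there is a genuine slip in the second step. After $\omega_k^{-1}$, the
$x_1$-exponent equals the \emph{$x_k$-exponent} of the monomial before the shift,
not the $x_{i+1}$-exponent; and Lemma~\ref{lemma: Y-Yt} as stated only handles a
\emph{single} factor $T_j^{-1}$. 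You invoke it as if it directly controls
$T_{k-1}^{-1}\cdots T_i^{-1}x^\lambda$, concluding ``$\mu_{i+1}=0$ only when
$\mu=s_i(\lambda)$'', but what is actually needed is: a monomial $x^\mu$ appearing
in $T_{k-1}^{-1}\cdots T_i^{-1}x^\lambda$ has $\mu_k=0$ if and only if
$\mu=s_{k-1}\cdots s_i(\lambda)$ and $\lambda_i=0$ (then with coefficient~$1$).
This is the claim the paper isolates and proves by a short induction on
$k\geq i+1$, with Lemma~\ref{lemma: Y-Yt} supplying both the base case and the
induction step. Once you make that inductive extension explicit and correct the
index ($x_k$, not $x_{i+1}$; $s_{k-1}\cdots s_i(\lambda)$, not $s_i(\lambda)$),
your identification of the surviving term with $\omega_i^{-1}x^\lambda$ goes
through exactly, and the remainder of your argument (including the $i=1$ remark
and the comparison with $Y_i^{(i)}$) is fine and matches the paper.
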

\begin{proof}
The conclusion is equivalent to the following claim.  Let $x^\mu$ that appears in the monomial expansion of $T_{k-1}^{-1}\cdots T_i^{-1}x^\lambda$. Then, $\mu_{k}=0$ if and only if $\mu=s_{k-1}\cdots s_{i}(\lambda)$ and $\lambda_i=0$. The claim follows by induction on $k\geq i+1$. Both the initial verification and the induction step follow from  Lemma \ref{lemma: Y-Yt}.
\end{proof}
\begin{Thm} \label{thm: Y limit}
For any $1\leq i$ and $f\in \Pas^+$, the sequence  $Y_i^{(k)}\Pi_k f$, $i\leq k$, converges to an element of $\Pas^+$, and
$$ \Y_i=\lim_k Y_i^{(k)}.$$
\end{Thm}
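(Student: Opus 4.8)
The strategy is to reduce the statement to the already-established convergence of the deformed operators $\widetilde Y_i^{(k)}$ (which define $\Y_i$ by construction, see \S\ref{sec: standardrep}) by showing that the discrepancy $D_i^{(k)} := Y_i^{(k)} - \widetilde Y_i^{(k)}$, acting on a fixed element of $\Pas^+$, converges to $0$ in the sense of \S\ref{sec: seq-limit}. Since $Y_i^{(k)} \Pi_k f = \widetilde Y_i^{(k)}\Pi_k f + D_i^{(k)}\Pi_k f$ and the first summand converges to $\Y_i f \in \Pas^+$ by definition of $\Y_i$, once we know $(D_i^{(k)}\Pi_k f)_{k\geq i}$ converges to $0$, Proposition \ref{prop: continuity}(i) (or simply the definition of limit) gives that $(Y_i^{(k)}\Pi_k f)_{k}$ converges to $\Y_i f$, which lies in $\Pas^+$; this is precisely the claim.

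First I would reduce to the case $f = m_{\sym{\lambda}{\mu}}$ a monomial basis element (Proposition \ref{prop: basis}), by linearity and the fact that $D_i^{(k)}$ is $\K$-linear; more precisely, writing $f = \sum_j c_j m_{\sym{\lambda^{(j)}}{\mu^{(j)}}}$ with finitely many terms, it suffices to handle each $m_{\sym{\lambda}{\mu}}$ separately. Fix $k$ large (in particular $k > i$ and $k$ larger than the length of the $\lambda$-part). Then $\Pi_k m_{\sym{\lambda}{\mu}} = x^{\lambda} m_{\mu}[\overline{\Xb}_{[\ell(\lambda)+1,k]}]$, a sum of monomials $x^{\nu}$ with $\nu \in \Lambda_k$. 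By Proposition \ref{prop: Y-Yt}, $D_i^{(k)} x^{\nu}$ is $0$ whenever $\nu_i > 0$, and equals $\tc^{k+1-i} T_{i-1}\cdots T_1 \omega_i^{-1} x^{\nu}$ when $\nu_i = 0$. The key point is that $\tc^{k+1-i} T_{i-1}\cdots T_1 \omega_i^{-1}$, by the final sentence of Proposition \ref{prop: Y-Yt}, equals $\tc^{k-i} Y_i^{(i)}$ for $i \geq 2$ (and $\tc^{k}$ times the identity for $i=1$); in either case it is $\tc^{k - i}$ (resp.\ $\tc^{k}$) times an operator on $\P_i^+$ that does \emph{not} depend on $k$. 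Since $Y_i^{(i)} x^{\nu}$ has monomial coefficients polynomial in $\tc$ (Remark \ref{rem: Y coeff}), and since the monomials $x^{\nu}$ appearing in $\Pi_k m_{\sym{\lambda}{\mu}}$ with $\nu_i = 0$ all have their first $i$ coordinates equal to a fixed finite set of values (the $\lambda$-part, independent of $k$, with $\lambda_i = 0$ forced), the expression $D_i^{(k)}\Pi_k m_{\sym{\lambda}{\mu}}$ is $\tc^{k-i}$ (resp.\ $\tc^{k}$) times a sum of terms of the form $Y_i^{(i)} x^{\lambda'} \cdot m_{\mu}[\overline{\Xb}_{[\ell(\lambda)+1,k]}]$-type pieces, where $\lambda'$ ranges over a finite set independent of $k$.

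To package this as a convergent sequence with limit $0$ in the sense of Definition \ref{Def of limit}, I would exhibit $D_i^{(k)}\Pi_k m_{\sym{\lambda}{\mu}}$ in the form $\sum_{r} a_{r,k}\, g_{r,k}$ where: the $g_{r,k}$ are the inverse-limit-compatible sequences whose limits lie in $\Pas^+$ (namely, the pieces of the form $(Y_i^{(i)} x^{\lambda'})\cdot m_{\mu}[\Xb_{\ell(\lambda)}]$-projections, a finite list indexed by $r$ since $\lambda'$ ranges over a finite set and $Y_i^{(i)}x^{\lambda'}$ is a fixed finite linear combination of monomials in $x_1,\dots,x_i$), and the coefficients $a_{r,k}$ each equal $\tc^{k-i}$ (or $\tc^k$), so $\lim_k a_{r,k} = 0$ by the definition of convergence to $0$ in $\K$. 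The main obstacle I anticipate is the bookkeeping in this last step: one must check that $\pi_k$ applied to the monomial-symmetric part $m_{\mu}[\overline{\Xb}_{[\ell(\lambda)+1,k]}]$ is compatible with the inverse system in the precise variable ranges, i.e.\ that after factoring out the $\tc^{k-i}$ the remaining sequences genuinely form an inverse system with $\Pas^+$-limits — this requires carefully matching the truncation $\Pi_k$, the operator $Y_i^{(i)}$ acting only in the first $i$ variables, and the symmetrization in variables $x_{\ell(\lambda)+1},\dots,x_k$, and invoking \cite{IW}*{Proposition 6.20} so that the choice of decomposition is immaterial. Once this is verified, Proposition \ref{prop: continuity}(i) applied to $Y_i^{(k)}\Pi_k f = \widetilde Y_i^{(k)}\Pi_k f + D_i^{(k)}\Pi_k f$ yields $\lim_k Y_i^{(k)} = \lim_k \widetilde Y_i^{(k)} = \Y_i$, completing the proof.
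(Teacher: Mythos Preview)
Your proposal is correct and follows essentially the same route as the paper: reduce to monomial basis elements, show the discrepancy $Y_i^{(k)}-\widetilde Y_i^{(k)}$ applied to $\Pi_k m_{\sym{\lambda}{\mu}}$ converges to $0$ using Proposition~\ref{prop: Y-Yt} and Remark~\ref{rem: Y coeff}, and conclude. The paper streamlines your ``bookkeeping obstacle'' by an explicit case split on whether $i\leq\ell(\lambda)$ or $i>\ell(\lambda)$; in the latter case it decomposes $\Pi_k m_{\sym{\lambda}{\mu}}=\sum_{S}x^\lambda m_{\mu_S}[\overline{\Xb}_{[\ell(\lambda),i]}]\,m_{\widehat\mu_S}[\overline{\Xb}_{[i+1,k]}]$, which makes precise your (slightly loose) claim that the first $i$ coordinates range over a fixed finite set---they are not just ``the $\lambda$-part'' when $i>\ell(\lambda)$, but $\lambda$ together with a choice of parts of $\mu$ placed in positions $\ell(\lambda)+1,\dots,i$.
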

\begin{proof} It is enough to show that $\lim_k(Y_i^{(k)}-\widetilde{Y}_i^{(k)})\Pi_k m_{\sym{\lambda}{\mu}}=0$, for any $\sym{\lambda}{\mu}\in \Lambdas$. If $i\leq \ell(\lambda)$ and $k$ large enough, then $(Y_i^{(k)}-\widetilde{Y}_i^{(k)})\Pi_k m_{\sym{\lambda}{\mu}}$ is either $0$ or $\tc^{k-i} (Y_i^{(i)} x^\lambda)m_\mu[\overline{\Xb}_{[\ell(\lambda),k]}]$. By Remark \ref{rem: Y coeff}, $$\lim_k \tc^{k-i} (Y_i^{(i)} x^\lambda)m_\mu[\overline{\Xb}_{[\ell(\lambda),k]}]=0.$$
If $i>\ell(\lambda)$, $$\Pi_k m_{\sym{\lambda}{\mu}}=\sum_{S\subseteq [\ell(\mu)]}x^\lambda m_{\mu_S}[\overline{\Xb}_{[\ell(\lambda),i]}] m_{\widehat{\mu}_S}[\overline{\Xb}_{[i+1,k]}].$$
Therefore, $(Y_i^{(k)}-\widetilde{Y}_i^{(k)})\Pi_k m_{\sym{\lambda}{\mu}}$ is a finite sum of terms of the form $\tc^{k-i} (Y_i^{(i)} x^\nu)m_\eta[\overline{\Xb}_{[i+1,k]}]$ with $(\nu,\eta)$ in a fixed finite set. Again, by Remark \ref{rem: Y coeff}, the limit of such a sequence equals $0$.
\end{proof}
The commutativity of the operators $\Y_i$, $i\geq 1$, proved in \cite{IW}*{Theorem 6.34}, is an immediate consequence of  Theorem \ref{thm: Y limit} and Proposition \ref{prop: continuity}.

\subsection{} \label{sec: YX}
While the more general concept of limit defined in \S\ref{sec: operator-limit} is necessary to define the operators $\Y_i$, the sequence of operators $Y_i^{(k)}X_i^{(k)}$ is compatible with the inverse system \cite{IW}*{Proposition 6.2} and, as a consequence of Theorem \ref{thm: Y limit}  and Proposition \ref{prop: continuity}, we have
\begin{equation}\label{eq5}
\lim_{\stackrel{\longleftarrow}{k}} Y_i^{(k)}X_i^{(k)}=\lim_k Y_i^{(k)}X_i^{(k)} = \lim_k Y_i^{(k)}\cdot  \lim_{\stackrel{\longleftarrow}{k}} X_i^{(k)} =\Y_i \X_i.
\end{equation}
In other words, the operator $\Y_i: x_i\Pas^+\to \Pas^+$ is simply the restriction to $x_i\Pas^+$ of the inverse limit   $\displaystyle \lim_{\stackrel{\longleftarrow}{k}} Y_i^{(k)}$ of the sequence  of operators $Y_i^{(k)}: x_i\P_k^+\to x_i\P_k^+$.


\begin{Thm}\label{thm: kernel}
Let $\mathbf{H}\in \H(r)^+$ and  let $H: \Pas^+\to \Pas^+$ be the operator given by the action of $\mathbf{H}$. Then, $H=0$ if and only if
$\varphi_k(\mathbf{H})=0$, for all $k\geq r$.
\end{Thm}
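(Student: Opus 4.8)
The plan is to reduce the statement to a finite-dimensional comparison on each graded piece, exploiting that the action of $\H(r)^+$ on $\Pas^+$ respects the filtration by the subspaces $\P(k)^+$ and that on each such subspace the action factors, in the limit, through the finite rank algebras $\H_k^+$. The "only if" direction is the substantive one; the "if" direction is essentially formal and I address it first.

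For the "if" direction, suppose $\varphi_k(\mathbf H)=0$ for all $k\geq r$. By Theorem \ref{thm: Y limit} (together with \eqref{eq: Taction} and \eqref{eq: Xaction}), each generator $\Teb_i,\Xeb_i,\Yeb_i$ acts on $\Pas^+$ as a limit (in the sense of \S\ref{sec: operator-limit}) of the corresponding finite rank operators $T_i^{(k)},X_i^{(k)},Y_i^{(k)}$, and by Proposition \ref{prop: continuity}(ii) this is compatible with products. Hence the operator $H$ associated to $\mathbf H\in\H(r)^+$ is $\lim_k H_k$ where $H_k:\P_k^+\to\P_k^+$ is the operator given by $\varphi_k(\mathbf H)\in\H_k^+$ in the standard representation on $\P_k^+$. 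If $\varphi_k(\mathbf H)=0$ for all $k\geq r$, then $H_k=0$ for all $k\geq r$ (using faithfulness of the standard representation of $\H_k^+$, Proposition \ref{laurent rep}), so $H=\lim_k H_k=0$ directly from the definition of the limit.

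For the "only if" direction, suppose $H=0$ but $\varphi_k(\mathbf H)\neq 0$ for some $k\geq r$; I want a contradiction. Fix such a $k$. Since $\mathbf H\in\H(r)^+\subseteq\H(k)^+$, the element $\varphi_k(\mathbf H)$ is a nonzero operator on $\P_k^+$, so there is a polynomial $p\in\P_k^+$ with $\varphi_k(\mathbf H)\,p\neq 0$. The key point is to promote $p$ to an element of $\Pas^+$ on which $\mathbf H$ acts nontrivially. The natural candidate is $\Pi_k^{-1}$-type lifts: consider elements of $\P(k)^+$, which by \S\ref{sec: p(k)} decompose as $\P_k^+\otimes\Sym[\mathbf X_k]$, and in particular contain the lifts $m_{\sym{\lambda}{\varnothing}}=x^\lambda$ for $\lambda$ a strict composition of length $\leq k$. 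One shows: for $f\in\P(k)^+$ with $\Pi_k f=p$, the element $Hf\in\Pas^+$ has $\Pi_m(Hf)$ eventually (for $m\gg k$) agreeing, on the "bottom" $k$ variables, with $\varphi_k(\mathbf H)p$; more precisely one tracks where the monomial witnessing $\varphi_k(\mathbf H)p\neq 0$ appears and argues it cannot be cancelled. Concretely, the strategy is: pick the lift of $p$ to lie in $\P(k)^+$ so that $\Pi_k(Hf)$ can be computed using the compatibility of $H$ with the inverse system down to level $k$, which holds for the $X$ and $T$ generators outright (inverse limit) and for the $Y$ generators after multiplying by a suitable power of $X$ as in \eqref{eq5}; reduce the general $\mathbf H$ to this situation by clearing denominators, i.e. replace $\mathbf H$ by $\mathbf H\,\Xeb_\nu$ for a suitable $\nu$ (note $\Xeb_\nu$ acts injectively on $\Pas^+$, being multiplication by a monomial, so $H=0\iff H\Xeb_\nu=0$, and similarly $\varphi_k(\mathbf H)=0\iff\varphi_k(\mathbf H\Xeb_\nu)=0$), so that the modified operator is literally the restriction of an inverse limit operator $\lim_{\stackrel{\longleftarrow}{k}}A_k$ with $A_k=\varphi_k(\mathbf H\Xeb_\nu)\in\H_k$ acting on $x_\nu\P_k^+$. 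Then $\Pi_k(Hf)=A_k\,\Pi_k f=\varphi_k(\mathbf H\Xeb_\nu)\,(x^\nu p)$, which is nonzero by faithfulness of $\H_k$ on $\P_k$, contradicting $H=0$.

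The main obstacle I anticipate is the reduction that lets us read off $\Pi_k(Hf)$ from $\varphi_k(\mathbf H)$ applied to $\Pi_k f$: the $\Yeb_i$-action is only a limit in the weak sense of \S\ref{sec: operator-limit}, not an inverse limit, so one cannot naively apply $\Pi_k$ and commute it past $H$. This is exactly what forces the trick of right-multiplying by an appropriate monomial $\Xeb_\nu$ (with $\nu$ large enough in each of the first $r$ slots that every $\Yeb_i$, $i\le r$, occurring in $\mathbf H$ gets paired with enough $\Xeb_i$'s), turning each relevant $\Yeb_i\X_i$-factor into the restriction of a genuine inverse limit operator via \eqref{eq5}; care is needed to move the $\Xeb_\nu$ to the right past the other generators using the relations \eqref{X relations}, \eqref{Y relations}, \eqref{XY cross relations}, which only introduces extra $\Teb_w$-factors and shifts of $\nu$, all harmless. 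Once $H\Xeb_\nu$ is realized as an honest inverse limit operator on $x_\nu\Pas^+$, the computation of $\Pi_k$ and the appeal to finite-rank faithfulness are routine.
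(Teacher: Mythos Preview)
Your approach is essentially the paper's: both directions rest on $H=\lim_k H_k$, and for the nontrivial direction both use \eqref{eq5} to upgrade the weak limit to an honest inverse limit after multiplying by enough $x_i$'s, then invoke faithfulness in finite rank. The paper's execution is more direct than yours: rather than right-multiplying $\mathbf H$ by $\Xeb_\nu$ and then rearranging factors, it simply evaluates $H$ on elements of $(x_1\cdots x_k)\P_k^+$, observes (citing \S\ref{sec: YX}) that on this subspace every $\Y_i$ with $i\le k$ already acts as an inverse limit so that $0=H\cdot(x_1\cdots x_k)x^\lambda=\displaystyle\lim_{\longleftarrow}H_m\cdot(x_1\cdots x_k)x^\lambda$, and concludes $H_k=0$ from faithfulness of $\H_k^+$ on $(x_1\cdots x_k)\P_k^+$; your last paragraph (moving $\Xeb_\nu$ ``past the other generators'') is attempting to justify exactly this inverse-limit step, but the relations \eqref{XY relations} produce $\Yeb\Xeb$-blocks rather than merely ``extra $\Teb_w$-factors and shifts of $\nu$,'' so your description there is slightly off even though the conclusion is correct.
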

\begin{proof}
For any $k\geq r$, let  $H_k=\varphi_k(\mathbf{H})\in \H_k^+$, where $\varphi_k: \H(k)^+\to \H_k^+$ is the morphism defined in Remark \ref{rem: varphi-1}. By Theorem \ref{thm: Y limit}, $H=\lim_k H_k$.

Assume that $H=0$. We claim that $H_k=0$ for all $k\geq r$. Indeed, as explained in 
\S\ref{sec: YX}, on $x_1\cdots x_k\Pas^+$, the action on any $\Y_i$, $1\leq i\leq k$, is the inverse limit of the actions of the sequence of operators $Y_i^{(k)}$, $k\geq r$. In consequence, for any  $\lambda\in \Lambda_k$,
$$0=H\cdot (x_1\cdots x_k)x^\lambda= \lim_{\stackrel{\longleftarrow}{k}} H_k\cdot (x_1\cdots x_k)x^\lambda.$$
The action of $\H_k^+$ on $(x_1\cdots x_k)\P_k^+$ is faithful and therefore $H_k=0$.
\end{proof}
Theorem \ref{thm: kernel} is the starting point  in \S\ref{sec: faith} of our  proof of the faithfulness of the standard representation of $\H^+$.


\section{Triangularity}

\subsection{}

A direct consequence of \cite{IW}*{Lemma 6.28} is the following.
\begin{Prop}\label{prop: p(k)-stable}
The space  $\P(k)^+$ is stable under the action of $\H^+(k)\subset \H^+$.
\end{Prop}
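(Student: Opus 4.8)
The statement asserts that $\P(k)^+$ is stable under the action of the subalgebra $\H(k)^+ \subseteq \H^+$ generated by $\Teb_i$, $\Xeb_i$, $\Yeb_i$ for $i \leq k$ (and $\Teb_j$ for $j \leq k-1$). Since $\H(k)^+$ is generated as an algebra by these finitely many elements, it suffices to check stability under the action of each generator separately; the general case then follows because $\P(k)^+$ is closed under composition of stable operators. So the plan reduces to verifying three things: $\X_i \big(\P(k)^+\big) \subseteq \P(k)^+$ for $i \leq k$, $\T_i\big(\P(k)^+\big) \subseteq \P(k)^+$ for $i \leq k-1$, and $\Y_i\big(\P(k)^+\big) \subseteq \P(k)^+$ for $i \leq k$.

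The first two are essentially immediate. Recall $\P(k)^+ = \{F \in \P_\infty^+ \mid s_j F = F \text{ for all } j > k\}$, and that $\X_i$ acts as multiplication by $x_i$ while $\T_i$ acts by the Demazure–Lusztig formula \eqref{eq: Taction}. For $i \leq k$ and $j > k$, the transposition $s_j$ commutes with multiplication by $x_i$ (as $j, j+1 > k \geq i$), so $s_j(x_i F) = x_i s_j F = x_i F$; hence $\X_i$ preserves $\P(k)^+$. Likewise, for $i \leq k-1$ and $j > k$ (so in particular $|i - j| \geq 2$ and $s_i, s_j$ commute), one computes $s_j \T_i F = \T_i s_j F = \T_i F$ using that the operator $\T_i$ only involves $x_i, x_{i+1}$ and $s_i$, all of which commute with $s_j$; hence $\T_i$ preserves $\P(k)^+$.

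The remaining and only substantive case is $\Y_i$ for $i \leq k$, and this is exactly where the cited result \cite{IW}*{Lemma 6.28} does the work. The point is that $\Y_i = \lim_k \widetilde{Y}_i^{(k)}$ (equivalently $\lim_k Y_i^{(k)}$ by Theorem \ref{thm: Y limit}), and one must show that applying this limit operator to an element of $\P(k)^+$ — i.e. a formal polynomial that is symmetric in all variables past the $k$-th — produces again something symmetric in all variables past the $k$-th. I would argue this by taking $F \in \P(k)^+$, writing $F$ in the monomial basis $\Mas(k)$ of Proposition \ref{prop: basis} as a finite combination of $m_{\sym{\lambda}{\mu}}$ with $\ell(\lambda) \leq k$, and tracking, for $j > k$, the commutation of $s_j$ with the finite-rank operators $Y_i^{(m)}$ on $\P_m^+$ for $m$ large: since $i \leq k < j$, the simple reflection $s_j$ commutes with $Y_i^{(m)}$ in $\H_m$ (as $Y_i^{(m)}$ only involves $T_1, \dots, T_{i-1}$ and $\omega_m$-type data in positions $\leq i$, while $s_j$ acts in positions $j, j+1$ — more precisely this is the content of \cite{IW}*{Lemma 6.28}). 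Passing to the limit using Proposition \ref{prop: continuity}(i) then gives $s_j \Y_i F = \Y_i s_j F = \Y_i F$, so $\Y_i F \in \P(k)^+$.

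The main obstacle is purely bookkeeping at the level of the limit: one must make sure that the sequence $(Y_i^{(m)} \Pi_m F)_{m}$ and the sequence obtained after applying $s_j$ are compatible in the sense of Definition \ref{Def of limit} so that the equality $\lim_m s_j Y_i^{(m)} \Pi_m F = s_j \lim_m Y_i^{(m)} \Pi_m F$ is legitimate — but this is guaranteed by Theorem \ref{thm: Y limit} together with the fact that $s_j$ is itself an inverse-limit operator and hence continuous in the sense of Proposition \ref{prop: continuity}. Once the commutation $[s_j, Y_i^{(m)}] = 0$ for $j > k \geq i$ is in hand at finite rank (the algebraic heart, supplied by \cite{IW}*{Lemma 6.28}), the passage to the limit and the reduction to generators complete the proof.
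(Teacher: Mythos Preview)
Your overall structure matches the paper's: both reduce to \cite{IW}*{Lemma 6.28}, and your reduction to generators together with the trivial verification for $\X_i$ and $\T_i$ is fine.

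There is, however, a genuine misstatement in your treatment of $\Y_i$. You assert that ``the simple reflection $s_j$ commutes with $Y_i^{(m)}$ in $\H_m$'' and justify this by saying $Y_i^{(m)}$ ``only involves $T_1,\dots,T_{i-1}$ and $\omega_m$-type data in positions $\leq i$''. Neither claim is correct. From the formula $Y_i^{(m)}=\tc^{m+1-i}T_{i-1}\cdots T_1\,\omega_m^{-1}\,T_{m-1}^{-1}\cdots T_i^{-1}$, the operator visibly involves $T_{m-1}^{-1},\dots,T_i^{-1}$ (positions well beyond $k$) and $\omega_m^{-1}$, which cyclically permutes \emph{all} $m$ variables. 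As operators on $\P_m^+$, $s_j$ and $Y_i^{(m)}$ do \emph{not} commute in general.

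What is true, and what actually drives the argument, is that $T_j Y_i^{(m)}=Y_i^{(m)}T_j$ for $j\neq i-1,i$ (relation \eqref{Y relations}), together with the characterization that $G\in\P_m^+$ is $s_j$-symmetric if and only if $T_j G=G$. The forward implication is immediate from the Demazure--Lusztig formula; for the converse, write $G=P+x_jQ$ with $P,Q$ in the $s_j$-invariants (a free rank-two decomposition), compute $T_jG=P+(x_{j+1}+(1-\tc)x_j)Q$, and conclude $Q=0$. Then for $F\in\P(k)^+$, $i\leq k<j<m$, one has $T_j\Pi_mF=\Pi_mF$, hence $T_j\,Y_i^{(m)}\Pi_mF=Y_i^{(m)}T_j\,\Pi_mF=Y_i^{(m)}\Pi_mF$, so $Y_i^{(m)}\Pi_mF$ is $s_j$-symmetric; now your limit argument via Proposition~\ref{prop: continuity} goes through. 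Replace the commutation of $s_j$ with that of $T_j$ and the parenthetical accordingly, and the proof is complete.
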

The following result \cite{IW}*{Proposition 6.32} gives a more explicit formula for the action of  $\Y_1$.
\begin{Prop}\label{prop: y-action}
Let $n\geq 0$, $f(x_1,\dots,x_{k-1})\in \P_{k-1}^{+}$, and $G[\mathbf{X}_{k-1}]\in  Sym[\mathbf{X}_{k-1}]$. We regard $$F=f(x_1,\dots,x_{k-1})x_k^n G[\mathbf{X}_{k-1}]$$ as an element of $\P(k)^+$. Then,
$$
\Y_1T_1\cdots T_{k-1} F =  \frac{\tc^k}{1-\tc} f(x_2,\dots,x_{k})G[\mathbf{X}_{k}+\qc x_1](h_n[(1-\tc )(\mathbf{X}_k+\qc x_1)]-h_n[(1-\tc )\mathbf{X}_k]).
$$
\end{Prop}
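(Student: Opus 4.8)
The plan is to work in finite rank $m\geq k$, compute there, and pass to the limit.  By the definition of $\Y_1$ (formula \eqref{eq: Yaction} with $i=1$, together with $\varpi_m=\pr_1\omega_m^{-1}$) one has $\widetilde{Y}_1^{(m)}=\tc^{m}\,\pr_1\,\omega_m^{-1}\,T_{m-1}^{-1}\cdots T_1^{-1}$ in $\H_m$, and since the limit operators $T_1,\dots,T_{k-1}$ satisfy $\Pi_m\circ T_j=T_j^{(m)}\circ\Pi_m$ for $j<k\leq m$ we obtain
$$\Y_1 T_1\cdots T_{k-1}F=\lim_m \widetilde{Y}_1^{(m)}\,T_1^{(m)}\cdots T_{k-1}^{(m)}\,\Pi_m F .$$
The product $T_{m-1}^{-1}\cdots T_1^{-1}\cdot T_1\cdots T_{k-1}$ telescopes to $T_{m-1}^{-1}\cdots T_k^{-1}$, so the term under the limit equals $\tc^{m}\,\pr_1\,\omega_m^{-1}\,T_{m-1}^{-1}\cdots T_k^{-1}\,\Pi_m F$.

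First I would reduce $T_{m-1}^{-1}\cdots T_k^{-1}\Pi_m F$ to a single monomial.  Each $T_i^{-1}$ with $k\leq i\leq m-1$ commutes with multiplication by any function symmetric in $x_i,x_{i+1}$; as $f(x_1,\dots,x_{k-1})$ involves none of $x_k,\dots,x_m$ and $G[\overline{\Xb}_{[k,m]}]$ is symmetric in $x_k,\dots,x_m$, both factors come out and one is left with $T_{m-1}^{-1}\cdots T_k^{-1}(x_k^n)$.  The key computational input is then the closed formula, valid for $n\geq1$,
$$T_{m-1}^{-1}\cdots T_k^{-1}(x_k^n)=\tc^{-(m-k)}\,x_m\,h_{n-1}[\,x_m+(1-\tc)(x_k+\cdots+x_{m-1})\,],$$
the right-hand side being $1$ when $n=0$.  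I would prove it by induction on $m-k$: the case $m=k+1$ is the identity $T_i^{-1}(x_i^n)=\tc^{-1}x_{i+1}h_{n-1}[x_{i+1}+(1-\tc)x_i]$, which follows directly from the defining formula for $T_i$; for the inductive step one expands $h_{n-1}[A+B]=\sum_j h_j[A]\,h_{n-1-j}[B]$, applies $T_{m-1}^{-1}$ to each term (it commutes with the coefficients, which avoid $x_{m-1},x_m$) via the base case, and recombines by the same Cauchy identity.

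Next I would apply $\omega_m^{-1}$, which acts by $x_i\mapsto x_{i+1}$ for $i<m$ and $x_m\mapsto\qc x_1$: it sends $f(x_1,\dots,x_{k-1})$ to $f(x_2,\dots,x_k)$, sends $G[\overline{\Xb}_{[k,m]}]$ to $G[\overline{\Xb}_{[k+1,m]}+\qc x_1]$, and sends $x_m\,h_{n-1}[x_m+(1-\tc)(x_k+\cdots+x_{m-1})]$ to $\qc x_1\,h_{n-1}[\qc x_1+(1-\tc)\overline{\Xb}_{[k+1,m]}]$.  Since $\tc^{m}\cdot\tc^{-(m-k)}=\tc^{k}$ and, for $n\geq1$, every resulting monomial already carries the factor $\qc x_1$, the projection $\pr_1$ acts as the identity, so
$$\widetilde{Y}_1^{(m)} T_1^{(m)}\cdots T_{k-1}^{(m)}\Pi_m F=\tc^{k}\,\qc x_1\,f(x_2,\dots,x_k)\,G[\overline{\Xb}_{[k+1,m]}+\qc x_1]\,h_{n-1}[\qc x_1+(1-\tc)\overline{\Xb}_{[k+1,m]}].$$
As $m\to\infty$ the two bracketed factors are compatible with the inverse system (stable under $x_m\mapsto0$) with limits $G[\Xb_k+\qc x_1]$ and $h_{n-1}[\qc x_1+(1-\tc)\Xb_k]$, while the prefactor is fixed; so the sequence converges in the sense of Definition~\ref{Def of limit} (and for $n=0$ a leftover $\tc^{m}$ forces the limit $0$), giving $\Y_1 T_1\cdots T_{k-1}F=\tc^{k}\,\qc x_1\,f(x_2,\dots,x_k)\,G[\Xb_k+\qc x_1]\,h_{n-1}[\qc x_1+(1-\tc)\Xb_k]$.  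Finally I would match this with the claimed formula through the plethystic identity $(1-\tc)\,\qc x_1\,h_{n-1}[\qc x_1+(1-\tc)\Xb_k]=h_n[(1-\tc)(\Xb_k+\qc x_1)]-h_n[(1-\tc)\Xb_k]$, proved by comparing generating functions in an auxiliary variable and using $\mathrm{Exp}[(1-\tc)\qc x_1 z]-1=(1-\tc)\qc x_1 z/(1-\qc x_1 z)$.

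The main obstacle I anticipate is establishing the closed formula for $T_{m-1}^{-1}\cdots T_k^{-1}(x_k^n)$ and carrying its induction cleanly, plus the bookkeeping that the sequence produced in the third step really converges in the sense of Definition~\ref{Def of limit}; once the reduction to the single monomial $x_k^n$ and the fact that $\pr_1$ is the identity are in hand, the passage to the limit and the final plethystic manipulation are routine.
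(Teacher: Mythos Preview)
Your proof is correct. The paper itself does not supply a proof of this proposition; it merely quotes the result from \cite{IW}*{Proposition 6.32}, so there is no in-paper argument to compare against. Your approach---reducing to the finite-rank operator $\widetilde{Y}_1^{(m)}T_1\cdots T_{k-1}=\tc^m\pr_1\omega_m^{-1}T_{m-1}^{-1}\cdots T_k^{-1}$, isolating the single computation $T_{m-1}^{-1}\cdots T_k^{-1}(x_k^n)$ via the closed form $\tc^{-(m-k)}x_m\,h_{n-1}[x_m+(1-\tc)\overline{\Xb}_{[k,m-1]}]$, applying $\omega_m^{-1}$ and $\pr_1$, then passing to the inverse limit and matching via the plethystic identity---is exactly the natural direct computation, and each step checks out (including the $n=0$ case, where the surviving factor $\tc^m$ in front of an inverse-compatible sequence indeed forces the limit to vanish).
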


\begin{Cor}\label{prop: y-vanishing}
Let $i\geq k$. Then, the restriction of $\Y_i$ to $\P(k-1)^+$ is the zero map.
\end{Cor}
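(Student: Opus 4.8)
The plan is to reduce the statement about $\Y_i$ on $\P(k-1)^+$ to the explicit formula for $\Y_1$ in Proposition \ref{prop: y-action}, using the conjugation relations \eqref{Y relations} that express each $\Yeb_i$ in terms of $\Yeb_1$ and the $\Teb_j$'s. First I would recall that in $\H^+$ (and hence acting on $\Pas^+$) the relation $\tc^{-1}\Teb_j\Yeb_j\Teb_j=\Yeb_{j+1}$ lets us write $\Yeb_i$ as a product of $\Yeb_1$ with finitely many $\Teb_j$'s on either side; concretely $\Y_i = \tc^{-(i-1)}\,\T_{i-1}\cdots\T_1\,\Y_1\,\T_1\cdots\T_{i-1}$ (or the analogous expression matching the finite-rank formula $Y_i=\tc^{k+1-i}T_{i-1}\cdots T_1\omega_k^{-1}T_{k-1}^{-1}\cdots T_i^{-1}$ in the limit). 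Since $i\geq k$, all the indices $j$ appearing are $\geq 1$, and the key point is that the innermost operators $\T_1\cdots\T_{i-1}$ act on $\P(k-1)^+$ before $\Y_1$ is applied.

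The main step is then to show that $\Y_1$ annihilates the relevant image. I would take $F\in \P(k-1)^+$ and, after applying the string of $\T_j$'s (each of which preserves the appropriate $\P(m)^+$ for $m$ large, by Proposition \ref{prop: p(k)-stable}), land in a space of the form $\P(k-1)^+$ viewed inside $\P(k)^+$ — that is, elements of the shape $f(x_1,\dots,x_{k-1})\,x_k^0\,G[\mathbf{X}_{k-1}]$ with $n=0$. Plugging $n=0$ into the formula of Proposition \ref{prop: y-action} gives $h_0[(1-\tc)(\mathbf{X}_k+\qc x_1)]-h_0[(1-\tc)\mathbf{X}_k]=1-1=0$, so $\Y_1 T_1\cdots T_{k-1}F=0$. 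This is exactly the mechanism: the operator $\Y_1$ applied to a symmetric-in-$\mathbf{X}_{k-1}$ function with no $x_k$ dependence produces a difference of complete homogeneous functions of degree $0$, which vanishes. One must be slightly careful that the chain of $\T_j$'s for general $i\geq k$ produces something of this exact form; this is where I would do the bookkeeping, using that $\T_1,\dots,\T_{i-1}$ only move variables among $x_1,\dots,x_i$ and that on $\P(k-1)^+$ these are still symmetric functions in all but the first $k-1$ variables, so after the conjugation we are reduced to a statement about $\Y_1$ (or $\Y_k$, which by the same relation reduces to $\Y_1 T_1\cdots T_{k-1}$ acting on $\P(k-1)^+$).

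The cleanest route is probably to prove it first for $i=k$ and then deduce the case $i>k$ from $i=k$ by the same conjugation relation together with the fact, from Proposition \ref{prop: p(k)-stable}, that $\H(k)^+$ stabilizes $\P(k)^+$: if $\Y_k$ kills $\P(k-1)^+$, one writes $\Y_{i}=\tc^{-1}\T_{i-1}\Y_{i-1}\T_{i-1}$ and inducts downward, checking that $\T_{i-1}$ sends $\P(k-1)^+$ into itself for $i-1\geq k$ (indeed $s_{i-1}$ with $i-1\geq k$ acts trivially on $\P(k-1)^+$, so $T_{i-1}$ acts as the scalar $1$ there), reducing immediately to $\Y_k$. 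So the real content is the single computation $\Y_k|_{\P(k-1)^+}=0$, i.e. the $n=0$ specialization of Proposition \ref{prop: y-action}. The main obstacle is not conceptual but notational: making sure the reduction $\Y_k = \tc\cdot(\text{something})\cdot\Y_1 T_1\cdots T_{k-1}$ is set up so that Proposition \ref{prop: y-action} applies verbatim with $n=0$, and confirming the $\T_j$'s with large index genuinely act as the identity on $\P(k-1)^+$ so nothing escapes that subspace before $\Y_1$ is applied.
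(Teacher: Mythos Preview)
Your proposal is correct and is precisely the argument the paper has in mind: the corollary is stated immediately after Proposition \ref{prop: y-action} with no separate proof, so the intended derivation is exactly the $n=0$ specialization $h_0-h_0=0$ together with the conjugation $\Y_k=\tc^{-(k-1)}\T_{k-1}\cdots\T_1\,\Y_1\,\T_1\cdots\T_{k-1}$ and the trivial reduction from $i>k$ to $i=k$ via $\T_{i-1}|_{\P(k-1)^+}=\mathrm{id}$. One minor clean-up: you do not need to track where $\T_1\cdots\T_{k-1}F$ lands, since Proposition \ref{prop: y-action} computes $\Y_1\T_1\cdots\T_{k-1}$ as a single operator applied to $F=f(x_1,\dots,x_{k-1})G[\mathbf{X}_{k-1}]$; the vanishing for $n=0$ is immediate without first describing the intermediate image.
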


\subsection{}\label{sec: eigen-def}
 The operators $Y^{(k)}_i:\P_k^+\to \P_k^+$ are upper triangular with respect to the basis $\{x^\lambda~|~\lambda\in \Lambda_k\}$ ordered by the $\St_{k}$-Bruhat order  (see, e.g. \cite{Kn}*{Lemma 6.1}). Before stating the result, we introduce the following notation. For $\lambda\in \Lambda_k$ and $i\leq k$, let
\begin{equation}
u_\lambda(i)=\Big|\{1\leq j\leq i~|~\lambda_j> \lambda_i\}\Big|+\Big|\{i\leq j\leq k~|~\lambda_j\geq \lambda_i\}\Big|.
\end{equation}
\begin{Prop}\label{prop: Y-triangularity}
Let $\lambda\in \Lambda_k$ and $1\leq i\leq k$. Then,
$$
Y_i^{(k)} x^\lambda\in \qc^{\lambda_i}\tc^{u_\lambda(i)} x^\lambda +\sum_{\mu<\lambda} \K x^\mu.
$$
\end{Prop}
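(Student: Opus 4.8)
\textbf{Proof proposal for Proposition \ref{prop: Y-triangularity}.}

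The plan is to reduce the statement to the known triangularity of the $\omega_k$-conjugation structure of the Cherednik operators and then track the exponents of $\qc$ and $\tc$ separately. Recall from the presentation of $\H_k$ that $Y_i^{(k)}=\tc^{k+1-i}T_{i-1}\cdots T_1\,\omega_k^{-1}\,T_{k-1}^{-1}\cdots T_i^{-1}$. So the starting point is to analyze $\omega_k^{-1}x^\lambda$, where $\omega_k^{-1}f(x_1,\dots,x_k)=f(x_2,\dots,x_k,\qc x_1)$, hence $\omega_k^{-1}x^\lambda=\qc^{\lambda_1}x^{\lambda'}$ with $\lambda'=(\lambda_2,\dots,\lambda_k,\lambda_1)$. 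The operators $T_j$ and $\tc T_j^{-1}$ have integral-in-$\tc$ matrix coefficients (Remark \ref{rem: Y coeff}) and, by the explicit formulas together with Lemma \ref{lemma-1}(iv), act triangularly: $T_j x^\mu\in \tc^{\epsilon}x^{s_j\mu}+(\text{lower in Bruhat order, bounded }\tc\text{-powers})$ with the $\tc$-exponent on the leading term being $0$ or $1$ according to the sign of $\langle\mu,\alpha_j\rangle$. The $\qc$-exponent is never changed by any $T_j$ (they commute with the total degree in each graded piece, and more precisely permute monomials of the same content), so the coefficient of $x^\lambda$ in $Y_i^{(k)}x^\lambda$ carries exactly $\qc^{\lambda_1}$ transported back to $\qc^{\lambda_i}$ — wait, one must be careful: conjugating by $T_{i-1}\cdots T_1$ on the left and $T_{k-1}^{-1}\cdots T_i^{-1}$ on the right is precisely the standard device (see the last Remark before \S\ref{sec: dahastandardrep}) realizing $Y_i^{(k)}$ from $\omega_k^{-1}$, and under it the monomial $x^\lambda$ is the one whose "$\omega$-cycled" version returns to $x^\lambda$; the relevant cyclic shift sends the first coordinate of the intermediate weight to position $i$, so the $\qc$-power attached to the diagonal entry is $\qc^{\lambda_i}$.

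The key steps, in order, are: (1) write $Y_i^{(k)}=\tc^{k+1-i}T_{i-1}\cdots T_1\,\omega_k^{-1}\,T_{k-1}^{-1}\cdots T_i^{-1}$ and record the three elementary triangularity facts for $T_j$, $\tc T_j^{-1}$, and $\omega_k^{-1}$ on monomials, including the precise $\tc$-exponent on each leading term in terms of $\langle\cdot,\alpha_j\rangle$; (2) compute the diagonal coefficient: track $x^\lambda\xmapsto{T_{k-1}^{-1}\cdots T_i^{-1}}$ (leading term a cyclic-type rearrangement), then $\omega_k^{-1}$ (introducing $\qc^{\lambda_i}$), then $T_{i-1}\cdots T_1$ (leading term back to $x^\lambda$), multiplying the accumulated $\tc$-powers and the prefactor $\tc^{k+1-i}$, and verify the total $\tc$-exponent equals $u_\lambda(i)$ by a direct count: the $\tc^{k+1-i}$ contributes, and each simple reflection in the reduced word for the underlying affine translation contributes $0$ or $1$ exactly according to whether it is an "ascent" or "descent" at the relevant intermediate weight, and one checks combinatorially that the total equals $|\{j\le i:\lambda_j>\lambda_i\}|+|\{j\ge i:\lambda_j\ge\lambda_i\}|$; (3) for the off-diagonal terms, invoke Lemma \ref{lemma-1}(ii)–(iv) and Proposition \ref{prop: B-properties}(i) to see that every non-leading monomial produced along the way is $<\lambda$ in the $\St_k$-Bruhat order and lies in $\Lambda_k$, and that these inequalities are preserved under further application of triangular operators (using part (iii) of Lemma \ref{lemma-1} at each step), so the final expansion has all lower terms strictly below $\lambda$; (4) finally note the coefficients are genuinely in $\K$ (rational in $\tc$, polynomial up to the explicit $\qc^{\lambda_i}\tc^{u_\lambda(i)}$ normalization) by Remark \ref{rem: Y coeff}.

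I expect the main obstacle to be step (2): getting the $\tc$-exponent of the diagonal entry to come out to exactly $u_\lambda(i)$ on the nose, rather than merely "some explicit power of $\tc$." This requires choosing a good reduced decomposition of the minimal-length element $w_\lambda$ (or of the translation realizing $Y_i^{(k)}$ as $\omega_k^{-1}$-conjugate), and then matching, reflection by reflection, the contributions $+1$ (resp.\ $0$) to the two cardinalities in the definition of $u_\lambda(i)$; the bookkeeping is exactly the inversion-counting that underlies the well-known product formula for the eigenvalue of $Y_i$ on the non-symmetric monomial, so the cleanest route is to cite \cite{Kn}*{Lemma 6.1} (which is already referenced here) for the eigenvalue normalization and then only verify that the remainder is strictly lower-triangular, which is the softer part. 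If one prefers a self-contained argument, the induction on $k-i$ in the style of Proposition \ref{prop: Y-Yt} — peeling off one $T^{-1}$ at a time and updating both the Bruhat estimate and the $\tc$-exponent count — is the natural fallback.
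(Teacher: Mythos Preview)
The paper does not actually prove this proposition: it is stated with the attribution ``(see, e.g.\ \cite{Kn}*{Lemma 6.1})'' and no argument is given. Your proposal therefore goes beyond what the paper does, and your outline---factor $Y_i^{(k)}$ as $\tc^{k+1-i}T_{i-1}\cdots T_1\,\omega_k^{-1}\,T_{k-1}^{-1}\cdots T_i^{-1}$, use the triangularity of each $T_j^{\pm 1}$ on monomials via Lemma~\ref{lemma-1}, and track the diagonal coefficient through the reduced word---is precisely the standard argument that the cited reference carries out. Your own suggestion that ``the cleanest route is to cite \cite{Kn}*{Lemma 6.1}'' is exactly what the paper does.

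One small caution on your sketch: the statement that the $T_j$'s ``commute with the total degree in each graded piece, and more precisely permute monomials of the same content'' is not literally correct---$T_j$ does not permute monomials, and off-diagonal terms can have different $S_k$-content from the leading term. What is true (and sufficient) is simply that no $T_j^{\pm1}$ introduces any power of $\qc$, so the only source of $\qc$ is $\omega_k^{-1}$, which contributes $\qc^{\lambda_i}$ once the intermediate weight has been cycled so that $\lambda_i$ sits in the last slot. With that correction, your plan is sound; the bookkeeping in step~(2) is indeed the delicate part, and your fallback of citing \cite{Kn} for it matches the paper's choice.
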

Recall that, by Proposition \ref{prop: B-properties}(i), in the above sum we must have $\mu\in \Lambda_k$. 


\subsection{} We will show that the operators $\widetilde{Y}_i^{(k)}:\P_k^+\to \P_k^+$ are upper triangular with respect to the basis $\{x^\lambda~|~\lambda\in \Lambda_k\}$ ordered by the $\St_{k}$-Bruhat order.  For any $\lambda\in \Lambda$ and $i\geq 1$, we define $\sgn_i(\lambda)$ to be $0$ if $\lambda_i=0$, or $i>\ell(\lambda)$, and to be $1$, if $\lambda_i>0$.

\begin{Prop}\label{prop: Ytilde-triangularity}
Let $\lambda\in \Lambda_k$ and $1\leq i\leq k$. Then,
$$
\widetilde{Y}_i^{(k)} x^\lambda\in \sgn_i(\lambda)\qc^{\lambda_i}\tc^{u_\lambda(i)} x^\lambda +\sum_{\mu<\lambda} \K x^\mu.
$$
\end{Prop}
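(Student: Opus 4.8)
The plan is to reduce the claim about $\widetilde{Y}_i^{(k)}$ to the already-established triangularity of $Y_i^{(k)}$ (Proposition \ref{prop: Y-triangularity}) together with the explicit description of the discrepancy $Y_i^{(k)}-\widetilde{Y}_i^{(k)}$ from Proposition \ref{prop: Y-Yt}. Recall from \eqref{eq: Yaction} that $\widetilde{Y}_i^{(k)} = \left(T_{i-1}\cdots T_1 \pr_1 T_1^{-1}\cdots T_{i-1}^{-1}\right) Y_i^{(k)}$, and Proposition \ref{prop: Y-Yt} tells us that $(Y_i^{(k)}-\widetilde{Y}_i^{(k)})x^\lambda$ is $0$ when $\lambda_i>0$ and equals $\tc^{k+1-i}T_{i-1}\cdots T_1\omega_i^{-1}x^\lambda$ when $\lambda_i=0$. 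So I would split into the two cases according to the sign of $\lambda_i$.

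First, suppose $\lambda_i>0$, so $\sgn_i(\lambda)=1$. Then by Proposition \ref{prop: Y-Yt} we have $\widetilde{Y}_i^{(k)}x^\lambda = Y_i^{(k)}x^\lambda$, and Proposition \ref{prop: Y-triangularity} gives exactly the desired statement $\widetilde{Y}_i^{(k)}x^\lambda \in \qc^{\lambda_i}\tc^{u_\lambda(i)}x^\lambda + \sum_{\mu<\lambda}\K x^\mu$, which matches the claim since $\sgn_i(\lambda)=1$.

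Second, suppose $\lambda_i=0$, so $\sgn_i(\lambda)=0$ and the claimed leading term vanishes; I must show $\widetilde{Y}_i^{(k)}x^\lambda \in \sum_{\mu<\lambda}\K x^\mu$, i.e. that all monomials occurring in $\widetilde{Y}_i^{(k)}x^\lambda$ are strictly below $\lambda$ in the Bruhat order. Here I use $\widetilde{Y}_i^{(k)}x^\lambda = Y_i^{(k)}x^\lambda - \tc^{k+1-i}T_{i-1}\cdots T_1\omega_i^{-1}x^\lambda$. For the first term, Proposition \ref{prop: Y-triangularity} gives $Y_i^{(k)}x^\lambda \in \qc^{\lambda_i}\tc^{u_\lambda(i)}x^\lambda + \sum_{\mu<\lambda}\K x^\mu = \tc^{u_\lambda(i)}x^\lambda + \sum_{\mu<\lambda}\K x^\mu$ (using $\lambda_i=0$). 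For the second term I need to show that $\tc^{k+1-i}T_{i-1}\cdots T_1\omega_i^{-1}x^\lambda$ has the form $\tc^{u_\lambda(i)}x^\lambda + \sum_{\mu<\lambda}\K x^\mu$ as well, so that the leading terms cancel and only strictly-lower monomials survive. The operator $\tc^{k+1-i}T_{i-1}\cdots T_1\omega_i^{-1}$ equals $\tc^{k-i}Y_i^{(i)}$ for $i\geq 2$ (and for $i=1$ it is $\tc^k$ times the identity, with $\omega_1=\mathrm{id}$). Since $\lambda_i=0$ and $\lambda\in\Lambda_k$, only the variables $x_1,\dots,x_{i-1}$ actually occur in... wait — more precisely, the operator $\omega_i^{-1}$ and $T_1,\dots,T_{i-1}$ act only on the first $i$ variables, so the action on $x^\lambda = x^{(\lambda_1,\dots,\lambda_{i-1},0)}\cdot x_{i+1}^{\lambda_{i+1}}\cdots x_k^{\lambda_k}$ factors as $(\tc^{k-i}Y_i^{(i)}x^{(\lambda_1,\dots,\lambda_{i-1},0)})$ times the monomial in the remaining variables. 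Applying Proposition \ref{prop: Y-triangularity} in rank $i$ to the weight $(\lambda_1,\dots,\lambda_{i-1},0)\in\Lambda_i$ with index $i$: its leading term is $\qc^0\tc^{u}$ times the monomial, where $u = u_{(\lambda_1,\dots,\lambda_{i-1},0)}(i)$ computed in $\Lambda_i$. I would then check the bookkeeping identity $k-i + u_{(\lambda_1,\dots,\lambda_{i-1},0)}(i) = u_\lambda(i)$ directly from the definition of $u_\lambda(i)$: the term $|\{i\leq j\leq k : \lambda_j\geq \lambda_i=0\}|$ in $u_\lambda(i)$ contributes $k-i+1$ from the positions $i,i+1,\dots,k$, while in rank $i$ it contributes only $1$, accounting for the factor $\tc^{k-i}$; the term $|\{1\leq j\leq i : \lambda_j > 0\}|$ is the same in both computations. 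Finally, I must verify that the lower-order monomials $x^\mu$ produced by $Y_i^{(i)}$ in rank $i$, once multiplied by $x_{i+1}^{\lambda_{i+1}}\cdots x_k^{\lambda_k}$, remain $<\lambda$ in the $\St_k$-Bruhat order; this follows from Proposition \ref{prop: B-properties}(ii) since the last $k-i$ coordinates agree.

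The main obstacle will be the combinatorial bookkeeping in the $\lambda_i=0$ case: getting the exponent match $\tc^{k-i}\cdot\tc^{u_{(\lambda_1,\dots,\lambda_{i-1},0)}(i)} = \tc^{u_\lambda(i)}$ precisely right from the definition of $u_\lambda$, and confirming the cancellation of leading terms is exact rather than merely matching up to lower order. Everything else is a routine application of results already proved in the excerpt. A cleaner alternative worth keeping in mind, if the direct exponent comparison becomes unwieldy, is to argue purely structurally: $\widetilde{Y}_i^{(k)}x^\lambda = (T_{i-1}\cdots T_1\pr_1 T_1^{-1}\cdots T_{i-1}^{-1})Y_i^{(k)}x^\lambda$, and when $\lambda_i=0$ the leading monomial $x^\lambda$ of $Y_i^{(k)}x^\lambda$ is killed by the projector-conjugate $T_{i-1}\cdots T_1\pr_1 T_1^{-1}\cdots T_{i-1}^{-1}$ — this is essentially the content of Lemma \ref{lemma: Y-Yt} — while that operator preserves the property of being a combination of monomials $\leq\lambda$ (one checks $T_j^{\pm 1}$ and $\pr_1$ do not increase Bruhat order on monomials $\mu\leq\lambda$ with $\mu_i$ controlled), so no monomial $\not<\lambda$ can appear.
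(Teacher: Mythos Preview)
Your proposal is correct and follows essentially the same route as the paper: split on $\lambda_i>0$ versus $\lambda_i=0$, invoke Proposition~\ref{prop: Y-Yt} to identify $\widetilde{Y}_i^{(k)}x^\lambda$ with $Y_i^{(k)}x^\lambda$ or with $Y_i^{(k)}x^\lambda-\tc^{k-i}x^\eta\cdot(Y_i^{(i)}x^\nu)$, and then apply Proposition~\ref{prop: Y-triangularity} in ranks $k$ and $i$. Your explicit verification of the exponent identity $k-i+u_\nu(i)=u_\lambda(i)$ and your appeal to Proposition~\ref{prop: B-properties}(ii) to transfer the Bruhat comparison from $\Lambda_i$ to $\Lambda_k$ make the argument more transparent than the paper's terse final sentence (which in fact cites part~(i) rather than the part~(ii) that is actually needed for the order comparison).
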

\begin{proof} We will make use of Proposition \ref{prop: Y-Yt}. If $\lambda_i>0$, then $\widetilde{Y}_i^{(k)} x^\lambda={Y}_i^{(k)} x^\lambda$, and our claim is precisely Proposition \ref{prop: Y-triangularity}. If $\lambda_i=0$, then define $\nu\in \Lambda_i,~\eta\in \Lambda_k$, $\nu_j=\lambda_j$, $\eta_j=0$ for $1\leq j\leq i$, and  $\eta_j=\lambda_j$ for  $i<j\leq k$.
Proposition \ref{prop: Y-Yt} 
can be restated as
$$
 \widetilde{Y}_i^{(k)} x^\lambda={Y}_i^{(k)} x^\lambda-\tc^{k-i}x^\eta\cdot ({Y}_i^{(i)} x^\nu).
$$
Now, Proposition \ref{prop: Y-triangularity} and Proposition \ref{prop: B-properties}(i) imply the desired statement.
\end{proof}
\subsection{} As a consequence of Proposition \ref{prop: p(k)-stable},  Corollary \ref{prop: y-vanishing}, Proposition \ref{prop: basis}, and Proposition  \ref{prop: Ytilde-triangularity},
we establish the upper triangularity of the operators $\Y_i$ with respect to the ordered basis $(\Mas, \preceq)$. 
\begin{Thm}\label{thm: cY-triangularity}
Let $i\geq 1$.  Let $\sym{\lambda}{\mu}\in \Lambdas$ and $i\geq 1$. Then, 
$$
\Y_i m_{\sym{\lambda}{\mu}}\in \sgn_i(\lambda)\qc^{\lambda_i}\tc^{u_{\con{\lambda}{\mu}}(i)}  m_{\sym{\lambda}{\mu}} + \sum_{\sym{\lambda^\prime}{\mu^\prime}\prec \sym{\lambda}{\mu}} \K m_{\sym{\lambda^\prime}{\mu^\prime}}.
$$
\end{Thm}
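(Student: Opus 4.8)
The plan is to deduce Theorem \ref{thm: cY-triangularity} from the finite-rank triangularity of $\widetilde{Y}_i^{(k)}$ (Proposition \ref{prop: Ytilde-triangularity}) by passing to the limit, using the fact that $\Y_i = \lim_k \widetilde{Y}_i^{(k)}$ and the stability statements of Proposition \ref{prop: p(k)-stable} and Corollary \ref{prop: y-vanishing}. First I would dispose of the case $i > \ell(\lambda)$: here $m_{\sym{\lambda}{\mu}} \in \P(\ell(\lambda))^+ \subseteq \P(i-1)^+$, so Corollary \ref{prop: y-vanishing} gives $\Y_i m_{\sym{\lambda}{\mu}} = 0$; since $\sgn_i(\lambda) = 0$ in this case, the asserted formula holds trivially (the main term vanishes and the sum can be taken empty). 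So assume $i \leq \ell(\lambda)$, and set $\ell = \ell(\lambda)$.

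For the main case, fix $k \geq \ell$ large and compute $\widetilde{Y}_i^{(k)} \Pi_k m_{\sym{\lambda}{\mu}}$. Writing $\Pi_k m_{\sym{\lambda}{\mu}} = x^\lambda m_\mu[\overline{\mathbf{X}}_{[\ell+1,k]}]$ and expanding $m_\mu[\overline{\mathbf{X}}_{[\ell+1,k]}]$ into monomials $x^\rho$ (with $\rho$ a rearrangement of $\mu$ supported in positions $\ell+1,\dots,k$), each term $x^{\con{\lambda}{\rho}}$ is a monomial in $\Lambda_k$ whose $i$-th entry is $\lambda_i$ (since $i \leq \ell$). The key numerical observation is that for every such $\rho$ one has $u_{\con{\lambda}{\rho}}(i) = u_{\con{\lambda}{\mu}}(i)$: the count $u_\bullet(i)$ only compares entries with $\lambda_i$, and the multiset of entries in positions $> \ell$ is the same (namely the parts of $\mu$) regardless of the arrangement, while the entries in positions $\leq \ell$ are literally $\lambda$. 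Hence by Proposition \ref{prop: Ytilde-triangularity}, $\widetilde{Y}_i^{(k)} x^{\con{\lambda}{\rho}} \in \sgn_i(\lambda)\qc^{\lambda_i}\tc^{u_{\con{\lambda}{\mu}}(i)} x^{\con{\lambda}{\rho}} + \sum_{\sigma < \con{\lambda}{\rho}} \K x^\sigma$, and by Proposition \ref{prop: B-properties}(i) every such $\sigma$ lies in $\Lambda_k$. Summing over $\rho$, the diagonal terms reassemble to $\sgn_i(\lambda)\qc^{\lambda_i}\tc^{u_{\con{\lambda}{\mu}}(i)} \Pi_k m_{\sym{\lambda}{\mu}}$, and the remainder is a $\K$-combination of monomials $x^\sigma$ with $\sigma \in \Lambda_k$ and $\sigma < \con{\lambda}{\rho} \leq \con{\lambda 0^{k-\ell}}{\mu_+}$ for the appropriate ordering; here I would invoke Lemma \ref{lemma: ineq} to compare $\con{\lambda}{\rho}$ with $\con{\lambda 0^{k-\ell}}{\mu}$.

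Next I would re-express this finite-rank statement in the almost-symmetric monomial basis. By Proposition \ref{prop: p(k)-stable} the vector $\widetilde{Y}_i^{(k)} \Pi_k m_{\sym{\lambda}{\mu}}$ lies in $\Pi_k \P(k)^+$, hence is $\Pi_k$ applied to a $\K$-combination $\sum_{\sym{\lambda'}{\mu'}} c^{(k)}_{\sym{\lambda'}{\mu'}} m_{\sym{\lambda'}{\mu'}}$ with $\ell(\lambda') \leq k$ (Proposition \ref{prop: basis}). Applying $\pi_{k}$ repeatedly corresponds to deleting trailing zero-blocks, so the monomial-level Bruhat bound together with Proposition \ref{prop: B-properties}(ii) and the definition of $\preceq$ (Definition \ref{def: order}) forces every index $\sym{\lambda'}{\mu'}$ appearing with nonzero coefficient to satisfy $\sym{\lambda'}{\mu'} \preceq \sym{\lambda}{\mu}$, and moreover the coefficient of $m_{\sym{\lambda}{\mu}}$ itself is exactly $\sgn_i(\lambda)\qc^{\lambda_i}\tc^{u_{\con{\lambda}{\mu}}(i)}$ for all large $k$. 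Finally, taking $\lim_k$ and using $\Y_i = \lim_k \widetilde{Y}_i^{(k)}$ (Theorem \ref{thm: Y limit}) together with Proposition \ref{prop: continuity}(i) yields the claimed expansion; since $\Pas^+ = \bigcup_k \P(k)^+$ and $\Mas$ is a basis (Proposition \ref{prop: basis}), the limit coefficients are well-defined scalars in $\K$.

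The main obstacle I anticipate is the bookkeeping in the middle paragraph: one must verify cleanly that the strict Bruhat inequalities $\sigma < \con{\lambda}{\rho}$ at finite rank, after the monomials are regrouped into almost-symmetric monomials $m_{\sym{\lambda'}{\mu'}}$ and after stabilization in $k$, translate precisely into $\sym{\lambda'}{\mu'} \prec \sym{\lambda}{\mu}$ — in particular ruling out $\sym{\lambda'}{\mu'} = \sym{\lambda}{\mu}$ appearing in the remainder, and confirming that no index with $\ell(\lambda') > \ell(\lambda)$ but $\lambda'$ having trailing zeros sneaks in. This requires care with the interaction between the "$m_\mu$ versus $m_{\mu_+}$" rearrangement, the padding $\lambda 0^{\ell(\eta)-\ell(\lambda)}$ built into Definition \ref{def: order}, and the deletion-of-zeros compatibility in Proposition \ref{prop: B-properties}(ii); Lemma \ref{lemma: ineq} is the tool designed for exactly this, so the argument should go through, but writing it so that the strictness is manifest is the delicate point.
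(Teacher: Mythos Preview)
Your proposal is essentially correct and uses the same ingredients as the paper (Corollary~\ref{prop: y-vanishing} for $i>\ell(\lambda)$, Proposition~\ref{prop: Ytilde-triangularity} for the monomial triangularity, Lemma~\ref{lemma: ineq} to convert Bruhat inequalities into $\preceq$, and Proposition~\ref{prop: basis} for the basis), but the order of operations differs in a way that creates exactly the bookkeeping obstacle you flag. The paper sidesteps it entirely.

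The paper applies Proposition~\ref{prop: p(k)-stable} \emph{to the limit operator} $\Y_i$ (not to $\widetilde{Y}_i^{(k)}$): since $i\leq k:=\ell(\lambda)$ one has $\Y_i\in\H(k)^+$, so $\Y_i m_{\sym{\lambda}{\mu}}\in\P(k)^+$ directly, and hence expands in $\Mas(k)$. Then, rather than controlling all of $\widetilde{Y}_i^{(n)}\Pi_n m_{\sym{\lambda}{\mu}}$ and regrouping, the paper fixes a single term $m_{\sym{\lambda'}{\mu'}}$ in that expansion, picks the \emph{one} monomial $x^{\con{\lambda'0^{k-\ell(\lambda')}}}{\mu'}$ inside it, observes that this monomial must occur in $\widetilde{Y}_i^{(n)}\Pi_n m_{\sym{\lambda}{\mu}}$ for large $n$, and invokes Proposition~\ref{prop: Ytilde-triangularity} and Lemma~\ref{lemma: ineq}(ii) once. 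This ``limit first, then one monomial'' order eliminates the need to show that finite-rank almost-symmetric coefficients $c^{(k)}_{\sym{\lambda'}{\mu'}}$ converge, or that $\widetilde{Y}_i^{(k)}$ preserves any partial symmetry at finite rank.

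One concrete slip: your invocation of Proposition~\ref{prop: p(k)-stable} for ``$\widetilde{Y}_i^{(k)}\Pi_k m_{\sym{\lambda}{\mu}}\in\Pi_k\P(k)^+$'' is misplaced---that proposition concerns the action of $\H^+(k)$ on $\P(k)^+\subset\Pas^+$, not the finite-rank operator on $\P_k^+$ (and as written $\Pi_k\P(k)^+=\P_k^+$ is vacuous; you presumably meant $\Pi_k\P(\ell)^+$). The underlying symmetry fact is true, but the paper's route makes it unnecessary.
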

\begin{proof} Let $\sym{\lambda}{\mu}\in \Lambdas$.
If $i>k:= \ell(\lambda)$, then by  Corollary \ref{prop: y-vanishing}, we have $\Y_i  m_{\sym{\lambda}{\mu}}=0$ and our claim holds. For the remainder of the proof we assume that   $i\leq k$. Then, $\Y_i\in \H(k)^+$ and $m_{\sym{\lambda}{\mu}}\in \P(k)^+$. In this case, $\Y_i m_{\sym{\lambda}{\mu}}\in \P(k)^+$,
by Proposition \ref{prop: p(k)-stable} and Proposition \ref{prop: basis}. Let  $\sym{\lambda^\prime}{\mu^\prime}$  such that  $m_{\sym{\lambda^\prime}{\mu^\prime}}$ appears in $\Y_i m_{\sym{\lambda}{\mu}}$ with non-zero coefficient. In particular, we have   $\ell(\lambda^\prime)\leq k$.

Since $\Y_i=\lim_n \widetilde{Y}_i^{(n)}$,  any monomial  that appears in $\Y_i m_{\sym{\lambda}{\mu}}$,  in particular the monomial $x^{\con{\lambda^\prime0^{k-\ell(\lambda^\prime)}}{\mu^\prime}}$,  must appear in $\widetilde{Y}^{(n)}_i \Pi_n m_{\sym{\lambda}{\mu}}$, for any $n$ sufficiently large. Proposition \ref{prop: Ytilde-triangularity} implies that
$\con{\lambda^\prime0^{k-\ell(\lambda^\prime)}}{\mu^\prime}\leq \con{\lambda}{\gamma}$ and from Lemma \ref{lemma: ineq} ii) we obtain  $\sym{\lambda^\prime}{\mu^\prime}\preceq \sym{\lambda}{\mu}$.

For $n$ sufficiently large, the largest monomial  that appears in $\Pi_n m_{\sym{\lambda}{\mu}}$ is $x^{\con{\lambda}{\nu}}$, where $\nu$ is the increasing re-ordering of $\con{\mu}{0^{n-\ell(\lambda)}}$. By Proposition \ref{prop: Ytilde-triangularity}, the coefficient of  $x^{\con{\lambda}{\nu}}$ in $\widetilde{Y}_i^{(n)} x^{\con{\lambda}{\nu}} $ is  $\sgn_i(\lambda)\qc^{\lambda_i}\tc^{u_{\con{\lambda}{\nu}}(i)} $. From $i\leq k$ and the definition of $u_{\con{\lambda}{\nu}}(i)$ it is clear that $\sgn_i(\lambda)\qc^{\lambda_i}\tc^{u_{\con{\lambda}{\nu}}(i)} =\sgn_i(\lambda)\qc^{\lambda_i}\tc^{u_{\con{\lambda}{\mu}}(i)} $. Therefore, the coefficient of $ m_{\sym{\lambda}{\mu}}$ in $\Y_i m_{\sym{\lambda}{\mu}}$ is $\sgn_i(\lambda)\qc^{\lambda_i}\tc^{u_{\con{\lambda}{\mu}}(i)} $.
\end{proof}


\section{Limit Macdonald functions}


\subsection{}
The common eigenbasis for the family of commuting operators $Y_i^{(k)}$, $1\leq i\leq k$, are the non-symmetric Macdonald polynomials $E_\lambda(\qc,\tc)\in \P_k^+$, $\lambda\in \Lambda_k$.  Each $E_\lambda(\qc,\tc)\in x^\lambda +\sum_{\mu<\lambda} \K x^\mu$, and 
\begin{equation}\label{eq: Y eigenfunctions}
Y_i^{(k)}E_\lambda(\qc,\tc)=\qc^{\lambda_i}\tc^{u_\lambda(i)}E_\lambda(\qc,\tc).
\end{equation}
For details see, for example, \cite{KnInt} or, closer to our notation and conventions, \cite{Kn}*{Corollary 6.2 and Corollary 7.5}. We will often omit the parameters $\qc, \tc$ from the notation.
 As we explain in what follows, Theorem \ref{thm: cY-triangularity} can be used to describe the spectrum of the family of commuting operators $\Y_i$, $i\geq 1$.

\subsection{} As in \cite{IW}, let us denote $ \widetilde{\omega}_k : \P_k\to \P_k$, defined as
$$ \widetilde{\omega}_kf(x_1,\dots,x_k) =  
	    \tc^{1-k}T_{k-1}\dots T_{1}x_1^{-1} f(x_1,\dots,x_k).$$
We record the following  identities between operators from $\P_k^+$ to $\P_{k-1}^+$, that can be found in \cite{IW}*{\S6.2 and (6.2)}

\begin{align}\label{pos system}
  \begin{split}
  \pi_k T_i &= T_i \pi_k,\quad 1\leq i\leq k-2,\\
  \pi_k T_{k-1}^{-1}\dots T_1^{-1}\widetilde{\omega}_k^{-1} &= 0, \\
  \pi_k \widetilde{\omega}_k^{-1}T_{k-1} & = 
  \widetilde{\omega}_{k-1}^{-1}\pi_k,\\
  \pi_k\omega^{-1}_k T_{k-1} &= \omega^{-1}_{k-1}\pi_k.
  \end{split}
\end{align}

\subsection{}

The non-symmetric Macdonald polynomials satisfy certain recursions that are very useful for analyzing their properties. We record these recursions, adapted to our notation and conventions, as established in \cite{KnInt}*{Lemma 4.3, Corollary 4.4}. 

Let $\lambda\in \Lambda_k$, and $1\leq i\leq k-1$. If, $\lambda_i=\lambda_{i+1}$ then $T_i E_\lambda=E_\lambda$. If $\lambda_i>\lambda_{i+1}$ then,
\begin{equation}\label{int1}
\left(T_i+(1-\tc)\frac{ \qc^{\lambda_{i+1}}\tc^{u_{i+1}(\lambda)}}{\qc^{\lambda_i}\tc^{u_i(\lambda)}-\qc^{\lambda_{i+1}}\tc^{u_{i+1}(\lambda)}}\right)E_\lambda=E_{s_i(\lambda)}.
\end{equation}

For any $\lambda\in \Lambda_k$, we denote $\gamma_k(\lambda)=(\lambda_k+1,\lambda_1,\dots, \lambda_{k-1})$. With this notation, we have
\begin{equation}\label{int2}
x_1\omega_k^{-1} E_\lambda=\qc^{\lambda_k}E_{\gamma_k(\lambda)}.
\end{equation}
For the last recursion, we use $p(\lambda)$ to denote the number of strictly positive parts of $\lambda$, and $a=a(\lambda)$ to denote the integer $1\leq a\leq k$ such that $\lambda_a\neq 0$, and $\lambda_{a+1}=\cdots=\lambda_k=0$. With this notation, remark that 
 $u_{a+1}(\lambda)=p(\lambda)+k-a(\lambda)$. Denote $\lambda^*= (\lambda_a+1, \lambda_1,\dots,\lambda_{a-1},0,\dots,0)$.  Then, we have
\begin{equation}\label{int3}
x_1\omega_k^{-1}\left(T_{k-1}\cdots T_a- \frac{\tc^{1+p(\lambda)}}{\qc^{\lambda_a}\tc^{u_a(\lambda)}} \tc^{k-a}T_{k-1}^{-1}\cdots T_a^{-1}\right) E_\lambda=\qc^{\lambda_a}\left(1- \frac{\tc^{1+p(\lambda)}}{\qc^{\lambda_a}\tc^{u_a(\lambda)}}\right) E_{\lambda^*}.
\end{equation}

\subsection{} For fixed $\lambda\in \Lambda_k$, we investigate the limit of the sequence $(E_{\lambda0^n})_{n\geq 0}$. We first need the following.

\begin{Lem}\label{lema: div} Let $\lambda\in \Lambda_k$ such that $a(\lambda)=p(\lambda)$. Then, $E_\lambda$ is divisible by $x_1\cdots x_a$ and
$$
\pi_k T_{k-1}^{-1}\cdots T_{a}^{-1}E_\lambda=0.
$$
\end{Lem}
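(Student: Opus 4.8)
The plan is to prove the two assertions of Lemma~\ref{lema: div} essentially in tandem, using the recursion \eqref{int3} together with downward induction on the size of the interval $[\lambda_-,\lambda]$ (equivalently, induction on $\ell(\w_\lambda)$). The hypothesis $a(\lambda)=p(\lambda)$ says that $\lambda$ has no zero parts interspersed among its positive parts: $\lambda=(\lambda_1,\dots,\lambda_a,0,\dots,0)$ with all of $\lambda_1,\dots,\lambda_a$ strictly positive.

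\medskip

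\textbf{Base case.} When $\lambda_-=(0,\dots,0,\lambda_a,\dots,\lambda_1)$ is anti-dominant (so $\w_\lambda=\mathrm{id}$), one knows $E_{\lambda_-}$ explicitly: by iterating \eqref{int2} starting from $E_{(0,\dots,0)}=1$ one obtains $E_{\lambda_-}$ as a product of operators $x_1\omega_k^{-1}$ applied to $1$, which visibly produces a polynomial divisible by $x_1\cdots x_a$ (each positive part contributes one factor, and $\omega_k^{-1}$ cyclically permutes variables so these factors land on distinct variables). Alternatively — and this is cleaner — I would use \eqref{int3} directly: for $\lambda$ with $a(\lambda)=p(\lambda)=a$, the reduced-word/intertwiner description shows $E_\lambda$ is obtained from $E_\mu$ with $\mu$ of strictly smaller Bruhat length (and still satisfying $a(\mu)=p(\mu)$, or reduced to a strictly smaller $a$) via the operators appearing on the left of \eqref{int1} and \eqref{int3}. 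So really the induction should be on the pair $(a(\lambda),\ell(\w_\lambda))$ ordered lexicographically, with the genuine base case $a=0$, where $E_\lambda=1$ and both claims are vacuous/trivial (the empty product $x_1\cdots x_0=1$ divides $1$, and $\pi_k T_{k-1}^{-1}\cdots T_k^{-1}=\pi_k$ kills nothing unless $k=0$ — here one checks the statement directly, noting $E_{(0^k)}=1$ and $\pi_k(1)=1$, so in fact one should start the induction at $a\ge 1$; for $a=0$ the divisibility is the empty statement and the vanishing assertion is not claimed since the product $T_{k-1}^{-1}\cdots T_a^{-1}$ is read as starting at $a$).

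\medskip

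\textbf{Inductive step.} Given $\lambda$ with $a(\lambda)=p(\lambda)=a\ge 1$, write $\lambda=\gamma_k^{-1}$-image or a $\lambda^*$-image of some smaller $\mu$. Concretely: either $\lambda=s_i(\mu)$ with $\mu_i>\mu_{i+1}$ and both positive (so $1\le i\le a-1$), in which case \eqref{int1} gives $E_\lambda=(T_i+c)E_\mu$ with $E_\mu$ divisible by $x_1\cdots x_a$ by induction; since $T_i$ acts on $\P_k^+$ and preserves divisibility by $x_1\cdots x_a$ when $i\le a-1$ (both $s_i$ and the Demazure term $(1-\tc)x_i\frac{1-s_i}{x_i-x_{i+1}}$ preserve the ideal $(x_1\cdots x_a)$, as the divided difference of a multiple of $x_1\cdots x_a$ by $x_i-x_{i+1}$ is again a polynomial multiple of $x_1\cdots x_{i-1}x_{i+1}\cdots x_a$ times $x_ix_{i+1}/\gcd$ — this needs a short verification), the divisibility of $E_\lambda$ follows. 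Or $\lambda=\mu^*$ for some $\mu$ with $a(\mu)=a-1$, in which case \eqref{int3} expresses $x_1\omega_k^{-1}$ applied to an operator-combination of $E_\mu$ as a multiple of $E_{\mu^*}=E_\lambda$; the factor $x_1\omega_k^{-1}$ supplies the missing variable, and by induction (at the smaller value $a-1$) the $E_\mu$-side already carries divisibility by $x_1\cdots x_{a-1}$, which $\omega_k^{-1}$ rotates appropriately.

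\medskip

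\textbf{The vanishing assertion.} Once divisibility of $E_\lambda$ by $x_1\cdots x_a$ is in hand, the identity $\pi_k T_{k-1}^{-1}\cdots T_a^{-1}E_\lambda=0$ should follow from the middle relation in \eqref{pos system}, namely $\pi_k T_{k-1}^{-1}\cdots T_1^{-1}\widetilde\omega_k^{-1}=0$, combined with the explicit conjugation formulas relating $\widetilde\omega_k$ and the $Y_i$'s. The cleanest route: since $E_\lambda$ is divisible by $x_1\cdots x_a$, write $E_\lambda=x_1\cdots x_a g$ and use \eqref{int3} (or its inverse) to recognize $T_{k-1}^{-1}\cdots T_a^{-1}E_\lambda$, up to the explicitly invertible scalar $\bigl(1-\tc^{1+p(\lambda)}\qc^{-\lambda_a}\tc^{-u_a(\lambda)}\bigr)$ and the other term $T_{k-1}\cdots T_a E_\lambda$, as $\widetilde\omega_k^{-1}$(something) — at which point the second line of \eqref{pos system} after applying $\pi_k$ forces the result to vanish. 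I expect the \textbf{main obstacle} to be precisely this last step: carefully matching $T_{k-1}^{-1}\cdots T_a^{-1}$ with a tail of the operator $T_{k-1}^{-1}\cdots T_1^{-1}\widetilde\omega_k^{-1}$ and checking that the "head" $T_{a-1}^{-1}\cdots T_1^{-1}\widetilde\omega_k^{-1}$ applied to $E_\lambda$ (which involves $x_1^{-1}$ and hence needs the divisibility to stay in $\P_k^+$) contributes nothing under $\pi_k$ — this is where the hypothesis $a(\lambda)=p(\lambda)$ (no interior zeros) and the divisibility by $x_1\cdots x_a$ are both essential, and the bookkeeping with the $T_i$ relations and \eqref{pos system} is the delicate part. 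The divisibility claim itself, by contrast, I expect to be routine once the right induction parameter is chosen.
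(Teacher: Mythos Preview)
Your approach to the divisibility claim is essentially the paper's: induct via the intertwiner recursions \eqref{int1} and \eqref{int3}, with base case $E_{1^a0^{k-a}}=x_1\cdots x_a$. The paper does not set up a lexicographic order; it just observes that the operators in \eqref{int1} for $1\le i\le a-1$ preserve the ideal $(x_1\cdots x_a)$, while the operator $x_1\omega_k^{-1}(\cdots)$ in \eqref{int3} carries divisibility by $x_1\cdots x_{a-1}$ to divisibility by $x_1\cdots x_a$.

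For the vanishing, you are circling the correct idea but making it harder than it is. The recursion \eqref{int3} plays no role here; drop that suggestion. The paper's argument is a single insertion of the identity $T_{a-1}^{-1}\cdots T_1^{-1}\widetilde\omega_k^{-1}\cdot\widetilde\omega_k T_1\cdots T_{a-1}$ between $T_a^{-1}$ and $E_\lambda$:
\[
\pi_k\,T_{k-1}^{-1}\cdots T_a^{-1}E_\lambda
=\pi_k\,T_{k-1}^{-1}\cdots T_1^{-1}\widetilde\omega_k^{-1}\bigl(\widetilde\omega_k T_1\cdots T_{a-1}E_\lambda\bigr).
\]
Since $E_\lambda$ is divisible by $x_1\cdots x_a$, so is $T_1\cdots T_{a-1}E_\lambda$ (each $T_i$ with $i\le a-1$ preserves that ideal, as you note yourself); hence $\widetilde\omega_k T_1\cdots T_{a-1}E_\lambda=\tc^{1-k}T_{k-1}\cdots T_1\,x_1^{-1}T_1\cdots T_{a-1}E_\lambda\in\P_k^+$, and the second identity in \eqref{pos system} gives zero immediately. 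There is no delicate bookkeeping: the only thing to check is that the inner factor is a polynomial, and that is exactly what divisibility buys you. Your ``head'' formulation has the arrow reversed --- you want the \emph{inverse} $\widetilde\omega_k T_1\cdots T_{a-1}$ applied to $E_\lambda$, not $T_{a-1}^{-1}\cdots T_1^{-1}\widetilde\omega_k^{-1}$.
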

\begin{proof}
For $\lambda=1^a0^{k-a}$ we have $E_\lambda=x_1\cdots x_a$. The general case for the divisibility claim follows by induction on the Bruhat order from the application of \eqref{int1} and \eqref{int3}. For the remaining claim, write
$$
\pi_k T_{k-1}^{-1}\cdots T_{a}^{-1}E_\lambda=  \pi_k T_{k-1}^{-1}\dots T_1^{-1}\widetilde{\omega}_k^{-1} (\widetilde{\omega}_k T_1\cdots T_{a-1} E_\lambda).
$$
Since $E_\lambda$ is divisible by $x_1\cdots x_a$, $\widetilde{\omega}_k T_1\cdots T_{a-1} E_\lambda\in \P_k^+$, and \eqref{pos system} implies that $\pi_k T_{k-1}^{-1}\cdots T_{a}^{-1}E_\lambda=0$.
\end{proof}

\begin{Prop}\label{prop: E limit}
Let $\lambda\in \Lambda_k$ such that $a(\lambda)=p(\lambda)$. The sequence $(E_{\lambda0^n})_{n\geq 0}$ is an inverse sequence and
$$
\mathscr{E}_\lambda:=\lim_{\stackrel{\longleftarrow}{n}} E_{\lambda 0^n}\in \P(k)^+.
$$
\end{Prop}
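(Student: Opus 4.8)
The plan is to prove Proposition \ref{prop: E limit} by showing that the sequence $(E_{\lambda 0^n})_{n\ge 0}$ is compatible with the inverse system, i.e. $\pi_{k+n}(E_{\lambda 0^n}) = E_{\lambda 0^{n-1}}$ for all $n\ge 1$, and then observing that the inverse limit lands in $\P(k)^+$ because each $E_{\lambda 0^n}$ becomes symmetric in the last $n$ variables $x_{k+1},\dots,x_{k+n}$ in a manner stable under further extension.

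First I would reduce to the single-step claim $\pi_{k+1}(E_{\lambda 0}) = E_\lambda$, since then the general compatibility follows by replacing $\lambda$ with $\lambda 0^{n-1}$ (note that $a(\lambda 0^{n-1}) = a(\lambda) = p(\lambda) = p(\lambda 0^{n-1})$, so the hypothesis is preserved). To prove the single-step claim, I would use the recursion \eqref{int2}: since $(\lambda 0)_{k+1} = 0$, we have $x_1\omega_{k+1}^{-1} E_{\lambda 0} = E_{\gamma_{k+1}(\lambda 0)}$ where $\gamma_{k+1}(\lambda 0) = (1,\lambda_1,\dots,\lambda_k,0) = 1\cdot(\lambda 0)$ shifted — more usefully, I would instead build $E_{\lambda 0}$ from $E_\lambda$ by a chain of intertwiner steps \eqref{int1} and the $\omega$-recursions \eqref{int2}, \eqref{int3}, tracking at each stage what $\pi_{k+1}$ does. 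The cleanest route is: express $E_{\lambda 0}$ in terms of $E_{(\lambda 0)_+}$ via a sequence of steps \eqref{int1} (the intertwiners $T_i + \text{const}$), and note that $(\lambda 0)_+ = (\lambda_+) 0$, while $E_{(\lambda_+)0}$ is obtained from $E_{\lambda_+}$ by the creation operator \eqref{int3} applied in rank $k+1$ (or, when $\lambda_+$ is a partition, directly). Then use \eqref{pos system} — specifically $\pi_k T_i = T_i \pi_k$ for $i \le k-2$ and the identity $\pi_k T_{k-1}^{-1}\cdots T_1^{-1}\widetilde\omega_k^{-1} = 0$ together with $\pi_k \widetilde\omega_k^{-1}T_{k-1} = \widetilde\omega_{k-1}^{-1}\pi_k$ — to push $\pi_{k+1}$ through these operators and land on the corresponding expression one rank down, which reassembles $E_\lambda$.

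The key technical input is Lemma \ref{lema: div}, which tells us $E_\lambda$ (and each $E_{\lambda 0^n}$, since the divisibility hypothesis persists) is divisible by $x_1\cdots x_a$ and that $\pi_{k+n}T_{k+n-1}^{-1}\cdots T_a^{-1} E_{\lambda 0^n} = 0$; this is precisely what makes the "error terms" in the creation-operator recursion \eqref{int3} vanish under $\pi$, leaving a clean match. So the argument structure is: (1) verify the hypothesis $a = p$ is inherited by $\lambda 0^n$; (2) write $E_{\lambda 0^{n}}$ via \eqref{int1} and \eqref{int3} applied in rank $k+n$ in terms of lower data, arranging that the last variable $x_{k+n}$ enters only through operators $T_{k+n-1}^{\pm 1}$ and $\omega_{k+n}^{-1}$; (3) apply $\pi_{k+n}$, invoke \eqref{pos system} to commute it past the low-index $T_i$'s and kill the terms involving $\widetilde\omega_{k+n}^{-1}T_{k+n-1}^{-1}\cdots$, using Lemma \ref{lema: div} to discard the subtracted term in \eqref{int3}; (4) conclude $\pi_{k+n}(E_{\lambda 0^n}) = E_{\lambda 0^{n-1}}$.

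The main obstacle I anticipate is the bookkeeping in step (2)–(3): one must choose the reduced word / order of intertwiner applications so that $\pi_{k+n}$ genuinely only meets operators it knows how to handle via \eqref{pos system}, and so that the residual constant factors (the $\qc^{\lambda_a}(1 - \tc^{1+p(\lambda)}/\qc^{\lambda_a}\tc^{u_a(\lambda)})$ type coefficients in \eqref{int3}) either cancel or match between rank $k+n$ and rank $k+n-1$ — this requires checking that $u_a$, $p$, and the relevant exponents are genuinely independent of the number of trailing zeros, which they are by the formula $u_{a+1}(\lambda) = p(\lambda) + k - a(\lambda)$ and the fact that appending zeros past position $a$ does not change $u_a$. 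Once the compatibility is established, membership of the limit in $\P(k)^+ = \P_k^+ \otimes \Sym[\mathbf X_k]$ is automatic: the inverse limit of a symmetric-in-trailing-variables sequence is fixed by all $s_i$ with $i > k$, which is the definition of $\P(k)^+$.
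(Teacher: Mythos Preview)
Your proposal is correct and follows essentially the same route as the paper: both arguments show $\pi_{k+n}E_{\lambda 0^n}=E_{\lambda 0^{n-1}}$ by applying $\pi$ to the intertwiner recursions \eqref{int1} and \eqref{int3}, using \eqref{pos system} to commute $\pi$ past the low-index $T_i$'s and invoking Lemma \ref{lema: div} to kill the extra term produced by \eqref{int3}, together with the observation that the scalar constants are unchanged by appending trailing zeros. The paper organizes this as ``both $\pi_N E_{\lambda 0^n}$ and $E_{\lambda 0^{n-1}}$ satisfy the same system of recursions with common base case $E_{1^a 0^\bullet}=x_1\cdots x_a$,'' which is a slightly cleaner packaging than your build-up-and-match phrasing (and note your sentence about \eqref{int3} producing $E_{(\lambda_+)0}$ from $E_{\lambda_+}$ is not literally how that recursion works---it sends $E_\lambda$ to $E_{\lambda^*}$---but this is a bookkeeping slip, not a gap in the strategy).
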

\begin{proof}
Let $\lambda$ as in the hypothesis, let $n\geq 1$ and $N=n+k$. For $1\leq i\leq a-1$ such that $\lambda_i>\lambda_{i+1}$, the application of $\pi_{n+k}$ to \eqref{int1} for $\lambda0^n$ gives
$$
\pi_{N}E_{s_i(\lambda)0^n}= \left(T_i+(1-\tc)\frac{ \qc^{\lambda_{i+1}}\tc^{u_\lambda(i+1)}}{\qc^{\lambda_i}\tc^{u_\lambda(i)}-\qc^{\lambda_{i+1}}\tc^{u_\lambda(i+1)}}\right) \pi_{N}E_{\lambda0^n}.
$$
Furthermore, for any $\lambda$ as in the hypothesis, the application of $\pi_{N}$ to \eqref{int3} for $\lambda0^n$ gives
$$
x_1 \omega_{N-1}^{-1}\left(T_{N-2}\cdots T_a -  \frac{\tc^{1+p(\lambda)}}{\qc^{\lambda_a}\tc^{u_\lambda(a)}} \tc^{N-1-a}T_{N-2}^{-1}\cdots T_a^{-1}\right) \pi_{N} E_{\lambda0^n}=
\qc^{\lambda_a}\left(1- \frac{\tc^{1+p(\lambda)}}{\qc^{\lambda_a}\tc^{u_\lambda(a)}}\right) \pi_{N} E_{\lambda^*0^n}+
$$
$$
+(\tc-1) \frac{\tc^{1+p(\lambda)}}{\qc^{\lambda_a}\tc^{u_\lambda(a)}} \tc^{N-1-a}\pi_{N} T_{N-1}^{-1}\cdots T_{a+1}^{-1}E_{1\lambda0^{n-1}}.
$$
However, the last term vanishes by Lemma \ref{lema: div}. The same system of recursions is satisfied by $E_{\lambda 0^{n-1}}$. Since  $E_{1^a0^N}=E_{1^a0^{N-1}}=x_1\cdots x_a$, we obtain that $\pi_N E_{\lambda0^n}=E_{\lambda0^{n-1}}$ for any $\lambda$ that satisfies the conditions in the hypothesis. The fact that $\mathscr{E}_\lambda\in \Pas^+$ follows from $T_jE_{\lambda0^n}=E_{\lambda0^n}$ for all $k+1\leq j\leq n-1$.
\end{proof}
\begin{Thm}\label{thm: E limit}
Let $\lambda\in \Lambda_k$. The sequence $(E_{\lambda0^n})_{n\geq 0}$ is convergent and
$$
\mathscr{E}_\lambda:=\lim_n E_{\lambda 0^n}\in \P(k)^+.
$$
We call the elements $\mathscr{E}_\lambda$, for $\lambda\in \Lambda$, \emph{limit (non-symmetric) Macdonald functions.}
\end{Thm}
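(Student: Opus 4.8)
The plan is to reduce the general case to Proposition \ref{prop: E limit} by means of the intertwiner recursion \eqref{int1}. Set $a=a(\lambda)$ and $p=p(\lambda)$ (the case $\lambda=0$ is trivial since then $E_{\lambda0^n}=1$ for all $n$). If $a=p$ there is nothing to prove. In general, let $\mu\in\Lambda_k$ be the composition obtained from $\lambda$ by moving all its zero parts to the end while preserving the relative order of the positive parts; then $\mu=\mu'0^{k-p}$ with $\mu'$ a strict composition of length $p$, so $a(\mu)=p(\mu)=p$ and Proposition \ref{prop: E limit} gives $\mathscr{E}_\mu:=\lim_n E_{\mu0^n}\in\P(p)^+\subseteq\P(k)^+$ (indeed an inverse limit).

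Next I would exhibit a fixed word realizing the passage from $\mu$ to $\lambda$. Moving the positive parts of $\mu$ one at a time to their target positions in $\lambda$ (starting from the rightmost) realizes $\lambda$ from $\mu$ by a sequence of simple transpositions $s_{j_1},\dots,s_{j_r}$, each of which, at the moment it is applied, swaps an adjacent pair of the form (positive, $0$) into ($0$, positive); consequently each step is an instance of \eqref{int1}, and all indices satisfy $j_\ell\le a-1$. Since the procedure only ever touches the first $a$ coordinates, the very same word applies verbatim to $\mu0^n$ and $\lambda0^n$, and we obtain, for every $n\ge0$,
$$
E_{\lambda0^n}=(T_{j_r}+c_r^{(n)})\cdots(T_{j_1}+c_1^{(n)})\,E_{\mu0^n},
$$
where $c_\ell^{(n)}\in\K$ is the constant produced by \eqref{int1} for the relevant intermediate composition.

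The key observation is that $c_\ell^{(n)}\to0$ as $n\to\infty$. The intermediate composition $\nu$ at the $\ell$-th step has the form (something supported on the first $a$ coordinates)$\cdot 0^{k+n-a}$, and \eqref{int1} is applied at an index $j$ with $\nu_j>0$ and $\nu_{j+1}=0$; the constant is then $(1-\tc)\,\tc^{u_\nu(j+1)}/(\qc^{\nu_j}\tc^{u_\nu(j)}-\tc^{u_\nu(j+1)})$. Here $u_\nu(j)$ stays bounded as $n$ grows (only the finitely many positive parts, all among the first $a$ positions, contribute), whereas $u_\nu(j+1)=|\{m\le j+1:\nu_m>0\}|+(k+n-j)$ grows linearly in $n$; hence $\ord c_\ell^{(n)}=u_\nu(j+1)-u_\nu(j)\to+\infty$, i.e.\ $c_\ell^{(n)}\to0$.

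Finally I would conclude by continuity. Expanding the product above and using that the constants are central,
$$
E_{\lambda0^n}=(T_{j_r}\cdots T_{j_1})\,E_{\mu0^n}+\sum_{\emptyset\ne S\subseteq\{1,\dots,r\}}\Big(\prod_{\ell\in S}c_\ell^{(n)}\Big)\,D_S\,E_{\mu0^n},
$$
where $D_S$ is the sub-product of the $T_{j_\ell}$, $\ell\notin S$, taken in decreasing order of $\ell$. Each $D_S$ (and $T_{j_r}\cdots T_{j_1}$) is an inverse-limit operator: all its indices are $\le a-1\le k-1$, so it commutes with the maps $\pi_N$ for $N\ge k+1$ by \eqref{pos system} and preserves $\P(k)^+$ (Proposition \ref{prop: p(k)-stable}). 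By Proposition \ref{prop: continuity}(i) applied to the convergent sequence $E_{\mu0^n}\to\mathscr{E}_\mu$, each $D_S E_{\mu0^n}$ converges to $D_S\mathscr{E}_\mu\in\P(k)^+$; since moreover $\prod_{\ell\in S}c_\ell^{(n)}\to0$ for $S\ne\emptyset$, a routine estimate (using Definition \ref{Def of limit}) shows the sum over $S\ne\emptyset$ tends to $0$. Therefore the right-hand side converges, whence $E_{\lambda0^n}\to(T_{j_r}\cdots T_{j_1})\mathscr{E}_\mu=:\mathscr{E}_\lambda\in\P(k)^+$.

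I expect the main obstacle to be the bookkeeping in the second step: producing the word $s_{j_1},\dots,s_{j_r}$ explicitly enough to see simultaneously that it is independent of $n$ and that every one of its steps is a legitimate application of \eqref{int1} (in particular that the denominator in \eqref{int1} never vanishes, which holds because $\qc^{\nu_j}\tc^{u_\nu(j)}\ne\qc^{\nu_{j+1}}\tc^{u_\nu(j+1)}$ whenever $\nu_j>\nu_{j+1}$). The convergence bookkeeping in the last step — notably the claim that $\big(\prod_{\ell\in S}c_\ell^{(n)}\big)D_SE_{\mu0^n}\to0$ — is routine given Definition \ref{Def of limit} and Proposition \ref{prop: continuity}.
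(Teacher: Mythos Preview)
Your proposal is correct and follows essentially the same strategy as the paper's proof: reduce to the case $a(\lambda)=p(\lambda)$ (Proposition \ref{prop: E limit}) via the intertwiners \eqref{int1}, using that the scalar corrections vanish in the limit because $u_{\nu0^n}(j+1)$ grows with $n$ when $\nu_{j+1}=0$. The only cosmetic difference is that the paper organizes this as an induction on $m(\lambda)=\sum_{i<a(\lambda),\,\lambda_i=0}(a(\lambda)-i)$, applying one transposition per step, whereas you unfold the induction into a single product $(T_{j_r}+c_r^{(n)})\cdots(T_{j_1}+c_1^{(n)})$ and take the limit of the whole expression at once.
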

\begin{proof}
We proceed by induction on $m(\lambda):=\displaystyle \sum_ {\substack{  i<a(\lambda)\\   \lambda_i=0}}(a(\lambda)-i)$. If $m(\lambda)=0$, then $a(\lambda)=p(\lambda)$, and by Proposition \ref{prop: E limit}, we have
$$
 \lim_n E_{\lambda0^n} = \lim_{\stackrel{\longleftarrow}{n}} E_{\lambda0^n}.
$$
Assume that $m(\lambda)>0$ and let $1\leq i\leq a$ such that  $\lambda_{i+1}>\lambda_{i}=0$, and let $\mu=s_i(\lambda)$. It is clear that $m(\mu)=m(\lambda)-1$ and,  by the induction hypothesis, the limit of $(E_{\mu0^n})_{n\geq 0}$ exists. Then,
$$
\left(T_i+(1-\tc)\frac{ \tc^{u_\mu(i+1)+n}}{\qc^{\mu_i}\tc^{u_\mu(i)}-\tc^{u_\mu(i+1)+n}}\right)E_{\mu0^n}=E_{\lambda0^{n}}.
$$
Above, we have used the fact that $u_{\mu0^n}(i+1)=u_{\mu}(i+1)+n$. Therefore,  the limit of  $(E_{\lambda0^{n}})_{n\geq 0}$ exists. 
\end{proof}

An independent proof of Theorem \ref{thm: E limit} was obtained in \cite{BW}*{Corollary 25}; this proof makes use of the explicit combinatorial formula for the finite rank non-symmetric Macdonald polynomials obtained in \cite{HHL}. The proof of Theorem \ref{thm: E limit} has the following immediate consequences, which were also obtained, with different arguments, in \cite{BW}*{Corollary 30 and Theorem 29}.
\begin{Cor}\label{cor: intert}
 Let $\lambda\in \Lambda_k$, and $1\leq i\leq k-1$ such that $\lambda_i>\lambda_{i+1}$. Then,
\begin{equation}\label{int4}
\begin{gathered}
\left(T_i+(1-\tc)\frac{ \qc^{\lambda_{i+1}}\tc^{u_\lambda(i+1)}}{\qc^{\lambda_i}\tc^{u_\lambda(i)}-\qc^{\lambda_{i+1}}\tc^{u_\lambda(i+1)}}\right)\mathscr{E}_\lambda=\mathscr{E}_{s_i(\lambda)},\quad \text{if } \lambda_{i+1}>0\\
T_i \mathscr{E}_\lambda=\mathscr{E}_{s_i(\lambda)},\quad \text{if } \lambda_{i+1}=0.
\end{gathered}
\end{equation}
\end{Cor}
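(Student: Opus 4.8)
The plan is to obtain the identities \eqref{int4} by passing to the limit in the finite-rank intertwiner recursion \eqref{int1}, using Theorem \ref{thm: E limit} together with the continuity property of the limit operation (Proposition \ref{prop: continuity}). First I would fix $\lambda\in\Lambda_k$ and $1\le i\le k-1$ with $\lambda_i>\lambda_{i+1}$, and apply \eqref{int1} to the composition $\lambda0^n$ for each $n\ge 0$. Since $i\le k-1<k+n$, the index $i$ is ``internal'' and the spectral exponents are stable: $u_{\lambda0^n}(i)=u_\lambda(i)$ and $u_{\lambda0^n}(i+1)=u_\lambda(i+1)$ when $\lambda_{i+1}>0$ (padding with zeros adds parts only to the right of position $i+1$, none of which exceed $\lambda_i$ or $\lambda_{i+1}$). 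Hence for all $n$,
$$
\left(T_i+(1-\tc)\frac{ \qc^{\lambda_{i+1}}\tc^{u_\lambda(i+1)}}{\qc^{\lambda_i}\tc^{u_\lambda(i)}-\qc^{\lambda_{i+1}}\tc^{u_\lambda(i+1)}}\right)E_{\lambda0^n}=E_{s_i(\lambda)0^n},
$$
with the scalar coefficient independent of $n$.

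Next I would take $\lim_n$ on both sides. By Theorem \ref{thm: E limit}, $\lim_n E_{\lambda0^n}=\mathscr{E}_\lambda$ and $\lim_n E_{s_i(\lambda)0^n}=\mathscr{E}_{s_i(\lambda)}$ (note $s_i(\lambda)$ is still a composition of the same length $k$, so the theorem applies); here one uses that $s_i(\lambda)0^n=s_i(\lambda0^n)$ since $i<k$. The operator $T_i$ is the limit operator $\T_i$ of the sequence $(T_i^{(k+n)})_{n}$ in the sense of \S\ref{sec: operator-limit}, and by Proposition \ref{prop: continuity}(i), $\T_i$ applied to a convergent sequence commutes with taking limits. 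Since the coefficient is a fixed scalar in $\K$, the whole left-hand side is a convergent sequence with limit $\left(T_i+(1-\tc)\frac{ \qc^{\lambda_{i+1}}\tc^{u_\lambda(i+1)}}{\qc^{\lambda_i}\tc^{u_\lambda(i)}-\qc^{\lambda_{i+1}}\tc^{u_\lambda(i+1)}}\right)\mathscr{E}_\lambda$, giving the first identity.

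For the case $\lambda_{i+1}=0$, the zero-padded composition $\lambda0^n$ has $\lambda_{i+1}=\cdots=0$, and one checks that $u_{\lambda0^n}(i+1)=u_\lambda(i+1)+n$ grows linearly in $n$ (this is exactly the computation used in the proof of Theorem \ref{thm: E limit}); consequently $\tc^{u_\mu(i+1)+n}\to 0$ and the scalar coefficient $(1-\tc)\dfrac{\tc^{u_\lambda(i+1)+n}}{\qc^{\lambda_i}\tc^{u_\lambda(i)}-\tc^{u_\lambda(i+1)+n}}$ converges to $0$ in $\K$. Passing to the limit in \eqref{int1} and invoking Proposition \ref{prop: continuity}(i) again yields $T_i\mathscr{E}_\lambda=\mathscr{E}_{s_i(\lambda)}$.

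The only real subtlety — and the step I would be most careful about — is the interchange of $\T_i$ with the limit: one must verify that the sequences $(E_{\lambda0^n})_n$ and $(E_{s_i(\lambda)0^n})_n$ are genuinely convergent in the sense of Definition \ref{Def of limit} (not merely that their limits exist as formal objects), so that Proposition \ref{prop: continuity}(i) applies. This is guaranteed by Theorem \ref{thm: E limit}, whose proof exhibits these sequences as convergent; and in the case $\lambda_{i+1}=0$ one additionally needs that multiplying a convergent sequence by a scalar sequence converging to $0$ yields a sequence converging to $0$, which is immediate from the definition of convergence in $\K$ and the fact that $E_{s_i(\lambda)0^n}$ has bounded order of vanishing at $\tc=0$ (Remark \ref{rem: Y coeff}-type reasoning, or simply that $\ord$ of its coefficients is bounded below). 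Granting these routine verifications, the corollary follows.
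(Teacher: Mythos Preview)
Your proposal is correct and follows essentially the same approach as the paper, which records the corollary as an immediate consequence of the proof of Theorem~\ref{thm: E limit}: apply the finite-rank intertwiner \eqref{int1} to $\lambda0^n$, note that for $\lambda_{i+1}>0$ the coefficient is independent of $n$ while for $\lambda_{i+1}=0$ it has the form used in that proof and tends to $0$, and pass to the limit via Proposition~\ref{prop: continuity}. Your final paragraph correctly flags the only point needing care (that the relevant sequences are convergent in the sense of Definition~\ref{Def of limit}), and this is exactly what the inductive construction in the proof of Theorem~\ref{thm: E limit} furnishes.
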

\begin{Cor}\label{cor: Y eigenfunctions}
$$
\Y_i \mathscr{E}_\lambda= \sgn_i(\lambda)\qc^{\lambda_i} \tc^{u_\lambda(i)}\mathscr{E}_\lambda.
$$
\end{Cor}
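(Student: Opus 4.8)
The plan is to prove the identity by induction on $m(\lambda)=\displaystyle\sum_{\substack{j<a(\lambda)\\ \lambda_j=0}}(a(\lambda)-j)$, the quantity already used in the proof of Theorem \ref{thm: E limit}, combining the operator identity $\Y_i=\lim_k Y_i^{(k)}$ of Theorem \ref{thm: Y limit} with the intertwining relations of Corollary \ref{cor: intert}. For the base case $m(\lambda)=0$, i.e. $a(\lambda)=p(\lambda)$, fix $\lambda\in\Lambda_k$. If $i>a(\lambda)$ then $\mathscr{E}_\lambda\in\P(a(\lambda))^+\subseteq\P(i-1)^+$, so $\Y_i\mathscr{E}_\lambda=0$ by Corollary \ref{prop: y-vanishing}, matching $\sgn_i(\lambda)=0$. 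If $1\le i\le a(\lambda)$ (so $\lambda_i>0$, and automatically $i\le k$), Proposition \ref{prop: E limit} gives $\mathscr{E}_\lambda=\lim_{\stackrel{\longleftarrow}{n}}E_{\lambda0^n}$, hence $\Pi_N\mathscr{E}_\lambda=E_{\lambda0^{N-k}}$ for $N\ge k$; then the definition of the operator limit $\Y_i=\lim_NY_i^{(N)}$ together with the finite-rank eigenrelation \eqref{eq: Y eigenfunctions} gives
$$\Y_i\mathscr{E}_\lambda=\lim_N Y_i^{(N)}E_{\lambda0^{N-k}}=\lim_N\qc^{\lambda_i}\tc^{u_{\lambda0^{N-k}}(i)}E_{\lambda0^{N-k}}=\qc^{\lambda_i}\tc^{u_\lambda(i)}\mathscr{E}_\lambda,$$
where the last equality uses that $u_{\lambda0^{N-k}}(i)=u_\lambda(i)$ for every $N$, which holds precisely because $\lambda_i>0$.

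For the inductive step $m(\lambda)>0$, choose an index $i_1$ with $\lambda_{i_1}=0<\lambda_{i_1+1}$ (one exists since $m(\lambda)>0$; e.g. the largest zero position below $a(\lambda)$) and set $\mu=s_{i_1}(\lambda)$, so that $m(\mu)<m(\lambda)$ and, since $\mu_{i_1}=\lambda_{i_1+1}>0=\mu_{i_1+1}$, Corollary \ref{cor: intert} yields $\Teb_{i_1}\mathscr{E}_\mu=\mathscr{E}_\lambda$. By the inductive hypothesis $\Y_j\mathscr{E}_\mu=\sgn_j(\mu)\qc^{\mu_j}\tc^{u_\mu(j)}\mathscr{E}_\mu$ for all $j\ge1$; in particular $\Y_{i_1+1}\mathscr{E}_\mu=0$ because $\mu_{i_1+1}=0$. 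One then computes $\Y_i\mathscr{E}_\lambda=\Y_i\Teb_{i_1}\mathscr{E}_\mu$ by moving $\Y_i$ past $\Teb_{i_1}$ with the relations of $\H^+$. If $i\notin\{i_1,i_1+1\}$, then $\Teb_{i_1}$ and $\Yeb_i$ commute by \eqref{Y relations}, and since $\mu$ and $\lambda$ agree away from positions $i_1,i_1+1$ (and differ there only by a transposition of values), $\mu_i=\lambda_i$, $\sgn_i(\mu)=\sgn_i(\lambda)$ and $u_\mu(i)=u_\lambda(i)$, which gives the claimed formula. If $i=i_1$, the relation $\tc^{-1}\Teb_{i_1}\Yeb_{i_1}\Teb_{i_1}=\Yeb_{i_1+1}$ yields $\Yeb_{i_1}\Teb_{i_1}=\tc\,\Teb_{i_1}^{-1}\Yeb_{i_1+1}$, so $\Y_{i_1}\mathscr{E}_\lambda=\tc\,\Teb_{i_1}^{-1}\Y_{i_1+1}\mathscr{E}_\mu=0$, matching $\sgn_{i_1}(\lambda)=0$. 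If $i=i_1+1$, combining $\tc^{-1}\Teb_{i_1}\Yeb_{i_1}\Teb_{i_1}=\Yeb_{i_1+1}$ with the quadratic relation $\Teb_{i_1}^2=(1-\tc)\Teb_{i_1}+\tc$ of \eqref{Quadratic} gives $\Yeb_{i_1+1}\Teb_{i_1}=(1-\tc)\Yeb_{i_1+1}+\Teb_{i_1}\Yeb_{i_1}$, whence $\Y_{i_1+1}\mathscr{E}_\lambda=\qc^{\mu_{i_1}}\tc^{u_\mu(i_1)}\Teb_{i_1}\mathscr{E}_\mu=\qc^{\lambda_{i_1+1}}\tc^{u_\lambda(i_1+1)}\mathscr{E}_\lambda$, using $\mu_{i_1}=\lambda_{i_1+1}$, $\sgn_{i_1}(\mu)=1=\sgn_{i_1+1}(\lambda)$, and the combinatorial identity $u_\mu(i_1)=u_\lambda(i_1+1)$. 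This closes the induction.

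The one genuinely substantive point is the reduction itself: recognizing that $\Y_i\mathscr{E}_\lambda$ should be handled by peeling off a single leading zero of $\lambda$ through the intertwiner $\Teb_{i_1}$, rather than by a direct limit computation for all $\lambda$ at once. After that, the remaining work—the two commutation identities drawn from the relations of $\H^+$ and the elementary (but error-prone) verification of $u_\mu(i)=u_\lambda(i)$ for $i\notin\{i_1,i_1+1\}$ and $u_\mu(i_1)=u_\lambda(i_1+1)$, along with the invariance of $\sgn$—is routine; one may also note as a sanity check that the diagonal eigenvalue produced here agrees with the coefficient predicted by Theorem \ref{thm: cY-triangularity}, since $\mathscr{E}_\lambda\in m_{\sym{\lambda}{\emptyset}}+\sum_{\sym{\lambda^\prime}{\mu^\prime}\prec\sym{\lambda}{\emptyset}}\K\, m_{\sym{\lambda^\prime}{\mu^\prime}}$. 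A variant that avoids the induction instead treats the case $\lambda_i=0$ in the base step directly: there $u_{\lambda0^n}(i)\to\infty$, so the finite-rank eigenvalue $\tc^{u_{\lambda0^n}(i)}$ tends to $0$, and one concludes using the fact that a convergent sequence scaled by a sequence of scalars tending to $0$ converges to $0$—this replaces the combinatorial bookkeeping above by the verification of that limit statement.
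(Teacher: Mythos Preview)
Your proof is correct, but it takes a different route from the paper's. The paper's one-line argument is precisely the ``variant'' you sketch at the end: apply Proposition~\ref{prop: continuity}(i) directly, using $\Y_i=\lim_N Y_i^{(N)}$ (Theorem~\ref{thm: Y limit}) and $\mathscr{E}_\lambda=\lim_N E_{\lambda0^{N-k}}$ (Theorem~\ref{thm: E limit}) to get
\[
\Y_i\mathscr{E}_\lambda=\lim_N Y_i^{(N)}E_{\lambda0^{N-k}}=\lim_N \qc^{\lambda_i}\tc^{u_{\lambda0^{N-k}}(i)}E_{\lambda0^{N-k}},
\]
and then read off the limit in the two cases $\lambda_i>0$ (scalar constant) and $\lambda_i=0$ (scalar $\to 0$). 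Your main argument instead restricts the direct limit computation to the base case $m(\lambda)=0$, where $(E_{\lambda0^n})$ is an honest inverse system so the scalar-times-sequence limit is immediate from Definition~\ref{Def of limit}, and then propagates the eigenvalue to arbitrary $\lambda$ via the intertwining relations of Corollary~\ref{cor: intert} together with the Hecke relations \eqref{Y relations}, \eqref{Quadratic}. The combinatorial checks you flag ($u_\mu(i)=u_\lambda(i)$ for $i\notin\{i_1,i_1+1\}$ and $u_\mu(i_1)=u_\lambda(i_1+1)$) are routine and correct. What your approach buys is that it sidesteps the one nontrivial point in the paper's shortcut: for general $\lambda$ the sequence $(E_{\lambda0^n})$ is only convergent in the sense of Definition~\ref{Def of limit}, not an inverse system, so ``convergent sequence times scalars tending to $0$ converges to $0$'' needs the structure of that definition (one has to know the $h$-part can be absorbed into the $g_i$'s, which here follows from how Theorem~\ref{thm: E limit} builds $E_{\lambda0^n}$ out of $T_w$ applied to inverse systems). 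The paper's route is much shorter; yours is more self-contained.
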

\begin{proof}
Straightforward from  Proposition \ref{prop: continuity}, Theorem \ref{thm: Y limit}, Theorem \ref{thm: E limit}, and  \eqref{eq: Y eigenfunctions}.
\end{proof}
\subsection{}\label{sec: eigenbasis}
The limit non-symmetric Macdonald function are linearly independent, but they do not span $\Pas^+$. The list of eigenvalues is complete, but the eigenvalues are not generally simple, so the family of operators $\Y_i$ has more joint eigenfunctions. For example, the Hecke algebra symmetrization operators $\P(k)^+\to \P(k^\prime)^+$, for $k>k^\prime\geq 0$, send
eigenfunctions to eigenfunctions. This is because an operator $\Y_i$ either commutes with the symmetrization operator (if $i\leq k^\prime$), or acts trivially on any element in $\P(k^\prime)^+$ (if $i>k^\prime$). 

In \cite{BW} it was shown that the symmetrization procedure can be used to construct  a joint eigenbasis for the operators $\Y_i$, starting from the limit non-symmetric Macdonald functions. To state this more precisely, we need some notation. For $0\leq k< n$, let  $\epsilon_k^{(n)}:\P_n^+\to\P_n^+$ denote the (normalized) tail-symmetrization operator in the finite Hecke algebra of the symmetric group $S_n$
\begin{equation}
\epsilon_k^{(n)}=\frac{\tc^{\binom{n-k}{2}}}{(n-k)_\tc!}\sum_{w\in S_{1^k,n-k}} \tc^{-\ell(w)}T_w.
\end{equation}
Above, for any $a\geq 1$, $a_\tc=(1-\tc^{a})/(1-\tc)$ is the corresponding $\tc$-integer, and $S_{1^k,n-k}$ is the (parabolic) subgroup of $S_n$ generated by $s_{j}$, $k<j<n$. For $\mu$ a partition, denote by $m_i(\mu)$ the multiplicity of $i$ in $\mu$ and let
$$
v_\mu(\tc):=\prod_{i\geq 1} (m_i(\mu))_\tc \hspace{0.1em}!
$$
For the following result we refer to \cite{BW}*{Corollary 38, Corollary 47, and Theorem 49}.
\begin{Thm} A common eigenbasis of the family of commuting operators $\Y_i$, $i\geq 1$, consists of $\widetilde{\mathscr{E}}_{\sym{\lambda}{\mu}}(\qc,\tc)$, $\sym{\lambda}{\mu}\in \Lambdas$, where
$$
\widetilde{\mathscr{E}}_{\sym{\lambda}{\mu}}(\qc,\tc): =\frac{1}{(1-\tc)^{\ell(\mu)} v_\mu(\tc)}\lim_n \epsilon^{(n)}_{\ell(\lambda)} E_{\con{\lambda}{\mu0^{n-\ell(\lambda)-\ell(\mu)}}}(\qc, \tc)\in \Pas^+.$$  The coefficient of $m_{\sym{\lambda}{\mu}}$ in $\widetilde{\mathscr{E}}_{\sym{\lambda}{\mu}}(\qc,\tc)$ equals $1$
and
$$\Y_i \widetilde{\mathscr{E}}_{\sym{\lambda}{\mu}}(\qc,\tc) = \sgn_i(\lambda)\qc^{\lambda_i}\tc^{u_{\con{\lambda}{\mu}}(i)}  \widetilde{\mathscr{E}}_{\sym{\lambda}{\mu}}(\qc,\tc).
$$
\end{Thm}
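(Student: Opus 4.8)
The plan is to realise each $\widetilde{\mathscr{E}}_{\sym{\lambda}{\mu}}(\qc,\tc)$ as the image of a limit non-symmetric Macdonald function under a \emph{limit tail-symmetrization operator}, and then to read off the three assertions --- membership in $\Pas^+$, the eigenvalue, and the basis property --- from the limit machinery of \S\ref{sec: operator-limit} (Definition \ref{Def of limit}, Proposition \ref{prop: continuity}), from Theorem \ref{thm: E limit} and Theorem \ref{thm: Y limit}, and from the triangularity of Theorem \ref{thm: cY-triangularity}. The backbone is the identity
$$
\widetilde{\mathscr{E}}_{\sym{\lambda}{\mu}}=\big((1-\tc)^{\ell(\mu)}v_\mu(\tc)\big)^{-1}\,\epsilon_{\ell(\lambda)}\,\mathscr{E}_{\con{\lambda}{\mu}},
$$
where $\mathscr{E}_{\con{\lambda}{\mu}}$ is the limit Macdonald function attached to the composition $\con{\lambda}{\mu}$ and $\epsilon_k=\lim_n\epsilon^{(n)}_k$ is the limit of the finite tail-symmetrizers.

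First I would construct $\epsilon_k$. The operators $\epsilon^{(n)}_k\colon\P_n^+\to\P_n^+$ do \emph{not} form an inverse system under $x_n\mapsto 0$ (already $\epsilon^{(n)}_0(x_n)=p_1[\overline{\mathbf{X}}_n]$ while $\epsilon^{(n-1)}_0(0)=0$), so one must verify property \textbf{(C)} of \S\ref{sec: operator-limit} directly: for $f\in\Pas^+$ the sequence $(\epsilon^{(n)}_k\Pi_n f)_n$ is convergent with limit in $\P(k)^+\subseteq\Pas^+$. It is enough to check this on the basis $\Mas$; expanding $\Pi_n m_{\sym{\eta}{\nu}}$ by the restriction formula \eqref{eq: symm-monomial} and computing the Hecke symmetrizer on a monomial --- symmetrizing over $S_{1^k,n-k}$ sends $x^{\eta}\cdot(\text{tail monomial})$ to $x^{(\eta_1,\dots,\eta_k)}$ times a scalar multiple of $m_{\beta}[\overline{\mathbf{X}}_{[k+1,n]}]$, with $\beta$ the partition obtained by sorting the tail exponents, and $m_{\beta}[\overline{\mathbf{X}}_{[k+1,n]}]$ is an honest inverse system with limit $m_{\beta}[\mathbf{X}_k]$ --- one sees that the relevant scalars converge (they are explicit rational functions of $\tc$ governed by stabilizers of exponent vectors, whence the role of $v_\mu(\tc)$) and that Definition \ref{Def of limit} is met. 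Write $\epsilon_k=\lim_n\epsilon^{(n)}_k\colon\Pas^+\to\P(k)^+$. By Theorem \ref{thm: E limit} the sequence $(E_{\con{\lambda}{\mu 0^{n-\ell(\lambda)-\ell(\mu)}}})_n$ converges to $\mathscr{E}_{\con{\lambda}{\mu}}$, so Proposition \ref{prop: continuity}(i) applied to it and to $A_n=\epsilon^{(n)}_{\ell(\lambda)}$ gives $\lim_n\epsilon^{(n)}_{\ell(\lambda)}E_{\con{\lambda}{\mu 0^{n-\ell(\lambda)-\ell(\mu)}}}=\epsilon_{\ell(\lambda)}\mathscr{E}_{\con{\lambda}{\mu}}$; hence $\widetilde{\mathscr{E}}_{\sym{\lambda}{\mu}}$ is well-defined and lies in $\P(\ell(\lambda))^+\subseteq\Pas^+$.

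Next, the eigenvalue. For $i>\ell(\lambda)$ we have $\widetilde{\mathscr{E}}_{\sym{\lambda}{\mu}}\in\P(\ell(\lambda))^+\subseteq\P(i-1)^+$, so $\Y_i$ annihilates it by Corollary \ref{prop: y-vanishing}, matching $\sgn_i(\lambda)=0$. For $1\le i\le\ell(\lambda)$, the operator $\epsilon^{(n)}_{\ell(\lambda)}$ is a linear combination of the $T_j$ with $j>\ell(\lambda)\ge i$, hence commutes with $Y^{(n)}_i$ by the relations \eqref{Y relations}; Proposition \ref{prop: continuity}(ii) then yields $\Y_i\epsilon_{\ell(\lambda)}=\epsilon_{\ell(\lambda)}\Y_i$ on $\Pas^+$. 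Since $\Y_i\mathscr{E}_{\con{\lambda}{\mu}}=\sgn_i(\con{\lambda}{\mu})\qc^{(\con{\lambda}{\mu})_i}\tc^{u_{\con{\lambda}{\mu}}(i)}\mathscr{E}_{\con{\lambda}{\mu}}$ by Corollary \ref{cor: Y eigenfunctions}, and since $i\le\ell(\lambda)$ forces $(\con{\lambda}{\mu})_i=\lambda_i$ and $\sgn_i(\con{\lambda}{\mu})=\sgn_i(\lambda)$, pulling the scalar through the $\K$-linear map $\epsilon_{\ell(\lambda)}$ gives $\Y_i\widetilde{\mathscr{E}}_{\sym{\lambda}{\mu}}=\sgn_i(\lambda)\qc^{\lambda_i}\tc^{u_{\con{\lambda}{\mu}}(i)}\widetilde{\mathscr{E}}_{\sym{\lambda}{\mu}}$.

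Finally, normalization and the basis property. One establishes that $\epsilon_k$ is triangular with respect to $(\Mas,\preceq)$: it maps $m_{\sym{\eta}{\nu}}$ into $\K\cdot m_{\sigma(\sym{\eta}{\nu})}+\sum_{\prec}\K m$, where $\sigma(\sym{\eta}{\nu})=\sym{(\eta_1,\dots,\eta_k)}{\,\widehat{\nu}\,}$ collapses the parts of $\eta$ past position $k$ into the partition (using Lemma \ref{lemma: ineq}(ii) for the $\preceq$-comparabilities), and that $\sigma$ is order-preserving with $\sigma(\sym{\con{\lambda}{\mu}}{\emptyset})=\sym{\lambda}{\mu}$. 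Coupled with $\mathscr{E}_{\con{\lambda}{\mu}}\in m_{\sym{\con{\lambda}{\mu}}{\emptyset}}+\sum_{\prec}\K m$ (the extraction-of-largest-monomial argument of the proof of Theorem \ref{thm: cY-triangularity}, applied to $E_{\con{\lambda}{\mu}0^{n}}$), and with the fact that the scalar contributed to the leading term $m_{\sym{\lambda}{\mu}}$ by $\epsilon_{\ell(\lambda)}$ equals exactly $(1-\tc)^{\ell(\mu)}v_\mu(\tc)$ --- this is the point of the normalizing constant --- we get $\widetilde{\mathscr{E}}_{\sym{\lambda}{\mu}}\in m_{\sym{\lambda}{\mu}}+\sum_{\sym{\lambda'}{\mu'}\prec\sym{\lambda}{\mu}}\K m_{\sym{\lambda'}{\mu'}}$. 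The transition matrix between $\{\widetilde{\mathscr{E}}_{\sym{\lambda}{\mu}}\}_{\sym{\lambda}{\mu}\in\Lambdas}$ and $\Mas$ is thus unitriangular for $\preceq$; each principal down-set $\{\sym{\lambda'}{\mu'}\preceq\sym{\lambda}{\mu}\}$ is finite (the Bruhat comparison forces $|\lambda'|+|\mu'|=|\lambda|+|\mu|$ and $\ell(\lambda')\le\ell(\lambda)$), and $\Mas$ is a basis of $\Pas^+$ (Proposition \ref{prop: basis}), so the family is a basis, hence a common eigenbasis of the $\Y_i$ by the previous paragraph. I expect the main obstacle to be precisely the convergence step together with the identification of this constant: one must show the Hecke-symmetrizer scalars converge despite the $n$-dependent $\tc$-factorials in $\epsilon^{(n)}_k$, and that the limiting leading scalar is $(1-\tc)^{\ell(\mu)}v_\mu(\tc)$. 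This is where \cite{BW} invokes the explicit combinatorial formula \cite{HHL} for finite-rank non-symmetric Macdonald polynomials; an alternative route is a direct induction on $\preceq$ via the intertwiner recursions \eqref{int1}--\eqref{int3} and the commutation of $\epsilon^{(n)}_k$ with the tail intertwiners.
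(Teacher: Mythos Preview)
The paper does not supply its own proof of this theorem: the sentence immediately preceding the statement reads ``For the following result we refer to \cite{BW}*{Corollary 38, Corollary 47, and Theorem 49}.'' So there is nothing in the paper to compare against; your proposal is an independent sketch of an argument that the paper outsources entirely.

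That said, your outline is structurally sound and is in fact close in spirit to what \cite{BW} does. Realising $\widetilde{\mathscr{E}}_{\sym{\lambda}{\mu}}$ as a limit tail-symmetrization of $\mathscr{E}_{\con{\lambda}{\mu}}$, reading off the eigenvalue from the commutation of $T_j$ ($j>\ell(\lambda)$) with $Y_i$ ($i\le\ell(\lambda)$) together with Corollary~\ref{prop: y-vanishing} for $i>\ell(\lambda)$, and deducing the basis property from unitriangularity against $(\Mas,\preceq)$ is exactly the right architecture. You have also correctly located the genuine work: the convergence of $\epsilon^{(n)}_k$ in the sense of \S\ref{sec: operator-limit} and the identification of the leading coefficient as $(1-\tc)^{\ell(\mu)}v_\mu(\tc)$ are not formal, and \cite{BW} handles them via the explicit Haglund--Haiman--Loehr formula rather than the intertwiner route you suggest as an alternative. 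Your sketch of the convergence argument (expanding $\Pi_n m_{\sym{\eta}{\nu}}$ and tracking the stabilizer scalars) is plausible but would need to be made precise; likewise the triangularity of $\epsilon_k$ with the map $\sigma$ you describe is believable but not proved here.
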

Combining this with Theorem \ref{thm: cY-triangularity} we obtain the following.
\begin{Cor}
For any $\sym{\lambda}{\mu}\in \Lambdas$, we have $\widetilde{\mathscr{E}}_{\sym{\lambda}{\mu}}(\qc,\tc)\in  m_{\sym{\lambda}{\mu}} + \sum_{\sym{\lambda^\prime}{\mu^\prime}\prec \sym{\lambda}{\mu}} \K m_{\sym{\lambda^\prime}{\mu^\prime}}$.
\end{Cor}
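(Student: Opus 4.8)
The plan is to expand $\widetilde{\mathscr{E}}_{\sym{\lambda}{\mu}}(\qc,\tc)$ in the monomial basis $\Mas$ (Proposition \ref{prop: basis}), say $\widetilde{\mathscr{E}}_{\sym{\lambda}{\mu}}=\sum_{\sym{\eta}{\nu}}c_{\sym{\eta}{\nu}}m_{\sym{\eta}{\nu}}$, and to prove $c_{\sym{\eta}{\nu}}=0$ whenever $\sym{\eta}{\nu}\not\preceq\sym{\lambda}{\mu}$. By the preceding theorem $c_{\sym{\lambda}{\mu}}=1$, and since $\widetilde{\mathscr{E}}_{\sym{\lambda}{\mu}}$ and every $m_{\sym{\eta}{\nu}}$ are homogeneous while $\sym{\eta}{\nu}\prec\sym{\lambda}{\mu}$ forces $|\eta|+|\nu|=|\lambda|+|\mu|$ (Bruhat–comparable weights have equal total sum), only finitely many $\sym{\eta}{\nu}$ can occur; so the argument takes place in a single, finite dimensional homogeneous component of $\Pas^+$.

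Next I would run the standard triangular–eigenfunction argument. Suppose $c_{\sym{\eta}{\nu}}\neq 0$ for some $\sym{\eta}{\nu}\not\preceq\sym{\lambda}{\mu}$, and choose such a pair maximal for $\preceq$ among all $\sym{\eta'}{\nu'}\not\preceq\sym{\lambda}{\mu}$ occurring in $\widetilde{\mathscr{E}}_{\sym{\lambda}{\mu}}$. By maximality, any $\sym{\alpha}{\beta}$ occurring with $\sym{\alpha}{\beta}\succ\sym{\eta}{\nu}$ would be $\preceq\sym{\lambda}{\mu}$, forcing $\sym{\eta}{\nu}\preceq\sym{\lambda}{\mu}$, a contradiction; hence, by Theorem \ref{thm: cY-triangularity} (the $\Y_i$ are $\preceq$-upper triangular on $\Mas$ with diagonal entry $\sgn_i(\cdot)\qc^{(\cdot)_i}\tc^{u(\cdot)}$), the coefficient of $m_{\sym{\eta}{\nu}}$ in $\Y_i\widetilde{\mathscr{E}}_{\sym{\lambda}{\mu}}$ receives a contribution only from $c_{\sym{\eta}{\nu}}m_{\sym{\eta}{\nu}}$ and equals $c_{\sym{\eta}{\nu}}\sgn_i(\eta)\qc^{\eta_i}\tc^{u_{\con{\eta}{\nu}}(i)}$. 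Since $\widetilde{\mathscr{E}}_{\sym{\lambda}{\mu}}$ is a joint eigenfunction, this also equals $c_{\sym{\eta}{\nu}}\sgn_i(\lambda)\qc^{\lambda_i}\tc^{u_{\con{\lambda}{\mu}}(i)}$. Comparing the $\sgn$ and $\qc$-exponents over all $i$ forces $\eta=\lambda$ (the set of indices with $\lambda_i>0$ and the corresponding values are read off from the eigenvalues), and the $\tc$-exponents then give $u_{\con{\lambda}{\nu}}(i)=u_{\con{\lambda}{\mu}}(i)$ for every $i$ with $\lambda_i>0$. Thus the only way the claim can fail is if $\widetilde{\mathscr{E}}_{\sym{\lambda}{\mu}}$ contains $m_{\sym{\lambda}{\nu}}$ for a partition $\nu$ with $|\nu|=|\mu|$ lying in the same joint $\Y$-eigenspace but with $\sym{\lambda}{\nu}\not\preceq\sym{\lambda}{\mu}$.

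To exclude this I would return to the definition $\widetilde{\mathscr{E}}_{\sym{\lambda}{\mu}}=\mathrm{const}\cdot\lim_n\epsilon^{(n)}_{\ell(\lambda)}E_{\con{\lambda}{\mu0^{n-\ell(\lambda)-\ell(\mu)}}}$. Writing $k=\ell(\lambda)$ and using $E_\gamma\in x^\gamma+\sum_{\delta<\gamma}\K x^\delta$, observe that $\epsilon^{(n)}_k$ is built from $T_w$, $w\in S_{1^k,n-k}$, so it fixes the first $k$ exponents of every monomial and sends $x_{k+1}^{\delta_{k+1}}\cdots x_n^{\delta_n}$ to a scalar multiple of the monomial symmetric polynomial in $\overline{\Xb}_{[k+1,n]}$ indexed by the sorted tail $\rho(\delta)$ of $\delta$. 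Hence $\epsilon^{(n)}_kE_\gamma$ is a $\K$-combination of terms $x^{(\delta_1,\dots,\delta_k)}m_{\rho(\delta)}[\overline{\Xb}_{[k+1,n]}]$ with $\delta\le\gamma=\con{\lambda}{\mu0^{n-k-\ell(\mu)}}$. Re-expanding each such term in $\Mas$ via \eqref{eq: symm-monomial} and letting $n\to\infty$, I would show—using the order-compatibility statements of \S\ref{sec: compo}, in the spirit of Lemma \ref{lemma: ineq}, together with a version of Corollary \ref{cor: last-position} controlling how $\rho(\delta)$ compares to $\mu$ once $\delta\le\con{\lambda}{\mu0^\bullet}$—that every surviving almost-symmetric monomial $m_{\sym{\eta}{\nu}}$ satisfies $\sym{\eta}{\nu}\preceq\sym{\lambda}{\mu}$; combined with the previous paragraph this finishes the proof. (This triangularity may also be extracted from the cited results of \cite{BW}, with Theorem \ref{thm: cY-triangularity} only needed to identify the leading term as $m_{\sym{\lambda}{\mu}}$.)

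The main obstacle is precisely the last step. Because the common spectrum of the $\Y_i$ is not simple, the eigenvalue bookkeeping cannot separate $\widetilde{\mathscr{E}}_{\sym{\lambda}{\mu}}$ from the other joint eigenfunctions $\widetilde{\mathscr{E}}_{\sym{\lambda}{\nu}}$ in its eigenspace; the genuine content is the compatibility of the partial (tail) symmetrization, of the passage to the limit, and of the re-expansion in $\Mas$ via \eqref{eq: symm-monomial}, with the affine Bruhat order on $\Lambda$ and the induced order $\preceq$ on $\Lambdas$. Controlling the sorted-tail partitions $\rho(\delta)$ for $\delta$ below $\con{\lambda}{\mu0^\bullet}$ and tracking their images under the monomial re-expansion in shifted alphabets is the technical heart of the argument.
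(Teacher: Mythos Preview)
The paper offers no argument beyond the single sentence ``Combining this with Theorem~\ref{thm: cY-triangularity} we obtain the following.'' Your second paragraph reconstructs what that combination presumably means: take a maximal offending index $\sym{\eta}{\nu}$, use upper-triangularity of the $\Y_i$ to read off that the joint eigenvalue at $\sym{\eta}{\nu}$ coincides with that at $\sym{\lambda}{\mu}$, and hence $\eta=\lambda$. That much is correct, and it is exactly the argument the paper has in mind.

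You are also right that this argument, as stated, does not finish the job: because the joint spectrum is not simple, equality of eigenvalues only gives $u_{\con{\lambda}{\nu}}(i)=u_{\con{\lambda}{\mu}}(i)$ for all $i$ with $\lambda_i>0$, which does not pin down $\nu$. A purely linear-algebraic deduction from ``eigenbasis with unit leading coefficient'' plus ``upper-triangular action'' cannot by itself force unitriangularity of the change-of-basis matrix (a $2\times2$ identity matrix already shows this). So the paper is implicitly relying on more of the cited \cite{BW} input than is recorded in the displayed theorem, and your instinct to return to the construction is sound.

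Where your proposal needs tightening is paragraph~3. First, $\epsilon^{(n)}_k$ does fix the first $k$ exponents, but it does \emph{not} send a tail monomial to a scalar multiple of a single monomial symmetric polynomial; it produces a Hall--Littlewood--type combination, so you must control a genuine sum over partitions dominated by the sorted tail, not just $\rho(\delta)$ itself. Second, the passage from ``$\delta\le\con{\lambda}{\mu0^{\bullet}}$'' to ``every almost-symmetric monomial in the re-expansion is $\preceq\sym{\lambda}{\mu}$'' is exactly the content of Lemma~\ref{lemma: ineq}(ii): once you know $\con{\lambda'0^{k-\ell(\lambda')}}{\text{tail}}\le\con{\lambda}{\mu}$, that lemma lets you replace the tail by its dominant rearrangement and conclude $\sym{\lambda'}{\text{tail}_+}\preceq\sym{\lambda}{\mu}$. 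This is the step you should invoke explicitly rather than gesture at; with it, the re-expansion via \eqref{eq: symm-monomial} and the limit cause no trouble, and the argument closes.
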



\section{The PBW basis} \label{sec: pbw}
\subsection{} For our discussion of the PBW basis of $\H^+$, and especially for our analysis in \S\ref{sec: faith}, it will be useful to switch from this point forward to an equivalent normalization of the presentation of $\H^+$. Specifically, we enlarge the field $\K$ to the field of fractions generated by $\tc^{1/2}$ and $\qc$, and we use the elements $\tc^{-1/2}\Teb_i$ instead of $\Teb_i$ as generators. As we record below, the defining  relations with respect to these generators do not involve $\tc^{1/2}$, but rather the element  $\hc=\tc^{-1/2}-\tc^{1/2}$. Therefore, we can use  $\Kp=\Rat(\hc,\qc)$ as the base field.

\begin{Conv}\label{conv: normalization}
In this section, in order to avoid heavier notation, we will use $\Teb_i$ to refer to the element $\tc^{-1/2}\Teb_i$. The same normalization and notational convention will apply to the corresponding elements $T_i$ of  $\H_k$ and $\H_k^+$.
\end{Conv}

With this convention, Definition \ref{def: sDAHA+} reads as follows.
\begin{Def}\label{def: sDAHA+-bis}
The  \sp limit DAHA  $\H^+$ is  the $\Kp$-algebra   generated by the elements $\Teb_i$,$\Xeb_i$, and $\Yeb_i$, $i\geq 1$, satisfying  the following relations
  \begin{subequations}\label{sdaha-bis}
        \begin{equation}\label{T relations-bis}
          \begin{gathered}
          \Teb_{i}\Teb_{j}=\Teb_{j}\Teb_{i}, \quad |i-j|>1,\\
          \Teb_{i}\Teb_{i+1}\Teb_{i}=\Teb_{i+1}\Teb_{i}\Teb_{i+1}, \quad i\geq 1,
          \end{gathered}
        \end{equation}
        \begin{equation}\label{Quadratic-bis}
                  \Teb_{i}-\Teb^{-1}_{i}=\hc, \quad i\geq 1,
        \end{equation}
        \begin{equation}\label{X relations-bis}
            \begin{gathered}
                \Teb_i^{-1} \Xeb_i \Teb_i^{-1}=\Xeb_{i+1}, \quad  i\geq 1\\
                \Teb_{i}\Xeb_{j}=\Xeb_{j}\Teb_{i}, \quad  j\neq i,i+1,\\
                \Xeb_i \Xeb_j=\Xeb_j \Xeb_i,\quad i,j\geq 1,
            \end{gathered}
        \end{equation}
         \begin{equation}\label{Y relations-bis}
            \begin{gathered}
                 \Teb_i \Yeb_i \Teb_i=\Yeb_{i+1}, \quad i\geq 1\\
               	\Teb_{i}\Yeb_{j}=\Yeb_{j}\Teb_{i}, \quad  j\neq i,i+1,\\
                \Yeb_i \Yeb_j=\Yeb_j \Yeb_i, \quad i,j\geq 1,
            \end{gathered}
        \end{equation}
                \begin{equation}\label{XY cross relations-bis}
            \Yeb_1 \Teb_1 \Xeb_1=\Xeb_2 \Yeb_1\Teb_1.
        \end{equation}
    \end{subequations}
    
\end{Def}    

For later use we record the following  relations, that follow by induction using the defining relations of $\H^+$, with \eqref{XY cross relations-bis} as the base case. For $1\leq a<b$ we have, 
\begin{equation}\label{XY relations}
            \begin{aligned}
                \Xeb_b \Yeb_a &=\Yeb_a \Xeb_b + \hc \Teb_{b-1}^{-1}\cdots \Teb_a^{-1}\cdots \Teb_{b-1}^{-1} \Yeb_b \Xeb_b\\
                &=\Yeb_a \Xeb_b + \hc\Yeb_a \Xeb_a \Teb_{b-1}^{-1}\cdots \Teb_a^{-1}\cdots \Teb_{b-1}^{-1} ,\\
                \Yeb_b \Xeb_a &=\Xeb_a \Yeb_b  -\hc \Teb_{b-1}\cdots \Teb_a\cdots \Teb_{b-1} \Xeb_b \Yeb_b\\
                &=\Xeb_a \Yeb_b  -\hc \Xeb_a \Yeb_a \Teb_{b-1}\cdots \Teb_a\cdots \Teb_{b-1}.
            \end{aligned}
        \end{equation} 

\begin{Rem}
As before, the defining relations of $\H^+$ do not depend on the parameter $\qc$. Therefore, in this normalization, $\H^+$ is defined over $\Rat[\hc]$.
\end{Rem}


\subsection{} \label{sec: S-def}
A (possibly empty) product of the generators of $\H^+$ will be called \emph{word}. We will use the following notion of degree for words. 
\begin{Def}\label{def: degree}
For any $i\geq 1$, the generators $\Teb_i$ have degree $0$, and the generators $\Xeb_i, \Yeb_i$ have degree $1$. The degree $\deg(\mathfrak{w})$ of a word $\mathfrak{w}$ is the sum of the degrees of the letters. With respect to this notion of degree, the defining relations of $\H^+$ are homogeneous. For any $D\geq 0$, we denote by ${}_D\H^{+}$, ${}_D\H(k)^{+}$ the $\Kp$-span of the words of degree $D$ in $\H^{+}$, and respectively,  $\H(k)^{+}$.
\end{Def}

\begin{Rem} Similarly, we can also consider the degree separately in the generators $\Xeb_i$ and $\Yeb_i$, which we denote by $\deg_{\Xeb}$ and $\deg_{\Yeb}$. The defining relations of $\H^+$ are homogeneous with respect to either notion of degree.
For a word $\mathfrak{w}$, 
we have $\deg(\mathfrak{w})=\deg_{\Xeb}(\mathfrak{w})+\deg_{\Yeb}(\mathfrak{w})$.
\end{Rem}

\begin{Def}
For any $i\geq 1$, let $\W_i$ denote the semigroup of finite (possibly trivial) words in the alphabet $\{\Xeb_i, \Yeb_i\}$ (the free semigroup generated by $\Xeb_i, \Yeb_i$). We consider the following set of elements of $\H^+$, which we will call \emph{standard words}
\begin{equation}
\Sw=\bigcup_{k\geq 1} \Sw(k),\quad  \Sw(k)=\left\{  \u_1 \u_2 \cdots \u_k \Teb_w ~\Bigg|~w\in S_k, \u_i\in \W_i, 1\leq i\leq k \right\}.  
\end{equation}
Furthermore, for any $D\geq 0$ we consider
\begin{equation}
{}_D\Sw(k)=\left\{ \mathfrak{w}\in \Sw(k)~\Bigg|~\deg(\mathfrak{w})=D\right\}.
\end{equation}
\end{Def}

In Theorem \ref{thm: pbw} we  show that $\Sw$ is a basis of $\H^+$. In preparation for this result, we first show that $\Sw$ spans $\H^+$. 
\begin{Prop}\label{prop: span}
For any $k\geq 1$, $D\geq 0$, the set ${}_D\Sw(k)$ spans ${}_D\H(k)^+$ as a $\Kp$-vector space.
\end{Prop}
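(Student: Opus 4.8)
\textbf{Proof strategy for Proposition \ref{prop: span}.}
The plan is to show that an arbitrary word $\mathfrak{w}$ of degree $D$ in the generators $\Teb_i$, $\Xeb_i$, $\Yeb_i$ ($1\leq i\leq k$) can be rewritten as a $\Kp$-linear combination of standard words in ${}_D\Sw(k)$, proceeding by induction on a suitable complexity statistic of $\mathfrak{w}$. The statistic I would use combines two pieces of data: first, the number of ``inversions'' among the letters $\Xeb_i, \Yeb_i$ that fail to be in the weakly-increasing-index order $\u_1\u_2\cdots\u_k$ prescribed by the definition of $\Sw(k)$; and second, once the $\Xeb$'s and $\Yeb$'s are sorted by index, the length of the trailing Hecke-algebra word measured against a reduced expression. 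Because all the defining relations are homogeneous with respect to $\deg$, any rewriting stays inside ${}_D\H(k)^+$, so it suffices to track only the combinatorial normal form.

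First I would handle the Hecke part: using the braid relations \eqref{T relations-bis} and the quadratic relation \eqref{Quadratic-bis}, any product of $\Teb_i$'s collapses to a $\Kp$-combination of basis elements $\Teb_w$, $w\in S_k$ — this is the standard fact that $\{\Teb_w\}$ spans the finite Hecke algebra. Next I would move all $\Teb$-letters to the right past the $\Xeb$'s and $\Yeb$'s. The commutation relations $\Teb_i\Xeb_j=\Xeb_j\Teb_i$ and $\Teb_i\Yeb_j=\Yeb_j\Teb_i$ for $j\neq i,i+1$ handle most cases; when $j\in\{i,i+1\}$ one uses \eqref{X relations-bis} (e.g. $\Teb_i\Xeb_i = \Xeb_{i+1}\Teb_i + \hc\,\Xeb_i\cdot(\ldots)$-type rearrangements, and the analogous $\Yeb$ versions from \eqref{Y relations-bis}) to push $\Teb_i$ right at the cost of producing terms in which the $\Xeb/\Yeb$ part has the \emph{same} multiset of indices but the Hecke tail is shorter, plus lower-order terms. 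Each such move strictly decreases the complexity statistic, so the induction closes for this stage, and we reach a $\Kp$-combination of words of the form $(\text{product of }\Xeb\text{'s and }\Yeb\text{'s})\cdot\Teb_w$.

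Then I would sort the $\Xeb$/$\Yeb$ block by index. Whenever a letter with a larger index sits to the left of a letter with a smaller index, I apply the cross relations: $\Xeb_i\Xeb_j=\Xeb_j\Xeb_i$, $\Yeb_i\Yeb_j=\Yeb_j\Yeb_i$ cost nothing, while for a pair $\Xeb_b\Yeb_a$ or $\Yeb_b\Xeb_a$ with $a<b$ out of order I use \eqref{XY relations} to swap them; the correction term $\hc\,\Yeb_a\Xeb_a\Teb_{b-1}^{-1}\cdots\Teb_a^{-1}\cdots\Teb_{b-1}^{-1}$ (or its $\Yeb$ analogue) again has the property that the $\Xeb/\Yeb$ indices only decrease in the relevant statistic, and the extra $\Teb^{\pm1}$'s can be pushed back to the right by the previous stage. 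Sorting by index groups all letters of index $i$ into a block $\u_i\in\W_i$, and within each block the order of $\Xeb_i$ vs. $\Yeb_i$ is left untouched (we make no attempt to normalize inside $\W_i$, since $\W_i$ is the full free semigroup), yielding exactly an element of $\Sw(k)$. The main obstacle is bookkeeping: one must set up the complexity statistic carefully enough that \emph{every} relation invoked — especially the ones in \eqref{X relations-bis}, \eqref{Y relations-bis}, \eqref{XY relations} that simultaneously alter the $\Xeb/\Yeb$ block and inject new $\Teb^{\pm 1}$'s — is seen to strictly decrease it, so that the interleaved ``sort indices / push $\Teb$ right'' loop terminates. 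I expect this termination argument, rather than any individual computation, to be the crux; once the statistic is pinned down the rest is a routine finite rewriting.
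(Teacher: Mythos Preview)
Your outline is plausible and would likely succeed, but the paper's argument sidesteps precisely the difficulty you flag. Rather than devising a composite complexity statistic and proving that an interleaved ``sort indices / push $\Teb$ right'' loop terminates, the paper runs a straightforward induction on the total degree $D$. Given a word $\v\Teb_w$ with $\v$ a word in the $\Xeb_i,\Yeb_i$, the paper locates the \emph{smallest} index $b$ occurring in $\v$ and the leftmost occurrence of a letter from $\{\Xeb_b,\Yeb_b\}$; say it is $\Xeb_b$. Since every letter to its left has index $c>b$, the second form of each relation in \eqref{XY relations} (e.g.\ $\Yeb_c\Xeb_b=\Xeb_b\Yeb_c-\hc\,\Xeb_b\Yeb_b\Teb_{c-1}\cdots\Teb_b\cdots\Teb_{c-1}$) lets one commute $\Xeb_b$ one step to the left with the correction term \emph{again beginning with $\Xeb_b$}. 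Thus the length of the prefix strictly drops in every branch, and after finitely many steps all terms have the shape $\Xeb_b\,\v'$ with $\deg\v'=D-1$. One then invokes the induction hypothesis on $\v'$; since no index $<b$ ever appears, each resulting standard word has $\u_1=\cdots=\u_{b-1}=\emptyset$, so prefixing $\Xeb_b$ yields an element of $\Sw(k)$.

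The difference is largely one of economy: your two-stage rewriting works, but verifying termination requires tracking how the $\Teb^{\pm1}$ tails injected by \eqref{XY relations} interact with subsequent index changes when pushed right, which is exactly the bookkeeping you anticipate. The paper's induction on $D$ replaces all of that with the single observation that the ``move-to-front'' correction terms can be arranged to keep the moving letter in front, so termination is immediate. Your approach has the advantage of being the natural ``normal form'' strategy one would try first; the paper's buys a shorter proof at the cost of spotting the right inductive variable.
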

\begin{proof} 
We fix $k\geq 1$ and will show, by induction on $D\geq 0$, that word in ${}_D\H(k)^+$ is a linear combination of  standard words in ${}_D\Sw(k)$. The case of words of degree zero is obvious. For the induction step we will use the relations \eqref{XY relations}.      
        
Let $\u$ be a word in ${}_D\H(k)^+$ with  $D>0$.  It is enough to assume that $\u$ is of the form $\v \Teb_w $, for some $w\in S_k$ and $\v$ a word in $\Xeb_i, \Yeb_i$, $1\leq i\leq k$. Indeed, any word in $\H(k)^+$ can be written as a linear combination of words of this form      
by using the relations \eqref{X relations-bis}, \eqref{Y relations-bis}, and \eqref{Quadratic-bis}. Furthermore, it is enough to argue that $\v$ can be written as a linear combination of standard words in $\Sw(k)_D$.

Let $b\leq k$ be the smallest positive integer such that $\v$ contains a letter from  $\{\Xeb_b, \Yeb_b\}$ (so that $\v\in {}_D\H(b)^+$) and consider the left-most occurrence of such a letter in $\v$. To fix the discussion, let us assume that this left-most occurrence is the letter $\Xeb_b$.
The occurrence might be at the beginning of $\v$, but if not, by repeated use of \eqref{XY relations}, $\v$ can be written as a linear combination of words, still in ${}_D\H(b)^+$, for which the first letter is $\Xeb_b$. Any word that appears in the linear combination is of the form $ \Xeb_b \v^\prime$ with $\v^\prime$ a word in ${}_{D-1}\H(b)^+$. By the induction hypothesis, $\v^\prime$ is a linear combination of standard  words in ${}_{D-1}\Sw(b)$. We have $\Xeb_b\cdot {}_{D-1}\Sw(b)\subseteq {}_D\Sw(b)$. Therefore, $ \Xeb_b \v^\prime$, and ultimately $\v$, is a linear combination of   words in ${}_D\Sw(b)$. Since $b\leq k$, we have ${}_D\Sw(b)\subseteq {}_D\Sw(k)$, and this completes the argument.
\end{proof}
Variants of the  argument in Proposition \ref{prop: span} gives the following.
\begin{Prop}\label{prop: other-bases}
For any $k\geq 1$, $D\geq 0$, each of the following sets span ${}_D\H(k)^+$ as a $\Kp$-vector space
\begin{enumerate}[label=(\roman*)]
\item $\left\{  \mathfrak{w}= \u_k \cdots \u_2 \u_1 \Teb_w ~\Bigg|~w\in S_k, \u_i\in \W_i, 1\leq i\leq k, \deg(\mathfrak{w})=D\right\}$,
\item $\left\{  \mathfrak{w}=\Teb_w \u_1 \u_2 \cdots \u_k  ~\Bigg|~w\in S_k, \u_i\in \W_i, 1\leq i\leq k, \deg(\mathfrak{w})=D\right\}$,
\item $\left\{  \mathfrak{w}=\Teb_w \u_k \cdots \u_2 \u_1 ~\Bigg|~w\in S_k, \u_i\in \W_i, 1\leq i\leq k, \deg(\mathfrak{w})=D\right\}$.
\end{enumerate}
\end{Prop}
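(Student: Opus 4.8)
The plan is to prove all three statements by the same nested induction that establishes Proposition \ref{prop: span}, adjusted only to accommodate (a) the order in which the blocks $\u_i$ occur and (b) the side on which $\Teb_w$ sits. As in that proof, every case reduces to rearranging a word $\v$ in the letters $\Xeb_i,\Yeb_i$ ($1\le i\le k$) into the prescribed block shape, using \eqref{XY relations} together with the commutativity of the $\Xeb$'s among themselves and of the $\Yeb$'s among themselves; the reduction of an arbitrary word of $\H(k)^+$ to one of the two normal forms $\v\Teb_w$ or $\Teb_w\v$ is carried out, exactly as in Proposition \ref{prop: span}, by means of \eqref{X relations-bis}, \eqref{Y relations-bis}, \eqref{Quadratic-bis}. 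Note that moving a generator $\Teb_s$, $s\le k-1$, to either side of an $\Xeb_j$ or $\Yeb_j$ changes the index $j$ by at most one and never raises it above $k$, so one never leaves $\H(k)^+$.

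For (i) the shape differs from that of Proposition \ref{prop: span} only in that the blocks are listed in decreasing order, while $\Teb_w$ stays on the right; accordingly, in the sorting of $\v$ one peels off the \emph{largest} index present rather than the smallest. Given $\v\Teb_w$ with $\v$ a word in the $\Xeb_i,\Yeb_i$, let $b\le k$ be the largest index occurring in $\v$ and consider its left-most occurrence. Using the rearranged forms of \eqref{XY relations}, valid for $a<b$,
\begin{align*}
\Yeb_a\Xeb_b&=\Xeb_b\Yeb_a-\hc\,\Yeb_a\Xeb_a\Teb_{b-1}^{-1}\cdots\Teb_a^{-1}\cdots\Teb_{b-1}^{-1},\\
\Xeb_a\Yeb_b&=\Yeb_b\Xeb_a+\hc\,\Xeb_a\Yeb_a\Teb_{b-1}\cdots\Teb_a\cdots\Teb_{b-1},
\end{align*}
together with $\Xeb_a\Xeb_b=\Xeb_b\Xeb_a$ and $\Yeb_a\Yeb_b=\Yeb_b\Yeb_a$, this occurrence is pushed to the front of $\v$: in the leading term the index-$b$ letter has moved one step to the left and nothing else has changed, while every correction term contains no letter of index $b$ at all (hence strictly fewer letters of the current top index) at the cost of a string of $\Teb$'s, which is immediately swept back to the right by \eqref{X relations-bis}, \eqref{Y relations-bis}, \eqref{Quadratic-bis}. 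Iterating collects all index-$b$ letters into a block $\u_b\in\W_b$ at the front of $\v$; recursing on the remaining suffix, whose largest index is now smaller, produces the shape $\u_k\cdots\u_2\u_1\Teb_w$ (with $\u_i=1$ for the absent indices), which lies in ${}_D\H(k)^+$ and is a $\Kp$-linear combination of the listed words.

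For (ii) and (iii) one first sweeps every $\Teb$ to the left, writing an arbitrary word of $\H(k)^+$ as a linear combination of terms $\Teb_w\v$ with $\v$ a word in the $\Xeb_i,\Yeb_i$, and then sorts $\v$ into increasing block order $\u_1\u_2\cdots\u_k$ exactly as in Proposition \ref{prop: span} (for (ii)), or into decreasing block order $\u_k\cdots\u_2\u_1$ as in part (i) above (for (iii)). The $\Teb$-strings produced by the correction terms of \eqref{XY relations} are now swept back to the left and absorbed into $\Teb_w$ after the usual rewriting of $\Teb_s^{\pm1}$; since those correction terms are strictly smaller, this is a legitimate step of the same induction.

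The only real content, exactly as for Proposition \ref{prop: span}, is the termination bookkeeping: one fixes the degree $D$, which is preserved by every rewriting move, and argues by descending induction on a well-founded statistic $\mathfrak{d}(\v)$ measuring how far $\v$ is from the target block shape — for instance a lexicographic combination of the multiset of indices occurring in $\v$, ordered so that replacing an index by a strictly smaller one (which is precisely the effect of the correction terms of \eqref{XY relations}) decreases it, the number of out-of-order adjacent pairs, and the number of $\Teb$'s still on the wrong side — and then checks that each move strictly decreases $\mathfrak{d}$ while keeping us inside $\H(k)^+$. Granting such a statistic, all three assertions follow from the scheme of Proposition \ref{prop: span} with no new algebraic identity beyond \eqref{XY relations}.
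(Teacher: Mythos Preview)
Your treatment of part (i) is correct and matches the paper: peel the largest index $b$ from its leftmost occurrence and push it to the front, exactly as the paper prescribes.

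For (ii) and (iii), however, your prescription does not match the paper's and this matters. You propose to sort $\v$ from the \emph{left} (``exactly as in Proposition~\ref{prop: span}'' for (ii), ``as in part (i)'' for (iii)) and then sweep the correction $\Teb$-strings to the left into $\Teb_w$. The paper instead peels from the \emph{right}: largest index, rightmost occurrence for (ii); smallest index, rightmost occurrence for (iii). The reason the side matters is that in the degree-$D$ induction one wants, after peeling a single letter $Z_b$, to obtain $\Teb_w\,\v'\,Z_b$ with $\v'\in{}_{D-1}\H(b)^+$, so that the inductive hypothesis (applied inside $\H(b)^+$) yields $\Teb_{w'}\u_1\cdots\u_b$ and $Z_b$ merges with $\u_b$. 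With your choice the peeled letter ends up \emph{between} $\Teb_w$ and the remainder, and the $\Teb$-strings produced by the corrections must then be swept left \emph{through} $Z_b$, which changes its index and destroys the clean factorization.

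Your fallback — the lexicographic statistic $\mathfrak d(\v)$ — does not rescue this. The claim that ``replacing an index by a strictly smaller one decreases'' the multiset is exactly what the correction terms of \eqref{XY relations} do locally, but sweeping a $\Teb_j^{\pm1}$ past an $\Xeb_j$ or $\Xeb_{j+1}$ can \emph{raise} the index (e.g.\ $\Teb_{b-1}^{-1}\Xeb_{b-1}=\Xeb_b\Teb_{b-1}^{-1}+\hc\,\Xeb_b$), so the multiset component of $\mathfrak d$ can increase during the $\Teb$-sweep. Concretely, for (i) already, the correction in $Y_1X_2X_1\to X_2Y_1X_1-\hc\,Y_1X_1\Teb_1^{-1}X_1$ has no index-$2$ letter before the sweep, but after pushing $\Teb_1^{-1}$ right one gets $-\hc\,Y_1X_1X_2\Teb_1^{-1}+\cdots$, with the index-$2$ count restored. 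So the statistic you sketch is not well-founded under the moves you use; the fix is precisely the paper's: for (ii) and (iii), peel from the right so the $\Teb$-strings never cross the peeled letter.
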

\begin{proof}
We only indicate the modifications of the proof of Proposition \ref{prop: span}  that are needed for the proof of each case. 
 For part (i), work with $b\leq k$ the largest positive integer such that  $v$ contains a letter from  $\{\Xeb_b, \Yeb_b\}$ and its left-most occurrence. For part (ii), work with $b\leq k$ the largest positive integer such that  $\v$ contains a letter from  $\{\Xeb_b, \Yeb_b\}$ and its right-most occurrence. For part (ii), work with $b\leq k$ the smallest positive integer such that  $\v$ contains a letter from  $\{\Xeb_b, \Yeb_b\}$ and its right-most occurrence. 
\end{proof}
\subsection{} In order to show that $\Sw$ is a linearly independent set, it is useful to consider the $\hc=0$ limit of $\H^+$. 

\begin{Def}\label{def: 0sDAHA+}
The  \sp limit DAHA  at $\hc=0$, denoted by $\ev_0(\H^+)$, is  the $\Rat(\qc)$-algebra   generated by the elements $\Tebu_i$,$\Xebu_i$, and $\Yebu_i$, $i\geq 1$, satisfying  the following relations
  \begin{subequations}\label{0sdaha-bis}
        \begin{equation}\label{0T relations-bis}
          \begin{gathered}
          \Tebu_{i}\Tebu_{j}=\Tebu_{j}\Tebu_{i}, \quad |i-j|>1,\\
          \Tebu_{i}\Tebu_{i+1}\Tebu_{i}=\Tebu_{i+1}\Tebu_{i}\Tebu_{i+1}, \quad i\geq 1,
          \end{gathered}
        \end{equation}
        \begin{equation}\label{0Quadratic-bis}
                  \Tebu_{i}=\Tebu^{-1}_{i}, \quad i\geq 1,
        \end{equation}
        \begin{equation}\label{0X relations-bis}
            \begin{gathered}
                \Tebu_i \Xebu_i \Tebu_i=\Xebu_{i+1}, \quad  i\geq 1\\
                \Tebu_{i}\Xebu_{j}=\Xebu_{j}\Tebu_{i}, \quad  j\neq i,i+1,\\
                \Xebu_i \Xebu_j=\Xebu_j \Xebu_i,\quad i,j\geq 1,
            \end{gathered}
        \end{equation}
         \begin{equation}\label{0Y relations-bis}
            \begin{gathered}
                 \Tebu_i \Yebu_i \Tebu_i=\Yebu_{i+1}, \quad i\geq 1\\
               	\Tebu_{i}\Yebu_{j}=\Yebu_{j}\Tebu_{i}, \quad  j\neq i,i+1,\\
                \Yebu_i \Yebu_j=\Yebu_j \Yebu_i, \quad i,j\geq 1,
            \end{gathered}
        \end{equation}
                \begin{equation}\label{0XY cross relations-bis}
            \Yebu_1 \Tebu_1 \Xebu_1=\Xebu_2 \Yebu_1\Tebu_1.
        \end{equation}
    \end{subequations} 
\end{Def}    
Of course, the defining relations of $\ev_0(\H^+)$ do not depend on $\qc$ and the algebra is in fact defined over $\Rat$. It is clear from  Definition \ref{def: 0sDAHA+} and Definition \ref{def: sDAHA+} that $\ev_0(\H^+)$ is isomorphic to $\H^+\otimes_{\Rat(\qc)[\hc]}\Rat(\qc)$ with $\Rat(\qc)[\hc]$ acting on $\Rat(\qc)$ by evaluation at $\hc=0$.

As a consequence of  \eqref{0X relations-bis} and \eqref{0XY cross relations-bis} we obtain $ \Yebu_1  \Xebu_2=\Xebu_2 \Yebu_1$. By further employing the relations  \eqref{0X relations-bis} and \eqref{0Y relations-bis} we obtain that, for any $a\neq b$, we have
\begin{equation}\label{0XY relations}
                \Yebu_b \Xebu_a =\Xebu_a \Yebu_b. 
\end{equation} 

\subsection{} \label{sec: PBW} The structure of $\ev_0(\H^+)$ is relatively simple. It is the semidirect product of the group algebra of the infinite symmetric group $S_\infty$ acting by permutation on the direct product of the semigroups $W_i$, $i\geq 1$, where $W_i$ is the free semigroup generated by $\Xebu_i, \Yebu_i$. Therefore, it is clear that the set
\begin{equation}
\Sw_0=\bigcup_{k\geq 1} \Sw_0(k),\quad  \Sw_0(k)=\left\{  u_1 u_2 \cdots u_k  w ~\Bigg|~w\in S_k, u_i\in W_i, 1\leq i\leq k \right\}.  
\end{equation}
is a basis for $\ev_0(\H^+)$.

\begin{Thm}\label{thm: pbw}
The set $\Sw$ is a basis of $\H^+$. We call $\Sw$ the \emph{PBW basis} of $\H^+$.
\end{Thm}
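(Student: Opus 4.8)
By Proposition \ref{prop: span} the set $\Sw$ already spans $\H^+$ over $\Kp$ (since $\H^+=\bigcup_{k}\bigoplus_D{}_D\H(k)^+$ and each ${}_D\H(k)^+$ is spanned by ${}_D\Sw(k)\subseteq\Sw$), so the task is to prove linear independence. The natural framework is to work over an integral form. Put $A=\Rat(\qc)[\hc]$, so that $\Kp=\operatorname{Frac}(A)$, and let $\H^+_A$ be the $A$-algebra with the presentation of Definition \ref{def: sDAHA+-bis}: this makes sense over $A$ because every coefficient occurring in \eqref{sdaha-bis} lies in $\Rat[\hc]\subseteq A$, and the quadratic relation already forces $\Teb_i$ to be invertible with $\Teb_i^{-1}=\Teb_i-\hc$. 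Then $\H^+=\H^+_A\otimes_A\Kp$, while $\H^+_A/\hc\H^+_A=\ev_0(\H^+)$, because $A/\hc A\cong\Rat(\qc)$ and setting $\hc=0$ in \eqref{sdaha-bis} recovers \eqref{0sdaha-bis}; the quotient map $\H^+_A\twoheadrightarrow\ev_0(\H^+)$ sends a standard word $\u_1\cdots\u_k\Teb_w\in\Sw$ to the corresponding element of $\Sw_0$, giving a degree-preserving bijection $\Sw\to\Sw_0$.

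The plan is to deduce the theorem from the single assertion that $\H^+_A$ is a free $A$-module. First note that the proof of Proposition \ref{prop: span} uses only the relations \eqref{X relations-bis}, \eqref{Y relations-bis}, \eqref{Quadratic-bis}, \eqref{XY relations}, whose coefficients lie in $\Rat[\hc]$; hence the same argument, run over $A$, shows that ${}_D\Sw(k)$ spans ${}_D\H(k)^+_A$ over $A$. Granting freeness of $\H^+_A$, the $A$-module ${}_D\H(k)^+_A$ is finitely generated free (submodules of free modules over the PID $A$ are free), and its rank equals $\dim_\Kp\bigl({}_D\H(k)^+_A\otimes_A\Kp\bigr)=\dim_\Kp{}_D\H(k)^+\le|{}_D\Sw(k)|$ and also equals $\dim_{\Rat(\qc)}\bigl({}_D\H(k)^+_A/\hc\bigr)=\dim_{\Rat(\qc)}{}_D\ev_0(\H^+)(k)=|{}_D\Sw_0(k)|=|{}_D\Sw(k)|$, where the middle step uses that $\Sw_0$ is a basis of $\ev_0(\H^+)$. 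Thus ${}_D\H(k)^+_A$ is free of rank exactly $|{}_D\Sw(k)|$ and is spanned by the $|{}_D\Sw(k)|$ elements of ${}_D\Sw(k)$, so ${}_D\Sw(k)$ is an $A$-basis of it. Letting $D,k$ vary, $\Sw$ is an $A$-basis of $\H^+_A$; since a free $A$-module is torsion-free it injects into $\H^+_A\otimes_A\Kp$, so $\Sw$ is a $\Kp$-basis of $\H^+$, as required.

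It remains to prove that $\H^+_A$ is $A$-free, and this is the heart of the matter. The plan is to run Bergman's diamond lemma on the rewriting system underlying the proof of Proposition \ref{prop: span}: orient the relations \eqref{X relations-bis}, \eqref{Y relations-bis}, \eqref{Quadratic-bis} and \eqref{XY relations} so that the irreducible words are precisely the standard words $\Sw$, and verify that every overlap ambiguity resolves. There are only finitely many ambiguity patterns (overlaps involving $\Teb\Teb$-pieces, $\Teb\Xeb$- and $\Teb\Yeb$-pieces, $\Xeb\Yeb$- and $\Yeb\Xeb$-pieces of distinct indices, and $\Teb_i^2$), and checking each is a direct computation with the defining relations. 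This is exactly where the $\tc=1$ structure enters: modulo $\hc$ the rewriting system is the manifestly confluent one of the monoid algebra $\ev_0(\H^+)$ — the semidirect product of $\Rat[S_\infty]$ with the direct product of the free semigroups generated by $\Xebu_i,\Yebu_i$ — so the $\hc^0$-part of every resolution is automatically consistent, and one is left to track only the $\hc$-linear corrections coming from \eqref{XY relations}. I expect the bookkeeping that confirms no inconsistent $\hc$-correction ever arises to be the main obstacle; the transparent $\tc=1$ picture is precisely what keeps that check finite and tractable, and it is presumably what the paper means by "a careful analysis of the relations satisfied by the generators of $\H^+$ and the structure of the stable limit DAHA at $\tc=1$".
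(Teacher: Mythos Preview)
Your strategy and the paper's share the same core idea—reduce linear independence to the $\hc=0$ specialization and invoke the basis $\Sw_0$ of $\ev_0(\H^+)$—but you have wrapped it in considerably more machinery than the paper uses, and you leave the step you yourself single out as hardest undone.

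The paper's entire linear-independence argument is one sentence: take a nontrivial $\Kp$-relation among elements of $\Sw$, clear denominators so that the coefficients lie in $\Rat[\qc,\hc]$ with not all of them divisible by $\hc$, and pass to $\ev_0(\H^+)\cong\H^+\otimes_{\Rat(\qc)[\hc]}\Rat(\qc)$; the image is then a nontrivial $\Rat(\qc)$-relation among elements of $\Sw_0$, contradicting \S\ref{sec: PBW}. No diamond lemma, no rank bookkeeping—just the observation that the presentation is defined over $\Rat[\hc]$, so one may specialize.

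Your detour through an abstract freeness statement for $\H^+_A$, followed by a rank comparison on each ${}_D\H(k)^+_A$, is redundant even on its own terms: a successful diamond-lemma verification would already exhibit $\Sw$ as an $A$-basis of $\H^+_A$, rendering your second paragraph unnecessary. More importantly, you do not actually carry out the diamond lemma; you flag it as ``the main obstacle'' and stop there. Given the form of \eqref{XY relations}—each commutation introduces a string of $\Teb$'s on the wrong side, which must themselves be moved past further $\Xeb$'s and $\Yeb$'s—the overlap resolution is not a formality, so this is a genuine gap in your proposal rather than a detail that can be safely suppressed.
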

\begin{proof}
The fact that $\Sw$ spans $\H^+$ follows from Proposition \ref{prop: span}. To show that $\Sw$ is a linearly independent set, consider a non-trivial linear relation among its elements. The coefficients in the linear relation are in $\Rat(\qc,\hc)$, but after clearing all denominators we can assume that they are in $\Rat[\qc,\hc]$, not all of them divisible by $\hc$. In $\ev_0(\H^+)$,  or equivalently in $\H^+\otimes_{\Rat(\qc)[\hc]}\Rat(\qc)$, this linear relation becomes a non-trivial linear relation between elements of $\Sw_0$, which contradicts the fact that $\Sw_0$ is a basis of  $\ev_0(\H^+)$.
\end{proof}

\begin{Cor}
Any of the sets specified in Proposition \ref{prop: other-bases} is a basis of ${}_D\H(k)^+$.
\end{Cor}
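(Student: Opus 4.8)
The plan is a dimension count: Theorem \ref{thm: pbw} pins down $\dim_{\Kp}{}_D\H(k)^+$, and a comparison of cardinalities upgrades the spanning statements of Proposition \ref{prop: other-bases} to the assertion that each of the three sets is a basis. Throughout, write $\mathcal{I}=\mathcal{I}_{D,k}$ for the set of tuples $(w,\u_1,\dots,\u_k)$ with $w\in S_k$, $\u_i\in\W_i$ and $\sum_{i=1}^{k}\deg(\u_i)=D$; this finite set simultaneously indexes ${}_D\Sw(k)$ and each of the three sets appearing in Proposition \ref{prop: other-bases}.

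First I would record that ${}_D\H(k)^+$ is finite-dimensional over $\Kp$: using the relations \eqref{X relations-bis}, \eqref{Y relations-bis}, \eqref{Quadratic-bis} one reduces any degree-$D$ word in the generators of $\H(k)^+$ to a $\Kp$-linear combination of the finitely many words $\v\Teb_w$ with $w\in S_k$ and $\v$ a word of degree $D$ in $\Xeb_1,\dots,\Xeb_k,\Yeb_1,\dots,\Yeb_k$. By Theorem \ref{thm: pbw} the set $\Sw$ is a basis of $\H^+$; since the defining relations are degree-homogeneous and $\Sw(k)\subseteq\Sw$, the subset ${}_D\Sw(k)$ is a linearly independent part of that basis, and by Proposition \ref{prop: span} it spans ${}_D\H(k)^+$. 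Hence ${}_D\Sw(k)$ is a basis of ${}_D\H(k)^+$.

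Next I would verify that the natural parametrization $\mathcal{I}\to{}_D\Sw(k)$, $(w,\u_\bullet)\mapsto\u_1\cdots\u_k\Teb_w$, is a bijection, so that $\dim_{\Kp}{}_D\H(k)^+=|\mathcal{I}|$. Surjectivity is the definition of ${}_D\Sw(k)$. For injectivity I would apply the specialization $\H^+\to\ev_0(\H^+)$ at $\hc=0$, under which this parametrization becomes $(w,u_\bullet)\mapsto u_1\cdots u_k\,w\in\Sw_0(k)$; the latter is a bijection because, as recalled in \S\ref{sec: PBW}, $\ev_0(\H^+)$ is the semidirect product of the group algebra of $S_\infty$ with the direct product of the semigroups $W_i$, $i\geq 1$, so that $\Sw_0$ is a basis of $\ev_0(\H^+)$ indexed exactly by such tuples. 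Thus distinct elements of $\mathcal{I}$ already have distinct images in $\ev_0(\H^+)$, hence in $\H^+$.

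Finally, fix one of the three sets $\mathcal{B}$ from Proposition \ref{prop: other-bases}, say $\mathcal{B}=\{\u_k\cdots\u_2\u_1\Teb_w\}$ for part (i); the other two cases are identical. Then $\mathcal{B}$ is the image of $\mathcal{I}$ under the corresponding assignment, so $|\mathcal{B}|\leq|\mathcal{I}|=\dim_{\Kp}{}_D\H(k)^+$, while by Proposition \ref{prop: other-bases} the set $\mathcal{B}$ spans ${}_D\H(k)^+$. A spanning set of a finite-dimensional vector space whose cardinality does not exceed the dimension is necessarily a basis (and the indexing assignment is then forced to be a bijection), which gives the Corollary. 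The only point that is not purely formal is the injectivity in the previous paragraph — i.e. confirming that all four sets are parametrized by literally the same combinatorial data $\mathcal{I}$; everything else combines Theorem \ref{thm: pbw}, Proposition \ref{prop: span} and Proposition \ref{prop: other-bases} with elementary linear algebra.
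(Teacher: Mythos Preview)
Your argument is correct. The paper gives no explicit proof of this Corollary, treating it as an immediate consequence of Theorem~\ref{thm: pbw}; the intended argument is simply to rerun the proof of that theorem verbatim for each of the three sets in Proposition~\ref{prop: other-bases}: spanning is Proposition~\ref{prop: other-bases}, and linear independence follows by specializing a putative relation to $\hc=0$ and observing that in $\ev_0(\H^+)$ each of the three sets maps onto a basis of the semidirect product (the $u_i$ for distinct $i$ commute there, and both $\{u_1\cdots u_k\,w\}$ and $\{w\,u_1\cdots u_k\}$ are bases of a semidirect product).

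Your route is a mild repackaging of the same idea: instead of running the $\hc=0$ argument three more times, you run it once to confirm that the parametrization $\mathcal{I}\to{}_D\Sw(k)$ is injective, extract $\dim_{\Kp}{}_D\H(k)^+=|\mathcal{I}|$ from Theorem~\ref{thm: pbw}, and finish with a cardinality bound. This is arguably cleaner, since it avoids having to check separately that the ``$\Teb_w$ on the left'' variants also specialize to bases at $\hc=0$; the dimension count absorbs that verification automatically. The key input---the structure of $\ev_0(\H^+)$ recorded in \S\ref{sec: PBW}---is the same in both approaches.
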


\section{The faithfulness of the standard representation}\label{sec: faith}

\subsection{} One of the most important outstanding questions about the structure  of the standard representation of $\H^+$ is its expected faithfulness \cite{IW}*{pg. 413}. 
In this section we  establish this faithfulness based on the detailed description of certain special elements that appear in the PBW expansion of $\varphi_k(\mathbf{H})$ for $\mathbf{H}\in \Sw(r)$ and $k>>r$. By Theorem \ref{thm: kernel}, the action of an element  $\mathbf{H}\in \H(r)^+$ in the standard representation  is identically zero if and only if  $\varphi_k(\mathbf{H})=0$ for all  $k\geq r$. We describe some special set of elements in the PBW basis of $\H_k^+$ and prove  a precise triangularity result about their occurrence in the PBW expansion of  $\varphi_k(\mathbf{H})$ for $\mathbf{H}\in \Sw(r)$. We then explain how the triangularity implies the desired faithfulness.

\subsection{} Recall  that Convention \ref{conv: normalization} applies also to the generators of $\H_k^+$.  The presentation in Definition \ref{DAHA} remains unchanged except that the reference is now to the relations in Definition \ref{def: sDAHA+-bis}. We adopt for $\H^+_k$ the notion of degree specified in Definition \ref{def: degree}. With respect to this notion of degree, the defining relations of $\H^+_k$ are homogeneous. For any $D\geq 0$, we denote by ${}_D\H^{+}_k$ the $\Kp$-span of the words of degree $D$ in $\H^{+}_k$.

\begin{Rem}\label{rem: varphi}
Recall from Remark \ref{rem: varphi-1} that there exists a canonical morphism $\varphi_k: \H(k)^+\to \H_k^+$ that sends each generator $\Teb_i$, $\Xeb_i$, $\Yeb_i$ to the corresponding generator $T^{(k)}_i$, $X^{(k)}_i$, $Y^{(k)}_i$ of $\H_k^+$. The kernel of $\varphi_k$ contains strictly the ideal generated by the relation  \eqref{det relation}. For example, for $k=2$, the kernel contains the relation $$Y_1X_1=\qc X_1 Y_1T_1^2 ,$$ which is not contained in the ideal generated by the relation $Y_1X_1X_2=\qc X_1X_2Y_1$ (because it is of lower degree).
However, after localization at $X_i, Y_i$, $1\leq i\leq k$, the kernel of $\varphi_k$ is precisely the ideal generated by  \eqref{det relation}. It would be interesting to investigate whether the kernel is precisely the ideal generated by the degree $2$ relation  
\begin{equation}\label{1=1}
Y_1 X_1=\qc X_1Y_1\cdot T_1\cdots  T_{k-2}T_{k-1}^2 T_{k-2}\cdots T_1 =\qc  X_1 Y_1\left( 1+ \hc \sum_{i=1}^{k-1}  T_1\cdots  T_{i-1}T_{i} T_{i-1}\cdots T_1 \right).
\end{equation}
\end{Rem}

The relations \eqref{XY relations} hold also for the corresponding elements of  $\H_k^+$. However, in $\H_k^+$ there are also relations that correspond to the $a=b$ case, which follow from  the \eqref{1=1} relation by conjugation with $T_{a-1}\cdots T_1$. More precisely,  for any $1\leq a\leq k$, the following relation holds in $\H^+_k$
\begin{equation}\label{a=a}
Y_a X_a
= \qc\left( X_aY_a +\hc \sum_{i=1}^{a-1} X_iY_i \cdot T_{a-1}\cdots T_i\cdots T_{a-1} \right)\left(1+\hc \sum_{p=a}^{k-1} T_a\cdots T_p\cdots T_a\right).
\end{equation}

\subsection{} We introduce the following notation that will be used in subsequent computations. For positive integers $a< b$ we denote 
\begin{equation}\label{Tab}
T_{(a, b)}=T_a\cdots T_{b-1} \cdots T_a=T_{b-1}\cdots T_a \cdots T_{b-1},\quad \text{and}\quad \mathcal{T}_{a\ss b}=\sum_{i=a+1}^b T_{(a, i)}. 
\end{equation}
We emphasize that $T_{(a, a+1)}=T_a$ and, in general, $T_{(a,b)}$ is the standard basis element of $\H_k^+$ indexed by the transposition $(a,b)$. With this notation, \eqref{a=a} reads
\begin{equation}\label{a=a-bis}
Y_a X_a
= \qc\left( X_aY_a +\hc \sum_{i=1}^{a-1} X_iY_i \cdot T_{(i, a)} \right)\left(1+\hc\mathcal{T}_{a\ss k}\right)=\qc\left( 1 +\hc  \mathcal{T}_{1\ss a} \right)\cdot X_aY_a\cdot  \left(1+\hc\mathcal{T}_{a\ss k}\right).
\end{equation}
\subsection{} The algebra $\H^+_k$ has a PBW basis that we will now describe. 
\begin{Def}
The following set of elements of $\H_k^+$ will be called rank $k$ \emph{standard DAHA words}
\begin{equation}
\Sd(k)=\left\{ X_\mu Y_\nu T_w ~\Bigg|~\mu,\nu\in \Lambda_k, w\in S_k \right\}.  
\end{equation}
Furthermore, for any $D\geq 0$ we consider
\begin{equation}
{}_D\Sd(k)=\left\{ \mathfrak{u}\in \Sd(k)~\Bigg|~\deg(\mathfrak{u})=D\right\}.
\end{equation}
\end{Def}
 The PBW Theorem in this context is the following (see, e.g. \cite{HaiChe}*{Corollary 5.8}).
\begin{Thm}\label{prop: dPBW}
The set ${}_D\Sd(k)$ is a basis of ${}_D\H_k^+$ as a $\Kp$-vector space.
\end{Thm}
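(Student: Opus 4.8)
This is the finite-rank analogue of Theorem \ref{thm: pbw}, and I would prove it by exactly the same two-step strategy: show that ${}_D\Sd(k)$ spans ${}_D\H_k^+$, and then that it is linearly independent. Since, just as for $\H^+$, the defining relations of $\H_k^+$ (those of Definition \ref{def: sDAHA+-bis} together with \eqref{det relation}) are homogeneous for the degree of Definition \ref{def: degree}, we have $\H_k^+=\bigoplus_{D}{}_D\H_k^+$ and $\Sd(k)=\bigsqcup_D{}_D\Sd(k)$ compatibly, so it is equivalent — and no harder — to prove the two claims for all $D$ simultaneously. (The whole statement is, of course, also available as \cite{HaiChe}*{Corollary 5.8}; the sketch below is the natural self-contained route within the conventions of this paper.)

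\emph{Spanning.} An arbitrary word in the generators $T_i,X_i,Y_i$ is straightened into $\Kp$-combinations of elements $X_\mu Y_\nu T_w$ as follows. First, using the relations \eqref{X relations-bis}, \eqref{Y relations-bis} between the $T_i$ and the $X_j,Y_j$ — which either commute a $T$ past an $X$- or $Y$-letter, or at worst permute the index of that letter and shorten the $T$-part — push every $T$-letter to the far right, reducing to words of the form $\mathfrak{u}\,T_w$ with $\mathfrak{u}$ a word in the $X_i$ and $Y_i$ only. Next, using the commutativity of the $X$'s and of the $Y$'s together with the cross relations \eqref{XY relations} and \eqref{a=a-bis}, move every $X$-letter to the left of every $Y$-letter: each use of a cross relation rewrites a length-two subword $Y_bX_a$ as a $\Kp$-combination of words of the shape $X_cY_d\cdot(\text{product of }T\text{'s})$ with $c,d\le\max(a,b)$, so — after pushing the freshly created $T$'s back to the right — the number of inversions (pairs consisting of a $Y$-letter to the left of an $X$-letter) strictly decreases while the degree is unchanged. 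Iterating terminates and expresses the original word inside $\Kp\cdot{}_D\Sd(k)$.

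\emph{Linear independence.} I would specialize to $\hc=0$, mimicking the proof of Theorem \ref{thm: pbw}. Let $\ev_0(\H_k^+)$ be the $\Rat(\qc)$-algebra with the same presentation but with $\Teb_i-\Teb_i^{-1}$ replaced by $0$ (so $T_i^2=1$) and with \eqref{det relation} kept unchanged. At $\hc=0$ the cross relations collapse to $Y_aX_a=\qc X_aY_a$ and $Y_bX_a=X_aY_b$ for $a\ne b$, while the $T$–$X$ and $T$–$Y$ relations become clean index-transpositions; hence $\ev_0(\H_k^+)$ is the semidirect product $\Rat(\qc)[S_k]\ltimes\bigl(\Rat(\qc)\langle X,Y\mid YX=\qc XY\rangle\bigr)^{\otimes k}$, which has $\{X_\mu Y_\nu w\mid \mu,\nu\in\Lambda_k,\ w\in S_k\}$ as a $\Rat(\qc)$-basis. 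Indeed, running the straightening of the previous paragraph at $\hc=0$ produces no correction terms, so the obvious surjection $\ev_0(\H_k^+)\twoheadrightarrow\Rat(\qc)[S_k]\ltimes(\text{quantum plane})^{\otimes k}$ carries the spanning set $\{X_\mu Y_\nu T_w\}$ bijectively onto this basis and is therefore an isomorphism. Now, given a nontrivial $\Kp$-linear relation among the elements of ${}_D\Sd(k)$, clear denominators so that the coefficients lie in $\Rat[\qc,\hc]$ and are not all divisible by $\hc$; reducing mod $\hc$ (i.e. passing to $\H_k^+\otimes_{\Rat(\qc)[\hc]}\Rat(\qc)\cong\ev_0(\H_k^+)$) yields a nontrivial relation among the $X_\mu Y_\nu w$, a contradiction. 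As an alternative, linear independence can also be extracted from the faithful standard representation on $\P_k^+$ (Proposition \ref{laurent rep}): $X_\mu$ acts by multiplication by $x^\mu$, $Y_\nu$ acts upper-triangularly (Proposition \ref{prop: Y-triangularity}) with scalar $\prod_i(\qc^{\lambda_i}\tc^{u_\lambda(i)})^{\nu_i}$ on $x^\lambda$, and on $x^\lambda$ with $\lambda$ deep in the antidominant chamber $T_w$ has Bruhat-top term $\tc^{\ell(w)}x^{w\lambda}$; evaluating a hypothetical relation at such $x^\lambda$ and reading off the coefficient of the maximal monomial that occurs separates the triples $(\mu,\nu,w)$.

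\emph{Main obstacle.} The only genuinely technical point is the termination of the straightening in the spanning step: since the cross relations \eqref{a=a-bis} preserve the degree, degree alone does not furnish a terminating induction, and one must exhibit a well-founded statistic — e.g. the pair (number of $Y$-before-$X$ inversions, total $T$-length after normalization to the right) ordered lexicographically — that strictly decreases under every rewriting move. Everything else is routine bookkeeping, and in any event the assertion is recorded as \cite{HaiChe}*{Corollary 5.8}.
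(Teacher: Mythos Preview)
The paper does not prove this statement at all: it simply cites \cite{HaiChe}*{Corollary 5.8} and moves on. Your proposal therefore supplies strictly more than the paper does, and your overall strategy (straightening for spanning, $\hc=0$ for independence) is the natural transplant of the paper's own proof of Theorem~\ref{thm: pbw}.

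There is, however, a genuine gap in your $\hc=0$ argument. Unlike $\H^+$, the algebra $\H_k^+$ is \emph{not} defined abstractly by ``the relations of Definition~\ref{def: sDAHA+-bis} together with \eqref{det relation}''; it is defined as the subalgebra of $\H_k$ generated by $T_i,X_i,Y_i$. Remark~\ref{rem: varphi} makes this explicit: the kernel of $\varphi_k:\H(k)^+\to\H_k^+$ strictly contains the ideal generated by \eqref{det relation}, so the abstract algebra you call $\ev_0(\H_k^+)$ is only a specialization of that larger quotient $R$, and the surjection $R\to\H_k^+$ is not known to be injective at this point. Your argument establishes linear independence of $\{X_\mu Y_\nu T_w\}$ in $R$, which does not by itself give linear independence in its quotient $\H_k^+$. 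The parenthetical ``$\H_k^+\otimes_{\Rat(\qc)[\hc]}\Rat(\qc)\cong\ev_0(\H_k^+)$'' is exactly the unproved step.

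The fix is immediate and you essentially already have it. Run the $\hc=0$ argument for the full DAHA $\H_k$, which \emph{is} presented abstractly (Definition~\ref{DAHA}): at $\hc=0$ it becomes $\Rat(\qc)[S_k]\ltimes(\text{quantum torus})^{\otimes k}$ with basis $\{X_\mu Y_\nu w:\mu,\nu\in\Z^k\}$, whence $\{X_\mu Y_\nu T_w:\mu,\nu\in\Z^k\}$ is a $\Kp$-basis of $\H_k$. Then $\Sd(k)$ is a subset of that basis and is therefore linearly independent in $\H_k$, hence in the subalgebra $\H_k^+$; combined with your spanning argument you are done. Your alternative route through the faithful action on $\P_k^+$ is also correct in spirit and is in fact closer to how the cited result is usually proved.
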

Theorem \ref{conj: main}, the main technical result of this section, establishes a  triangularity property among certain terms that may appear in the PBW expansion of an element of the form $\varphi_k(\mathfrak{w})$, for $\mathfrak{w}\in \Sw(r)$, $k\geq r$. 

\subsection{} Before we proceed, we record some basic facts about the number of factors required to write a permutation as a product of transpositions.
\begin{Not} For $w\in S_k$, we denote by $\varkappa(w)$ the minimal number of factors  required to write $w$ as a product of (not necessarily simple) transpositions, and by $\kappa(w)$ the number of cycles (including the trivial
one-element cycles) in the cycle decomposition of $w$. Note that $\varkappa(w)$ does not depend on the rank of the ambient permutation group.
\end{Not}
For the following results we refer to \cite{MacPer}.
\begin{Prop}\label{prop: vkappa}
Let $w\in S_k$. Then, \begin{enumerate}[label=(\roman*)]
\item $\varkappa(w)=k-\kappa(w)$;
\item  If the transposition $(i,j)$ is a factor in a minimal expression of $w$ as a product of transpositions, then $i$ and $j$ are part of the same cycle in the cycle decomposition of $w$.
\end{enumerate}
\end{Prop}

\subsection{}\label{sec: order}  If $\u\in \H^+_k$ is a word in the generators of $\H^+_k$, it follows from the relations between the generators of $\H^+_k$ that its expansion in the PBW basis  has coefficients in $\Rat[\hc, \qc]$.
\begin{Def} 
Let $\mathfrak{K}\in \H^+_k$ be a word in the generators of $\H^+_k$, and let $\u\in \Sd(k)$. The order of vanishing at $\hc=0$ of the coefficient of $\u$ in the PBW expansion of $\mathfrak{K}$  is called the order of $\mathfrak{u}$ (in the PBW expansion of $\mathfrak{K}$). If the coefficient of $\mathfrak{u}$ is $0$, the order of $\mathfrak{u}$ is $\infty$.
We denote the order of $\mathfrak{u}$ in the PBW expansion of $\mathfrak{K}$ by 
$\ord_\mathfrak{K}(\mathfrak{u})$.
\end{Def}

\begin{Def}
Let $w_1,w_2\in S_k$. We say that $w_2$ is a $\varkappa$-factor of $w_1$ if  there exist $\sigma,\tau\in S_k$ such that $$w_1=\sigma w_2 \tau\quad \text{and}\quad
\varkappa(w_1)=\varkappa(\sigma)+\varkappa(w_2)+\varkappa(\tau).$$
\end{Def}

\begin{Lem}\label{lem: ord-ineq}
Let $w_1, w_2, w\in S_k$. Then, for any choice of signs $\varsigma_1,\varsigma_2\in\{\pm1\}$, 
$$
 {\varkappa(w_1^{\varsigma_1} w_2^{\varsigma_2})} + \ord_{T^{\varsigma_1}_{w_1} T^{\varsigma_2}_{w_2}} (T_w) \geq \varkappa(w).
$$
If the equality holds, then $w_1^{\varsigma_1} w_2^{\varsigma_2}$ is a $\varkappa$-factor of $w$.
\end{Lem}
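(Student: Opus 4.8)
The plan is to prove the inequality by induction on $\ell(w_1)+\ell(w_2)$, reducing the general case to the multiplication of a single generator $T_i^{\pm1}$ against an element $T_{w}^{\varsigma}$ in PBW form, i.e. to understanding the PBW expansion of $T_i^{\pm1}T_w$ and $T_w T_i^{\pm1}$ in $\H_k^+$. The key computation is the following: for a simple transposition $s_i$ and $w\in S_k$, the quadratic relation \eqref{Quadratic-bis} gives
\begin{equation}\label{eq: ind-step}
T_i T_w \in T_{s_i w} + \hc\,\Kp^{\times}\!\cdot T_w\quad\text{if }\ell(s_iw)<\ell(w),\qquad T_iT_w = T_{s_iw}\quad\text{if }\ell(s_iw)>\ell(w),
\end{equation}
and symmetrically on the right, with $T_i^{-1}$ handled via $T_i^{-1}=T_i-\hc$. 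So multiplying by one generator either produces a single PBW term of the same $\hc$-order (and permutation changed by a simple reflection, hence $\varkappa$ changing by at most one), or splits into two terms, one with the permutation unchanged and one with the $\hc$-order raised by $1$ and the permutation changed by a simple reflection. In either branch, if $T_w$ occurs in $T_i^{\varsigma}T_{w_2}^{\varsigma_2}$ with order $d$, then either $w=s_iw'$ for some $w'$ occurring in $T_{w_2}^{\varsigma_2}$ with order $d$, or $w$ occurs in $T_{w_2}^{\varsigma_2}$ with order $d-1$; since $\varkappa(s_i w')\geq \varkappa(w')-1$ and $\varkappa(s_iw_1^{\varsigma_1}w_2^{\varsigma_2})\geq \varkappa(w_1^{\varsigma_1}w_2^{\varsigma_2})-1$, both branches preserve the inequality $\varkappa(w_1^{\varsigma_1}w_2^{\varsigma_2})+d\geq \varkappa(w)$. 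This sets up the induction cleanly.

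For the equality clause, I would track more carefully what happens in the two branches of \eqref{eq: ind-step}. When equality $\varkappa(w_1^{\varsigma_1}w_2^{\varsigma_2})+\ord(T_w)=\varkappa(w)$ holds, each multiplication step that was used must have been an equality step in the sense above: either $\ell$ went down and we took the $\hc^0$-branch with $\varkappa$ going \emph{up} by exactly one (so $s_i$ is ``length-decreasing but $\varkappa$-increasing'' for the relevant partial product — this is where Proposition \ref{prop: vkappa}(ii) is used, to see that the transposition being multiplied in lies in a single cycle and genuinely lengthens the transposition-factorization), or we took the $\hc^1$-branch with $\varkappa$ going down by one. Unwinding the induction, the sequence of simple reflections applied on the left accumulates to some $\sigma$ and those on the right to some $\tau$ with $w=\sigma\, w_1^{\varsigma_1}w_2^{\varsigma_2}\,\tau$ wait — more precisely $w = \sigma' \cdot(\text{sub-word of }w_1^{\varsigma_1}w_2^{\varsigma_2})\cdot \tau'$; the bookkeeping shows the "surviving" permutation is exactly a $\varkappa$-factor of $w$, because at each equality step the $\varkappa$-lengths add. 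I would phrase this as: define $v$ to be the permutation of the dominant PBW term, and show by induction that $v = \sigma w_1^{\varsigma_1}w_2^{\varsigma_2}\tau$ with $\varkappa(v)=\varkappa(\sigma)+\varkappa(w_1^{\varsigma_1}w_2^{\varsigma_2})+\varkappa(\tau)$ whenever the order is minimal, then note $v=w$.

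I anticipate two points of friction. The first is purely organizational: "reducing to one generator at a time" requires writing $T_{w_1}^{\varsigma_1}$ as a product of $\ell(w_1)$ generators (each raised to $\varsigma_1$), peeling them off one by one, and checking that the order and $\varkappa$ inequalities are additive along the process — routine but needs care with the $T_i^{-1}=T_i-\hc$ substitution, since that already introduces an $\hc$ and a $\varkappa$-neutral term. The second, and the real obstacle, is the equality statement: keeping track of \emph{which} permutation survives as a $\varkappa$-factor through a long chain of $\hc^0$ and $\hc^1$ branchings, and proving that minimality of the total order forces $\varkappa$-additivity at every step. This is where I would lean hardest on Proposition \ref{prop: vkappa}: part (i) to convert between $\varkappa$ and cycle counts, and part (ii) to control which transpositions can appear in a minimal transposition-factorization, ruling out the possibility that a cancellation produces a shorter-than-expected $\varkappa$. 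I would also use the fact that $\varkappa$ is invariant under the rank of the ambient group (stated in the Notation), so that the whole argument is insensitive to enlarging $k$.
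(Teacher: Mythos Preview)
Your inductive plan contains a real gap in the step you flag as merely ``organizational.'' The branch analysis you give for multiplying by one generator does \emph{not} preserve the inequality $\varkappa(w_1^{\varsigma_1}w_2^{\varsigma_2})+d\geq\varkappa(w)$. Concretely: write $w_1=s_i w_1'$ with $\ell(w_1')<\ell(w_1)$, take the $\hc^0$-branch $w=s_iv$ coming from a term $T_v$ of order $d$ in $T_{w_1'}T_{w_2}^{\varsigma_2}$, and try to deduce $\varkappa(s_iw_1'w_2^{\varsigma_2})+d\geq\varkappa(s_iv)$ from the inductive hypothesis $\varkappa(w_1'w_2^{\varsigma_2})+d\geq\varkappa(v)$. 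Since multiplying by $s_i$ can change $\varkappa$ by $+1$ on one side and $-1$ on the other, the two estimates you quote only give
\[
\varkappa(s_iw_1'w_2^{\varsigma_2})+d\;\geq\;\varkappa(w_1'w_2^{\varsigma_2})-1+d\;\geq\;\varkappa(v)-1\;\geq\;\varkappa(s_iv)-2,
\]
which is off by $2$. The bare inequality on $\varkappa$ is simply too weak an invariant to carry through the induction; this is not fixed by Proposition~\ref{prop: vkappa}, and the equality clause of the inductive hypothesis only rules out the $e=0$ sub-case of this gap, not $e=1$.

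The paper avoids this by not inducting on the inequality at all. It uses the global subword description of the Hecke product: every term of $T_{w_1}T_{w_2}$ is of the form $\hc^{a}T_{w_1y}$ with $y$ obtained from a fixed reduced word for $w_2$ by deleting $a$ letters, and one writes $y=w_2z_y$ with $z_y$ a product of $a$ (not necessarily simple) transpositions. Then $w=w_1w_2z_y$ directly yields $\varkappa(w)\leq\varkappa(w_1w_2)+\varkappa(z_y)\leq\varkappa(w_1w_2)+a$, and equality forces $\varkappa(w)=\varkappa(w_1w_2)+\varkappa(z_y)$, i.e.\ the $\varkappa$-factor statement, immediately. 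The signed cases are handled by first expanding each $T_{w_i}^{-1}$ via the same deletion description and reducing to the $(+,+)$ case. If you want to rescue your induction, the fix is to strengthen the invariant to $\varkappa\big(v\cdot(w_1'w_2^{\varsigma_2})^{-1}\big)\leq d$ for every term $T_v$ of order $d$; this \emph{is} stable under left multiplication by $T_i^{\pm1}$ (conjugation by $s_i$ preserves $\varkappa$ in the $\hc^0$-branch, and the $\hc^1$-branch appends one transposition), and it implies the desired inequality by subadditivity---but once you formulate it this way you have essentially recovered the paper's argument.
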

\begin{proof} We first present the argument for the case of $\varsigma_1=\varsigma_2=1$, which will be used in the proof of the remaining cases. The terms that appear in the PBW expansion of $T_{w_1}T_{w_2}$ are integer linear combinations of the form $\hc^{a_y}T_{w_1 y}$ with $y\leq w_2$. The element $y$ is obtained from a fixed reduced expression of $w_2$ by removing $a_y$ factors; the element $y$ can appear several times in this fashion, by removing different factors, for different values $a_y$. Any such $y$  
can be alternatively written in the form $w_2 z_y$, with $z_y$ the product of $a_y$ (not necessarily simple) transpositions. Let $\hc^{a_y}T_{w_1 y}$ such that  $w=w_1y$ and $a_y=\ord_{T_{w_1} T_{w_2}}T_w$. We write $y=w_2 z_y$ as described above, in this paragraph. Then,
$$
 \varkappa(w_1w_2)+ \ord_{T_{w_1} T_{w_2}} (T_w)= \varkappa(w_1w_2)+a_y\geq \varkappa(w_1w_2)+\varkappa(z_y)\geq \varkappa(w).
$$
If the equality holds, then $\varkappa(w)= \varkappa(w_1w_2)+\varkappa(z_y)$, which means that $w_1w_2$ is a left $\varkappa$-factor of $w$. Moreover, we also have $\varkappa(z_y)=a_y$.

We give a proof of our claim for the case of  $\varsigma_1=\varsigma_2=-1$, the argument for other cases being similar and easier. First, remark that the terms that appear in the PBW expansion of $T_{w_1}^{-1}$ are of the form  $\pm \hc^{a_{y_1}}T_{y_1}$ with $y_1\leq w_1^{-1}$ being obtained from a fixed reduced expression of $w_1^{-1}$ by removing $a_{y_1}$ factors; the element $y_1$ can appear several times in this fashion, by removing different factors, for different values $a_{y_1}$. Any such $y_1$   can be alternatively written in the form $z_1w_1^{-1}$, with $z_1$ the product of $a_{y_1}$ (not necessarily simple) transpositions. Similarly, the terms that appear in the PBW expansion of $T_{w_2}^{-1}$ are of the form  $\pm\hc^{a_{y_2}}T_{y_2}$ with $y_2\leq w_2^{-1}$, and $y_2=w_2^{-1}z_2$, and  $z_2$ the product of $a_{y_2}$ transpositions. The if the element $T_w$ appears in the PBW expansion of  $T_{w_1}^{-1} T_{w_2}^{-1}$ then it must appear in the PBW expansion of at least one of the terms of the form $T_{y_1}T_{y_2}$. We focus on the term $T_{y_1}T_{y_2}$ for which
$$\ord_{T^{-1}_{w_1} T^{-1}_{w_2}} (T_w)=a_{y_1}+a_{y_2}+\ord_{T_{y_1} T_{y_2}} (T_w).$$
We know that $\ord_{T_{y_1} T_{y_2}} (T_w)\geq \varkappa(w)-\varkappa(y_1y_2)$ with equality holding only if $y_1y_2$ is a $\varkappa$-factor of $w$. We have 
$y_1y_2=z_1w_1^{-1}w_2^{-1}z_2$, which implies that $$\varkappa(y_1y_2)\leq \varkappa(w_1^{-1}w_2^{-1})+\varkappa(z_1)+\varkappa(z_2)\leq  \varkappa(w_1^{-1}w_2^{-1})+a_{y_1}+a_{y_2}.$$
Therefore, $\ord_{T^{-1}_{w_1} T^{-1}_{w_2}} (T_w)\geq \varkappa(w)+a_{y_1}+a_{y_2}-\varkappa(y_1y_2)\geq \varkappa(w)-\varkappa(w_1^{-1}w_2^{-1})$, which is precisely our claim. If equality holds, then $y_1y_2$ is a $\varkappa$-factor of $w$ and $\varkappa(y_1y_2)= \varkappa(w_1^{-1}w_2^{-1})+\varkappa(z_1)+\varkappa(z_2)$, which means that $w_1^{-1}w_2^{-1}$ is a  $\varkappa$-factor of $y_1y_2$. In conclusion, $w_1^{-1}w_2^{-1}$ is a $\varkappa$-factor of $w$.
\end{proof}

\subsection{}
The following is a key technical result.

\begin{Prop}\label{prop: upsilon-bound}
Let $k\geq r$, $\mathfrak{w}=\u_1 \u_2 \cdots \u_r \in \Sw(r)$, $\u_j\in \W_j, 1\leq j\leq r$, and  $X_\mu Y_\nu T_w$,  $\mu,\nu\in \Lambda_k$, $w\in S_k$. Then, 
$$
\ord_{\varphi_k(\mathfrak{w})} (X_\mu Y_\nu T_w)\geq \varkappa(w).
$$
\end{Prop}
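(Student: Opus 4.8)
The plan is to proceed by induction on the degree $D = \deg(\mathfrak{w})$, translating the problem into a statement about how the order of vanishing at $\hc = 0$ interacts with multiplication by the generators $X_b^{(k)}$, $Y_b^{(k)}$. The base case $D = 0$ is vacuous since $\mathfrak{w}$ is then the empty word and $\varphi_k(\mathfrak{w}) = 1 = T_e$, with $\varkappa(e) = 0$. For the inductive step, I would write $\mathfrak{w} = Z \cdot \mathfrak{w}'$ where $Z \in \{X_b, Y_b\}$ is the first letter, $b \leq r$, and $\mathfrak{w}' = \u_1 \cdots \u_{b-1} \u_b' \u_{b+1} \cdots \u_r \in \Sw(r)$ is a standard word of degree $D - 1$; by the induction hypothesis, every term $X_{\mu'} Y_{\nu'} T_{w'}$ appearing in the PBW expansion of $\varphi_k(\mathfrak{w}')$ has order $\geq \varkappa(w')$. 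It then suffices to understand how left-multiplication by $X_b^{(k)}$ (resp. $Y_b^{(k)}$) transforms each such PBW term, and to show the order can drop by at most the drop in $\varkappa$.

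The heart of the matter is therefore a "straightening" lemma: given a PBW basis element $X_\mu Y_\nu T_w$ of $\H_k^+$ and a generator, express $X_b \cdot X_\mu Y_\nu T_w$ and $Y_b \cdot X_\mu Y_\nu T_w$ back in PBW form, controlling the $\hc$-orders of the resulting terms. Multiplication by $X_b$ on the left is easy on the $X$-part since the $X_i$ commute; the cost is commuting $X_b$ past $Y_\nu$, which is governed precisely by the relations \eqref{XY relations} and the diagonal relation \eqref{a=a-bis}. Each application of \eqref{XY relations} either leaves the permutation unchanged at $\hc^0$-cost, or produces a term with an extra factor $\hc \, T_{(a,c)}$ — i.e. it multiplies $T_w$ by a transposition at the cost of one power of $\hc$. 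The relation \eqref{a=a-bis} for $Y_a X_a$ similarly trades a power of $\hc$ for each transposition $T_{(i,a)}$ or $T_{(a,p)}$ introduced. So the bookkeeping reduces to: whenever the permutation label is multiplied by a transposition $(i,j)$, the $\hc$-order increases by exactly one, and multiplication of $T_w$ by $T_{w_0}$ in the Hecke algebra respects the bound of Lemma \ref{lem: ord-ineq}, namely $\varkappa(w w_0^{\pm 1}) + \ord_{T_w T_{w_0}^{\pm 1}}(T_{w''}) \geq \varkappa(w'')$. Combining these, the total $\hc$-order of any PBW term $X_\mu Y_\nu T_w$ arising from $X_b \cdot \varphi_k(\mathfrak{w}')$ is at least the $\hc$-order of the parent term in $\varphi_k(\mathfrak{w}')$ plus (number of transpositions used) $\geq \varkappa(w') + (\text{number of transpositions needed to pass from } w' \text{ to } w) \geq \varkappa(w)$, using the triangle-type inequality $\varkappa(w) \leq \varkappa(w') + \varkappa(w'^{-1}w)$ together with Lemma \ref{lem: ord-ineq} to absorb the Hecke-algebra expansions.

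The main obstacle I anticipate is keeping the bookkeeping honest when several applications of \eqref{XY relations} and \eqref{a=a-bis} are chained, since each step produces a sum of terms with different permutation labels and different $\hc$-orders, and the "number of transpositions introduced" must be matched cycle-by-cycle against $\varkappa(w) = k - \kappa(w)$ as in Proposition \ref{prop: vkappa}(i). The cleanest way around this is probably to prove an auxiliary statement: for any element $\mathfrak{K} \in \H_k^+$ whose PBW expansion already satisfies $\ord_{\mathfrak{K}}(X_\mu Y_\nu T_w) \geq \varkappa(w)$ for all $\mu, \nu, w$, the same bound holds for $X_b \mathfrak{K}$ and $Y_b \mathfrak{K}$. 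This is proved directly by moving $X_b$ (or $Y_b$) to its correct position among the $X$'s (or $Y$'s), applying \eqref{XY relations} / \eqref{a=a-bis} only finitely many times, and invoking Lemma \ref{lem: ord-ineq} for the final Hecke-algebra products $T_{w_{\text{transpositions}}} \cdot T_w$; one should track that each power of $\hc$ produced is always accompanied by exactly one new transposition in the permutation label. Once this closure property under left-multiplication by generators is established, the proposition follows immediately by induction on $D$, peeling off one letter of $\mathfrak{w}$ at a time.
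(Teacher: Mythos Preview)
Your strategy is sound and rests on the same key insight as the paper's proof: every power of $\hc$ produced during straightening is matched by exactly one new transposition factor $T_{(a,b)}^{\pm 1}$, and then Lemma~\ref{lem: ord-ineq} converts the count of transpositions into the bound $\varkappa(w)$. The paper, however, organizes the argument differently. Rather than inducting on $\deg(\mathfrak w)$ and proving closure under left multiplication by one generator, it straightens $\varphi_k(\mathfrak w)$ all at once into an \emph{intermediate expansion} whose terms have the form $X_\mu Y_\nu \prod_j T^{\pm}_{(e_j,f_j)}$; the $\hc$-order of each such term is seen to be at least the number of factors in the product, hence at least $\varkappa\bigl(\prod_j(e_j,f_j)\bigr)$, and a single application of Lemma~\ref{lem: ord-ineq} finishes.

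There is a genuine gap in your plan as written. You invoke only the relations \eqref{XY relations} and \eqref{a=a-bis}, but these alone do not suffice for the closure lemma: applying them to move $Y_b$ past one factor of $X_\mu$ produces a correction term such as $-\hc\, X_a Y_a\, T_{(a,b)}\cdot(\text{remaining }X\text{'s})\cdot Y_\nu T_w$, in which $T_{(a,b)}$ is sandwiched between $X/Y$ generators, not sitting at the right end. To continue, one must commute $T_{(a,b)}^{\pm 1}$ past further $X_c$'s and $Y_c$'s, and it is precisely here that the paper introduces the additional relation families \eqref{X-Tab} and \eqref{Y-Tab}. The essential verification---that every relation in this enlarged family still obeys ``one power of $\hc$ buys at most one extra transposition factor''---is where the technical content lies, and it is not covered by the relations you list. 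Once that verification is in hand your inductive scheme goes through, but note that the closure lemma for $Y_b$ is then no easier than the full statement: the induction on degree does not actually simplify the work, which is why the paper opts for the direct intermediate-expansion argument. (Incidentally, left multiplication by $X_b$ is trivially compatible with the bound, since $X_b\cdot X_\mu Y_\nu T_w = X_{\mu+e_b}Y_\nu T_w$ is already in PBW form; only the $Y_b$ case requires straightening.)
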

\begin{proof} Lets us first collect the relations inside $\H_k^+$ that will be used in the argument below. The commutation relations that are part of \eqref{T relations-bis}, \eqref{X relations-bis}, \eqref{Y relations-bis} will be used without further reference. For the first part of the argument, the non-trivial braid relations in \eqref{T relations-bis} and the quadratic relations \eqref{Quadratic-bis} will not be used. The remaining relations from \eqref{X relations-bis}, \eqref{Y relations-bis}, which contain terms of higher order with respect to $\hc$,  will be used precisely in the following form
  \begin{subequations}\label{straightening}
  \begin{equation}
  T_i X_{i}=X_{i+1} T_i +\hc X_i, \quad T_i^{-1}X_i=X_{i+1}T_i^{-1}+\hc X_{i+1},\quad i\geq 1,
  \end{equation}
  \begin{equation}
  T_i X_{i+1}=X_i T_i -\hc X_i, \quad  T_i^{-1} X_{i+1}=X_i T_i^{-1} -\hc X_{i+1},\quad i\geq 1,
  \end{equation}
    \begin{equation}
  T_i Y_{i}=Y_{i+1} T_i -\hc Y_{i+1}, \quad T_i^{-1} Y_{i}=Y_{i+1} T_i^{-1} -\hc Y_i, \quad i\geq 1,
  \end{equation}
  \begin{equation}
  T_i Y_{i+1}=Y_i T_i +\hc Y_{i+1}, \quad  T_i^{-1} Y_{i+1}=Y_i T_i^{-1}+\hc Y_i, \quad i\geq 1.
  \end{equation}
\end{subequations}
These relations are particular cases of a more general type of relations, which we record below. We make use of the following notation. For $a\leq i\leq b-1$, let
\begin{equation}
\begin{aligned}
T_{(a,b)}^{| T_iX_i}&=T_a\cdots T_{b-1}\cdots T_i X_i T_{i-1}\cdots T_a, \quad T_{(a,b)}^{T_i X_i|}=T_a\cdots T_i X_i T_{i+1}\cdots T_{b-1}\cdots T_a,\\
T_{(a,b)}^{| T_i X_{i+1}}&=T_a\cdots T_{b-1}\cdots T_i X_{i+1} T_{i-1}\cdots T_a, \quad T_{(a,b)}^{T_i X_{i+1}|}=T_a\cdots T_i X_{i+1} T_{i+1}\cdots T_{b-1}\cdots T_a.
\end{aligned}
\end{equation}
We adopt the corresponding notation for $T_{(a,b)}^{| T_iY_i}$, $T_{(a,b)}^{T_i Y_i|}$,  $T_{(a,b)}^{| T_i Y_{i+1}}$, and $T_{(a,b)}^{T_i Y_{i+1}|}$, as well as for the analogous elements based on $T_{(a,b)}^{-1}$. The following relations can be directly verified by applying the relations \eqref{straightening}
\begin{subequations}\label{X-Tab}
\begin{equation}\label{X-Tab-a}
T_{(a,b)}^{|T_iX_i}=T_{(a,b)}^{|T_{i+1}X_{i+1}} +\hc T_{(a,i+1)}^{|T_i X_i} T_{(i+1,b)}, \quad a\leq i<b-1,
\end{equation}
\begin{equation}\label{X-Tab-b}
T_{(a,b)}^{|T_{b-1}X_{b-1}}=X_b T_{(a,b)} +\hc X_a,
\end{equation}
\begin{equation}\label{X-Tab-c}
T_{(a,b)}^{T_iX_i|}=X_{i+1} T_{(a,b)} +\hc T_{(i+1,b)} T_{(a,i+1)}^{T_{i-1} X_i|}, \quad a\leq i<b-1,
\end{equation}
\begin{equation}\label{X-Tab-d}
T_{(a,b)}^{|T_iX_{i+1}}=T_{(a,b)}^{T_{i}X_{i}|} -\hc T_{(a,i+1)}^{|T_i X_i} T_{(i+1,b)}, \quad a\leq i<b-1,
\end{equation}
\begin{equation}\label{X-Tab-e}
T_{(a,b)}^{|T_{b-1}X_{b}}=T_{(a,b)}^{T_{b-2}X_{b-1}|} -\hc X_a,
\end{equation}
\begin{equation}\label{X-Tab-f}
T_{(a,b)}^{T_iX_{i+1}|}= T_{(a,b)}^{T_{i-1}X_i|} -\hc T_{(i+1,b)} T_{(a,i+1)}^{T_{i-1} X_i|}, \quad a\leq i<b-1,
\end{equation}
\end{subequations}

\begin{subequations}\label{Y-Tab}
\begin{equation}\label{Y-Tab-a}
T_{(a,b)}^{|T_iY_i}=T_{(a,b)}^{|T_{i+1}Y_{i+1}} -\hc T_{(a,i+1)}T_{(i+1,b)}^{|T_{i+1} Y_{i+1}} , \quad a\leq i<b-1,
\end{equation}
\begin{equation}\label{Y-Tab-b}
T_{(a,b)}^{|T_{b-1}Y_{b-1}}=Y_b T_{(a,b)} -\hc Y_b-\hc^2Y_b\sum_{j=a+1}^{b-1} T_{(a,j)},
\end{equation}
\begin{equation}\label{Y-Tab-c}
T_{(a,b)}^{T_iY_i|}=Y_{i+1} T_{(a,b)} -\hc Y_{i+1} T_{(i+1,b)} T_{(a,i+1)}, \quad a\leq i<b-1,
\end{equation}
\begin{equation}\label{Y-Tab-d}
T_{(a,b)}^{|T_iY_{i+1}}=T_{(a,b)}^{T_{i}Y_{i}|} +\hc T_{(a,i+1)} T_{(i+1,b)}^{|T_{i+1} Y_{i+1}}, \quad a\leq i<b-1,
\end{equation}
\begin{equation}\label{Y-Tab-e}
T_{(a,b)}^{|T_{b-1}Y_{b}}=T_{(a,b)}^{T_{b-2}Y_{b-1}|} +\hc Y_b+\hc^2Y_b\sum_{j=a+1}^{b-1} T_{(a,j)},
\end{equation}
\begin{equation}\label{Y-Tab-f}
T_{(a,b)}^{T_iY_{i+1}|}= T_{(a,b)}^{T_{i-1}Y_i|} +\hc Y_{i+1} T_{(i+1,b)} T_{(a,i+1)}, \quad a\leq i<b-1.
\end{equation}
\end{subequations}
The analogous relations involving $T_{(a,b)}^{-1}$ can be obtained from \eqref{X-Tab} and \eqref{Y-Tab} by applying the $\Rat$-linear involution of $\H^+$ that sends $T_i$ to $T_i^{-1}$, swaps $X_i$ and $Y_i$, and acts on $\K^\prime$ by  mapping $\hc$ to $-\hc$ and $\qc$ to $\qc^{-1}$.

The relations \eqref{X-Tab} and \eqref{Y-Tab} can be used inductively to express an element of the form $T_{(a,b)}X_c$ as a linear combination of elements of the form $X_d \prod_{j} T_{(e_j,f_j)}$ with coefficients that are integral polynomials in $\hc$. Similarly, an element of the form $T^{-1}_{(a,b)}X_c$ can be expressed as a linear combination of elements of the form $X_d \prod_{j} T^{-1}_{(e_j,f_j)}$ with coefficients that are integral polynomials in $\hc$. Analogous statements hold for  elements of the form $T_{(a,b)}Y_c$ and $T^{-1}_{(a,b)}Y_c$.


We will also use the relations \eqref{XY relations} and  \eqref{a=a-bis}, we which record here in the form that they will be used
  \begin{subequations}\label{XY-full}    
  \begin{equation}
                Y_b X_a =X_a Y_b  -\hc X_a Y_a T_{(a, b)}, \quad 1\leq a<b, 
   \end{equation} 
  \begin{equation}
                Y_a X_b= X_b Y_a - \hc Y_aX_a T_{(a, b)}^{-1}, 1\leq a<b,
   \end{equation}            
     \begin{equation}\label{eq: a=a-other}
Y_a X_a
= \qc X_aY_a+\qc \hc X_aY_a\sum_{j=a+1}^{k}T_{(a,j)}  +\qc \hc \sum_{i=1}^{a-1} X_iY_i \cdot T_{(i, a)}  +\qc \hc^2 \sum_{i=1}^{a-1}\sum_{j=a+1}^{k} X_iY_i \cdot T_{(i, a)}T_{(a,j)}, \quad a\geq 1.
\end{equation}
  \end{subequations}
  
Based on these observations, before analyzing the PBW expansion of $\varphi_k(\mathfrak{w})$, we first consider an intermediate expansion of $\varphi_k(\mathfrak{w})$ as a linear combination (with coefficients that are integral polynomials in $\hc$) of terms of the form $X_\mu Y_\nu \prod_j  T^\pm_{(e_j,f_j)}$  with $\mu,\nu\in \Lambda_k$, and $(e_j,f_j)\in S_k$ transpositions. As explained above, such an expansion can be obtained by the application of the relations  \eqref{X-Tab}, \eqref{Y-Tab},  and \eqref{XY-full}. The order of vanishing at $\hc=0$ of the coefficient of $X_\mu Y_\nu \prod_j  T^\pm_{(e_j,f_j)}$ in such an expansion will be denoted by 
$$
\oord_{\varphi_k(\mathfrak{w})} (X_\mu Y_\nu \prod_j T^\pm_{(e_j,f_j)}).
$$

By inspecting the relations \eqref{X-Tab}, \eqref{Y-Tab},  and \eqref{XY-full} we see that $\oord_{\varphi_k(\mathfrak{w})} (X_\mu Y_\nu \prod_j T^\pm_{(e_j,f_j)})$ is at least the number of factors in the product which, in turn, is at least $\varkappa(\prod_j (e_j,f_j))$. Therefore,
$$
\oord_{\varphi_k(\mathfrak{w})} (X_\mu Y_\nu \prod_j T^\pm_{(e_j,f_j)})\geq \varkappa(\prod_j (e_j,f_j)).
$$

To obtain the full PBW expansion of $\varphi_k(\mathfrak{w})$, we apply the non-trivial braid relations in \eqref{T relations-bis} and the quadratic relations  \eqref{Quadratic-bis} to each factor $\prod_j T^\pm_{(e_j,f_j)}$ to write them as a linear combination with coefficients integral polynomials in $\hc$ of terms of the form $T_w$, $w\in S_k$. From Lemma \ref{lem: ord-ineq} we have
\begin{equation}\label{eq: tau-estimate}
{\varkappa(\prod_j (e_j,f_j))}+\ord_{\prod_j T^\pm_{(e_j,f_j)}} (T_w) \geq \varkappa(w).
\end{equation}
Therefore, $$
\ord_{\varphi_k(\mathfrak{w})} (X_\mu Y_\nu T_w)\geq \varkappa(w),$$
which is precisely our claim.
\end{proof}

\subsection{}\label{sec: triang-affine}
In preparation for Proposition \ref{prop: main1}, we establish some lower bounds for the order of certain elements in the PBW expansion of elements of the form ${T_w Y_\lambda}X_\eta$ .

\begin{Lem}
Let $\sigma^\prime\in S_{N-1}\subset S_N$ and $\tau^\prime=(1,N,N-1,\dots,2)\in S_N$. Then, $\ell(\sigma^\prime\tau^\prime)=\ell(\sigma^\prime)+\ell(\tau^\prime)$.
\end{Lem}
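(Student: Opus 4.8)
The plan is to prove the identity by a direct inversion count in one-line notation, using that for $w\in S_N$ the Coxeter length equals the number of inversions, $\ell(w)=\#\{(i,j)\ :\ 1\le i<j\le N,\ w(i)>w(j)\}$, together with the fact that $S_{N-1}\subset S_N$ is the stabilizer of $N$, so every $\sigma^\prime\in S_{N-1}$ has $\sigma^\prime(N)=N$. Throughout I read $\sigma^\prime\tau^\prime$ as the composition $i\mapsto\sigma^\prime(\tau^\prime(i))$.

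First I would record $\tau^\prime$ explicitly. Reading the cycle $(1,N,N-1,\dots,2)$, one has $\tau^\prime(1)=N$ and $\tau^\prime(k)=k-1$ for $2\le k\le N$, i.e.\ in one-line notation $\tau^\prime=[\,N,1,2,\dots,N-1\,]$. Its inversions are exactly the pairs $(1,j)$ with $2\le j\le N$: the leading entry $N$ dominates everything, while the tail $1,2,\dots,N-1$ is strictly increasing. Hence $\ell(\tau^\prime)=N-1$.

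Next I would compute $\sigma^\prime\tau^\prime$ as a function: $(\sigma^\prime\tau^\prime)(1)=\sigma^\prime(\tau^\prime(1))=\sigma^\prime(N)=N$ and $(\sigma^\prime\tau^\prime)(k)=\sigma^\prime(k-1)$ for $2\le k\le N$, so in one-line notation $\sigma^\prime\tau^\prime=[\,N,\sigma^\prime(1),\sigma^\prime(2),\dots,\sigma^\prime(N-1)\,]$. I would then split the inversions of $\sigma^\prime\tau^\prime$ according to whether the smaller index equals $1$ or not. The inversions with smaller index $1$ are again exactly the $N-1$ pairs $(1,j)$, $2\le j\le N$. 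The inversions with smaller index $\ge 2$ are the pairs $(i,j)$ with $2\le i<j\le N$ and $\sigma^\prime(i-1)>\sigma^\prime(j-1)$, which under $(i,j)\mapsto(i-1,j-1)$ correspond bijectively to the inversions of $\sigma^\prime\in S_{N-1}$, hence number $\ell(\sigma^\prime)$. Adding, $\ell(\sigma^\prime\tau^\prime)=(N-1)+\ell(\sigma^\prime)=\ell(\tau^\prime)+\ell(\sigma^\prime)$, which is the claim.

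I do not expect a genuine obstacle: this is an elementary inversion count. The only thing requiring mild care is translating the cycle $(1,N,N-1,\dots,2)$ into one-line notation and keeping the shift $k\mapsto k-1$ consistent, and noting that the embedding used is the one fixing $N$ (with the embedding fixing $1$ the statement is already false in $S_3$). An alternative route would be to invoke the standard criterion that $\ell(uv)=\ell(u)+\ell(v)$ iff $u$ has no inversion $(a,b)$ with $v^{-1}(a)>v^{-1}(b)$, and to observe that $(\tau^\prime)^{-1}=(1,2,\dots,N)$ has inversion set $\{(a,N):1\le a\le N-1\}$ while every inversion of $\sigma^\prime$ only involves indices $\le N-1$; but the direct count above is shorter and self-contained.
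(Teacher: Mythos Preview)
Your proof is correct. Your inversion count is sound: with the standard embedding $S_{N-1}\subset S_N$ fixing $N$, one indeed has $\sigma^\prime\tau^\prime=[N,\sigma^\prime(1),\dots,\sigma^\prime(N-1)]$, and the inversions split exactly as you describe.

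The paper takes a different route: it argues by induction on $\ell(\sigma^\prime)$, using the standard root-theoretic criterion that $\ell(s_i w)=\ell(w)+1$ iff $w^{-1}(\alpha_i)$ is a positive root, together with the observation that $\tau^{\prime-1}$ sends every positive root of $\Phi_{N-1}$ to a positive root of $\Phi_N$. Your approach is more direct and entirely self-contained, requiring only the inversion interpretation of length and no induction; the paper's approach, while slightly less elementary, fits the Coxeter-theoretic language used elsewhere in the paper and makes transparent \emph{why} the additivity holds (namely, $\tau^{\prime-1}(\Phi_{N-1}^+)\subset\Phi_N^+$), which is essentially the same content as your alternative route via the disjointness of inversion sets. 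Both arguments are equally valid here; yours is arguably the cleaner one for this specific statement.
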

\begin{proof}
We proceed by induction on $\ell(\sigma^\prime)\geq 0$. Let $\sigma^\prime\in S_{N-1}$ such that  $\ell(\sigma^\prime\tau^\prime)=\ell(\sigma^\prime)+\ell(\tau^\prime)$. Let $1\leq i\leq N-2$ such that $\ell(s_i\sigma^\prime)=\ell(\sigma^\prime)+1$. The last condition is equivalent to $\sigma^{\prime-1}(\alpha_i)\in \Phi_{N-1}^+$ (see, e.g. \cite{Hum}*{Theorem 5.4}). But 
 $\tau^{\prime-1}(\Phi_{N-1}^+)\subset \Phi^+_N$, so $\tau^{\prime-1}\sigma^{\prime-1}(\alpha_i)\in \Phi_{N}^+$. This implies that   $$\ell(s_i\sigma^\prime\tau^\prime)=\ell(\sigma^\prime\tau^\prime)+1=\ell(\sigma^\prime)+\ell(\tau^\prime)+1=\ell(s_i\sigma^\prime)+\ell(\tau^\prime),$$
 which completes the argument.
\end{proof}
Let $\ell\geq 1$, $s\geq 0$, and $N> \ell+s$. Consider a subset  $$\{d_1,d_2,\cdots, d_{\ell}\}\subset \{N, N-1,N-2, \dots, N-(\ell+s-1)\}$$ with  $d_{\ell}=N$. Denote $\{e_1,e_2,\dots, e_s\}$ the complement. We are not assuming that the elements $d_1,d_2,\cdots, d_{\ell}$ or $e_1,e_2,\dots, e_s$ are listed in any particular order (e.g. increasing, decreasing). 
\begin{Cor}\label{cor: reduced-expr}
Let $\sigma$ be the cycle $(1,N,d_{\ell-1}, \dots, d_1)\in S_N$ and $\tau$ be the cycle $(1, N, N-1, \dots, N-(\ell+s-1))\in S_N$.
The element $\sigma$ has a reduced expression of the type $\rho s_{N-1}\cdots s_1$ with $\rho\in S_{N-1}$. 
\end{Cor}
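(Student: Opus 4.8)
The plan is to write down the factorization explicitly and then use the preceding Lemma to see that it is reduced. First I would identify the permutation $c:=s_{N-1}s_{N-2}\cdots s_1$. Tracking the image of each point through the word (rightmost reflection applied first) gives $c(1)=N$, $c(2)=1$, and $c(j)=j-1$ for $3\le j\le N$; thus $c$ is exactly the cycle $(1,N,N-1,\dots,2)$, which is the element $\tau'$ of the preceding Lemma. Counting inversions, the only inversions of $c$ are the pairs $(1,j)$ with $2\le j\le N$, so $\ell(c)=N-1$ and the word $s_{N-1}\cdots s_1$ is already a reduced expression for $c$.

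Next I would set $\rho:=\sigma c^{-1}$, so that tautologically $\sigma=\rho\cdot s_{N-1}\cdots s_1$. To check that $\rho$ lies in the parabolic subgroup $S_{N-1}$ (the stabilizer of $N$), note that $c^{-1}$ is the cycle $(1,2,\dots,N)$, so $c^{-1}(N)=1$, while $\sigma$, being the cycle $(1,N,d_{\ell-1},\dots,d_1)$, satisfies $\sigma(1)=N$; hence $\rho(N)=\sigma(c^{-1}(N))=\sigma(1)=N$, and $\rho\in S_{N-1}$.

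Finally I would invoke the preceding Lemma with $\sigma'=\rho$ and $\tau'=c$, which yields $\ell(\sigma)=\ell(\rho c)=\ell(\rho)+\ell(c)=\ell(\rho)+(N-1)$. Consequently, juxtaposing any reduced expression of $\rho$---which involves only $s_1,\dots,s_{N-2}$ since $\rho\in S_{N-1}$---with the reduced word $s_{N-1}\cdots s_1$ produces a reduced expression of $\sigma$ of the claimed form $\rho\,s_{N-1}\cdots s_1$.

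There is no serious obstacle here: the content of the statement is essentially the preceding Lemma, and the only points demanding attention are the bookkeeping with the composition convention when identifying $c$ and $c^{-1}$, and the verification that the only inversions of $c$ are those listed, so that $s_{N-1}\cdots s_1$ is genuinely reduced.
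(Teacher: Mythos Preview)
Your proposal is correct and is exactly the argument the paper intends: the corollary is stated without proof precisely because it follows from the preceding Lemma by setting $\sigma'=\rho:=\sigma c^{-1}$ and $\tau'=c=s_{N-1}\cdots s_1=(1,N,N-1,\dots,2)$, after checking that $\rho$ fixes $N$. Your bookkeeping on $c$, $c^{-1}$, and the inversion count is accurate.
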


\begin{Prop}\label{prop: parts}
Let $w, v\in S_N$, $\lambda, \eta,\nu,\mu\in \Lambda_N$, and $s \geq 1$. If $\lambda$ has $s$ more distinct  parts than $\mu$, then the order of $X_\nu Y_\mu T_v$ in the PBW expansion of  ${T_w Y_\lambda}X_\eta$ is at least $s$. If  the order is exactly $s$, and $\eta=\nu=0$, then $v$ is obtained from any reduced expression of $w$ by omitting exactly $s$ factors.
\end{Prop}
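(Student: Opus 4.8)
The plan is to compute the PBW expansion of $T_wY_\lambda X_\eta$ by straightening, i.e.\ by iterated application of the homogeneous relations \eqref{straightening}, \eqref{X-Tab}, \eqref{Y-Tab} and \eqref{XY-full} (all with coefficients in $\Rat[\hc,\qc]$), and to track along the way the power of $\hc$ accumulated so far against the number of distinct parts of the current $Y$-weight. Since the $\hc$-order of the coefficient of $X_\nu Y_\mu T_v$ is bounded below by the minimum, over all rewriting paths producing the monomial $X_\nu Y_\mu T_v$, of the $\hc$-weight collected along the path, it is enough to establish, along every such path: the collected $\hc$-weight is at least $s:=\#\mathrm{dist}(\lambda)-\#\mathrm{dist}(\mu)$, and if it equals $s$ and $\eta=\nu=0$ then the permutation produced is obtained from a fixed reduced expression of $w$ by deleting exactly $s$ simple factors. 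The inequality then follows immediately; for the ``exactly $s$'' clause one uses that the hypothesis forces a surviving order-$s$ path-contribution, whose path one then analyzes.

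\textbf{The local analysis.} I would record each intermediate expression as a $\Rat[\hc,\qc]$-linear combination of generalized monomials $\hc^{p}\qc^{\ast}\,X_{\nu'}Y_{\mu'}\,T_{(c_1,d_1)}^{\varepsilon_1}\cdots T_{(c_m,d_m)}^{\varepsilon_m}\,T_w$ ($X$'s on the left, $Y$'s in the middle, a word in transposition elements followed by the fixed $T_w$ on the right), and consider three kinds of moves: (1)~pushing an $X$-letter left past a $Y$-letter via \eqref{XY-full}; (2)~pushing a transposition element $T_{(c,d)}^{\pm1}$ right past an $X$- or $Y$-letter via \eqref{X-Tab}, \eqref{Y-Tab} and their $X\leftrightarrow Y$ analogues; (3)~finally straightening the rightmost block $\big(\prod_j T_{(c_j,d_j)}^{\varepsilon_j}\big)T_w$ into standard Hecke elements $T_v$ via the braid and quadratic relations. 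For (1) and (2) one checks by inspection that, among \eqref{XY-full}, \eqref{X-Tab}, \eqref{Y-Tab}, the only moves that can \emph{decrease} the number of distinct $Y$-parts are the ones carrying an explicit extra factor of $\hc$: the terms $\qc\hc X_aY_a\sum_j T_{(a,j)}$ and $\qc\hc^{2}\sum_{i,j}X_iY_i\,T_{(i,a)}T_{(a,j)}$ in \eqref{eq: a=a-other}, the $\hc Y_b$ and $\hc^{2}Y_b\sum_j T_{(a,j)}$ terms in \eqref{Y-Tab-b} and \eqref{Y-Tab-e}, and the ``Demazure'' lower terms created when a $T_i$ is absorbed into a power $Y_i^aY_{i+1}^b$; in each case a distinct-value merge is only possible by first producing an equal-exponent configuration, and that is exactly where the extra $\hc$ is spent. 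This yields the first claim. For step (3) one invokes Lemma~\ref{lem: ord-ineq}, iterated: the contribution of $T_v$ to $\prod_j T_{(c_j,d_j)}^{\varepsilon_j}T_w$ has $\hc$-order at least $\varkappa(v)-\varkappa\big((\prod_j(c_j,d_j))\,w\big)\ge \varkappa(v)-\varkappa(w)-m$; combining this with the count of transposition factors produced in (1)–(2) closes the inequality and, on the equality locus with $\eta=\nu=0$, forces every transposition factor produced to be essential and $v$ to arise from $w$ by deleting one simple reflection per unit of $\hc$ spent --- here one converts the $\varkappa$-bookkeeping into a statement about reduced words using Corollary~\ref{cor: reduced-expr} together with the length-additivity Lemma preceding it.

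\textbf{Main obstacle.} The delicate point is that the drop in the number of distinct $Y$-parts and the drop in permutation length are \emph{not} independent, and a single move can in principle lower the distinct count by two while spending only one $\hc$ (e.g.\ $Y_bX_a\mapsto-\hc X_aY_aT_{(c,d)}$ when $\lambda_a,\lambda_b$ are values of multiplicity one while $\lambda_a+1$ and $\lambda_b-1$ already occur). What saves the bound is that such a move simultaneously spawns a transposition element $T_{(c,d)}$ that cannot be eliminated in step (3) without producing a further $\hc$; hence the tracked invariant cannot be purely local per move --- it must \emph{amortize}, pre-charging each transposition factor created by \eqref{eq: a=a-other}, \eqref{Y-Tab-b}, \eqref{Y-Tab-e} against the $\hc$ it is destined to produce downstream via Lemma~\ref{lem: ord-ineq}. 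Designing this amortized statistic so that it is non-negative initially, monotone under every elementary move, and sharp enough that the equality case forces a \emph{reduced} expression, is the heart of the proof; once it is in place the remainder is a finite (if lengthy) verification on the explicit relations \eqref{straightening}--\eqref{XY-full}.
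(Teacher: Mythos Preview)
Your proposal works at the level of granularity of single generators ($T_i$ past one $Y_j$ via \eqref{straightening}, transposition elements via \eqref{X-Tab}/\eqref{Y-Tab}), and this is precisely what creates the obstacle you identify. The paper bypasses the whole issue by working one level up: it uses the single identity
\[
T_i Y_\nu \;=\; Y_{s_i(\nu)}T_i \;+\; \hc\,Y_{i+1}\,\frac{Y_\nu-Y_{s_i(\nu)}}{Y_{i+1}-Y_i},
\]
which pushes $T_i$ past the \emph{entire} monomial $Y_\nu$ in one step. The main term permutes the parts of $\nu$ (so the number of distinct parts is unchanged) and keeps $T_i$; the error term carries exactly one $\hc$, absorbs $T_i$, and replaces $(\nu_i,\nu_{i+1})$ by a convex pair $(\nu_i',\nu_{i+1}')$ with the same sum. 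A direct check on this convex replacement shows the distinct-parts count moves by at most one, so the induction on $\ell(w)$ runs cleanly with no amortization needed. The second clause is then immediate: with $\eta=\nu=0$ each of the $\ell(w)$ factors $T_{i_j}$ either survives or is absorbed with one $\hc$, so order exactly $s$ means exactly $s$ factors were deleted and $v$ is the surviving subword.

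Your proposal, by contrast, never defines the amortized statistic that you correctly recognize as the crux, so the argument is not complete as written. Moreover, the appeal to Corollary~\ref{cor: reduced-expr} for the equality case is misplaced: that corollary concerns one very specific cycle and says nothing about converting $\varkappa$-data into reduced-word deletions for a general $w\in S_N$. If you want to repair your route, the cleanest fix is simply to adopt the global commutation formula above in place of \eqref{straightening}/\eqref{Y-Tab}; the ``two-drop-for-one-$\hc$'' scenario then never arises, Lemma~\ref{lem:\ ord-ineq} is not needed, and the reduced-word statement falls out directly from which $T_{i_j}$'s were absorbed.
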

\begin{proof} The claim follows by induction of $\ell(w)$ by using the relations \eqref{XY relations}, \eqref{a=a-bis}, and
 $$T_i Y_\nu-Y_{s_i(\nu)}T_i= \hc Y_{i+1}\frac{Y_\nu-Y_{s_i(\nu)}}{Y_{i+1}-Y_{i}},$$
and the fact that $\nu$ and $s_i(\nu)$ have the same number of distinct parts.
\end{proof}

\begin{Prop}\label{prop: yz}
With the notation above, for any $z\geq s\geq 0$,  the order of  the element $$Y_{e_1}Y_{e_2}\cdots Y_{e_s}Y_N^{z-s}T_\tau \in \H_N^+$$  in the PBW expansion of $T_\sigma Y_1^z\in \H_N^+$ is at least $s+1$, unless $s=0$ and $\sigma=\tau$. The same is true if we consider these elements as elements of $\H_k^+$, for any $k\geq N$.
\end{Prop}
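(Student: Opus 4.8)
Here is a plan for proving Proposition~\ref{prop: yz}.

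The plan is to reduce the statement to the single identity expressing $T_{N-1}\cdots T_1\,Y_1^z$ in the PBW basis $\Sd(N)$, and then to propagate the resulting $\hc$--order estimates through one left multiplication by an element of $S_{N-1}$ and an induction on $z$. I would first dispose of $s=0$. Passing to $\hc=0$ (where the relations of $\H_N^+$ degenerate so that $T_i^{-1}=T_i$, hence $T_iY_j=Y_{s_i(j)}T_i$) one has $T_\sigma Y_1^z\equiv Y_N^z\,T_\sigma\pmod\hc$; since $\Sd(N)$ reduces modulo $\hc$ to a basis of the $\hc=0$ algebra, the coefficient of $Y_\mu T_v$ in $T_\sigma Y_1^z$ has order $\geq 1$ for every pair $(\mu,v)\neq(z\eps_N,\sigma)$. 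For $s=0$ the relevant pair is $(z\eps_N,\tau)$, equal to $(z\eps_N,\sigma)$ precisely when $\sigma=\tau$; this settles $s=0$, and also records that every target in the statement has order $\geq 1$.

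Assume now $s\geq 1$. By Corollary~\ref{cor: reduced-expr} write $\sigma=\rho\,s_{N-1}\cdots s_1$ reduced with $\rho\in S_{N-1}$, so $T_\sigma=T_\rho\,(T_{N-1}\cdots T_1)$. Iterating the $Y$--versions of the straightening relations \eqref{straightening} one expands
\[
T_{N-1}\cdots T_1\,Y_1^z=Y_N^z\,T_{N-1}\cdots T_1+\sum_v\hc^{\,(N-1)-\ell(v)}\,Y_{\mu_v}\,T_v ,
\]
where each $v$ is obtained from $s_{N-1}s_{N-2}\cdots s_1$ by deleting $(N-1)-\ell(v)$ of its factors, so that $v$ is a product of increasing cycles with pairwise disjoint supports and $\varkappa(v)=\ell(v)$, while $\mu_v$ is obtained from $z\eps_1$ by $(N-1)-\ell(v)$ operations each moving or splitting a block of $Y$'s; in particular $\mu_v$ has at most $1+\big((N-1)-\ell(v)\big)$ nonzero parts. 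As $\rho$ uses only $s_1,\dots,s_{N-2}$ it commutes with $Y_N$, so left multiplication sends the leading term to $Y_N^zT_\sigma$ and gives $T_\sigma Y_1^z=Y_N^z T_\sigma+\sum_v\hc^{\,(N-1)-\ell(v)}T_\rho\,Y_{\mu_v}\,T_v$. For each $v$ one PBW--expands $T_\rho Y_{\mu_v}T_v$, using Proposition~\ref{prop: parts} (with trivial $X$--part) to bound the $\hc$--order at which the target $Y$--monomial $Y_{e_1}\cdots Y_{e_s}Y_N^{z-s}$ can be produced from $Y_{\mu_v}$ and Lemma~\ref{lem: ord-ineq} to bound the $\hc$--order at which $T_\tau$ can be produced --- here the point is that $\varkappa(\tau)=\ell+s$ is strictly larger than $\varkappa(\sigma)=\ell$. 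Combined with the fact that $(N-1)-\ell(v)$ is at least one less than the number of nonzero parts of $\mu_v$, and with the fact that $Y_{e_1}\cdots Y_{e_s}$ contributes $s$ nonzero parts at positions different from $N$, this is designed to force the coefficient of $Y_{e_1}\cdots Y_{e_s}Y_N^{z-s}T_\tau$ to have order $\geq s+1$.

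The cleanest way to organize the final estimate is an induction on $z\geq s$: right multiplying the PBW expansion of $T_\sigma Y_1^{z-1}$ by $Y_1$ turns a term $Y_\mu T_v$ into $Y_{\mu+\eps_{v(1)}}T_v$ plus terms of strictly higher $\hc$--order, and since $\tau(1)=N$ the coefficient of $Y_{e_1}\cdots Y_{e_s}Y_N^{z-s}T_\tau$ in $T_\sigma Y_1^z$ inherits its main part from that of $Y_{e_1}\cdots Y_{e_s}Y_N^{(z-1)-s}T_\tau$ in $T_\sigma Y_1^{z-1}$, of order $\geq s+1$ by the inductive hypothesis, the other contributions carrying an extra $\hc$ (and being handled by the same circle of estimates, or by strengthening the inductive hypothesis to cover more general targets). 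In the base case $z=s$ the main part would come from $Y_{e_1}\cdots Y_{e_s}Y_N^{-1}T_\tau$, which is not a composition and hence absent; so the whole coefficient is carried by the $\hc$--corrections, and one invokes the estimate of the previous paragraph with $z=s$.

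Accordingly, the hard part is exactly this base case $z=s$: one must show that, once all the factors $Y_1$ have been split off, producing the long cycle $\tau$ as $T$--part costs one $\hc$ beyond the splits already accounted for --- i.e. that the ``number of $Y$--splits'' count and the ``reflection length of the $T$--part'' count cannot both be realized with the minimal number of $\hc$'s simultaneously; this is where Corollary~\ref{cor: reduced-expr}, Proposition~\ref{prop: parts}, Lemma~\ref{lem: ord-ineq} and the structural restriction on which permutations $v$ occur in $T_{N-1}\cdots T_1\,Y_1^z$ all have to be used together. Finally, the claim for $\H_k^+$ with $k\geq N$ is immediate, since the inclusion $\H_N^+\hookrightarrow\H_k^+$ respects the defining relations and carries $\Sd(N)$ into $\Sd(k)$, so the PBW coefficients of $T_\sigma Y_1^z$ and their orders at $\hc=0$ are the same whether computed in $\H_N^+$ or in $\H_k^+$.
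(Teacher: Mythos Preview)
Your proposal is more complicated than needed and leaves the decisive step unproven. You correctly invoke Proposition~\ref{prop: parts} to get the lower bound $s$ on the order, but you overlook the \emph{second} clause of that same proposition: when the order is exactly $s$ (and the $X$--parts are trivial), the permutation $v$ in the PBW term $Y_\mu T_v$ must be obtained from a fixed reduced expression of $w$ by omitting \emph{exactly} $s$ simple factors. This subword characterization is what makes the argument short.

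The paper applies it as follows. Write $\sigma=\rho\,s_{N-1}\cdots s_1$ reduced with $\rho\in S_{N-1}$ (Corollary~\ref{cor: reduced-expr}). If the order of $Y_{e_1}\cdots Y_{e_s}Y_N^{z-s}T_\tau$ were exactly $s$, then $\tau$ would be obtained from $\rho\,s_{N-1}\cdots s_1$ by deleting exactly $s$ letters. Now observe two things. First, if \emph{no} letter is deleted from the tail $s_{N-1}\cdots s_1$, then $Y_1^z$ is carried to $Y_N^z$ through that tail, and since $T_\rho$ (a word in $T_1,\dots,T_{N-2}$) commutes with $Y_N$, the only resulting PBW term with such a $v$ is $Y_N^zT_\sigma$; this forces $s=0$ and $\sigma=\tau$. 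Second, if at least one letter \emph{is} deleted from the tail, let $s_i$ be the rightmost one; then the subword sends $1$ to some element of $[1,N-1]$ (since $s_1,\dots,s_{i-1}$ carry $1$ to $i$, the remaining tail letters fix $i$, and any subword of $\rho$ stays in $S_{N-1}$), contradicting $\tau(1)=N$. Hence the order cannot equal $s$ unless $s=0$ and $\sigma=\tau$, so it is $\geq s+1$ otherwise.

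By contrast, your route---explicit expansion of $T_{N-1}\cdots T_1 Y_1^z$, Lemma~\ref{lem: ord-ineq} for the $T$--part, and an induction on $z$---never closes. You explicitly flag the base case $z=s$ as ``the hard part'' and do not prove it; the inductive step also defers the higher--$\hc$ contributions to ``the same circle of estimates, or \dots\ strengthening the inductive hypothesis,'' which is not an argument. The missing idea is precisely the subword description of minimal--order terms in Proposition~\ref{prop: parts}; once you use it, no induction on $z$, no explicit PBW expansion of $T_{N-1}\cdots T_1 Y_1^z$, and no appeal to Lemma~\ref{lem: ord-ineq} are required.
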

\begin{proof}
By Proposition \ref{prop: parts}, the order of $Y_{e_1}Y_{e_2}\cdots Y_{e_s}Y_N^{z-s}T_\tau$  in the PBW expansion of $T_\sigma Y_1^z$  is at least $s$.
By Corollary \ref{cor: reduced-expr}, $\sigma$ has a reduced expression of the type $\rho s_{N-1}\cdots s_1$ with $\rho\in S_{N-1}$.
If the order is precisely $s$, then $\tau$ must be obtained from this reduced expression for $\sigma$ by omitting exactly $s$ factors. The only element $Y_\mu T_v$ in the PBW expansion of $T_\sigma Y_1^z$ for which $v$ is obtained from the reduced expression of $\sigma$ by omitting only factors in $\rho$ is $Y_N^zT_\sigma$. Therefore, if $s>0$, then $\tau$ is obtained from the reduced expression of $\sigma$ by omitting at least one factor from outside $\rho$. Let us denote by $s_i$ the rightmost factor removed. Then, $\tau(1)=\sigma(i)\leq N-1$, which contradicts $\tau(1)=N$. Therefore, unless $s=0$ and $\sigma=\tau$, the relevant order is at least $s+1$.
\end{proof}
\subsection{}\label{sec: dorder-1}  For a word $\u=\u(X,Y)$ in a two-letter alphabet  $\{X,Y\}$ we denote by $\deg_X\u$ and $\deg_Y\u$ the number of occurrences in $\u$ of the letter $X$ and, respectively, $Y$. We also denote by $\underline{g}(\u)=(g_i)_{0\leq i\leq \deg_Yu}$ the non-negative integer sequence, which we call the \emph{gap sequence}, that  counts the number of $X$ letters between consecutive $Y$ letters, scanning $\u$ from \emph{right} to \emph{left}. For example, if $\u=X^2YX^3Y^2XYX^4$ we have $$\deg_Y\u=4 \text{ and } g_0=4, g_1=1, g_2=0, g_3=3, g_4=2.$$ There is a bijective correspondence between words and gap sequences.

A finite non-negative integer sequence $\underline{a}=(a_i)_{0\leq i\leq m}$ is a composition. Recall the notation in \S\ref{sec: compo} for its weight $|\underline{a}|$ and length $\ell(\underline{a})$. If  $\underline{a}=(a_i)_{0\leq i\leq m}$ and  $\underline{b}=(b_i)_{0\leq i\leq m}$ are compositions of the same length, we say that $\underline{a}$ is smaller than, or equal to, $\underline{b}$
 in \emph{dominance order} (denoted $\underline{a}\tplus \underline{b}$) if 
\begin{equation}
a_0+\cdots+a_i\leq b_0+\cdots+b_i, \quad \text{for all } 0\leq i\leq m.
\end{equation}
We extend this partial order relation to all compositions (by extending the shorter sequence with zeros). 
The dominance order on gap sequences induces a partial order on words  in $\{X,Y\}$. The set of such words thus becomes  an ordered semigroup (with respect to concatenation). In this context, the order relation is the semigroup order induced by the cover relation $XY<YX$.

\subsection{}\label{sec: dorder-2} Let $N\geq 1$, and let $\m$ be a monomial in an ordered set of variables $X_0,\cdots, X_N$. The sequence of exponents $\underline{e}(\m)=(e_i)_{0\leq i\leq N}$  in $\m$ is recorded in the \emph{decreasing} order of the variables, that is,  $e_{i}$ is  the exponent of $X_{N-i}$. The dominance order on exponent sequences induces a partial order on monomials. The set of monomials becomes  an ordered semigroup (with respect to multiplication). In this context, the order relation is the semigroup order induced by the cover relations $X_0<X_1<\dots<X_N$.

\subsection{}\label{sec: special} We now introduce some special elements of $\Sd(k)$. We start with some notation. Fix $j\geq 1$ and $k>N\geq m+j$. We denote by $c_j(N,m)\in S_k$ the $(m+1)$-cycle 
\begin{equation}
(j,N,N-1,\dots,N-m+1)=(j,N-m+1)(j, N-m+2)\cdots (j,N-1)(j,N).
\end{equation}
Of course, $\varkappa(c_j(N,m))=m$.

Let $\underline{a}=(a_i)_{0\leq i\leq m}$ be a composition. We denote $z=z(\underline{a})=\min\{i~|~a_i\neq 0\}\cup\{m\}$. To the 5-tuple $(\underline{a}, k, j,m,N)$  as specified above we associate the following elements of $\H^+_k$. To keep the notation reasonably simple, we only emphasize the dependence on $\underline{a}$, but we will indicate the other parameters when possible confusion might arise. If $z=m$, we denote $M_{\underline{a}}(X)=X_j^{a_m}$ and $M_{\underline{a}}(Y,T)=Y_j^m$; for $z<m$ let
\begin{equation}
M_{\underline{a}}(X)=X_j^{a_m+1} X_{N-m+z+1}^{a_{m-1}}\cdots X_{N-1}^{a_{z+1}}X_N^{a_z-1},\quad M_{\underline{a}}(Y,T)=Y_jY_{N-m+z+1}\cdots Y_{N-1} Y_N^z T_{c_j(N,m-z)}.
\end{equation}
We emphasize that, since $N\leq k $,  we can (and will) regard $M_{\underline{a}}(X)M_{\underline{a}}(Y,T)$ both as an element of $\Sd(N)$ and $\Sd(k)$.  The initial data $\underline{a}$, $j, m, N$ (but not $k$) can be recovered from $M_{\underline{a}}(X)$ and $M_{\underline{a}}(Y,T)$.

\subsection{}\label{sec: triang-daffine} 
We will next prove a crucial result that analyzes in detail the minimal order occurrence for elements of the form $M_{\underline{a}}(X)M_{\underline{a}}(Y,T)$ in the PBW expansion of a fixed word in  $\{X_1, Y_1\}$. The proof is technical and it may be helpful to give a brief outline of the proof strategy. The main estimate for the order of such a term follows from Proposition \ref{prop: upsilon-bound}. In order to understand when the minimal possible order is attained, we examine the terms that result from the application of the relation  \eqref{a=a-bis} in the process of migrating the rightmost $X_1$ all the way to the left. As it turns out, some of these terms also occur in the PBW expansion of a smaller word in  $\{X_1, Y_1\}$ (with respect to the partial order defined in \S\ref{sec: dorder-1}) and such terms can be analyzed inductively. The remaining terms are then analyzed separately, with the help of Proposition \ref{prop: parts} and Proposition \ref{prop: yz}.

\begin{Prop}\label{prop: main1}
 Let $\mathfrak{u}\in \H_k^+$ be a word in $\{X_1, Y_1\}$. Denote $m=\deg_{Y_1}$, let $\underline{g}=\underline{g}(\mathfrak{u})=(g_i)_{0\leq i\leq m}$ be its gap sequence and let $z=z(g(\mathfrak{u}))$.  Let $\underline{a}=(a_i)_{0\leq i\leq m}$ be a composition with $|\underline{a}|=\deg_{X_1}\mathfrak{u}$, and let $\zeta=z(\underline{a})$. We fix $k>N>m$. The elements $M_{\underline{a}}(X)M_{\underline{a}}(Y,T)$ under consideration are associated to  $(\underline{a}, k, 1,m,N)$.
Then,
\begin{enumerate}[label=(\roman*)]
\item $\ord_\u(M_{\underline{a}}(X)M_{\underline{a}}(Y,T))\geq m-\zeta$.
\end{enumerate}
Furthermore, if $\ord_\u(M_{\underline{a}}(X)M_{\underline{a}}(Y,T))= m-\zeta$, then
\begin{enumerate}[resume, label=(\roman*)]
\item $\zeta\geq z$;
\item If $\zeta=z$, then $M_{\underline{a}}(X)\leq M_{\underline{g}}(X)$  and $\underline{a}\tplus \underline{g}$.
\end{enumerate}
The term of $\hc$-degree $m-z$ in the coefficient of  $M_{\underline{g}}(X)M_{\underline{g}}(Y,T)$ in the PBW expansion of $\u$ is  $\qc^{m-z}\hc^{m-z}$.
\end{Prop}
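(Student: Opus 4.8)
The plan is to prove all claims simultaneously by induction on the word $\u$ with respect to the dominance order on gap sequences described in \S\ref{sec: dorder-1}, with the base case being words of the form $Y_1^m X_1^{|\underline a|}$ (the minimal words for fixed $m$ and $\deg_{X_1}$). First I would record the base estimate: by Proposition \ref{prop: upsilon-bound} applied to $\mathfrak w=\u\in\Sw(1)$, every term $X_\mu Y_\nu T_w$ in the PBW expansion of $\varphi_k(\u)$ has order at least $\varkappa(w)$; since the cycle $c_1(N,m-\zeta)$ appearing in $M_{\underline a}(Y,T)$ satisfies $\varkappa(c_1(N,m-\zeta))=m-\zeta$, this immediately gives part (i). The substance of the proof is then to decide exactly when equality $\ord_\u(M_{\underline a}(X)M_{\underline a}(Y,T))=m-\zeta$ can occur, and this is where the induction on the word order is used.

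For the inductive step I would process $\u$ by locating the rightmost occurrence of $X_1$ that is not already at the far right of the word and migrate it leftward past the $Y_1$'s using the relation \eqref{eq: a=a-other} (equivalently \eqref{a=a-bis}). Each application produces a ``commuted'' term together with correction terms carrying a factor of $\qc\hc$ or $\qc\hc^2$ and various $T_{(i,j)}$'s; the commuted term corresponds to a strictly smaller word in the dominance order (one $XY$ has been replaced by $YX$), so by the induction hypothesis its contribution to $M_{\underline a}(X)M_{\underline a}(Y,T)$ is understood. The correction terms must be analyzed directly: after straightening the $T_{(i,j)}$ factors against the $X$'s and $Y$'s via the relations \eqref{X-Tab}, \eqref{Y-Tab}, \eqref{XY-full} and then reducing the resulting products of transpositions to PBW form using Lemma \ref{lem: ord-ineq}, one checks that to hit the target element $M_{\underline a}(X)M_{\underline a}(Y,T)$ at order exactly $m-\zeta$ the permutation part is forced — by Proposition \ref{prop: parts} and Proposition \ref{prop: yz} — to be the specific cycle $\tau=c_1(N,m-z)$ obtained by omitting precisely the right number of simple factors from a reduced word for $\sigma=c_1(N,m-\zeta)$ of the shape $\rho s_{N-1}\cdots s_1$ (Corollary \ref{cor: reduced-expr}). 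The count of distinct parts in $\lambda$ versus $\mu$ in Proposition \ref{prop: parts} is what delivers the inequality $\zeta\ge z$ in part (ii): if $\zeta<z$ the $Y$-exponent pattern of $M_{\underline a}(Y,T)$ would have too many distinct parts relative to what a term of order $m-\zeta$ can carry. For part (iii), when $\zeta=z$ the only surviving contributions come from the correction terms of the migration, and tracking the exponents through the straightening relations — using the semigroup/dominance order on monomials from \S\ref{sec: dorder-2} — shows $M_{\underline a}(X)\le M_{\underline g}(X)$, which unwinds to $\underline a\tplus\underline g$ on the level of exponent (equivalently gap) sequences.

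Finally, for the last assertion — that the $\hc$-degree $m-z$ part of the coefficient of $M_{\underline g}(X)M_{\underline g}(Y,T)$ in $\u$ equals $\qc^{m-z}\hc^{m-z}$ — I would isolate the unique chain of choices in the migration that lands on $M_{\underline g}$ at the minimal order: at each of the $m-z$ steps where a $T_{(1,j)}$ must be created one is forced to pick the term carrying exactly one factor of $\qc\hc$ (the higher $\hc^2$ branch and the ``commuted'' branch both overshoot the order or land on a strictly smaller $M_{\underline a}(X)$), and the braid/quadratic reduction of the resulting product of transpositions to $T_{c_1(N,m-z)}$ contributes no further $\hc$ and no $\qc$. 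Multiplying the $m-z$ factors of $\qc\hc$ gives $\qc^{m-z}\hc^{m-z}$. The main obstacle, and the place where the argument is genuinely delicate rather than routine, is the bookkeeping in the inductive step: one must show that no \emph{other} combination of a commuted term plus lower-order corrections can conspire to produce the element $M_{\underline a}(X)M_{\underline a}(Y,T)$ at order $m-\zeta$ with a different (possibly cancelling) coefficient — this requires the rigidity statements of Proposition \ref{prop: parts} and Proposition \ref{prop: yz} (``$\tau$ is obtained from a reduced word of $\sigma$ by omitting exactly the right factors, and the rightmost omitted factor would move $1$ away from $N$'') to pin down the permutation uniquely, so that the coefficient computation is unambiguous.
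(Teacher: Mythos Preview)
Your overall architecture is the same as the paper's: part (i) from Proposition \ref{prop: upsilon-bound} via $\varkappa(c_1(N,m-\zeta))=m-\zeta$; induction on the dominance order on gap sequences; migration of the rightmost $X_1$ leftward using \eqref{a=a-bis}; and the analysis of the resulting terms with Propositions \ref{prop: parts} and \ref{prop: yz}. So the route is correct.

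There are, however, two slips of the same flavor that you should fix. First, the minimal word for the base case is $X_1^{|\underline a|}Y_1^m$ (gap sequence $(0,\dots,0,|\underline a|)$, i.e.\ $z=m$), not $Y_1^m X_1^{|\underline a|}$; the latter is the \emph{maximal} word and is not in PBW form. Second, the migration replaces a pattern $Y_1X_1$ by $X_1Y_1$ (plus corrections), not the other way around; since the cover relation is $XY<YX$, this is what makes the commuted term strictly smaller. Both errors point to a reversed mental picture of the order.

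One further point: your account of part (ii) (``too many distinct parts'') is not quite the mechanism. Proposition \ref{prop: parts} alone only yields $\varsigma_2\ge s$ for the contribution coming from $T_\sigma Y_1^z$; combined with the $\hc$-count from the prefix this gives $\ord_\u(\cdot)\ge m-\zeta$, which is exactly the threshold, not strictly above it. What rules out $\zeta<z$ at minimal order is the sharper statement of Proposition \ref{prop: yz}, which shows that for $s>0$ the specific element $Y_{e_1}\cdots Y_{e_s}Y_N^{z-s}T_{\tau}$ (here $\tau=c_1(N,m-\zeta)$) cannot occur in the PBW expansion of $T_\sigma Y_1^z$ with order exactly $s$ --- it must have order $\ge s+1$. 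You cite Proposition \ref{prop: yz}, so you have the right tool; just make its role explicit in the $\zeta<z$ case rather than folding it into a distinct-parts heuristic. The paper also separates the expanded terms into ``lower'' (handled by induction on a smaller $\u'$), ``main'' (all $\hc$-branches chosen), and ``mixed'' terms, and treats mixed terms by a short cycle-shape argument; your sketch conflates main and mixed into ``correction terms,'' which is fine but means you will need that extra paragraph when you write it out.
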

\begin{proof}
Since $$
M_{\underline{a}}(X)M_{\underline{a}}(Y,T)=X_1^{a_m+1} X_{N-m+\zeta+1}^{a_{m-1}}\cdots X_{N-1}^{a_{\zeta+1}}X_N^{a_\zeta-1}Y_1Y_{N-m+\zeta+1}\cdots Y_{N-1} Y_N^\zeta T_{c_1(N,m-\zeta)},
$$ and $\varkappa(c_1(m-\zeta))=m-\zeta$, part (i) follows from Proposition \ref{prop: upsilon-bound}. For the remaining parts, we assume that $\ord_\u(M_{\underline{a}}(X)M_{\underline{a}}(Y,T))= m-\zeta$ and we proceed by induction on the partial order on words in  $\{X_1, Y_1\}$ of fixed $\deg_{X_1}\mathfrak{u}$ and $\deg_{Y_1}\mathfrak{u}<k$. The minimal word, which occurs for $z=m$, is already in $\Sd(k)$ and the required properties are trivially satisfied. 

Assume now that $z<m$. We start by laying out some terminology.  The word $\u$ as encoded by the gap sequence $\underline{g}$ is
\begin{equation}
\u=X_1^{g_m}Y_1X_1^{g_{m-1}}Y_1\cdots Y_1 X_1^{g_{z+1}}Y_1 X_1^{g_z} Y_1^z\in \H_k^+.
\end{equation}
A \emph{Y-block} is a maximal (non-trivial) sequence of consecutive $Y_1$ in the expression of $\u$. Similarly, a \emph{X-block} is a maximal (non-trivial) sequence of consecutive $X_1$ in the expression of $\u$. 

We apply the relations \eqref{a=a-bis} $m-z$-times, in the process of migrating the rightmost $X_1$ all the way to the left. We call the resulting expression
\begin{equation}\label{eq: 1st-order}
\u= \qc^{m-z} X_1^{g_m+1}Y_1\left(1+\hc \mathcal{T}_{1\ss k-1}  \right) \cdots Y_1\left(1+\hc \mathcal{T}_{1\ss k-1}  \right) X_1^{g_{z+1}}Y_1\left(1+\hc \mathcal{T}_{1\ss k-1}  \right) X_1^{g_z-1} Y_1^z
\end{equation}
 the \emph{first layer straightening} of the word $\u$.

We classify the terms that appear after distributing the sums in the first layer straightening as follows. The \emph{lower terms} are those that are obtained from picking the constant term (i.e. $1$) from  (at least) one parenthesis immediately following the rightmost $Y_1$ in a Y-block. These terms acquire  an $\hc$ coefficient of degree strictly less than $m-z$ and also appear in the first layer straightening of  some word $\u^\prime$ in $\{X_1, Y_1\}$ such that $\u^\prime<\u$, which justifies our choice of terminology. In such a case, we denote by $\underline{g}^\prime=\underline{g}(\u^\prime)$ its gap sequence and $z^\prime=z(\u^\prime)$. Of course, $z^\prime\geq z$.  The \emph{main terms} are the terms obtained by picking an $\hc$ term from each parenthesis. These terms acquire  an $\hc$ coefficient of degree exactly $m-z$. Finally, the remaining terms are called \emph{mixed terms}. A mixed term appears with coefficient $\qc^{m-z}\hc^{m-z-b}$, where $b$ is the number of times that a constant term is picked. However, from  Proposition \ref{prop: parts} and the fact that the $Y$-monomial in $M_{\underline{a}}(Y,T)$ has exactly $m-z+1$ distinct parts it follows that the terms of type $M_{\underline{a}}(X)M_{\underline{a}}(Y,T)$ in the PBW expansion of a mixed term must have order at least $m-z$.

The rest of the argument is a function of the relative order of $z$ and $\zeta$.

\emph{Case $\zeta>z$.}   In this case, part (ii) is satisfied, and  the hypothesis in part (iii) is not satisfied, so there is nothing to check.

 \emph{Case $\zeta=z$.} Part (ii) is satisfied. If  $M_{\underline{a}}(X)M_{\underline{a}}(Y,T)$ arises from a lower term, then it appears in the PBW expansion of some (or several) word $\u^\prime<\u$. By part (i), 
 \begin{equation}\label{eq: uprime}
 \ord_{\u^\prime}(M_{\underline{a}}(X)M_{\underline{a}}(Y,T))\geq m-\zeta.
 \end{equation}
If $\ord_{\u^\prime}(M_{\underline{a}}(X)M_{\underline{a}}(Y,T))= m-\zeta$,  the induction hypothesis gives $z=\zeta\geq z^\prime\geq z$, so $z=z^\prime=\zeta$. Since $\u^\prime<\u$ and $z^\prime=z$ we obtain that $\underline{g}^\prime\tplustrict \underline{g}$ and $M_{\underline{g}^\prime}(X)<M_{\underline{g}}(X)$. The induction hypothesis also gives 
$M_{\underline{a}}(X)\leq M_{\underline{g}^\prime}(X)<M_{\underline{g}}(X)$ and $\underline{a}\tplus \underline{g}^\prime\tplustrict \underline{g}$, confirming part (iii).

 If  $M_{\underline{a}}(X)M_{\underline{a}}(Y,T)$ does not appear in the PBW expansion of some word $\u^\prime<\u$, then it can only appear in the PBW expansion of  a main term, or a mixed term.
 
 Assume that $M_{\underline{a}}(X)M_{\underline{a}}(Y,T)$ appears in the PBW expansion of a main term, 
 a word of the form
\begin{equation}\label{eq: other}
\qc^{m-z}\hc^{m-z} X_1^{a_m+1}Y_1 \cdot T_{1\ss r_1}  \cdots Y_1\cdot T_{1\ss r_{m-z-1}}  \cdot X_1^{a_{z+1}}Y_1\cdot T_{1\ss r_{m-z}}  \cdot X_1^{a_z-1} Y_1^z,
\end{equation}
for some $1\leq r_1,\dots,r_{m-z}\leq k-1$. We denote $d_i=r_i+1$ for $1\leq i\leq m-z$.
If $\ord_{\u}(M_{\underline{a}}(X)M_{\underline{a}}(Y,T))= m-z$, then $M_{\underline{a}}(X)M_{\underline{a}}(Y,T)$ is precisely the unique term of $\hc$-degree zero in the PBW expansion of 
\begin{equation}\label{eq: other1}
X_1^{a_m+1}Y_1 \cdot T_{1\ss r_1}  \cdots Y_1\cdot T_{1\ss r_{m-z-1}}  \cdot X_1^{a_{z+1}}Y_1\cdot T_{1\ss r_{m-z}}  \cdot X_1^{a_z-1} Y_1^z,
\end{equation}
which is of the form $m(X)m^\prime(Y)T_\sigma$ for some monomials $m(X)$ and $m^\prime(Y)$ and $\sigma=(1,d_1)\cdots (1,d_{m-z})$. In particular, we must have $\sigma=c_1(N,m-z)$, which implies that $d_i=N-m+z+i$ for all $1\leq m-z$, and consequently $m(X)=M_{\underline{g}}(X)$ and $m^\prime(X)=Y_1Y_{N-m+z+1}\cdots Y_{N-1} Y_N^z$. Therefore, in this case, $\underline{a}=\underline{g}$ and  the term of $\hc$-degree $m-z$ in the coefficient of  $M_{\underline{g}}(X)M_{\underline{g}}(Y,T)$ in the PBW expansion of $\u$ is  $\qc^{m-z}\hc^{m-z}$. Therefore, part (iii) is verified. As will become clear from the analysis of the remaining cases, this is the only occurrence of $M_{\underline{g}}(X)M_{\underline{g}}(Y,T)$ in the PBW expansion of $\u$, proving  the last claim in the statement.

Assume now that $\ord_{\u}(M_{\underline{a}}(X)M_{\underline{a}}(Y,T))= m-z$ and $M_{\underline{a}}(X)M_{\underline{a}}(Y,T)$ appears in the PBW expansion of a mixed term, a word  of the form
\begin{equation}\label{eq: mixed}
\qc^{m-z}\hc^{m-z-b} X_1^{a_m+1}Y_1 \cdot T_{1\ss r_1}  \cdots  X_1^{a_{z+1}}Y_1\cdot T_{1\ss r_{m-z-b}}  \cdot X_1^{a_z-1} Y_1^z,
\end{equation}
for some positive $b$ with $b+1$ at least equal to the number of Y-blocks in $\u$ (there is a $T_{1\ss r}$ factor at the end of each Y-block, with the exception of the right-most one), and  $1\leq r_1,\dots,r_{m-z-b}\leq k-1$. 
Then, $M_{\underline{a}}(X)M_{\underline{a}}(Y,T)$ is a term of order $b$ in the PBW expansion of 
\begin{equation}\label{eq: mixed1}
X_1^{a_m+1}Y_1 \cdot T_{1\ss r_1}  \cdots X_1^{a_{z+1}}Y_1\cdot T_{1\ss r_{m-z-b}}  \cdot X_1^{a_z-1} Y_1^z.
\end{equation}

Since, by Proposition \ref{prop: parts}, terms of the form $Y_\mu T_v$ with all $0\leq \mu_j\leq 1$ in the PBW expansion of some $T_wY_1^c$ have order at least $c-1$, it follows that, in order to select a term of order $b$ in the PBW expansion of the element in \eqref{eq: mixed1}, we must pick the term of order zero from relations of the type \eqref{X relations} and a term of order $c-1$ from the PBW expansion of  factors of the form $T_{1\ss r}Y_1^c$. The latter are of the form  $Y_\mu T_v$ with $v$ obtained from the reduced expression of $(1,r+1)=s_r\cdots s_1\cdots s_r$ by omitting exactly $c-1$ factors (necessarily from the first half of the product). If the  factors corresponding to $j_1<\cdots<j_{c-1}$ are omitted, then 
\begin{equation}\label{eq: cycle}
v=(1,j_1,j_2,\dots,j_{c-1},r+1)=(1,r+1)(1,j_{c-1})\cdots (1,j_1).
\end{equation}  
Therefore, if the term $m(X)m^\prime(Y)T_\sigma$, for monomials $m(X)$ and $m^\prime(Y)$, is a term of order $b$ in the PBW expansion of the element in \eqref{eq: mixed1} that appears as described above, then $\sigma$ is a product of exactly $m-z$ distinct transpositions of the form $(1,j)$ which appear in segments as specified in \eqref{eq: cycle}. Therefore $\sigma$ is a cycle with the property that there are $j\neq 1$ for  which we have $\sigma^2(j)>\sigma(j)$. In conclusion, $\sigma\neq c_1(N,m-z)$ and $M_{\underline{a}}(X)M_{\underline{a}}(Y,T)$  such that $\ord_{\u}(M_{\underline{a}}(X)M_{\underline{a}}(Y,T))= m-z$ cannot appear in the PBW expansion of a mixed term.

\emph{Case $\zeta<z$.} Denote $s=z-\zeta>0$. The element  $M_{\underline{a}}(X)M_{\underline{a}}(Y,T)$ does not appear in the PBW expansion of some word $\u^\prime<\u$ because the induction hypothesis would imply that $\zeta\geq z^\prime\geq z> \zeta$,  a  contradiction. Therefore,  $M_{\underline{a}}(X)M_{\underline{a}}(Y,T)$ can only appear in the PBW expansion of  a  main term, or a mixed term. 

We examine how such a term can appear, in stages. We will consider the terms in the PBW expansion of a word of the form
\begin{subequations}
\begin{equation}\label{eq: dprime}
\v=X_1^{a_m+1}Y_1 \cdot T_{1\ss r_1}  \cdots Y_1\cdot T_{1\ss r_{m-z-1}}  \cdot X_1^{a_{z+1}}Y_1\cdot T_{1\ss r_{m-z}}  \cdot X_1^{a_z-1}, \quad  \text{or}
\end{equation}
\begin{equation}\label{eq: dprime-mixed}
\v=X_1^{a_m+1}Y_1 \cdot T_{1\ss r_1}  \cdots   X_1^{a_{z+1}}Y_1\cdot T_{1\ss r_{m-z-b}}  \cdot X_1^{a_z-1}, 
\end{equation}
\end{subequations}
with $\v Y_1^{z}$ a main term, or respectively a mixed term in the first layer straightening of $\u$.
Such terms are of the form  $m(X)m^\prime(Y)T_\sigma$ for some monomials $m(X)$ and $m^\prime(Y)$ and $\sigma\in S_{N}$. Furthermore, a term in PBW expansion of
$
T_\sigma Y_1^z
$
is of the form  $m^\dprime(Y)T_v$ for some monomial $m^\dprime(Y)$ and $v\in S_N$. Any term in the PBW expansion of  a main term, or a mixed term,   arises in this manner and is of the form $m(X)m^\prime(Y)m^\dprime(Y)T_v$. Hence,  $$M_{\underline{a}}(X)M_{\underline{a}}(Y,T)=m(X)m^\prime(Y)m^\dprime(Y)T_v,$$ with $m(X)$, $m^\prime(Y)$, $m^\dprime(Y)$, $\sigma$, and $v$ as described above. If we denote by  $m-z+\varsigma_1$, with $\varsigma_1\geq 0$,  the order of $m(X)m^\prime(Y)T_\sigma$ in the PBW expansion of the relevant main, or mixed, term and by $\varsigma_2$ the order of $m^\dprime(Y)T_v$ in the PBW expansion of $T_\sigma Y_1^z$, we have
\begin{equation}\label{eq: order}
\ord_\u(M_{\underline{a}}(X)M_{\underline{a}}(Y,T))= m-z+\varsigma_1+\varsigma_2.
\end{equation}
Furthermore, $m(X)=M_{\underline{a}}(X)$, $v=c_1(N,m-\zeta)$, and $m^\prime(Y)m^\dprime(Y)=Y_1Y_{N-m+\zeta+1}\cdots Y_{N-1} Y_N^\zeta$. Note that $Y_1$ already appears in $m^\prime(Y)$. So, $m^\prime(Y)$ and $m^\dprime(Y)$ are of the form 
$$
m^\prime(Y)=Y_1Y_{f_1}Y_{f_2}\cdots Y_{f_{m-z-b-1}}Y_N^{b},\quad  m^\dprime(Y)=Y_{e_1}Y_{e_2}\cdots Y_{e_c}Y_N^{z-c},
$$
with $z-c+b=\zeta$ and $\{f_1,f_2,\dots,f_{m-z-b-1}\}\cup \{e_1,e_2,\dots, e_c\}=\{N-1,\dots, N-m+\zeta+1 \}$. Proposition \ref{prop: parts} implies that $\varsigma_2\geq c$. Therefore, 
 $$
 m-z+\varsigma_1+\varsigma_2\geq m-(\zeta+c-b)+\varsigma_1+c=m-\zeta+b+\varsigma_1\geq m-\zeta.
 $$
SInce $\ord_\u(M_{\underline{a}}(X)M_{\underline{a}}(Y,T))=m-\zeta$, we must have $b=\varsigma_1=0$, $\varsigma_2=c$, and $\zeta=z-c$. This implies that $\varsigma_2=c=s$. 
 
 If $M_{\underline{a}}(X)M_{\underline{a}}(Y,T)$ arises from a main term (such as the one in \eqref{eq: dprime}), then  $m(X)m^\prime(Y)T_w$ is the unique term of order zero in the PBW expansion of the element in \eqref{eq: dprime}, which implies, with the notation $d_i=r_i+1$ for $1\leq i\leq m-z$, that
 \begin{equation}\label{eq: m(Y)}
 m^\prime(Y)=Y_1Y_{d_1}Y_{d_2}\cdots Y_{d_{m-z-1}}, \quad \sigma=(1,d_1)\cdots (1,d_{m-z}).
\end{equation}
 Furthermore, since $d_1,d_2,\dots,d_{m-z-b-1}\leq N-1$ and $s>0$, we must have $d_{m-z}=N$. Also, $$m^\dprime(Y)T_{c_1(N, m-z)}=Y_{e_1}Y_{e_2}\cdots Y_{e_c}Y_N^{z-c}T_{c_1(N, m-z)}$$ has order  $s$ in the PBW expansion of $T_\sigma Y_1^z$. Proposition \ref{prop: yz}, for $\ell=m-z$, assures that there are no such elements of order $s$ in the PBW expansion of $T_\sigma Y_1^z$.  In this case, we conclude that, if $\zeta>z$, and $M_{\underline{a}}(X)M_{\underline{a}}(Y,T)$ arises from a main term, then  $\ord_\u(M_{\underline{a}}(X)M_{\underline{a}}(Y,T))>m-\zeta$, which contradicts our hypothesis $\ord_\u(M_{\underline{a}}(X)M_{\underline{a}}(Y,T))=m-\zeta$.
 
   If $M_{\underline{a}}(X)M_{\underline{a}}(Y,T)$ arises from a mixed term (such as the one in \eqref{eq: dprime-mixed}), then  $m(X)m^\prime(Y)T_w$ is a term of order $b$ in the PBW expansion of the element in \eqref{eq: dprime-mixed}. As it follows from the treatment of the mixed terms in \emph{Case $\zeta=z$}, $m^\prime(Y)$ is of the form specified in \eqref{eq: m(Y)}, for some $d_1,\dots, d_{m-z}$. The remaining part of the argument replicates the treatment of the main terms above.
  \end{proof}
  The proof of Proposition \ref{prop: main1} applies to establish the following result.
  \begin{Cor}\label{cor: main}
  Let $1\leq j< k$ and let $\mathfrak{u}\in \H_k^+$ be a word in $\{X_j, Y_j\}$. Denote $m=\deg_{Y_j}$, let $\underline{g}=\underline{g}(\mathfrak{u})=(g_i)_{0\leq i\leq m}$ be  its gap sequence, and let $z=z(g(\mathfrak{u}))$.  Let $\underline{a}=(a_i)_{0\leq i\leq m}$ be a composition with $|\underline{a}|=\deg_{X_j}\mathfrak{u}$, and let $\zeta=z(\underline{a})$. We fix $k>N\geq m+j$. The elements $M_{\underline{a}}(X)M_{\underline{a}}(Y,T)$ under consideration are associated to  $(\underline{a}, k, j,m,N)$.
Then,
\begin{enumerate}[label=(\roman*)]
\item $\ord_\u(M_{\underline{a}}(X)M_{\underline{a}}(Y,T))\geq m-\zeta$.
\end{enumerate}
Furthermore, if $\ord_\u(M_{\underline{a}}(X)M_{\underline{a}}(Y,T))= m-\zeta$, then
\begin{enumerate}[resume, label=(\roman*)]
\item $\zeta\geq z$;
\item If $\zeta=z$, then $M_{\underline{a}}(X)\leq M_{\underline{g}}(X)$  and $\underline{a}\tplus \underline{g}$.
\end{enumerate}
The term of $\hc$-degree $m-z$ in the coefficient of  $M_{\underline{g}}(X)M_{\underline{g}}(Y,T)$ in the PBW expansion of $\u$ is  $\qc^{m-z}\hc^{m-z}$.
  \end{Cor}
  
\begin{proof}
The argument entirely follows the one for Proposition \ref{prop: main1}, with one caveat. For the first layer straightening of $\u$,  we use \eqref{a=a-bis} $m-z$-times, in the process of migrating the rightmost $X_j$ all the way to the left. At each step, instead of a $X_1Y_1$ common factor we obtain a $\left( X_jY_j +\hc \sum_{i=1}^{j-1} X_iY_i \cdot T_{i\ss j-1} \right)$ common factor. We get a linear combination of terms, each containing either $X_j$, or $X_i$ with $i<j$; for each term we continue its migration towards the left with the $X_j$, or $X_i$, depending on the situation. In the latter case, we have to use the relations  \eqref{XY relations}  between $X_i$ and $Y_j$. When the process of migration is completed with such a term, we will end up with a copy of $X_i$ all the way to the left. Since the smallest index that appears in $M_{\underline{a}}(X)M_{\underline{a}}(Y,T)$ is $j$, such terms are not of interest from the point of view of our  statement. Therefore, it is enough to focus on the terms that appear in the PBW expansion of
\begin{equation}
\u= \qc^{m-z} X_j^{g_m+1}Y_j\left(1+\hc \mathcal{T}_{j\ss k-1}  \right) \cdots Y_j\left(1+\hc \mathcal{T}_{j\ss k-1}  \right) X_j^{g_{z+1}}Y_j\left(1+\hc \mathcal{T}_{j\ss k-1}  \right) X_j^{g_z-1} Y_j^z,
\end{equation}
which we call the first layer straightening of $\u$ in this case. From this point, the argument proceeds as in the proof of Proposition \ref{prop: main1}.
\end{proof}
  
\subsection{}

Let $\mathbf{a}=(\underline{a}^j)_{1\leq j\leq r}$ be a finite sequence of compositions, $\underline{a}^j=(a^j_i)_{0\leq i\leq m_j}$. We define its length to be the composition $\underline{\ell}(\mathbf{a})=(\ell(\underline{a}^j))_{1\leq j\leq r}=(m_j+1)_{1\leq j\leq r}$. Accordingly, $|\underline{\ell}(\mathbf{a})|=m_1+\cdots+m_r+r$.
As usual, denote $z_j=z(\underline{a}^j)$. The compositions $m(\mathbf{a})=(m_j)_{1\leq j\leq r}$ and $z(\mathbf{a})=(z_j)_{1\leq j\leq r}$ will play an important role in what follows.

If $k\geq \ell(\mathbf{a})$, let $N_j=k-m_r-\cdots-m_{j+1}$, for $1\leq j\leq r$. Remark that $N_j-m_j=N_{j-1}\geq r$, for all $1\leq j\leq r$. In particular, the cycles
$c_j(N_j,m_j-z_j)$ are disjoint. We denote by $c(k,\mathbf{a})$ their product. 
\begin{Rem}\label{rem: mlr}
For $1\leq i<j\leq r$ we have $c(k,\mathbf{a})(i)<c(k,\mathbf{a})(j)$. Therefore, for any $\sigma\in S_r$, we have $\ell(c(k,\mathbf{a}) \sigma)=\ell(c(k,\mathbf{a}))+\ell(\sigma)$, which means that $c(k,\mathbf{a})$ is the minimal length representative in its left $S_r$-coset. It can be directly verified that  $\varkappa(c(k,\mathbf{a}) \sigma)=\varkappa(c(k,\mathbf{a}))+\varkappa(\sigma)$.
\end{Rem}

We consider the element 
$$
M_{\mathbf{a}}(X)M_{\mathbf{a}}(Y,T)\in \Sd(k)
$$
defined by setting $$M_{\mathbf{a}}(X)=\prod_{j=1}^r M_{\underline{a}^j}(X) \  \text{and}\ M_{\mathbf{a}}(Y,T)=\left(\prod_{j=1}^r Y_jY_{N_{j+1}+z_j+1}\cdots Y_{N_j-1} Y_{N_j}^{z_j} \right)  T_{c(k,\mathbf{a})},$$
with each $M_{\underline{a}^j}(X) M_{\underline{a}^j}(Y,T) $ associated to the data $(\underline{a}^j, k, j,m_j,N_j)$. Remark that $M_{\mathbf{a}}(Y,T)$ is obtained from the product of $M_{\underline{a}^j}(Y,T) $ with the factors appropriately sorted. Moreover, $M_{\mathbf{a}}(Y,T)$ only depends on $k$, $m(\mathbf{a})$, and $z(\mathbf{a})$.

\subsection{}

For $\mathbf{m}$ and $\mathbf{z}$ two compositions of equal length $\ell(\mathbf{m})=\ell(\mathbf{z})=r$ and such that $\mathbf{z}\leq \mathbf{m}$ component-wise,  and $k\geq |\mathbf{m}|+\ell(\mathbf{m})$, let
\begin{subequations}
\begin{equation}
\widetilde{\Sd}(k,\mathbf{m})=\left\{ M_{\mathbf{a}}(X)M_{\mathbf{a}}(Y,T) \in \Sd(k) ~\Bigg|~ m(\mathbf{a})=\mathbf{m} \right\} \quad \text{and}
\end{equation}
\begin{equation}
  \widetilde{\Sd}(k,\mathbf{m}, \mathbf{z})=\left\{ M_{\mathbf{a}}(X)M_{\mathbf{a}}(Y,T) \in \Sd(k, \mathbf{m}) ~\Bigg|~  z(\mathbf{a})=\mathbf{z} \right\}. 
\end{equation}
\end{subequations}
It is important to remark that for all the elements of $\widetilde{\Sd}(k,\mathbf{m}, \mathbf{z})$ the $M_{\mathbf{a}}(Y,T)$ component is the same.

\subsection{}  To $\mathfrak{w}=\u_1 \u_2 \cdots \u_r \Teb_w\in \Sw(r)$, $w\in S_r, \u_j\in \W_j, 1\leq j\leq r$ we associate the \emph{gap data} and the corresponding invariants
$$\mathbf{g}=g(\mathfrak{w})=(\underline{g}(\u_j))_{1\leq j\leq r}, \  m(\mathfrak{w})=m(\mathbf{g}), \  z(\mathfrak{w})=z(\mathbf{g}).$$ As above, for $k\geq \underline{\ell}(\mathbf{g})$, we consider the element 
$$
M_{\mathbf{g}}(X)M_{\mathbf{g}}(Y,T)\in \Sd(k).
$$
If all $\u_j$ are trivial with the exception of  $\u_i$, and $\underline{g}=\underline{g}(\u_i)$, then $M_{\mathbf{g}}(X)M_{\mathbf{g}}(Y,T)=M_{\underline{g}}(X)M_{\underline{g}}(Y,T)$.

Proposition \ref{prop: main1} and Corollary \ref{cor: main} are particular cases of the following general result.

\begin{Thm}\label{conj: main}
Let $\mathfrak{w}=\u_1 \u_2 \cdots \u_r\Teb_w \in \Sw(r)$, $w\in S_r, \u_j\in \W_j, 1\leq j\leq r$, and $k\geq |\underline{\ell}(g(\mathfrak{w}))|$. Let 
$M_{\mathbf{a}}(X)M_{\mathbf{a}}(Y,T)\in \widetilde{\Sd}(k,m(\mathfrak{w})) $ and $w_{\mathbf{a}}\in S_r$.
Then,
\begin{enumerate}[label=(\roman*)]
\item The element
$
M_{\mathbf{a}}(X)M_{\mathbf{a}}(Y,T) T_{w_{\mathbf{a}}}
$
has order at least $| m(\mathfrak{w})  -  z(\mathbf{a})|$
 in the PBW expansion of $\varphi_k(\mathfrak{w})$;
\end{enumerate}
If the order of $
M_{\mathbf{a}}(X)M_{\mathbf{a}}(Y,T) T_{w_{\mathbf{a}}}
$
 in the PBW expansion of $\varphi_k(\mathfrak{w})$
is exactly  $| m(\mathfrak{w})  -  z(\mathbf{a})|$, then
\begin{enumerate}[resume, label=(\roman*)]
\item $w_\mathbf{a}=w$;
\item $z(\mathbf{a})\geq z(\mathfrak{w})$ component-wise;
\item If $z(\mathbf{a})= z(\mathfrak{w})$, then  $g(\mathbf{a})\tplus g(\mathfrak{w})$ component-wise.
 \end{enumerate}
The term of $\hc$-degree $|m(\mathfrak{w}) - z(\mathfrak{w})|$ in the coefficient of  $M_{\mathbf{g}}(X)M_{\mathbf{g}}(Y,T) T_w$ in the PBW expansion of $\varphi_k(\mathfrak{w})$ is  $$\qc^{|m(\mathfrak{w}) - z(\mathfrak{w})|}\hc^{|m(\mathfrak{w}) - z(\mathfrak{w})|}.$$
\end{Thm}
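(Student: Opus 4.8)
The plan is to prove Theorem \ref{conj: main} by reducing it to the single-variable case already established in Proposition \ref{prop: main1} and Corollary \ref{cor: main}, exploiting the fact that the generators $\Xeb_j, \Yeb_j$ associated to distinct indices $j$ essentially commute modulo lower-order terms, and that the ``straightening'' process that produces the PBW expansion can be organized one index at a time. First I would observe that $\varphi_k(\mathfrak{w}) = \varphi_k(\u_1)\varphi_k(\u_2)\cdots\varphi_k(\u_r)T_w$, where $\varphi_k(\u_j)$ is a word purely in $\{X_j^{(k)}, Y_j^{(k)}\}$. The key structural point is that for $i < j$, the only relation mixing the two ``blocks'' $\{X_i, Y_i\}$ and $\{X_j, Y_j\}$ is $Y_j X_i = X_i Y_j - \hc X_i Y_i T_{(i,j)}$ (and its companion from \eqref{XY-full}), which produces a term of $\hc$-degree at least one carrying a transposition $T_{(i,j)}$. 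So when we migrate all the $X$'s to the left past all the $Y$'s, the ``leading'' contribution (order zero in these cross-relations) is simply the juxtaposition $\bigl(\prod_j \text{PBW-expansion of }\varphi_k(\u_j)\bigr)T_w$ computed block-by-block, and the correction terms all carry extra powers of $\hc$ together with extra transpositions, hence extra $\varkappa$.

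The main steps, in order, would be: (1) Fix the target $M_{\mathbf{a}}(X)M_{\mathbf{a}}(Y,T)T_{w_{\mathbf{a}}}$ and note that because the cycles $c_j(N_j, m_j - z_j)$ live on disjoint index ranges $\{N_{j-1}+1,\dots,N_j\}$ and (by Remark \ref{rem: mlr}) $c(k,\mathbf{a})$ is the minimal-length representative of its left $S_r$-coset with $\varkappa(c(k,\mathbf{a})\sigma) = \varkappa(c(k,\mathbf{a})) + \varkappa(\sigma)$, any $T_v$ appearing in the PBW expansion decomposes the permutation $v$ into a ``block part'' supported on the disjoint high ranges and a ``low part'' in $S_r$. (2) Apply Proposition \ref{prop: upsilon-bound} to get the a priori bound $\ord_{\varphi_k(\mathfrak{w})}(X_\mu Y_\nu T_v) \geq \varkappa(v)$; for the element under consideration $\varkappa = \sum_j (m_j - \zeta_j) + \varkappa(w_{\mathbf{a}}) \geq |m(\mathfrak{w}) - z(\mathbf{a})|$ (using $\zeta_j = z_j(\mathbf{a})$), which gives part (i). (3) For the equality analysis, run the block-by-block straightening: migrate the rightmost $X_r$-block letters through $\u_r$'s $Y_r$'s using \eqref{a=a-bis}, producing the ``first layer straightening'' exactly as in Proposition \ref{prop: main1}, then pass the resulting factors leftward past $\u_{r-1},\dots,\u_1$ using the cross-relations \eqref{XY-full}; repeat for index $r-1$, etc. Each cross-relation application past a block of lower index costs an extra $\hc$ and an extra transposition $T_{(i,j)}$ which forces $\varkappa$ to increase by one beyond what the block-diagonal count predicts; since $c(k,\mathbf{a})$ has no room to absorb such a transposition (the indices $i \le r < N_{j-1} < $ the support of each block cycle), equality forces all such cross-corrections to be trivial, i.e. only the block-diagonal term survives. (4) On the block-diagonal term, the factor in index $j$ is governed precisely by Corollary \ref{cor: main} applied to $\u_j$ (with $N = N_j$, $j$ in place of the index there), giving $\zeta_j \ge z_j(\mathfrak{w})$, and in case of equality $M_{\underline{a}^j}(X) \le M_{\underline{g}^j}(X)$, $\underline{a}^j \tplus \underline{g}^j$; assembling these across $j$ yields (iii) and (iv). (5) The permutation bookkeeping: once equality holds and all cross-corrections vanish, the $S_r$-part of $v$ must be exactly $w$ (the cross-corrections were the only source of extra transpositions among the first $r$ strands, and $w$ is carried along untouched as the rightmost factor $T_w$), giving (ii). (6) For the final sharp statement, by Corollary \ref{cor: main} the $\hc$-degree-$(m_j - z_j)$ coefficient of $M_{\underline{g}^j}(X)M_{\underline{g}^j}(Y,T)$ in $\varphi_k(\u_j)$ is $\qc^{m_j - z_j}\hc^{m_j - z_j}$; multiplying across the $r$ disjoint blocks and noting that the block-diagonal assembly is the unique minimal-$\hc$-degree source of $M_{\mathbf{g}}(X)M_{\mathbf{g}}(Y,T)T_w$, the coefficient multiplies to $\qc^{|m(\mathfrak{w}) - z(\mathfrak{w})|}\hc^{|m(\mathfrak{w}) - z(\mathfrak{w})|}$.

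I would be careful to verify that moving a factor $X_i Y_i T_{(i,j)}$ (arising from a cross-relation) further to the left, and then straightening $T_{(i,j)}$ against the rest, genuinely cannot conspire to lower the total $\hc$-order below the block-diagonal count — this is exactly the kind of phenomenon Proposition \ref{prop: upsilon-bound} (via Lemma \ref{lem: ord-ineq}) is designed to control, so the argument should be: every such detour term has $X_\mu Y_\nu T_v$ with $\varkappa(v)$ strictly larger than the block-diagonal $\varkappa$, hence order strictly larger than $|m(\mathfrak{w}) - z(\mathbf{a})|$, hence irrelevant to the equality case. The main obstacle I anticipate is step (3)/(5): making precise that the disjointness of the block cycles forces the $S_r$-part and the block parts of any surviving minimal-order permutation to separate cleanly, and that the ``extra transposition'' produced by each cross-relation is genuinely unabsorbable. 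This requires a careful lemma about how $\varkappa$ behaves under multiplying a permutation supported on disjoint high index ranges by transpositions $(i,j)$ with $i \le r$ — essentially that $\varkappa(c(k,\mathbf{a})\cdot(i,j)\cdot\sigma) \ge \varkappa(c(k,\mathbf{a})) + 1 + \varkappa(\sigma)$ when $(i,j)$ connects a low strand to a block strand — which follows from Proposition \ref{prop: vkappa}(ii) (the transposition must join strands in the same cycle, and joining a block to a low strand strictly merges cycles) but needs to be stated and invoked explicitly. The rest is a routine, if lengthy, induction mirroring the single-index proof.
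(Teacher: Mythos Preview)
Your proposal is essentially correct and identifies the same key ingredients as the paper: Proposition \ref{prop: upsilon-bound} together with Remark \ref{rem: mlr} for part (i), the coset structure of $c(k,\mathbf{a})$ for part (ii), and block-by-block reduction to Corollary \ref{cor: main} for parts (iii)--(iv) and the final coefficient. You also correctly flag the crux of the argument, namely that cross-block interactions cannot contribute at minimal $\hc$-order.

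The organization differs. You propose a \emph{sequential} straightening: fully process $\u_r$, push the result leftward through $\u_{r-1},\dots,\u_1$ via \eqref{XY-full}, then repeat. The paper instead first takes the intermediate expansion of each $\varphi_k(\u_j)$ separately into terms $X_{\mu_j}Y_{\nu_j}\prod_u T^{\pm}_{(e_{j,u},f_{j,u})}$, and then analyzes their product. The decisive device is the equivalence relation $\sim$ on $[k]$ given by the orbits of $c(k,\mathbf{a})$: the paper shows (via Lemma \ref{lem: ord-ineq} and Proposition \ref{prop: vkappa}(ii)) that at minimal order every transposition $(e_s,f_s)$ appearing satisfies $e_s\sim f_s$, and that all indices arising from block $j$ lie in the $j$-orbit. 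Since $u\not\sim v$ for $1\le u\neq v\le r$, this forces the minimal-order term of the product to be exactly the block-diagonal one, and Corollary \ref{cor: main} applies to each factor. This replaces your anticipated lemma about $\varkappa(c(k,\mathbf{a})\cdot(i,j)\cdot\sigma)$ and handles uniformly both the low-low transpositions $(i,j)$ with $i,j\le r$ and the low-high transpositions $(j,N)$ that appear once high indices have been created inside a block---a case your sketch does not mention explicitly but which your sequential scheme would also have to address. The paper's equivalence-relation packaging is what makes the ``unabsorbable transposition'' intuition rigorous without a lengthy case analysis.
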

\begin{proof} For part (i), let $$\Gamma_{\mathbf{a}}=M_{\mathbf{a}}(X)\left(\prod_{j=1}^r Y_jY_{N_{j+1}+z_j+1}\cdots Y_{N_j-1} Y_{N_j}^{z_j} \right).$$
With this notation, and based on Remark \ref{rem: mlr}, $M_{\mathbf{a}}(X)M_{\mathbf{a}}(Y,T) T_{w_{\mathbf{a}}}=\Gamma_{\mathbf{a}} T_{c(k,\mathbf{a})w_{\mathbf{a}}}$.
We first consider terms of the form $\Gamma_{\mathbf{a}} T_\sigma$, $\sigma\in S_k$, in the  the PBW expansion of $\varphi_k(\u_1 \u_2 \cdots \u_r)$. We have
\begin{equation}\label{eq: ord-ineq1}
\ord_{\varphi_k(\mathfrak{w})}(\Gamma_{\mathbf{a}} T_{c(k,\mathbf{a})w_{\mathbf{a}}})\geq \min_{\sigma\in S_k}\{\ord_{\varphi_k(\u_1 \u_2 \cdots \u_r)}(\Gamma_{\mathbf{a}} T_\sigma)+\ord_{T_\sigma T_w}(T_{c(k,\mathbf{a})w_{\mathbf{a}}})\}.
\end{equation}
Since $w, w_{\mathbf{a}}\in S_r$,  if $\ord_{T_\sigma T_w}(T_{c(k,\mathbf{a})w_{\mathbf{a}}})<\infty$, then $c(k,\mathbf{a})$ is the minimal length representative of $\sigma$ in its left $S_r$-coset. Therefore, we can restrict the computation of the minimum in \eqref{eq: ord-ineq1} to $\sigma\in c(k,\mathbf{a})S_r$. Let $\sigma=c(k,\mathbf{a})\sigma^\prime$, with $\sigma^\prime\in S_r$. From Proposition \ref{prop: upsilon-bound} and Remark \ref{rem: mlr}, we obtain 
\begin{equation}\label{eq: ord-ineq2}
\ord_{\varphi_k(\u_1 \u_2 \cdots \u_r)}(\Gamma_{\mathbf{a}} T_\sigma)\geq \varkappa(\sigma)=\varkappa(c(k,\mathbf{a}))+\varkappa(\sigma^\prime).
\end{equation}
Combining \eqref{eq: ord-ineq1} and \eqref{eq: ord-ineq2}, we obtain 
$$
\ord_{\varphi_k(\mathfrak{w})}(\Gamma_{\mathbf{a}} T_{c(k,\mathbf{a})w_{\mathbf{a}}})\geq \varkappa(c(k,\mathbf{a}))=| m(\mathfrak{w})  -  z(\mathbf{a})|.
$$
This proves part (i). 

Assume now that  $\ord_{\varphi_k(\mathfrak{w})}(\Gamma_{\mathbf{a}} T_{c(k,\mathbf{a})w_{\mathbf{a}}})=| m(\mathfrak{w})  -  z(\mathbf{a})|$. Then, $\varkappa(\sigma^\prime)=0$ in \eqref{eq: ord-ineq2}, which means that 
$$
\sigma=c(k,\mathbf{a}),\quad  \Gamma_{\mathbf{a}} T_\sigma T_w=M_{\mathbf{a}}(X)M_{\mathbf{a}}(Y,T) T_{w},\quad  \text{and}\quad w_{\mathbf{a}}=w.
$$

For part (iii) and (iv) we can assume that $w=w_{\mathbf{a}}=1$. As in the proof of Proposition \ref{prop: upsilon-bound}, we first consider an intermediate expansion of $\varphi_k(\mathfrak{w})$ as a linear combination (with coefficients that are integral polynomials in $\hc$) of terms of the form $\Gamma_{\mathbf{a}} \prod_s T^\pm_{(e_s,f_s)}$  with $\mu,\nu\in \Lambda_k$, and $(e_s,f_s)\in S_k$ transpositions; the order of vanishing at $\hc=0$ of the coefficient of $\Gamma_{\mathbf{a}} \prod_s  T^\pm_{(e_s,f_s)}$ in such an expansion is denoted by 
$$
\oord_{\varphi_k(\mathfrak{w})} (\Gamma_{\mathbf{a}} \prod_s T^\pm_{(e_s,f_s)}).
$$
 Recall that  $\oord_{\varphi_k(\mathfrak{w})} (\Gamma_{\mathbf{a}} \prod_s T^\pm_{(e_s,f_s)})$ is at least the number of factors in the product which, in turn, is at least $\varkappa(\prod_s (e_s,f_s))$. Therefore,
\begin{equation}\label{eq: aa}
\oord_{\varphi_k(\mathfrak{w})} (\Gamma_{\mathbf{a}} \prod_s T^\pm_{(e_s,f_s)})\geq \varkappa(\prod_s (e_s,f_s)).
\end{equation}

To obtain the full PBW expansion of $\varphi_k(\mathfrak{w})$, we consider the PBW expansion of the factors $\prod_s T^\pm_{(e_s,f_s)}$.  From Lemma \ref{lem: ord-ineq} we have
\begin{equation}\label{eq: bb}
{\varkappa(\prod_s (e_s,f_s))}+\ord_{\prod_s T^\pm_{(e_s,f_s)}} (T_{c(k,\mathbf{a})}) \geq \varkappa(c(k,\mathbf{a})), 
\end{equation}

from which we obtain 
\begin{equation}\label{eq: cc}
\ord_{\varphi_k(\mathfrak{w})} (\Gamma_{\mathbf{a}} T_{c(k,\mathbf{a})})\geq \varkappa(c(k,\mathbf{a})).
\end{equation}
In order to have equality in \eqref{eq: cc}, we must have equality in both \eqref{eq: aa} and \eqref{eq: bb} for some $\prod_s (e_s,f_s)$.  We fix a term $\Gamma_{\mathbf{a}} \prod_s  T^\pm_{(e_s,f_s)}$ for which we  have equality in both \eqref{eq: aa} and \eqref{eq: bb}.

 By Lemma \ref{lem: ord-ineq}, if we have equality in \eqref{eq: bb}, then $\prod_s (e_s,f_s)$ is a  $\varkappa$-factor of $c(k,\mathbf{a})$, which implies that
 \begin{equation}\label{eq: 34}
 | m(\mathfrak{w})  -  z(\mathbf{a})| = \varkappa(c(k,\mathbf{a}))\geq \varkappa(\prod_s (e_s,f_s)).
 \end{equation} 
 If we have equality in \eqref{eq: aa} then $\varkappa(\prod_s (e_s,f_s))$ is precisely the number of factors in the product, which, in particular, means that $\prod_s (e_s,f_s)$ is a minimal product of transpositions. By Proposition \ref {prop: vkappa}, in a minimal expression of $c(k,\mathbf{a})$ as a product of transpositions $(u,v)$, the numbers $u$ and $v$ are part of the same cycle in the cycle decomposition of $c(k,\mathbf{a})$. To streamline the exposition, we use $\sim$ to denote the equivalence relation on $[k]:=\{1,2,\dots,k\}$ whose equivalence classes are the orbits of $c(k,\mathbf{a})$. Since  $\prod_s (e_s,f_s)$  is a  $\varkappa$-factor of $c(k,\mathbf{a})$, we have $e_s\sim f_s$ for each pair that appears in the product. This leads to strong restrictions, which we specify below, on the terms from the relations  \eqref{X-Tab}, \eqref{Y-Tab}, and \eqref{XY-full}  that can produce  $\Gamma_{\mathbf{a}} \prod_s  T^\pm_{(e_s,f_s)}$. 

To obtain the intermediate expansion of  $\varphi_k(\mathfrak{w})$, we proceed as follows. We first obtain the intermediate expansions of each $\varphi_k(\mathfrak{u}_j)$, $1\leq j\leq r$, and fix some terms $X_{\mu_j}Y_{\nu_j}\prod_u T^\pm_{(e_{j,u},f_{j,u})}$ such that  $\prod_u {(e_{j,u},f_{j,u})}$ is a minimal product of transpositions,  
\begin{equation}\label{eq: 35}
\oord_{\varphi_k(\mathfrak{u}_j)} ( X_{\mu_j}Y_{\nu_j}\prod_u T^\pm_{(e_{j,u},f_{j,u})} )=\varkappa( \prod_u {(e_{j,u},f_{j,u})}  ), 
\end{equation}
  and 
\begin{equation}\label{eq: 36}
\oord_{\prod_{j=1}^r X_{\mu_j}Y_{\nu_j}\prod_u T^\pm_{(e_{j,u},f_{j,u})}} ( \Gamma_{\mathbf{a}} \prod_s  T^\pm_{(e_s,f_s)} )=\varkappa (\prod_s  {(e_s,f_s)})-\sum_{j=1}^r \varkappa( \prod_u {(e_{j,u},f_{j,u})}  ).
\end{equation}  

We note that the following terms on the right-hand side of some of the relations  \eqref{X-Tab},  \eqref{Y-Tab}, and \eqref{XY-full} lead to terms $X_{\mu_j}Y_{\nu_j}\prod_u T^\pm_{(e_{j,u},f_{j,u})}$ and   $\Gamma_{\mathbf{a}} \prod_s  T^\pm_{(e_s,f_s)}$ that violate \eqref{eq: 35} or \eqref{eq: 36}: the second term in \eqref{X-Tab-b} and \eqref{X-Tab-e}, the second and third term in \eqref{Y-Tab-b} and \eqref{Y-Tab-e}, and the third and fourth term in \eqref{eq: a=a-other}. We ignore such terms in our ongoing analysis, and we refer to elements that arise from the expansion that involve the remaining terms as acceptable terms.

The inspection of the relations  \eqref{X-Tab}  shows that the acceptable terms in the PBW expansion of  $T^\pm_{(u,v)}X_i$  are  either $X_i T^\pm_{(u,v)}$ (as the term of order $0$), or of the form $X_{c}\prod_t T^\pm_{(u_t,v_t)}$, with $\prod_t (u_t,v_t)$ the minimal product of transpositions of a cycle that contains $u,v,i$ and $c$ (which means that $\{u,v,i,c\}\subseteq \cup_t\{u_t,v_t\}$). The corresponding statement for  $T^\pm_{(u,v)}Y_i$ is also true. In particular, any of the factors $T^\pm_{(e_{j,u},f_{j,u})}$ that appear in $X_{\mu_j}Y_{\nu_j}\prod_u T^\pm_{(e_{j,u},f_{j,u})}$ will eventually contribute to  $\prod_s  T^\pm_{(e_s,f_s)}$ a sub-factor of the form  $\prod_t T^\pm_{(u_t,v_t)}$ with $\prod_t {(u_t,v_t)}$ the minimal
product of transpositions of a cycle that contains $e_{j,u}$ and $f_{j,u}$. Therefore, $e_{j,u}\sim f_{j,u}$. Keeping in mind that  factors of the form $T_{(e, f)}$ initially appear in the straightening process for $\varphi_k(\mathfrak{u}_j)$  by the application of the relations  \eqref{eq: a=a-other},  we obtain that $j\sim e_{j,u}\sim f_{j,u}$ for all $j,u$. Together with the relations \eqref{XY-full}, it also implies that all the indices $d$ such that $X_d$  appears in $X_{\mu_j}$ are in $\cup_u\{e_{j,u}, f_{j,u}\}$ and thus satisfy $d\sim j$, and similarly for  $Y_{\nu_j}$.

 Because for $1\leq u\neq v\leq r$ we have $u\not\sim v$, we conclude that $\Gamma_{\mathbf{a}} \prod_s  T^\pm_{(e_s,f_s)}$ is precisely the term of order zero in the intermediate PBW expansion of the ordered product $\displaystyle\prod_{j=1}^r \left( X_{\mu_j}Y_{\nu_j}\prod_u T^\pm_{(e_{j,u},f_{j,u})}\right)$. Therefore, with the notation
 $$
\Gamma_{\underline{a}^j}=M_{\underline{a}^j}(X) Y_jY_{N_{j+1}+z_j+1}\cdots Y_{N_j-1} Y_{N_j}^{z_j} , \quad 1\leq j\leq r,
$$
 the following equalities hold 
 \begin{subequations}
 \begin{equation}\label{eq: 37-a}
X_{\mu_j}Y_{\nu_j}=\Gamma_{\underline{a}^j},\quad 1\leq j\leq r,
\end{equation}
 \begin{equation}\label{eq: 37-b}
\prod_j\prod_u T^\pm_{(e_{j,u},f_{j,u})} = \prod_s  T^\pm_{(e_s,f_s)}, \quad 
\varkappa (\prod_s  {(e_s,f_s)})=\sum_{j=1}^r \varkappa( \prod_u {(e_{j,u},f_{j,u})}  ).
\end{equation}
\end{subequations}
From \eqref{eq: 37-a}, we obtain that all the integers that appear as indices in the monomial $\Gamma_{\underline{a}^j}$ are among $\cup_u\{e_{j,u}, f_{j,u}\}$. Therefore, the number of factors in the product $\prod_u {(e_{j,u},f_{j,u})}$ is at least $m_j-z_j$, and because  $\prod_u {(e_{j,u},f_{j,u})}$ is a minimal product of transpositions, we have 
\begin{equation}\label{eq: 37-c}
\varkappa(\prod_u {(e_{j,u},f_{j,u})})\geq m_j-z_j, \quad 1\leq j\leq r.
\end{equation} From \eqref{eq: 37-b} we obtain 
\begin{equation}\label{eq: 37}
\varkappa (\prod_s  {(e_s,f_s)})\geq | m(\mathfrak{w})  -  z(\mathbf{a})|.
\end{equation}
The inequalities \eqref{eq: 34} and \eqref{eq: 37} imply that $\varkappa(\prod_s  {(e_s,f_s)})=\varkappa(c(k,\mathbf{a}))$. Because  $\prod_s  {(e_s,f_s)}$ is a $\varkappa$-factor of $c(k,\mathbf{a})$, we have $\prod_s  {(e_s,f_s)}=c(k,\mathbf{a})$. Furthermore, $\varkappa(\prod_u {(e_{j,u},f_{j,u})})= m_j-z_j$ and since $\prod_j \prod_u {(e_{j,u},f_{j,u})} = \prod_s  {(e_s,f_s)})=c(k,\mathbf{a})$ and  $\{j, {N_{j+1}+z_j+1},\dots {N_j-1}, {N_j} \}\subseteq \cup_u\{e_{j,u}, f_{j,u}\}$, $1\leq j\leq r$, we obtain that  $\prod_u {(e_{j,u},f_{j,u})}=c_j(N_j,m_j-z_j)$.

To conclude, we obtain that for all $1\leq j\leq r$, the element $M_{\underline{a}^j}(X) M_{\underline{a}^j}(Y,T) $ associated to the data $(\underline{a}^j, k, j,m_j,N_j)$ satisfies 
$$
 \ord_{X_{\mu_j}Y_{\nu_j}\prod_u T^\pm_{(e_{j,u},f_{j,u})}}M_{\underline{a}^j}(X) M_{\underline{a}^j}(Y,T))=0.
$$
From \eqref{eq: 35} we obtain $ \ord_{\varphi_k(\mathfrak{u}_j)}(M_{\underline{a}^j}(X) M_{\underline{a}^j}(Y,T))= m_j-z_j $. Corollary \ref{cor: main} now implies parts (iii), (iv), and the last claim in the statement of Theorem \ref{conj: main}.
\end{proof}

\subsection{} We are now ready to present a proof of the faithfulness of the standard representation.

\begin{Thm}\label{thm: faith}
The standard representation of $\H^+$ is faithful. 
\end{Thm}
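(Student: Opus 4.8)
The plan is to deduce faithfulness from the triangularity result Theorem \ref{conj: main} together with the kernel criterion Theorem \ref{thm: kernel}. By Theorem \ref{thm: kernel}, an element $\mathbf{H}\in \H(r)^+$ acts as zero on $\Pas^+$ if and only if $\varphi_k(\mathbf{H})=0$ in $\H_k^+$ for all $k\geq r$; so it suffices to show that if $\mathbf{H}\neq 0$ in $\H^+$ then $\varphi_k(\mathbf{H})\neq 0$ for all sufficiently large $k$. First I would fix a nonzero $\mathbf{H}\in \H^+$. Since $\H^+$ is graded by $\deg$ and the morphisms $\varphi_k$ are degree-preserving, I may assume $\mathbf{H}$ is homogeneous of some degree $D$. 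Using the PBW basis $\Sw$ of $\H^+$ (Theorem \ref{thm: pbw}), write $\mathbf{H}=\sum_{\mathfrak{w}\in S} c_{\mathfrak{w}}\,\mathfrak{w}$ with $S\subset \Sw(r)$ finite (for $r$ large enough), $c_{\mathfrak{w}}\in \Kp$ nonzero; clearing denominators I may assume $c_{\mathfrak{w}}\in\Rat[\hc,\qc]$.

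The key is to isolate, via Theorem \ref{conj: main}, a single coefficient in the PBW expansion of $\varphi_k(\mathbf{H})$ that cannot vanish. For each $\mathfrak{w}=\u_1\cdots\u_r\Teb_w\in S$, Theorem \ref{conj: main} says that the element $M_{g(\mathfrak{w})}(X)M_{g(\mathfrak{w})}(Y,T)T_w\in\Sd(k)$ occurs in $\varphi_k(\mathfrak{w})$ with coefficient whose lowest-order-in-$\hc$ term is $\qc^{|m(\mathfrak{w})-z(\mathfrak{w})|}\hc^{|m(\mathfrak{w})-z(\mathfrak{w})|}$, and — crucially — that any \emph{other} $\mathfrak{w}'\in S$ contributes to the coefficient of this same PBW basis element $M_{g(\mathfrak{w})}(X)M_{g(\mathfrak{w})}(Y,T)T_w$ only through terms of $\hc$-order $\geq |m(\mathfrak{w}')-z(\mathbf{g}(\mathfrak{w}))|$ when the special triangularity applies, and in particular (by parts (ii)--(iv) of Theorem \ref{conj: main} applied with $\mathbf{a}=g(\mathfrak{w})$) only $\mathfrak{w}'$ with $w'=w$, $z(\mathfrak{w}')\leq z(\mathfrak{w})$ component-wise in the relevant sense, and $g(\mathfrak{w}')$ comparable to $g(\mathfrak{w})$ can contribute at the minimal order. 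I would choose $\mathfrak{w}_0\in S$ to be maximal with respect to a suitable total refinement of the partial order controlling these comparisons — e.g. order $\mathfrak{w}$ by first minimizing $|z(\mathfrak{w})|$ (equivalently maximizing $|m(\mathfrak{w})-z(\mathfrak{w})|$ among a fixed $m$-type), then maximizing the gap data $g(\mathfrak{w})$ in dominance order, then breaking ties by $w$. For such a maximal $\mathfrak{w}_0$, Theorem \ref{conj: main}(iii)--(iv) forces every competing $\mathfrak{w}'\neq\mathfrak{w}_0$ in $S$ either to contribute to the coefficient of $M_{g(\mathfrak{w}_0)}(X)M_{g(\mathfrak{w}_0)}(Y,T)T_{w_0}$ only at $\hc$-order strictly greater than $|m(\mathfrak{w}_0)-z(\mathfrak{w}_0)|$, or not at all.

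Granting this, the coefficient of $M_{g(\mathfrak{w}_0)}(X)M_{g(\mathfrak{w}_0)}(Y,T)T_{w_0}$ in $\varphi_k(\mathbf{H})=\sum c_{\mathfrak{w}}\varphi_k(\mathfrak{w})$ has its $\hc^{|m(\mathfrak{w}_0)-z(\mathfrak{w}_0)|}$-coefficient equal to $c_{\mathfrak{w}_0}(0,\qc)\cdot\qc^{|m(\mathfrak{w}_0)-z(\mathfrak{w}_0)|}$ up to terms from $\mathfrak{w}'$ with strictly larger $z$-type — which I can preclude by also refining the maximal choice over all $m$-types appearing, arranging that $\mathfrak{w}_0$ realizes the globally minimal $|z|$ (hence there are no $\mathfrak{w}'$ of strictly larger degeneracy). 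Since $c_{\mathfrak{w}_0}\in\Rat[\hc,\qc]$ is nonzero, writing $c_{\mathfrak{w}_0}=\hc^e\,\tilde c$ with $\tilde c(0,\qc)\neq 0$, the $\hc^{e+|m(\mathfrak{w}_0)-z(\mathfrak{w}_0)|}$-coefficient is $\tilde c(0,\qc)\qc^{|m(\mathfrak{w}_0)-z(\mathfrak{w}_0)|}\neq 0$ as a rational function of $\qc$. Hence $\varphi_k(\mathbf{H})\neq 0$ for every $k\geq |\underline\ell(g(\mathfrak{w}_0))|$, and by Theorem \ref{thm: kernel}, $\mathbf{H}$ acts nontrivially on $\Pas^+$. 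This proves faithfulness.

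The main obstacle is bookkeeping: making the choice of the ``dominant'' PBW word $\mathfrak{w}_0$ precise so that Theorem \ref{conj: main}'s triangularity genuinely isolates its contribution — one must check that no \emph{cancellation} occurs among the (possibly several) $\mathfrak{w}'$ that tie with $\mathfrak{w}_0$ in all the relevant invariants ($w$, $m$-type, $z$-type, gap data), which by part (iv) of Theorem \ref{conj: main} forces $g(\mathfrak{w}')=g(\mathfrak{w}_0)$ and hence (the gap data determining the word) $\mathfrak{w}'=\mathfrak{w}_0$, so the tie is trivial. Verifying that this exhausts the comparison — i.e. that the partial orders in parts (ii)--(iv) together with the $\hc$-order estimate in part (i) leave $\mathfrak{w}_0$ as the unique minimal-order source of $M_{g(\mathfrak{w}_0)}(X)M_{g(\mathfrak{w}_0)}(Y,T)T_{w_0}$ — is the delicate point, and it is exactly where the full strength of Theorem \ref{conj: main} (including the explicit leading coefficient $\qc^{|m-z|}\hc^{|m-z|}$) is used.
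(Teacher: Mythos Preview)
Your approach is essentially the paper's: use Theorem~\ref{thm: kernel} to reduce to showing $\varphi_k(\mathbf{H})\neq 0$, then use the triangularity of Theorem~\ref{conj: main} to isolate a nonvanishing PBW coefficient coming from a carefully chosen extremal word $\mathfrak{w}_0$. However, there are two genuine gaps in your bookkeeping that the paper handles and without which the isolation argument fails.

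First, Theorem~\ref{conj: main} only applies to $M_{\mathbf{a}}(X)M_{\mathbf{a}}(Y,T)\in\widetilde{\Sd}(k,m(\mathfrak{w}))$, i.e.\ it presupposes $m(\mathbf{a})=m(\mathfrak{w})$. So when you examine the contribution of a competing $\mathfrak{w}'$ to the coefficient of $M_{g(\mathfrak{w}_0)}(X)M_{g(\mathfrak{w}_0)}(Y,T)T_{w_0}$, part~(i) gives you nothing unless $m(\mathfrak{w}')=m(\mathfrak{w}_0)$. Reducing to homogeneous total degree does not suffice; you must first reduce (as the paper does, by looking at $\varphi_k(\mathbf{H})$ modulo $\hc$) to the case where every $\mathfrak{w}$ in the support has the same individual degrees $(\deg_{\Xeb}\u_j,\deg_{\Yeb}\u_j)_j$, hence the same $m(\mathfrak{w})$.

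Second, your treatment of the case $c_{\mathfrak{w}_0}=\hc^e\tilde c$ with $e>0$ does not work: if you look at the $\hc^{e+|m-z|}$-coefficient, then any $\mathfrak{w}'$ with $\ord c_{\mathfrak{w}'}<e$ can contribute at that level via terms of $\varphi_k(\mathfrak{w}')$ of $\hc$-order strictly larger than the minimum $|m-z(\mathbf{a})|$, and Theorem~\ref{conj: main} says nothing about such higher-order terms. The paper avoids this by clearing denominators so that the $c_{\mathfrak{w}}$ are relatively prime in $\Rat[\qc,\hc]$, defining $\Theta=\{\mathfrak{w}:\ord c_{\mathfrak{w}}=0\}\neq\emptyset$, and then minimizing $|z(\mathfrak{w})|$ and maximizing the gap data \emph{within} $\Theta$. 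With that choice, any $\mathfrak{w}'$ contributing at the critical $\hc$-order $\vartheta=|m-z(\mathfrak{w}_0)|$ is forced by part~(i) to have $\ord c_{\mathfrak{w}'}=0$, hence $\mathfrak{w}'\in\Theta$, and then parts~(ii)--(iv) pin it down to $\mathfrak{w}_0$ exactly as you outline.
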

\begin{proof}
Assume that  $\mathbf{H}\in \H(r)^+$ acts by zero on $\Pas^+$. We show that $\mathbf{H}=0$. By Theorem \ref{thm: kernel}, we have $\varphi_k(\mathbf{H})=0$ for all  $k\geq r$. For a contradiction, assume that $\mathbf{H}$ is non-zero and consider the (non-trivial) expansion of $\mathbf{H}$ with respect to the PBW basis of $\H^+(r)$
\begin{equation}\label{eq: expansion}
\mathbf{H}=\sum_{\mathfrak{w}=\u_1 \u_2 \cdots \u_r \Teb_w\in \Sw(r)} c_\mathfrak{w} (\qc,\hc) \mathfrak{w}.
\end{equation}
By clearing denominators, we can assume that all $c_\mathfrak{w} (\qc,\hc)\in \Rat[\qc,\hc]$ are relatively prime. In analogy with the corresponding notation in \S\ref{sec: seq-limit}, 
the order of vanishing at $\hc=0$ for a polynomial $c_\mathfrak{w} (\qc,\hc)$ is denoted by $\ord c_\mathfrak{w} $. The subset $\Theta$ consisting of elements  $\mathfrak{w}\in \Sw(r)$ such that  $\ord c_\mathfrak{w} =0$ is non-empty. 

Because the maps $\varphi_k$ are homogeneous, it is enough to assume that $\mathbf{H}$ is homogeneous with respect to $\deg_{\Xeb}$ and $\deg_{\Yeb}$. In fact, by examining $\varphi_k(\mathbf{H})$ modulo $\hc$ we see that it is enough to assume that for each $1\leq i\leq r$, the $\u_i$ components of the words $\mathfrak{w}$ that appear in the sum \eqref{eq: expansion} have all the same $\deg_{\Xeb}$ and the same  $\deg_{\Yeb}$. In particular, all the words $\mathfrak{w}$ that appear in \eqref{eq: expansion} have the same $m(\mathfrak{w})$ and the same $\underline{\ell}(g(\mathfrak{w}))$, which we denote by $m(\mathbf{H})$ and, respectively, $\underline{\ell}(\mathbf{H})$.

Let $$\Theta_{\min}=\{\mathfrak{w}\in \Theta~|~|z(\mathfrak{w})| \text{ minimal}\},$$
and denote  by $\vartheta$ the common value $|m(\mathbf{H})-z(\mathfrak{w})|$ for $\mathfrak{w}\in \Theta_{\min}$. Let  $\mathfrak{w}_{\max}\in \Theta_{\min}$ be a maximal element in $\Theta_{\min}$ with respect to the product order on the gap data $g(\mathfrak{w})$. We denote $\mathfrak{w}_{\max}=\u_1^{\max} \u_2^{\max} \cdots \u_r^{\max}\Teb_{w_{\max}} $ and $\mathbf{g}_{\max}=\mathbf{g}(\mathfrak{w}_{\max})$.

Let $k>>\underline{\ell}(\mathbf{H})$. Since $\varphi_k(\mathbf{H})=0$, the coefficient of $\hc^\vartheta$ in $\varphi_k(\mathbf{H})$ must vanish. We argue that  
\begin{equation}\label{eq: last}
\ord_{\varphi_k(\mathbf{H})}M_{\mathbf{g}_{\max}}(X)M_{\mathbf{g}_{\max}}(Y,T) T_{w_{\max}}=\vartheta, 
\end{equation}
which is a contradiction. Indeed, 
$
\ord_{\varphi_k(\mathfrak{w}_{\max})}M_{\mathbf{g}_{\max}}(X)M_{\mathbf{g}_{\max}}(Y,T) T_{w_{\max}}=|m(\mathbf{H})-z(\mathfrak{w}_{\max})|=\vartheta, 
$
and since $\mathfrak{w}_{\max}\in \Theta$, we have 
$
\ord_{\varphi_k(c_{\mathfrak{w}_{\max}}\mathfrak{w}_{\max})}M_{\mathbf{g}_{\max}}(X)M_{\mathbf{g}_{\max}}(Y,T) T_{w_{\max}}=\vartheta. 
$

Assume that $M_{\mathbf{g}_{\max}}(X)M_{\mathbf{g}_{\max}}(Y,T) T_{w_{\max}}$ also appears in the coefficient of $\hc^\vartheta$ in the PBW expansion of some $\varphi_k(c_{\mathfrak{w}^\prime}\mathfrak{w}^\prime)$. Then, 
$$
\ord_{\varphi_k(\mathfrak{w}^\prime)}M_{\mathbf{g}_{\max}}(X)M_{\mathbf{g}_{\max}}(Y,T) T_{w_{\max}}\leq \vartheta-\ord c_{\mathfrak{w}^\prime}.
$$ 
By Theorem \ref{conj: main}(i), 
$
\ord_{\varphi_k(\mathfrak{w}^\prime)}M_{\mathbf{g}_{\max}}(X)M_{\mathbf{g}_{\max}}(Y,T) T_{w_{\max}}\geq \vartheta.
$
Therefore, we must have $\ord c_{\mathfrak{w}^\prime}=0$ (i.e. $\mathfrak{w}^\prime\in \Theta$) and 
$
\ord_{\varphi_k(\mathfrak{w}^\prime)}M_{\mathbf{g}_{\max}}(X)M_{\mathbf{g}_{\max}}(Y,T) T_{w_{\max}}= \vartheta=|m(\mathbf{H})-z(\mathfrak{w}_{\max})|.
$
By Theorem \ref{conj: main}(ii) this implies $\mathfrak{w}^\prime=\mathfrak{w}_{\max}$. By Theorem \ref{conj: main}(iii), 
we have $|z(\mathfrak{w}_{\max})|\geq |z(\mathfrak{w}^\prime)|$, but since $|z(\mathfrak{w}_{\max})|$ is minimal we also have $\mathfrak{w}^\prime\in \Theta_{\min}$ and $|z(\mathfrak{w}_{\max})|= |z(\mathfrak{w}^\prime)|$. Theorem \ref{conj: main}(iv) now implies that $ g(\mathfrak{w}_{\max}) \tplus g(\mathfrak{w}^\prime)$ for the product partial order. But $\mathfrak{w}_{\max}$ is a maximal element in $\Theta_{\min}$, which implies that $g(\mathfrak{w}^\prime)=g(\mathfrak{w}_{\max})$. 

To summarize, $M_{\mathbf{g}_{\max}}(X)M_{\mathbf{g}_{\max}}(Y,T) T_{w_{\max}}$ only appears in the coefficient of $\hc^\vartheta$ in the PBW expansion of  $\varphi_k(c_{\mathfrak{w}_{\max}}\mathfrak{w}_{\max})$ and therefore \eqref{eq: last} holds, contradicting the fact that  $\varphi_k(\mathbf{H})=0$. Therefore $\mathbf{H}=0$.
\end{proof}

\begin{Thm}\label{thm: limit-algebra}
The algebra generated by the action of the limit operators $\X_i$, $\Y_i$, $\T_i$, $i\geq 1$, on $\Pas^+$ is isomorphic to $\H^+$.
\end{Thm}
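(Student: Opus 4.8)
The plan is to assemble this from the pieces already in place. By construction (see \S\ref{sec: standardrep}), the limit operators $\X_i$, $\Y_i$, $\T_i$, $i\geq 1$, acting on $\Pas^+$ satisfy the defining relations of $\H^+$ from Definition \ref{def: sDAHA+}; this is exactly what was verified in \cite{IW} and re-derived here (with Theorem \ref{thm: Y limit} providing the cleaner description $\Y_i=\lim_k Y_i^{(k)}$, and Proposition \ref{prop: continuity} ensuring that limits respect products). Consequently there is a surjective $\K$-algebra homomorphism
$$
\Psi:\H^+\longrightarrow \mathcal{A},\qquad \Teb_i\mapsto \T_i,\ \Xeb_i\mapsto \X_i,\ \Yeb_i\mapsto \Y_i,
$$
where $\mathcal{A}$ denotes the algebra of operators on $\Pas^+$ generated by the $\X_i,\Y_i,\T_i$. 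The content of the theorem is that $\Psi$ is injective, i.e. that the standard representation is faithful --- but this is precisely Theorem \ref{thm: faith}: if $\mathbf H\in\H^+$ acts as the zero operator on $\Pas^+$ then $\mathbf H=0$. (Strictly, Theorem \ref{thm: faith} is stated for $\mathbf H\in\H(r)^+$, but every element of $\H^+$ lies in some $\H(r)^+$, since each $\Sw(r)\subseteq\Sw(r+1)$ and $\Sw=\bigcup_r\Sw(r)$ is the PBW basis by Theorem \ref{thm: pbw}.)

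So the proof is essentially two lines: first invoke the construction of the standard representation to get the surjection $\Psi$, then invoke Theorem \ref{thm: faith} to conclude $\ker\Psi=0$, hence $\Psi$ is an isomorphism. I would write it as: \emph{By \S\ref{sec: standardrep} the assignment $\Teb_i\mapsto\T_i$, $\Xeb_i\mapsto\X_i$, $\Yeb_i\mapsto\Y_i$ extends to a surjective algebra morphism $\Psi$ from $\H^+$ onto the algebra generated by these operators. If $\mathbf H\in\ker\Psi$, pick $r$ with $\mathbf H\in\H(r)^+$; then $\mathbf H$ acts by zero on $\Pas^+$, so $\mathbf H=0$ by Theorem \ref{thm: faith}. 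Thus $\Psi$ is an isomorphism.}

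There is essentially no obstacle here; the theorem is a corollary of Theorem \ref{thm: faith}, with the one small bookkeeping point that one should check the map is well-defined (the relations hold) before worrying about injectivity --- and that well-definedness is exactly how $\H^+$ and its standard representation were \emph{defined} in \cite{IW} and recalled in \S\ref{sec: standardrep}, so nothing new is needed. The only place one might want an extra sentence is to note, as the excerpt already does in its closing remark, that by Theorem \ref{thm: kernel} (trivially valid on $\Pas^-$ as well, since there $\mathbf H$ is literally the inverse limit of the $\varphi_k(\mathbf H)$) the same conclusion applies verbatim to the $\Pas^-$ standard representation: the operator algebra generated by $\X_i,\Y_i,\T_i$ acting on $\Pas^-$ is also isomorphic to $\H^+$.

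\begin{proof}
As described in \S\ref{sec: standardrep}, the action of $\Teb_i$, $\Xeb_i$, $\Yeb_i$, $i\geq 1$, in the standard representation of $\H^+$ on $\Pas^+$ is given by the limit operators $\T_i$, $\X_i$, $\Y_i$, respectively; in particular these operators satisfy the defining relations \eqref{sdaha} of $\H^+$, so there is a surjective $\K$-algebra morphism
$$
\Psi\colon \H^+\longrightarrow \mathcal{A},\qquad \Teb_i\longmapsto \T_i,\quad \Xeb_i\longmapsto \X_i,\quad \Yeb_i\longmapsto \Y_i,
$$
onto the subalgebra $\mathcal{A}$ of operators on $\Pas^+$ generated by $\X_i$, $\Y_i$, $\T_i$, $i\geq 1$. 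It remains to show that $\Psi$ is injective. Let $\mathbf{H}\in \ker\Psi$. Since $\Sw=\bigcup_{r\geq 1}\Sw(r)$ is a basis of $\H^+$ by Theorem \ref{thm: pbw} and $\Sw(r)\subseteq \Sw(r+1)$, there is some $r\geq 1$ with $\mathbf{H}\in \H(r)^+$. Then $\mathbf{H}$ acts by zero on $\Pas^+$, so by Theorem \ref{thm: faith} we have $\mathbf{H}=0$. Hence $\Psi$ is an isomorphism.
\end{proof}

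As noted above, Theorem \ref{thm: kernel} holds trivially for the standard representation on $\Pas^-$, so Theorem \ref{thm: faith}, and therefore the present statement, apply verbatim to the $\Pas^-$ standard representation as well: the algebra generated by the action of the corresponding limit operators on $\Pas^-$ is likewise isomorphic to $\H^+$.
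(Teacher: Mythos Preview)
Your proof is correct and follows essentially the same approach as the paper: the standard representation gives a surjection from $\H^+$ onto the operator algebra, and Theorem \ref{thm: faith} (faithfulness) shows this surjection has trivial kernel. The only difference is that you spell out the reduction to $\H(r)^+$, whereas in the paper Theorem \ref{thm: faith} is already stated for all of $\H^+$ (the reduction to $\H(r)^+$ happens inside its proof), so that step is not needed here.
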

\begin{proof}
The action of the limit operators  $\X_i$, $\Y_i$, $\T_i$, $i\geq 1$, on $\Pas^+$ defines the standard representation of $\H^+$. Therefore, the algebra generated by the limit operators is the quotient of $\H^+$ by the kernel of the standard representation. By Theorem \ref{thm: faith}, the standard representation is faithful, which  implies our claim.
\end{proof}


\begin{bibdiv}
\begin{biblist}[\normalsize]
\BibSpec{article}{%
+{}{\PrintAuthors} {author}
+{,}{ }{title}
+{.}{ \textit}{journal}
+{}{ \textbf} {volume}
+{}{ \PrintDatePV}{date}
+{,}{ no. }{number}
+{,}{ }{pages}
+{,}{ }{status}
+{.}{}{transition}
}

\BibSpec{book}{%
+{}{\PrintAuthors} {author}
+{,}{ \textit}{title}
+{.}{ }{series}
+{,}{ vol. } {volume}
+{,}{ \PrintEdition} {edition}
+{,}{ }{publisher}
+{,}{ }{place}
+{,}{ }{date}
+{,}{ }{status}
+{.}{}{transition}
}

\BibSpec{collection.article}{
+{}{\PrintAuthors} {author}
+{,}{ \textit}{title}
+{.}{ In: \textit}{conference}
+{,}{ }{pages}
+{.}{ }{series}
+{,}{ vol. } {volume}
+{,}{ }{publisher}
+{,}{ }{place}
+{,}{ }{date}
+{,}{ }{status}
+{.}{}{transition}
}

\bib{BW}{article}{
   author={Bechtloff Weising, Milo J.},
   title={Stable-limit non-symmetric Macdonald functions},
   journal={Algebr. Comb.},
   volume={7},
   date={2024},
   number={6},
   pages={1845--1878},
   review={\MR{4858857}},
}

\bib{CGM}{article}{
   author={Carlsson, Erik},
   author={Gorsky, Eugene},
   author={Mellit, Anton},
   title={The $\mathbb{A}_{q,t}$ algebra and parabolic flag Hilbert schemes},
   journal={Math. Ann.},
   volume={376},
   date={2020},
   number={3-4},
   pages={1303--1336},
   issn={0025-5831},
   review={\MR{4081116}},
   doi={10.1007/s00208-019-01898-1},
}

\bib{CM}{article}{
   author={Carlsson, Erik},
   author={Mellit, Anton},
   title={A proof of the shuffle conjecture},
   journal={J. Amer. Math. Soc.},
   volume={31},
   date={2018},
   number={3},
   pages={661--697},
   issn={0894-0347},
   review={\MR{3787405}},
   doi={10.1090/jams/893},
}

\bib{HHL}{article}{
   author={Haglund, J.},
   author={Haiman, M.},
   author={Loehr, N.},
   title={A combinatorial formula for nonsymmetric Macdonald polynomials},
   journal={Amer. J. Math.},
   volume={130},
   date={2008},
   number={2},
   pages={359--383},
   issn={0002-9327},
   review={\MR{2405160}},
   doi={10.1353/ajm.2008.0015},
}

\bib{HaiChe}{article}{
   author={Haiman, Mark},
   title={Cherednik algebras, Macdonald polynomials and combinatorics},
   conference={International Congress of Mathematicians. Vol. III},
      publisher={Eur. Math. Soc., Z\"{u}rich},
   date={2006},
   pages={843--872},
   review={\MR{2275709}},
}

\bib{Hum}{book}{
   author={Humphreys, James E.},
   title={Reflection groups and Coxeter groups},
   series={Cambridge Studies in Advanced Mathematics},
   volume={29},
   publisher={Cambridge University Press, Cambridge},
   date={1990},
   pages={xii+204},
   isbn={0-521-37510-X},
   review={\MR{1066460}},
   doi={10.1017/CBO9780511623646},
}

\bib{IW}{article}{
   author={Ion, Bogdan},
   author={Wu, Dongyu},
   title={The stable limit DAHA and the double Dyck path algebra},
   journal={J. Inst. Math. Jussieu},
   volume={23},
   date={2024},
   number={1},
   pages={379--424},
   issn={1474-7480},
   review={\MR{4699874}},
   doi={10.1017/s1474748022000445},
}

\bib{KnInt}{article}{
   author={Knop, Friedrich},
   title={Integrality of two variable Kostka functions},
   journal={J. Reine Angew. Math.},
   volume={482},
   date={1997},
   pages={177--189},
   issn={0075-4102},
   review={\MR{1427661}},
   doi={10.1515/crll.1997.482.177},
}

\bib{Kn}{collection.article}{
   author={Knop, Friedrich},
   title={Composition Kostka functions},
   conference={Algebraic groups and homogeneous spaces},
      series={Tata Inst. Fund. Res. Stud. Math.},
      volume={19},
      publisher={Tata Inst. Fund. Res., Mumbai},
   date={2007},
   pages={321--352},
   review={\MR{2348910}},
}

\bib{MacPer}{article}{
   author={Mackiw, George},
   title={Permutations as products of transpositions},
   journal={Amer. Math. Monthly},
   volume={102},
   date={1995},
   number={5},
   pages={438--440},
   issn={0002-9890},
   review={\MR{1327789}},
   doi={10.2307/2975036},
}

\bib{Mel}{article}{
   author={Mellit, Anton},
   title={Toric braids and $(m,n)$-parking functions},
   journal={Duke Math. J.},
   volume={170},
   date={2021},
   number={18},
   pages={4123--4169},
   issn={0012-7094},
   review={\MR{4348234}},
   doi={10.1215/00127094-2021-0011},
}

\bib{Sa}{article}{
   author={Sahi, Siddhartha},
   title={Some properties of Koornwinder polynomials},
   conference={$q$-series from a contemporary perspective},
      address={South Hadley, MA},
      series={Contemp. Math.},
      volume={254},
      publisher={Amer. Math. Soc., Providence, RI},
   date={2000},
   pages={395--411},
   review={\MR{1768938}},
   doi={10.1090/conm/254/03963},
}

\bib{SV2}{article}{
   author={Schiffmann, Olivier},
   author={Vasserot, Eric},
   title={The elliptic Hall algebra and the $K$-theory of the Hilbert scheme
   of $\mathbb A^2$},
   journal={Duke Math. J.},
   volume={162},
   date={2013},
   number={2},
   pages={279--366},
   issn={0012-7094},
   review={\MR{3018956}},
   doi={10.1215/00127094-1961849},
}

\end{biblist}
\end{bibdiv}
\end{document}